\documentclass[a4paper,oneside,reqno]{amsart}
\usepackage[utf8]{inputenc}

\usepackage{amssymb,amsmath}
\usepackage{thmtools}
\usepackage{thm-restate}
\usepackage{mathtools}
\usepackage{bbm}
\usepackage{enumitem}
\usepackage{hyperref}
\usepackage[capitalize]{cleveref}
\usepackage{stmaryrd} 

\usepackage{comment}
\usepackage{tikz}

\usepackage{geometry}
\declaretheorem[numberwithin=section]{theorem}
\declaretheorem[sibling=theorem]{lemma}
\declaretheorem[sibling=theorem]{fact}
\declaretheorem[sibling=theorem]{claim}
\crefname{claim}{Claim}{Claims}

\declaretheorem[sibling=theorem]{corollary}
\declaretheorem[sibling=theorem]{proposition}

\declaretheorem[sibling=theorem,style=definition]{definition}

\declaretheorem[name=Linear Program,style=definition,numbered=no]{LP}

\makeatletter
\renewcommand{\theHtheorem}{\thesection.\arabic{theorem}}
\newcommand{\FixSiblingH}[1]{%
  \expandafter\renewcommand\csname theH#1\endcsname{\theHtheorem}}
\makeatother

\FixSiblingH{lemma}
\FixSiblingH{fact}
\FixSiblingH{claim}
\FixSiblingH{conjecture}
\FixSiblingH{corollary}
\FixSiblingH{proposition}
\FixSiblingH{observation}
\FixSiblingH{definition}
\FixSiblingH{remark}

\newcommand{\br}[1]{\llbracket{#1}\rrbracket}

\renewcommand{\le}{\leqslant}
\renewcommand{\ge}{\geqslant}
\renewcommand{\leq}{\leqslant}
\renewcommand{\geq}{\geqslant}

\renewcommand{\Pr}{\mathbb{P}}
\newcommand{\DKL}{D_{\mathrm{KL}}}
\newcommand{\Qr}{\mathbb{Q}}
\renewcommand{\emptyset}{\varnothing}

\newcommand{\cA}{\mathcal{A}}
\newcommand{\cB}{\mathcal{B}}
\newcommand{\cC}{\mathcal{C}}

\newcommand{\cE}{\mathcal{E}}

\newcommand{\cH}{\mathcal{H}}

\newcommand{\cJ}{\mathcal{J}}
\newcommand{\cL}{\mathcal{L}}

\newcommand{\cS}{\mathcal{S}}

\newcommand{\eps}{\varepsilon}

\newcommand{\1}{\mathbbm{1}}

\DeclareMathOperator{\Ex}{\mathbb{E}}

\DeclareMathOperator{\Aut}{Aut}
\DeclareMathOperator{\Emb}{Emb}

\DeclareMathOperator{\UT}{UT}

\DeclareMathOperator{\spn}{span}

\DeclareMathOperator{\ent}{H}
\DeclareMathOperator{\Pois}{Pois}

\title[Upper tails for irregular graphs beyond the mean-field regime]{Upper tails for irregular graphs \\ beyond the mean-field regime}

\thanks{This research was supported by: the Israel Science Foundation grant 2110/22 and the ERC Consolidator Grant 101044123 (RandomHypGra).}

\date{\today}

\author{Asaf Cohen Antonir}
\address{School of Mathematical Sciences, Tel Aviv University, Tel Aviv 6997801, Israel}
\email{asaf123654@gmail.com}

\author{Matan Harel}
\address{Department of Mathematics, Northeastern University, Boston, MA, USA}
\email{m.harel@northeastern.edu}

\author{Frank Mousset}
\email{moussetfrank@gmail.com}

\author{Wojciech Samotij}
\address{School of Mathematical Sciences, Tel Aviv University, Tel Aviv 6997801, Israel}
\email{samotij@tauex.tau.ac.il}

\begin{document}

\begin{abstract}
  Let $G_{n,p}$ be the binomial random graph of density $p$ and let $X_H$ be the number of copies of a fixed graph $H$ in $G_{n,p}$.
  We prove asymptotically tight bounds on the logarithmic upper-tail probability of $X_H$ whenever $H$ is a connected, irregular graph with maximum degree $\Delta \ge 2$ and $p \ge n^{-1/\Delta - \eps_H} (\log n)^{\omega(1)}$ for an explicit $\eps_H >0$.
  These bounds are expressed in terms of a new variational problem that generalises the combinatorial optimisation problem arising from the na\"ive mean-field approximation.
  This new variational problem includes an entropy term that corresponds to the large number of embeddings of certain highly structured graphs in $K_n$.
  For a certain class of irregular graphs $H$ that we call stable, we show that this description of the upper-tail probability is valid in a range of densities that is optimal up to a poly($\log\log n$) factor. 
  For a further subclass of stable graphs, which includes all irregular complete bipartite graphs, we show that this range of densities is optimal up to a multiplicative constant. 
\end{abstract}

\maketitle

\section{Introduction}
\label{sec:introduction}

Given a positive integer $n$ and $p \coloneqq  p(n) \in [0,1]$, we define the binomial random graph model $G_{n,p}$ by including each edge of $K_n$, the complete graph on $n$ vertices, with probability $p$, independently of all other edges. For a fixed nonempty graph $H$, let $X_H \coloneqq X_H^{n,p}$ be the number of copies of $H$ in $G_{n,p}$, that is, the number of subgraphs of $G_{n,p}$ isomorphic to $H$.

The typical behaviour of $X_H$ in the asymptotic regime $n \to \infty$ is well-understood in all relevant regimes. A simple first-moment argument shows that $X_H = 0$ with high probability as long as $p \ll n^{-1/m(H)}$, where $m(H) \coloneqq \max \{e_J/v_J : \emptyset \subsetneq J \subseteq H\}$ is the so-called \emph{(maximum) density} of $H$. On the other hand, if $p \gg n^{-1/m(H)}$, then $\Ex[X_H] \to \infty$ and $X_H/\Ex[X_H] \to 1$ in probability, which can be easily proved via a second-moment argument.  If we additionally assume that $p$ does not approach 1 too quickly, $X_H$ satisfies a central limit theorem (see, e.g.~\cite{barbour1989central,MicNieSer24,Ros11}). The goal of this work is to study the atypical behaviour of $X_H$ as $n \to \infty$, in as large a swath of the sparse regime as possible. More precisely, we wish to compute the asymptotic rate of the logarithmic upper-tail probability $\log \Pr\left(X_H \ge (1 + \delta) \Ex[X_H]\right)$ for every fixed $\delta > 0$ and all $p \ll 1$. For notational convenience, we refer to this event as $\UT_{H,\delta}$.

Although this problem is very natural, the non-linearity of $X_H$ makes the analysis of the large-deviation regime much more elaborate than a similar analysis of the typical behaviour. One manifestation of this can be seen in the divergent phenomenologies displayed by the lower- and upper-tail events. On the one hand, a combination of Harris's inequality~\cite{harris1960lower} and Janson's inequality~\cite{janson1990poisson} immediately implies that, for any fixed $\delta \in (0,1]$, the logarithmic lower-tail probabilities satisfy
\[
  \log \Pr\bigl(X_H \le (1-\delta) \Ex[X_H]\bigr) = - \Theta_\delta\bigl(\min\{\Ex[X_J] : \emptyset \neq J \subseteq H\}\bigr).
\]
We note that, although determining the order of magnitude in the logarithmic lower-tail probability is relatively straightforward, evaluating the exact asymptotics is more delicate (see Kozma and Samotij~\cite{kozma2023lower}).

In contrast, the upper-tail behaviour can be influenced by the appearance of small but highly-structured subgraphs, which can increase $X_H$ dramatically without changing the global structure of the graph.
Obtaining good upper and lower bounds on the logarithmic upper-tail probability of $X_H$ proved challenging already in the seemingly simple case where $H= K_3$.
Progressively stronger upper bounds were established in~\cite{janson2004deletion, kim2004divide, Vu02}, ending with the work of Janson, Oleszkiewicz, and Ruciński~\cite{JanOleRuc04}, who proved upper and lower bounds on the logarithmic upper-tail probability that disagreed by a factor of $O(\log(1/p))$; see the classical survey of Janson and Ruciński~\cite{JanRuc2002} for more details.
In the case where $H$ is a clique, Chatterjee~\cite{chatterjee2012missing} and DeMarco--Kahn~\cite{DeMKah12-K3,demarco2012tight} independently added the logarithmic term missing in the upper bound, thus determining the order of magnitude of the logarithmic upper-tail probability for such graphs.

Starting with the seminal work of Chatterjee and Dembo~\cite{chatterjee2016nonlinear}, the last decade has seen the development of general, analytic theory of large deviations of non-linear functions of independent random variables.
By viewing $X_H$ as a polynomial in Bernoulli-$p$ random variables, one can apply this theory to prove a variational description of the logarithmic upper-tail probability for certain graphs $H$ and densities $p$.
The original work of Chatterjee and Dembo held for general $H$, but only when $p \ge n^{-c_H}$ for some small $c_H > 0$.
Further developments by Eldan~\cite{eldan2018gaussian}, Cook--Dembo~\cite{cook2020large}, and Augeri~\cite{augeri2020nonlinear,augeri2021transportation} extended the validity of these estimates to a broader range of densities.
From the perspective of the upper tails of $X_H$, the work of Cook, Dembo, and Pham~\cite{CooDemPha2024} is the strongest result in this vein.
It combines combinatorial and analytic techniques to study large deviations of sub(hyper)graph counts in binomial random {\em hypergraphs}.
Specialising the results of~\cite{CooDemPha2024} to the setting of subgraph counts in $G_{n,p}$, one obtains a variational description of the logarithmic upper-tail probability for $X_H$ for every graph $H$ with maximum degree $\Delta \ge 2$ and all $p \geq n^{-1/\Delta + c_H}$, for some explicit $c_H >0$.
The associated variational problem (discussed in detail in~\cref{sec:naive-mean-field}) was solved by Lubetzky--Zhao~\cite{lubetzky2017variational} in the case where $H$ is a clique, for all densities $p$ (except when $p = \Theta(n^{-1/\Delta})$), and by Bhattacharya, Ganguly, Lubetzky, and Zhao~\cite{bhattacharya2017upper} for general $H$, but only when $p \gg n^{-1/\Delta}$.

The work of Harel, Mousset, and Samotij~\cite{HarMouSam22} returned to the combinatorial approach to the upper-tail problem pioneered by Janson, Oleszkiewicz, and Ruciński~\cite{JanOleRuc04}.
At the root of the method is the following simple observation: one can increase the number of copies of $H$ found in $G_{n,p}$ by {\em planting} (i.e., conditioning on the appearance of) a graph $G$. To be more precise, we consider the family of graphs 
\[
  \cS_{\delta} \coloneqq \{G \subseteq K_n: \Ex[ X_H \mid G \subseteq G_{n,p}] \geq (1 + \delta) \Ex[X_H]\},
\]
which we call $\delta$-seeds. 
Heuristically, the event $\UT_{H,\delta}$ is `typical' under $\Pr(\cdot \mid G \subseteq G_{n,p})$, and therefore, $\log \Pr(\UT_{H,\delta})$ will be bounded from below by $\log \Pr( G \subseteq G_{n,p}) = -e(G) \log(1/p)$, up to lower-order corrections. By choosing a seed with the minimum number of edges, we can expect that
\begin{equation}\label{eq:ENMF}
\log \Pr(\UT_{H,\delta}) \geq - (1 + o(1)) \cdot \psi_H(\delta),
\end{equation}
where $\psi_H(\delta)$ is the variational problem
\[
 \psi_H(\delta) \coloneqq \min\big\{e(G) \log(1/p) : G \in \cS_\delta\}.
\]

The main contribution of~\cite{HarMouSam22} is the formulation of a sufficient condition that implies that~(\ref{eq:ENMF}) is asymptotically tight.
First, the work adapts a moment argument from the aforementioned work of Janson, Oleszkiewicz, and Ruciński~\cite{JanOleRuc04} to show that the upper-tail event is dominated by the appearance of `small' seeds, i.e., seeds with $O(\psi_H(\delta))$ edges.
From here, the sufficient condition is formulated in terms of a subclass of these `small' seeds, termed \emph{cores}.
Roughly speaking, these are seeds where every edge supports a rather large number of copies of $H$, in expectation.
The precise definition of cores is chosen so that every `small' seed must contain some core;
in particular, the probability that $G_{n,p}$ contains a `small' seed is bounded from above by the probability that it contains a core with $O(\psi_H(\delta))$ edges.
We say that cores are \emph{entropically stable} if these structural restrictions make these graphs rare, in the sense that there are $(1/p)^{o(m)}$ cores with $m$ edges for every $m = O(\psi_H(\delta))$.
As the probability that a given graph with $m$ edges appears in $G_{n,p}$ is $p^m$, when the entropic stability bound holds, a union bound over cores implies that the upper-tail probability is essentially bounded from above by the probability of appearance of a smallest core.
Combining this with the lower bound~\eqref{eq:ENMF}, and using simple continuity properties of $\psi_H$, one obtains
\[
\log \Pr(\UT_{H,\delta}) = - (1 + o(1)) \cdot \psi_H(\delta).
\]

The second contribution of~\cite{HarMouSam22} is showing that cores are entropically stable whenever $H$ is a $\Delta$-regular, non-bipartite graph for a nearly-optimal range of densities $p$.
Later, Basak--Basu~\cite{BasBas2023} introduced a more flexible notion of entropic stability to extend the argument to bipartite graphs and a sharp range of densities $p$; for lower values of $p$, the logarithmic upper-tail probability is in the `Poisson regime,' and has qualitatively different behaviour.

To reiterate the goal of this work, given the established results on $\Delta$-regular graphs, we wish to study the logarithmic upper-tail probability of $X_H$ whenever $H$ is not regular, for as broad range of the density parameter $p$ as we can.
The fundamental issue we encounter is that, when $p \ll n^{-1/\Delta}$, the cores defined in~\cite{HarMouSam22} are not entropically stable.
In particular, it is not even true that the number of different embeddings of a smallest $\delta$-seed into $K_n$ is asymptotically negligible!
Despite this fact, it is still true that $\delta$-seeds that are minimal in an appropriate sense are highly-structured graphs, which constrains the entropic contribution of the number of their embeddings.
Heuristically, we will argue that, in a large range of densities $p$, the number of minimal $\delta$-seeds is dominated by the choice of the vertices they span, and that the additional entropy corresponding to the choice of edges within that vertex set is asymptotically irrelevant.
In order to formulate this formally, we introduce the notion of a \emph{hub-core}, which captures the properties of minimal $\delta$-seeds which allow us to prove this `second-order entropic stability'. To do so, we introduce some notation.

Let $H$ be a connected, irregular graph with maximum degree $\Delta \ge 2$.
For disjoint $A, B \subseteq V(H)$, we let $N(A)$ be the neighbourhood of $A$ in $H$ and denote by $H[A, B]$ the bipartite subgraph of $H$ induced by the sets $A$ and $B$.
In other words, $H[A, B]$ is the bipartite graph with partite sets $A, B$ that contains all edges of $H$ with one endpoint in each of $A$ and $B$. Let 
\[
\cJ = \cJ(H) \coloneqq \{H[A,N(A)] : A \text{ is an independent set of vertices of degree } \Delta \}.
\]
Let $r = r_H$ be the maximum of $|A|/|N(A)|$ over all $A$ as above and let $\cJ^* = \cJ^*(H)$ be the set of all $J \in \cJ(H)$ which achieve this maximum. It is straightforward to see that $1/\Delta \leq r < 1$, see~\cref{lemma:rH-bounds} below. Finally, we set 
\[
  d_H\coloneqq \min \{\deg_J v : J\in \cJ^*\land v\in V(J)\}.
\]
We also introduce the following shorthand notations, as they repeat many times throughout the paper:
\[
  f = f(n,p)\coloneqq \max\bigl\{n\cdot (np^{\Delta})^r,n^2p^{\Delta}\bigr\}
  \qquad
  \text{and}
  \qquad
  g = g(n,p) \coloneqq \min\{n, f(n,p)\}.
\]
We are now ready to define hub-cores.  Our definition involves a slowly-growing function $\omega$ whose precise choice
\[
  \omega = \omega(n,p) \coloneqq \log \log (1/p)
\]
is not important.

\begin{definition}[hub-cores]
  \label{dfn:hub-core}
Given $p \ll 1$, we say a graph $G \subseteq K_n$ with no isolated vertices is a hub-core if it contains a vertex cover of size at most 
 \[
    \min\left\{e_G / \omega, \bigl(\log\log(1/p)\bigr)^{3} \cdot \max\bigl\{1, (e_G/f)^{v_H}\bigr\} \cdot \max\{1, e_G/n\}\right\}.
  \]
If $p  \leq n^{-1/\Delta} \cdot (\log n)^{-\omega}$, we also require that the minimum degree of $G$ satisfies $\delta(G) \geq d_H$. 
\end{definition}
We set $\cL$ to be the set of all hub-cores in $K_n$.
Unlike the cores of~\cite{HarMouSam22}, hub-cores are not necessarily $\delta$-seeds. However, we will see that, in the cases that interest us, $\delta$-seeds contain large hub-cores, which will be crucial for our analysis.
Having established the notion of hub-cores, we can now consider a refined variational problem.

\begin{definition}[The entropic variational problem]
  For an irregular graph $H$, define its \emph{entropic variational problem} $\psi_H^*\colon [0, \infty) \to [0, \infty]$ by setting
  \[
    \psi_H^*(\delta) \coloneqq \min\big\{e(G) \log(1/p)-v(G)\log(en/g) : G\in \cL \cap \cS_\delta \text{ and } e_G \leq  f \cdot (\log(1/p))^2\big\}.
  \]
\end{definition}

The negative term in this variational problem corresponds to the logarithmic number of embeddings of the vertices of a hub-core into $K_n$; crucially, the entropy of the location of the edges within the vertex set of a hub-core does not need to be included, due to the structural constraints imposed on hub-cores.
Heuristically, the inclusion of this term allows us to upgrade the naive lower bound underlying $\psi$, which estimates the upper-tail event from below by $\{G \subseteq G_{n,p}\}$ for a fixed seed $G$, to a more sophisticated version that estimates the upper-tail event from below by the larger event $\{G' \subseteq G_{n,p} \text{ for some } G' \cong G\}$.
This improvement is asymptotically negligible for every graph $H$ with maximum degree $\Delta$ as long as $p \ge n^{-1/\Delta - o(1)}$; further, when $H$ is $\Delta$-regular, it is negligible except when $p$ is polylogarithmically close to the Poisson regime.
For irregular graphs $H$, however, $\psi^*(\delta)$ is strictly smaller than $\psi(\delta)$, to first order, as soon as $p \le n^{-1/\Delta - \Omega(1)}$. 
  
We note that, in his study of the upper tail of induced 4-cycles, the first author~\cite{Coh2024} also formulated a variational problem which accounted for the number of embeddings of minimal seeds, and used a different method to generalise the notion of entropic stability to prove that it gives tight estimates on the logarithmic upper-tail probability.

Our first theorem states that, for any irregular graph $H$ with maximum degree $\Delta \ge 2$, $\psi_H^*$ is an asymptotically-tight approximation to the logarithmic upper-tail probability of $X_H$ whenever $p \geq n^{-1/\Delta - \eps_H} (\log n)^{\omega(1)}$ for some explicit $\eps_H >0$ that we define in~\eqref{eq:eps_H_def} below.  We set
\[
  p_H\coloneqq \max \{n^{-1/m(H)},n^{-1/\Delta - \eps_H}\},
\]
where we recall that $m(H) = \max \{e_J/v_J : \emptyset \subsetneq J \subseteq H\}$.

\begin{theorem}\label{thm:main-dense-ish}
  For every irregular, connected graph $H$ with maximum degree $\Delta \ge 2$ and all $0 < \varepsilon < \delta$, there is a constant $K$ such that the following holds.
  For all $p_H \cdot (\log n)^K \le p \ll 1$ and all large enough $n$,
  \[
    (1-\varepsilon) \cdot \psi_H^*(\delta-\varepsilon) \le - \log \Pr\big(X_H \ge (1+\delta)\Ex[X_H]\big) \le (1+\varepsilon) \cdot \psi_H^*(\delta+\varepsilon).
  \]
\end{theorem}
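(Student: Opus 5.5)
The plan is to prove the two inequalities separately. The lower bound on the probability (the right-hand inequality) comes from planting, and the upper bound on the probability (the left-hand inequality) follows the moment-plus-core framework of~\cite{HarMouSam22}, with cores replaced by hub-cores and entropic stability replaced by a `second-order' version.

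\textbf{Lower bound on $\Pr(\UT_{H,\delta})$.} Let $G$ be an optimiser for $\psi_H^*(\delta+\varepsilon)$; it is a hub-core in $\cS_{\delta+\varepsilon}$ with $e_G \le f(\log(1/p))^2$. I consider the event $\cE$ that $G_{n,p}$ contains a copy of $G$ placed on some vertex set. The number of such placements is at least $\binom{n}{v_G}$ up to automorphisms, and I will restrict to a well-spread family of roughly $(en/g)^{v_G}$ placements. Since $G$ has a small vertex cover and $v_G \lesssim g$, I expect that distinct placements in this family have small pairwise overlaps. A second-moment (or Janson-type) argument should then give $\Pr(\cE) \ge \exp\bigl(-(1+o(1))(e_G\log(1/p) - v_G\log(en/g))\bigr)$. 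Conditional on a fixed copy being present, $\Ex[X_H]$ rises by a factor of $1+\delta+\varepsilon$. Using the Harris inequality together with a lower-tail (Janson) bound on the copies of $H$ not meeting the planted graph, $X_H \ge (1+\delta)\Ex[X_H]$ holds with probability $1-o(1)$ under this conditioning. Combining these gives the right-hand inequality.

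\textbf{Upper bound on $\Pr(\UT_{H,\delta})$.} First, I invoke the moment argument of~\cite{HarMouSam22}. It shows that, up to a factor $(1+o(1))$ in the exponent, $\UT_{H,\delta}$ is contained in the event that $G_{n,p}$ contains some $(\delta-\varepsilon)$-seed with $O(\psi_H(\delta)/\log(1/p))$ edges. Second, I need a structural lemma: every such small seed contains a subgraph that is a hub-core, lies in $\cS_{\delta-\varepsilon}$, and has at most $f(\log(1/p))^2$ edges. To find it, I iteratively delete edges that lie in few copies of $H$, as in the definition of cores. The remaining graph is then analysed through the dominant structures $H[A,N(A)]$ with $J\in\cJ^*$. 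High-degree `hub' vertices play the role of $N(A)$, so a vertex cover of the claimed size exists. In the regime $p\le n^{-1/\Delta}(\log n)^{-\omega}$, leaves or low-degree vertices contribute too few copies, which is what forces minimum degree at least $d_H$. Here the ratio $r_H$ governs the count: a hub set $B$ of size $b$ joined to many vertices $A$ yields about $n\cdot(np^\Delta)^r$-type quantities, which is why $f$ appears. Third, I take a union bound over hub-cores $L$ with $m$ edges and $v$ vertices. The number of such $L$ is at most $\binom{n}{v}$ times the number of ways to place the edges given the vertices. The small vertex cover $C$ bounds the latter: every edge meets $C$, so the count is at most $\binom{n}{|C|}\binom{|C|v}{m}$. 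This is $\exp(o(m\log(1/p)))$ precisely because $|C|\le m/\omega$ and $p \ge p_H(\log n)^K$. Thus $\sum_L p^{e_L} \le \exp\bigl(-(1-o(1))\min(e_L\log(1/p) - v_L\log(en/g))\bigr)$. For this I also need $\binom{n}{v}\le (en/g)^{v}\cdot e^{o(m\log(1/p))}$, which is where the cap $e_G\le f(\log 1/p)^2$ and the factor $g$ enter. Continuity of $\psi^*$ in $\delta$ then absorbs the $\varepsilon$-losses.

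\textbf{Main obstacle.} The hardest step will be the structural lemma: showing that every small seed contains a hub-core seed with a vertex cover as small as the definition demands. The definition's bound involves the factors $(e_G/f)^{v_H}$ and $e_G/n$. This requires a careful counting of copies of $H$ in a graph with a given vertex-cover structure, presumably via an optimisation over which vertices of $H$ map into the cover, with $\cJ^*$ and $r_H$ identifying the optimal configurations. The choice of $\eps_H$ is exactly what makes the entropy from non-cover edges negligible. I would first try to prove the vertex-cover bound via a fractional-matching/LP duality argument on the embedding counts.
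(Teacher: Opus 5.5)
The step the paper calls second-order entropic stability is missing. Without it, neither inequality reaches $(1\pm\varepsilon)\,\psi_H^*$.

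\textbf{Upper bound on the probability.} You compare every error term with $m\log(1/p)$. But $\psi_H^*$ can be $o(\psi_H)$, hence $o(e_G\log(1/p))$ for every seed $G$; this is the point of the example in \cref{sec:naive-mean-field}. So you need errors that are $o(\psi_H^*)$. Concretely, the union bound gives $\log(|\cL_m|p^m)\le v_m\log(en/g)-m\log(1/p)+O(m\log\log(1/p))$. Passing to $-(1-\varepsilon)\bigl(m\log(1/p)-v_m\log(en/g)\bigr)$ requires that every hub-core $\delta$-seed with $e_G\le f(\log n)^2$ satisfies $e_G\log(1/p)-v_G\log(en/g)\ge c\,e_G\min\{\log(p/p_H),\log(1/p)\}$. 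This is \cref{lem:weak-LB-psi-*}, and it is where $\delta(G)\ge d_H$ and $\eps_H$ genuinely enter:
\begin{itemize}
\item The minimum-degree condition gives $v_G\le e_G/d_H+\nu_G$. Hence the left-hand side is at least $(e_G/d_H)\log\bigl(f/(enp^{d_H})\bigr)-\nu_G\log(en/f)$.
\item \cref{lem:f-npd} shows $f/(np^{d_H})\ge\beta(p/p_H)^\beta$. It does so by applying \cref{lem:bound-on-NJG} to a $J\in\cJ^*$ with a vertex of degree $d_H$ deleted, a graph not in $\cJ$.
\item With $p/p_H\ge(\log n)^K$, this dominates the $O(m\log\log(1/p))$ edge entropy once $K$ is large. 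That, not bounding edge placements, is the role of the hypothesis on $p$.
\end{itemize}
Your edge-placement count is also mis-justified. With only $|C|\le m/\omega$ you get $\binom{|C|v}{m}\le(ev/\omega)^m$, which is $\exp(\Theta(m\log n))$ once $v\ge n^{\Omega(1)}$. You need the polylogarithmic branch $|C|\le(\log\log(1/p))^3\max\{1,(m/f)^{v_H}\}\max\{1,m/n\}$ of \cref{dfn:hub-core}. That gives $|C|v\le m(\log(1/p))^{O(1)}$, as in \cref{lem:core-count}.

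\textbf{Lower bound on the probability.} This needs the same estimate. A second-moment argument can work, but not because placements have small overlaps. The ratio $\max_F\sum_{F'}p^{e(F'\setminus F)}$ is dominated by placements sharing the cover and all but $k$ further vertices with $F$. These contribute up to $\binom{v_G}{k}\binom{n}{k}p^{d_Hk}$. The total is $\exp\bigl(O(\sqrt{v_G\,np^{d_H}}+\log n)\bigr)=e^{o(\psi_H^*)}$ only because $\delta(G)\ge d_H$, $np^{d_H}\ll f$, and $\psi_H^*\gg e_G$.

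The paper avoids second moments. It fixes the cover on $\br{\nu_G}$ and intersects with the event $\mathcal{T}$ that at most $v_G+f$ vertices have at least $d_H$ neighbours in $\br{\nu_G}$. Every outcome in $\mathcal{T}$ contains at most $2^{v_G+f}$ of the placements. Meanwhile $\Pr(\mathcal{T}^c)\le\bigl(enp^{d_H}(\log n)^C/f\bigr)^f\ll p^{e_H}$, again by \cref{lem:f-npd}.

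Your claim that $\UT_{H,\delta}$ holds with probability $1-o(1)$ given a planted seed is unproven and unnecessary. \cref{lem:lower} gives $\varepsilon p^{e_H}$, which costs only $O(\log(1/p))=o(\psi_H^*)$.

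\textbf{Smaller points.}
\begin{itemize}
\item The moment argument yields seeds with $O(\psi_H(\delta+\varepsilon))=O(f\log(1/p))$ edges, not $O(\psi_H/\log(1/p))$.
\item Trimming must be with respect to copies of $J\in\cJ$ through an edge, since a seed need not contain any copy of $H$. The vertex cover then comes from the degree-sum bound of \cref{cor:core-edge-bipartite}.
\item The hubs are images of $A$, not of $N(A)$.
\end{itemize}
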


We emphasise that, for any irregular graph $H$, \cref{thm:main-dense-ish} holds in a range of densities which extends {\em polynomially further} than $n^{-1/\Delta}$, and therefore covers ranges where the distinction between $\psi^*(\delta)$ and $\psi(\delta)$ is crucial. While this manuscript was prepared, Basak and Karmakar~\cite{BasSha2025} independently proved that $\psi_H(\delta)$ gives tight bounds whenever $p \gg n^{-1/\Delta}$. Their approach is similar to the approach of~\cite{HarMouSam22}; indeed, in this range, entropic stability in the original sense holds.

\subsection{Stable graphs}
In this section, we are interested in graphs $H$ such that $1/\Delta + \varepsilon_H > 1/m(H)$; we call such graphs \emph{stable}.
Although the definition of stable graphs involves the mysterious parameter $\eps_H$, there is another simple, combinatorial characterisation of stable graphs.
Indeed, stable graphs are those graphs $H$ for which $\cJ^*(H)$ coincides with the family of all densest subgraphs of $H$.
Formally, this is given in the following lemma, which we prove in \cref{sec:stable-graphs}.
\begin{restatable}{lemma}{stablechar}
  \label{lem:stable-characterisation}
  An irregular graph $H$ with maximum degree $\Delta$ is stable if and only if
  \[
    \cJ^*(H)=\{J\subseteq H: {e_J}/{v_J} = m(H)\}.
  \]
\end{restatable}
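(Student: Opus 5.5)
The plan is to reduce both directions to a comparison of densities, using the explicit form of $\eps_H$ from~\eqref{eq:eps_H_def}. I have not seen that definition yet, so the reduction below rests on an assumption about its shape, flagged at the point where it is used.

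\emph{Step 1: densities of the graphs in $\cJ(H)$.} Let $J = H[A,N(A)]$ with $A$ an independent set of vertices of degree $\Delta$. Every vertex of $A$ sends all $\Delta$ of its edges into $N(A)$, so $e_J = \Delta|A|$ and $v_J = |A|+|N(A)|$. Writing $r_A = |A|/|N(A)|$, this gives
\[
  \frac{e_J}{v_J} = \frac{\Delta r_A}{1+r_A}.
\]
The map $x\mapsto \Delta x/(1+x)$ is strictly increasing. Hence, among the members of $\cJ(H)$, the densest ones are exactly those in $\cJ^*(H)$, each with density $\Delta r/(1+r)$. In particular $m(H)\ge \Delta r/(1+r)$ always holds. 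This identifies $(1+r)/(\Delta r)$ as the natural exponent: $n^{-(1+r)/(\Delta r)}$ is the appearance threshold of the graphs in $\cJ^*$.

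\emph{Step 2: rewriting stability.} I expect~\eqref{eq:eps_H_def} to express $1/\Delta+\eps_H$ as a minimum of two kinds of quantities. The first is $(1+r)/(\Delta r)$, coming from the graphs in $\cJ^*$. The second is a collection of reciprocal densities $v_{J'}/e_{J'}$, or of slightly modified exponents, over subgraphs $J'\subseteq H$ not in $\cJ^*$. These are the competing structures that spoil the hub-core analysis below the threshold $n^{-1/\Delta-\eps_H}$. Granting this form, the stability condition $1/\Delta+\eps_H>1/m(H)$ splits into two conditions.

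\begin{enumerate}[label=(\roman*)]
  \item The inequality $(1+r)/(\Delta r)>1/m(H)$ fails. Since Step~1 gives $\ge$, this forces $m(H) = \Delta r/(1+r)$, that is, every $J\in\cJ^*$ is a densest subgraph of $H$.
  \item Every subgraph $J'\notin\cJ^*$ has $e_{J'}/v_{J'}<m(H)$.
\end{enumerate}

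Together, (i) and (ii) say exactly that $\cJ^*(H)=\{J\subseteq H : e_J/v_J=m(H)\}$. For the forward direction I use $1/\Delta+\eps_H>1/m(H)$ to rule out any non-$\cJ^*$ subgraph of density $m(H)$, and also to rule out $m(H)>\Delta r/(1+r)$, whose witness would be a non-$\cJ^*$ subgraph. For the converse, if the densest subgraphs are precisely $\cJ^*$, then every term in the minimum other than the $\cJ^*$-term is strictly larger than $1/m(H)$ by finiteness. I then check that the $\cJ^*$-term enters $\eps_H$ in a way that does not force equality.

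\emph{Main obstacle.} The delicate step is matching the actual definition~\eqref{eq:eps_H_def} to this dichotomy. If $\eps_H$ involves exponents other than plain reciprocal densities, I will need a short lemma for each such term. For example, a term might be attached to unbalanced pieces $H[A,N(A)]$ together with extra edges, or to the $n^2p^\Delta$ branch of $f$. Each lemma should show that the term is strictly greater than $1/m(H)$ precisely when the corresponding subgraph is not densest. I would first attempt this via the monotonicity argument of Step~1, together with the bound $1/\Delta\le r<1$ from \cref{lemma:rH-bounds}.
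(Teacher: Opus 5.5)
Your Step~1 is correct, and it is also how the paper's proof begins: every $J\in\cJ^*$ has density $r\Delta/(1+r)$, the graphs in $\cJ\setminus\cJ^*$ are strictly sparser, and hence $m(H)\ge r\Delta/(1+r)$.

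\textbf{The gap is in Step~2.} Step~2 is the whole content of the lemma, and it rests on a guess about \eqref{eq:eps_H_def} that is wrong. The guess is also inconsistent with what you derive from it. Suppose $(1+r)/(\Delta r)$ were one of the terms of the minimum. Then stability, $1/\Delta+\eps_H>1/m(H)$, would force $(1+r)/(\Delta r)>1/m(H)$, i.e.\ $m(H)>\Delta r/(1+r)$. That is the opposite of your condition~(i). Moreover, whenever $m(H)=\Delta r/(1+r)$, no graph could be stable at all. Your closing remark that the $\cJ^*$-term ``does not force equality'' is exactly where the argument breaks.

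In fact there is no $\cJ^*$-term, and the remaining terms are not reciprocal densities. Set $\mathcal{E}\coloneqq\{J\subseteq H : e_J>r\Delta\alpha_J^*\}$. This automatically excludes $\cJ^*$, since $e_J=r\Delta\alpha_J^*$ for every $J\in\cJ^*$. A one-line computation gives
\[
  \frac{1}{\Delta}+\eps_H=\min_{J\in\mathcal{E}}\frac{v_J-(1+r)\alpha_J^*}{e_J-r\Delta\alpha_J^*}.
\]

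Two ingredients are then needed, and neither is in your plan.

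First, a mediant-type comparison. For $J\in\mathcal{E}$ the denominator is positive, so you can cross-multiply. This shows that the term is at most $(1+r)/(r\Delta)$ if and only if $e_J/v_J\ge r\Delta/(1+r)$. Under the same condition the term is at most $v_J/e_J$, with equality if and only if $e_J/v_J=r\Delta/(1+r)$. This settles the converse direction. If $\cJ^*$ equals the family $\mathcal{M}$ of densest subgraphs, then $m(H)=r\Delta/(1+r)$ and every $J\in\mathcal{E}$ has $e_J/v_J<m(H)$. Hence every term strictly exceeds $1/m(H)$, and since the minimum is over a finite set, $H$ is stable.

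Second, for the forward direction you must show that the offending densest subgraph $J$ actually lies in the index set $\mathcal{E}$; otherwise it never enters the minimum. This uses \cref{lemma:eJ-alphaJ-clique}.
\begin{enumerate}
\item The inequality $\Delta\alpha_J^*\le\Delta v_J-e_J$ gives $r\Delta\alpha_J^*\le e_J\bigl(r\Delta/m(H)-r\bigr)$, and the right-hand side is $<e_J$ when $m(H)>r\Delta/(1+r)$.
\item If $m(H)=r\Delta/(1+r)$ and $J\in\mathcal{M}\setminus\cJ^*$, then $J\notin\cJ$ by your Step~1. A densest subgraph has no isolated vertices, so the inequality in \cref{lemma:eJ-alphaJ-clique} is strict. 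Again $J\in\mathcal{E}$.
\end{enumerate}
In both cases the mediant bound then yields $1/\Delta+\eps_H\le v_J/e_J=1/m(H)$, so $H$ is not stable.
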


Since every stable graph $H$ satisfies $p_H = n^{-1/m(H)}$,
\cref{thm:main-dense-ish} yields asymptotics of the logarithmic upper-tail probability for stable graphs and $p\geq n^{-1/m(H)}(\log n)^{\omega(1)}$.
This assumption on $p$ is optimal up to the $(\log n)^{\omega(1)}$ factor, as for $p = \Theta\left(n^{-1/m(H)}\right)$, a Poisson-type lower bound for the upper-tail probability is larger than the one implicit in the definition of $\psi^*_H$.
Our next result extends \cref{thm:main-dense-ish} for stable graphs to a wider range of densities $p$ and shows its optimality up to a factor of $(\log\log n)^{O(1)}$.

\begin{theorem}
  \label{thm:main-near-appearance-threshold}
  For every stable graph $H$, there exists a $C_H > 0$ such that the following holds for every $\delta > 0$.
  \begin{enumerate}[label=(\roman*)]
  \item
    \label{item:main-near-appearance-threshold-UB}
    For every $\varepsilon > 0$, there exists an $L > 0$ such that, for all $Lp_H(\log n)^{C_H} \le p \ll 1$,
    \[
      - \log \Pr\big(X_H \ge (1+\delta)\Ex[X_H]\big) = (1\pm\varepsilon) \cdot \psi_H^*(\delta \pm \varepsilon).
    \]
  \item
    \label{item:main-near-appearance-threshold-LB}
    For every $\varepsilon > 0$, there exists an $\ell > 0$ such that, for all $p_H \ll p \leq \ell p_H(\log n/\log\log n)^{C_H}$,
    \[
      -\log \Pr(X_H\geq (1+\delta)\Ex[X_H]) \le \varepsilon \cdot \psi^*_H(\delta + \varepsilon).
    \]
  \end{enumerate}
\end{theorem}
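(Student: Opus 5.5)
Part (i) follows the same strategy as \cref{thm:main-dense-ish}, with the entropic-stability estimates sharpened near the appearance threshold. By \cref{lem:stable-characterisation}, every densest subgraph of a stable graph $H$ lies in $\cJ^*(H)$. The near-optimal seeds are therefore vertex-disjoint unions of ``hub'' structures: copies of the graphs $J\in\cJ^*$, with the $|A|$ side blown up or the $N(A)$ side serving as hubs. These come together with the clique-like/hub configurations counted by $f$.

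\textbf{Lower bound on $-\log\Pr$.} I would use the moment argument of~\cite{HarMouSam22,JanOleRuc04}, as in the proof of \cref{thm:main-dense-ish}. It reduces the upper tail to the event that $G_{n,p}$ contains a $(\delta-\varepsilon)$-seed with $O(\psi^*_H)$ edges. Every such seed contains a hub-core $G\in\cL\cap\cS_{\delta-\varepsilon}$ with $e_G\le f(\log(1/p))^2$. I then take a union bound over hub-cores. The key estimate is the second-order entropic stability: the number of hub-cores with $v$ vertices and $m$ edges is at most
\[
\binom{n}{v}\cdot(1/p)^{o(m)}.
\]
This works because a hub-core has a vertex cover of size $\tau$ satisfying $\tau\le m/\omega$ and the polylog bound in \cref{dfn:hub-core}. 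The edge set is then fixed by choosing the cover, which costs $\binom{n}{\tau}$, and then choosing neighbourhoods within the $v$ chosen vertices. That costs at most $\binom{v\tau}{m}\le (e v\tau/m)^m$. The remaining task is to make $\log(v\tau/m)=o(\log(1/p))$ at densities only $(\log n)^{C_H}$ above $p_H=n^{-1/m(H)}$. For stable $H$ we have $\log(1/p)\asymp\log n$, so the bound $\tau\le(\log\log(1/p))^3\max\{1,(m/f)^{v_H}\}\max\{1,m/n\}$ should give this once $m\le f(\log(1/p))^2$. This is where $C_H$ enters: it must be large enough that the polylog losses are dominated.

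\textbf{Upper bound on $-\log\Pr$.} I would take a minimiser $G$ of $\psi^*_H(\delta+\varepsilon)$ and consider the event that some copy of $G$ is planted, i.e.\ the union over the roughly $(en/g)^{v_G}$ embeddings. A second-moment (or Janson-type) argument shows that this union has probability at least $\exp(-(1+\varepsilon)(e_G\log(1/p)-v_G\log(en/g)))$. Here one needs $\Ex[\#\text{copies of }G]\to\infty$ but small, and stability of $H$ should control the overlaps. Conditioned on a copy, Harris's inequality and a concentration argument give $\UT_{H,\delta}$ with probability bounded below.

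\textbf{Part (ii).} I would give an explicit Poisson-type construction. Fix $J\in\cJ^*$, a densest subgraph of $H$ by \cref{lem:stable-characterisation}. Planting about $k$ vertex-disjoint copies of $J$, with $k$ chosen so that their extensions to copies of $H$ yield an extra $\delta\Ex[X_H]$, costs roughly $\log\Pr(\Pois(\mu)\ge k)\approx -k\log(k/\mu)$, where $\mu=\Ex[X_J]$. Near $p_H$ the ratio $k/\mu$ is only $(\log n/\log\log n)^{O(1)}$, so $\log(k/\mu)=O(\log\log n)$. Meanwhile $\psi^*_H$ scales like $k\cdot e_J\log(1/p)\cdot(\text{const})\asymp k\log n$. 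Choosing $\ell$ small makes the ratio at most $\varepsilon$.

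The main obstacle is the counting step in (i): pushing the hub-core entropy bound down to $p$ within a polylog factor of $n^{-1/m(H)}$. There $\log(en/g)$ is comparable to $\log(1/p)$ per vertex, and no slack remains except from the stability gap.
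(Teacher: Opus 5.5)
Your lower bound on $-\log\Pr$ assumes that every seed with $O(f\log(1/p))$ edges (the size the moment argument delivers) contains a hub-core seed. That is where the difficulty lies, and it cannot hold uniformly for $p\gg p_H$. For stable $H$ the hub-core count is not the bottleneck: \cref{lem:core-count} together with \cref{lem:weak-LB-psi-*}~\ref{item:weak-LB-psi-*-stable} already handles hub-core seeds with up to $f(\log(1/p))^2$ edges for every $p\gg p_H$. So if your claim held in that generality, the union bound would give $-\log\Pr(\UT_{H,\delta})\ge(1-\varepsilon)\psi^*_H(\delta-\varepsilon)$ for all $p\gg p_H$, contradicting part (ii). Any proof must therefore use $p\ge Lp_H(\log n)^{C_H}$ sharply, and yours never does. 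The only available tool, the trimming in \cref{lem:cores}, certifies the cover bound $\nu_G\le e_G/\omega$ only when $f$ dominates $(e_G/f)^{v_H}$ up to $\log\log$ factors. That means $p\ge p_H(\log n)^K$ with $K$ depending on $v_H$, not on $C_H$.

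The paper does not prove your deterministic claim; it circumvents it with maximal tight cores (\cref{def:tight-cores}). Tight cores with more than $m_c$ edges appear with probability at most $p^{\eta m_c}$ (\cref{lem:large-tight-core-is-expensive}). If the maximal tight core $G^*\subseteq G$ has $O(f)$ edges, every copy of $J\in\cJ^*$ not inside $G^*$ meets it in some $J_0\in\cJ^*_\emptyset$. By \cref{lem:counting-extensions-via-dimension} there are at most $(2e_{G^*})^{\alpha^*_{J_0-R_{J_0}(J)}}(2e_G)^{\dim_{J_0}J}$ such copies, which drops below $c\,n^{v_J}p^{e_J}$ once $p\ge Lp_H(\log n)^{C_H}$ (\cref{lem:seeds-contains-a-small-tight-core-2}). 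Then $G^*$ is itself a seed with $O(f)$ edges, and \cref{lem:cores} applies in its second case. So $C_H$ is an explicit constant built from $\dim_{J_0}J$ and $\alpha^*_{J_0-R_{J_0}(J)}$, not one chosen ``large enough to absorb polylog losses''. It also cannot be freely enlarged, since (ii) must hold with the same $C_H$.

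Your part (ii) computation is off. To gain $\delta\Ex[X_H]$ copies of $H$ by extending copies of $J$ you need $k=\Theta(\mu)$ extra copies, where $\mu=\Ex[X_J]=\Theta(f^{\alpha^*_J})$. So $k/\mu$ is a constant and the Poisson cost is $\Theta(f^{\alpha^*_J})$. Meanwhile $\psi^*_H\asymp f\log(1/p)$, not $k\log n$, because a blow-up of $J$ with $O(f)$ edges already carries $\Theta(\mu)$ copies. Taken at face value, your comparison would apply at every density and contradict (i). Corrected, using $f=(p/p_H)^{\Delta r}$, disjoint copies of $J$ beat $\psi^*_H$ only for $p\ll p_H(\log n)^{1/(\Delta r(\alpha^*_J-1))}$. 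That is at best the contribution of the pairs $(\emptyset,J)$ to the maximum defining $C_H$.

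Whenever that maximum is attained only at pairs with $J_0\neq\emptyset$, you need a two-stage construction. First plant a blow-up of $J_0$ with at most $e_{J_0}\gamma f$ edges and part sizes $(\gamma f)^{a_u}$, where $a$ is a fractional independent set with $a(V(J_0)\setminus R)=\alpha^*_{J_0-R}$. Then use the fan decomposition (\cref{prop:optimal-pair-decomposition}) and \cref{lem:Poisson-lemma} to force $t=\Gamma xf\log n$ rooted copies of each $J_i\setminus J_0$. This costs $O(t\log t)=O(xf\log n\log\log n)$, which is at most $\varepsilon f\log(1/p)$ once $x\le(C\log\log n)^{-1}$. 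That is where the $\log\log n$ in (ii) comes from.

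A smaller issue concerns the other direction of (i). When $p$ is only polylogarithmically above $p_H$, the expected number of copies of the minimiser $G$ tends to $0$, not $\infty$: its logarithm is at most $-\psi^*_H+v_G\log g$ with $g=f$ polylogarithmic. So you must show that overlapping copies cost only $e^{o(\psi^*_H)}$. The ingredient that does this is the condition $\delta(G)\ge d_H$ in \cref{dfn:hub-core}, which via \cref{lem:f-npd} makes vertices with $d_H$ neighbours in the fixed cover rare. A vague appeal to stability is not enough.
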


The proof of part~\ref{item:main-near-appearance-threshold-UB} of this theorem relies on graphs we call \emph{tight cores}.
Roughly speaking, a tight core is a union of copies of $J\in \cJ^*$ that overlap in a `generic' way;
this notion is inspired by the counting arguments used in several recent works on Ramsey properties of random graphs (see, e.g., \cite{KupSam24}).
The following two properties make tight cores relevant to our setting.
First, when $p \gg p_H (\log n)^{C_H}$, a maximal tight core of a seed remains a seed.
Second, as long as $p \leq p_H \cdot n^{o(1)}$, $G_{n,p}$ conditioned on the upper-tail event does not contain tight cores with $\omega(\psi_H(\delta) / \log(1/p))$ edges.
Consequently, the upper-tail event is dominated by the appearance of a tight core with $O(\psi_H(\delta)/\log(1/p))$ edges that is also a seed.
This motivates our decision to call these objects `cores', even though their definition is somewhat orthogonal to earlier notions.

For generic stable graphs $H$, both parts of~\cref{thm:main-near-appearance-threshold} can be optimal.
However, if we consider certain subclasses of stable graphs, the theorem may be further tightened.
Without going into the technical details (see \cref{sec:struct-stable-graphs}), we divide stable graphs into \emph{clean} and \emph{non-clean} graphs. The simple structure of clean graphs allows us to relax the upper-bound assumption on $p$ in part \ref{item:main-near-appearance-threshold-LB} so that it matches the lower-bound assumption in part~\ref{item:main-near-appearance-threshold-UB} up to a constant factor.
As a result, we essentially settle the upper-tail problem for all clean graphs.

\begin{theorem}
  \label{thm:clean}
  Suppose that $H$ is a stable graph that is clean and let $C_H$ be as in \cref{thm:main-near-appearance-threshold}.
  For all $\delta, \varepsilon > 0$, there is an $\ell > 0$ such that, for all $p_H \ll p \leq \ell p_H(\log n)^{C_H}$, we have
  \[
    -\log \Pr(X_H\geq (1+\delta)\Ex[X_H]) \le \varepsilon \cdot \psi^*_H(\delta + \varepsilon).
  \]
\end{theorem}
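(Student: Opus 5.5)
We prove the bound by a direct lower bound on $\Pr(\UT_{H,\delta})$. In the stated range of $p$ we build a \emph{Poisson-type} event, namely the appearance of many vertex-disjoint copies of a smallest densest subgraph, and show that its log-probability is only a small fraction of $\psi_H^*(\delta+\eps)$. Since $H$ is stable, \cref{lem:stable-characterisation} lets us fix $J\in\cJ^*(H)$ with $e_J/v_J=m(H)$. Then $p_H=n^{-1/m(H)}$ is the appearance threshold of $J$, and $\mu_J\coloneqq\Ex[X_J]=\Theta(n^{v_J}p^{e_J})\to\infty$ polylogarithmically slowly in our range.

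\textbf{Step 1 (planting $J$-copies).} Let $k$ denote the number of disjoint copies of $J$ we need. Each copy of $J$ in $G_{n,p}$ extends to roughly $\Ex[X_H]/\mu_J$ copies of $H$ in expectation. So $k$ copies raise $\Ex[X_H]$ by $\delta\Ex[X_H]$ once $k\approx c\,\delta\,\mu_J$ for a suitable constant $c$. By the Poisson approximation (Janson/Stein--Chen) for the number of $J$-copies, or equivalently for a maximal family of disjoint copies,
\[
  \log\Pr(X_J\ge k)\ge -k\log\bigl(k/(e\mu_J)\bigr)-o(k)=-O_\delta(\mu_J).
\]
Conditioned on this event and on a second-moment bound for the remaining copies of $H$, we obtain $X_H\ge(1+\delta)\Ex[X_H]$ with probability bounded below. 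Hence $-\log\Pr(\UT_{H,\delta})=O_\delta(\mu_J)$.

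\textbf{Step 2 (lower bound on $\psi^*$).} We need $\psi_H^*(\delta+\eps)\ge \mu_J/(\eps\,\ell')$ for some $\ell'$ tending to $\infty$ as $\ell\to0$. Take any $G\in\cL\cap\cS_{\delta+\eps}$ with $e_G\le f(\log(1/p))^2$. Write $\Phi(G)\coloneqq e_G\log(1/p)-v_G\log(en/g)$. We will show that $\Phi(G)\gtrsim \mu_J(\log n)^{C_H}/(p/p_H)^{\text{const}}$, which is large when $p\le \ell p_H(\log n)^{C_H}$.

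This is where cleanness enters. Its structural input should be that the hub-core structure forces $G$ to look like a union of near-disjoint $J$-blow-ups. Each such blow-up contributes $e\log(1/p)-v\log(en/g)\ge c\log n$ per copy, rather than the loss of $\log\log n$ factors that occurs in the non-clean case. We then sum this over the $\gtrsim\delta\,\mu_J\cdot(\text{gain factor})$ copies that a seed requires.

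\textbf{Step 3.} Choosing $\ell$ small compares Steps 1 and 2: the Poisson cost $O(\mu_J)$ is at most $\eps\,\psi_H^*(\delta+\eps)$, which gives the claim.

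The main obstacle is Step 2. We must show that, for clean $H$, every admissible hub-core seed pays at least $\Omega(\log n)$ per planted copy of $J$ in $\Phi$, uniformly up to $p=\ell p_H(\log n)^{C_H}$. We expect this to reuse the seed/tight-core computation from the proof of \cref{thm:main-near-appearance-threshold}, with the $(\log\log n)^{C_H}$ loss eliminated via cleanness.
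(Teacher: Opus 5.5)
\textbf{There is a genuine gap.} It lies in Step~1, and Step~2 cannot repair it.

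\textbf{Step 2 is not where the difficulty is.} For stable $H$ and $p\gg p_H$, \cref{lem:psi--psi*-lower} (via \cref{lem:weak-LB-psi-*}~\ref{item:weak-LB-psi-*-stable} and \cref{lem:weak_estimation_for_Phi}) already gives $\psi_H^*(\delta+\eps)=\Theta(f\log(1/p))$, with no use of cleanness. No hub-core analysis can push $\psi_H^*$ above $O(f\log(1/p))$. So the theorem amounts to $\log\Pr(\UT_{H,\delta})\ge-\eps f\log(1/p)$ for $p=p_H(x\log n)^{C_H}$ with $x$ small. What your Step~3 really needs is $\mu_J\le c\,\eps f\log n$, a statement about $J$ alone.

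\textbf{Step 1 is too expensive in general.} By \cref{fact:f-p-pH} and \eqref{eq:evaluation-of-f}, $\mu_J=f^{\alpha_J^*}$ and $f\log n=f^{1+1/(r\Delta C_H)}/x$. Applying the definition of $C_H$ to the pair $(\emptyset,J)$ gives $\alpha_J^*\ge\dim_\emptyset J\cdot(1+1/(r\Delta C_H))$, with equality iff $(\emptyset,J)$ is an optimal pair of dimension one. Hence planting copies of a single $J\in\cJ^*$ from scratch is affordable only if some optimal pair has $J_0=\emptyset$. This happens, for instance, when $|\cJ^*|=1$, as for stars and complete bipartite graphs. Otherwise your cost $\Theta(\mu_J)$ exceeds $f\log n$ by a factor $x f^{\kappa}$ with $\kappa>0$, which is unbounded at the top of the range. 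Clean graphs need not have such a pair; the conditions on the roots $R_i$ and on $a$ over $V(J_0)\setminus R$ in \cref{def:clean-graphs} are only meaningful when $J_0\neq\emptyset$.

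Here is an example (I checked it by hand). Take $\Delta=3$. Let $J_0$ be bipartite on $\{a_1,\dots,a_6\}\cup\{b_1,\dots,b_8\}$ with $N(a_1)=\{b_1,b_2,b_3\}$, $N(a_2)=\{b_1,b_4,b_5\}$, $N(a_3)=\{b_2,b_4,b_6\}$, $N(a_4)=\{b_3,b_5,b_7\}$, $N(a_5)=\{b_6,b_7,b_8\}$, $N(a_6)=\{b_1,b_6,b_8\}$. Obtain $H$ by adding $x_1,x_2,x_3,y_1,\dots,y_4$ with $N(x_1)=\{b_8,y_1,y_2\}$, $N(x_2)=\{y_1,y_3,y_4\}$, $N(x_3)=\{y_2,y_3,y_4\}$. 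Then:
\begin{itemize}
\item $r=3/4$, and $\cJ^*=\{J_0,H\}$ is exactly the family of densest subgraphs (density $9/7$), so $H$ is stable;
\item the unique optimal pair is $(J_0,H)$, with $R=\{b_8\}$ and $\alpha^*_{J_0-R}=7$, so $r\Delta C_H=1/4$;
\item $H$ is clean: take $a$ to be the indicator of the $b$'s and $y$'s.
\end{itemize}
Yet $\mu_{J_0}=f^{8}$ and $\mu_H=f^{12}$, whereas $f\log n=f^{5}/x$.

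\textbf{The missing idea} is the two-stage construction of \cref{thm:Poisson-type-lower-bound}, built on an optimal pair $(J_0,J)$ with a fan decomposition $J_1,\dots,J_d$ (\cref{prop:optimal-pair-decomposition}).
\begin{itemize}
\item First plant, at cost $p^{O(\gamma f)}$, a blow-up $G^*$ of $J_0$ with parts of size $(\gamma f)^{a_u}$, where $a$ is the fractional independent set from \cref{def:clean-graphs}.
\item Then, for each $i$, ask for $Y_i\ge t=\Gamma xf\log n$ copies of $J_i\setminus J_0$ rooted in the blown-up roots $R_i$.
\item Together with the $(\gamma f/2)^{\alpha^*_{J_0-R}}$ copies of $J_0-R$ in $G^*$, this yields $\Omega(\Gamma\, n^{v_J}p^{e_J})$ copies of $J$, and \cref{lem:lower} then triggers $\UT_{H,\delta}$.
\item Each $\Pr(Y_i\ge t)$ is bounded below via \cref{lem:Poisson-lemma}. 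The total-variation Poisson approximation you invoke says nothing at probabilities $e^{-\Theta(t)}$, so your Step~1 needs such a lemma even in the favourable case. The required bound $\Ex[Y_i^2]\le\Ex[Y_i]f^{O(1)}$ comes from $np^{m(H)}\ge1$ and the inducedness of $J_0$ in $J_i$.
\end{itemize}

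\textbf{Cleanness enters only in the Poisson step.} The conditions $a(R_i)=|R_i|$ and $a(V(J_0)\setminus R)=\alpha^*_{J_0-R}$ give $\Ex[Y_i]\gtrsim f^{\alpha^*_{J_i}-\alpha^*_{J_0-R_i}}$. Since $(J_0,J_i)$ is optimal of dimension one, this exponent equals $1+1/(r\Delta C_H)$, so $t=O(\Ex[Y_i])$. The Poisson cost is therefore $O(t)=O(xf\log n)\le\eps f\log(1/p)$ once $x\le 1/C$. Without cleanness, $t/\Ex[Y_i]$ may be $f^{\Omega(1)}$, and the resulting cost $t\log t\asymp t\log\log n$ is the source of the $\log\log n$ in \cref{thm:main-near-appearance-threshold}~\ref{item:main-near-appearance-threshold-LB}. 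It has nothing to do with hub-cores.
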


It is easy to check that all irregular complete bipartite graphs, and in particular all stars, are stable and clean.  
(More generally, every stable graph $H$ for which $|\cJ^*(H)| = 1$ is clean.)
In a recent work, Akhmejanova and {\v S}ileikis~\cite{AkhSil2025} resolved the upper-tail problem for stars in all densities $p_H \ll p \ll 1$ and found an explicit function that is asymptotic to the logarithmic upper-tail probability, also in the range $p_H \ll p \ll p_H (\log n)^{C_H}$, where we only know that $\psi_H^*$ is not relevant.

On the other hand, for a wide class of stable, non-clean graphs, which we call one-dimensional, it is the lower bound on $p$ in \ref{item:main-near-appearance-threshold-UB} which may be relaxed. Due to its technical nature, we postpone the definition of one-dimensional graphs to \cref{sec:struct-stable-graphs}. For intuition, we note that every stable graph $H$ where $\cJ^*$ is a chain (in the sense that, for any $J,J' \in \cJ^*$, either $J \subseteq J'$ or $J' \subseteq J$) is one-dimensional.

\begin{theorem}
  \label{thm:non-clean}
  Suppose that $H$ is a stable, one-dimensional graph that is not clean and let $C_H$ be the constant defined in \cref{thm:main-near-appearance-threshold}.
  For all $\delta, \varepsilon > 0$, there is an $L > 0$ such that, for all $L p_H(\log n/\log\log n)^{C_H} \le p \ll 1$, we have
  \[
    -\log \Pr(X_H\geq (1+\delta)\Ex[X_H]) = (1\pm\varepsilon)\cdot \psi^*_H(\delta \pm \varepsilon).
  \]
\end{theorem}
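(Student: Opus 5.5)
The plan is to split the two-sided estimate into a lower bound on the probability, which gives the upper bound on $-\log\Pr$, and an upper bound on the probability. The lower bound comes from planting, and the upper bound comes from tight cores.

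\emph{Lower bound on the probability.} Fix a minimiser $G$ of $\psi^*_H(\delta+\varepsilon)$, so $G$ is a hub-core and a $(\delta+\varepsilon)$-seed. I will lower-bound $\Pr(\UT_{H,\delta})$ by $\Pr(\exists\, G'\cong G \text{ with } G'\subseteq G_{n,p}\text{ and } \UT_{H,\delta})$ and use a second-moment (or Janson/Harris) argument on the number of planted copies of $G$. The first moment is $\approx (n/g)^{v_G}\, p^{e_G}$ up to $e^{o(\psi^*)}$ factors, because the vertex cover of $G$ is small. I will then show that, conditionally on a fixed copy being present, $\UT_{H,\delta}$ holds with probability $e^{-o(\psi^*)}$. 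This follows as in~\cite{HarMouSam22}: $X_H$ restricted to copies not using the planted edges concentrates by Janson's inequality (lower tail), and the planted copies contribute at least $(\delta+\varepsilon)\Ex X_H$ in expectation, with concentration through Harris's inequality. This part should be valid as soon as $p\gg p_H$. Since $p\ge L p_H(\log n)^{C_H}$, the correction terms are a factor $\varepsilon$ of $\psi^*$. The continuity of $\psi^*$ in $\delta$ absorbs the $\pm\varepsilon$ shifts.

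\emph{Upper bound on the probability.} This is the main part. I will follow the moment method of~\cite{JanOleRuc04,HarMouSam22}. On $\UT_{H,\delta}$, outside an event of negligible probability, $G_{n,p}$ contains a $(\delta-\varepsilon)$-seed $G$ with $O(\psi_H(\delta)/\log(1/p))$ edges. I then pass to a maximal tight core $T\subseteq G$, that is, a generic union of copies of members of $\cJ^*$. The key claim is that, for $p\ge Lp_H(\log n)^{C_H}$, $T$ remains a $(\delta-2\varepsilon)$-seed. To prove it, I will bound the contribution of copies of $H$ that use edges outside $T$. Stability, via \cref{lem:stable-characterisation}, says that $\cJ^*$ equals the set of densest subgraphs. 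So any copy of $H$ meeting $G$ in a way not supported on copies of $J\in\cJ^*$ is charged a strictly denser-than-needed factor, and this gains $(p/p_H)^{\Omega(1)}\ge (\log n)^{\Omega(C_H)}$ per such configuration. With $C_H$ chosen large, this beats the polylogarithmic number of overlap types. Finally, I will union-bound over tight cores. Their genericity means that the number of tight cores with vertex set of size $v$ and $m$ edges is $(en/g)^{v}\cdot(1/p)^{o(m)}$, by an iterative counting in which each new copy of $J$ is attached along a small interface. This gives
\[\Pr(\UT_{H,\delta})\le \sum_{T} p^{e_T}\le \exp\bigl(-(1-\varepsilon)\psi^*_H(\delta-\varepsilon)\bigr).\]
To finish, I will check that a tight-core seed with the relevant edge count is a hub-core, so that it is admissible in $\psi^*$. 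If it is not, I will extract a hub-core seed from it without increasing $e\log(1/p)-v\log(en/g)$ by more than an $\varepsilon$ fraction.

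\emph{Main obstacle.} The hardest step is the seed-preservation claim for maximal tight cores near the threshold. It requires a careful classification of how copies of $H$ intersect unions of copies of $J\in\cJ^*$, together with the quantitative gain from stability. This is where the exponent $C_H$ gets fixed. The other difficulty is making the entropy count of tight cores match $v\log(en/g)$ to first order. I would first try an induction that adds one copy of $J$ at a time and bounds the new vertices against the new edges using $|A|/|N(A)|=r$.
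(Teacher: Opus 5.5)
There is a genuine gap. You claim the seed-preservation step only for $p\ge Lp_H(\log n)^{C_H}$, which is the range of \cref{thm:main-near-appearance-threshold}~\ref{item:main-near-appearance-threshold-UB}. The whole content of this theorem is the window $Lp_H(\log n/\log\log n)^{C_H}\le p<Lp_H(\log n)^{C_H}$.

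You never use that $H$ is one-dimensional and not clean, and no argument that ignores non-cleanness can work there. By \cref{thm:clean}, for clean $H$ the asserted asymptotics are false for $p$ between $Lp_H(\log n/\log\log n)^{C_H}$ and $\ell p_H(\log n)^{C_H}$.

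In this window a maximal tight core can genuinely fail to be a seed:
\begin{itemize}
\item The seed $G$ supplied by the moment method has $O(f\log(1/p))$ edges, not $O(\psi_H(\delta)/\log(1/p))=O(f)$ as you write, since \cref{lem:stability} needs $m\gtrsim\psi_H(\delta)$. Only the maximal tight core $G^*$ is forced down to $O(f)$ edges, by \cref{lem:large-tight-core-is-expensive}.
\item For an optimal pair $(J_0,J)$ with $R=R_{J_0}(J)$, \cref{lem:counting-extensions-via-dimension} bounds the copies of $J$ meeting $G^*$ in $J_0$ by $(2e_{G^*})^{\alpha^*_{J_0-R}}(2e_G)^{\dim_{J_0}J}$. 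With $e_G$ this large, the bound is no longer $o(n^{v_J}p^{e_J})$, and such copies can carry a constant fraction of the surplus.
\item This excess factor $\log(1/p)$ in $e_G$ is what $C_H$ compensates for; stability gives nothing here. Stability only disposes of $J\notin\cJ^*$ (\cref{prop:difference-in-expectations-important-J}), while the troublesome copies are of graphs in $\cJ^*$, which are already as dense as possible.
\item $C_H$ is a fixed constant of $H$, not something you may choose large.
\end{itemize}
As a sanity check: if seeds really had $O(f)$ edges, the strict inequality $\alpha_J^*>\alpha_{J_0}^*+\dim_{J_0}J$ would make $G^*$ a seed for every $p\gg p_H$. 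Your argument would then give the $\psi_H^*$-asymptotics in the range excluded by \cref{thm:main-near-appearance-threshold}~\ref{item:main-near-appearance-threshold-LB}.

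What is missing is a probabilistic estimate for this bad configuration. Let $\mathcal{E}$ be the event that $G_{n,p}$ contains some $G$ with $O(f\log(1/p))$ edges whose maximal tight core $G^*$ has $O(f)$ edges but $N(J,G;J_0,G^*)\ge cn^{v_J}p^{e_J}$ for an optimal pair. You need $\Pr(\mathcal{E})\le e^{-Df\log(1/p)}\ll e^{-\psi_H^*(\delta)}$, and this is where both extra hypotheses enter.
\begin{itemize}
\item Since $\dim_{J_0}J=1$, maximality of $G^*$ and optimality of $(J_0,J)$ (through \cref{claim:J0-in-J'0-not-maximal}) force the petals $\varphi(J\setminus J_0)$ to be pairwise edge-disjoint (\cref{claim:petals-disjoint}).
\item Discard the petals whose root set has at most $f^{\alpha^*_{J_0-R}-\zeta}$ extensions to $J_0$ inside $G^*$. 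This still leaves $T\ge c'f^{\alpha_J^*-\alpha^*_{J_0-R}}$ disjoint petals.
\item Non-cleanness, via the uniqueness of the largest fractional independent set (\cref{lem:unique-frac-ind-set}) and an entropy/compactness argument, bounds the number of such good root sets by $f^{\alpha^*_{J_0}-\alpha^*_{J_0-R}-\zeta}$ (\cref{claim:entropy-of-roots}).
\item A union bound over $G^*$ and over $T$ disjoint petals then gives $\Pr(\mathcal{E})\le\exp\bigl(2D^2f\log n-(\zeta/2)T\log f\bigr)$.
\item Writing $p=xp_H(\log n)^{C_H}$, one has $f\log(1/p)\lesssim x^{-1/C_H}f^{\alpha_J^*-\alpha^*_{J_0-R}}$. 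The saving of $f^{-\zeta}$ per petal, that is, a factor $\log f\asymp\log\log n$ in the exponent, wins precisely when $x\ge L(\log\log n)^{-C_H}$. This is where the range of the theorem comes from.
\end{itemize}

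Three smaller points.
\begin{itemize}
\item You need not count tight cores at all: the union bound can run over the hub-core seeds produced by \cref{lem:cores} and counted by \cref{lem:core-count}.
\item For $p\ge p_Hn^{\eta}$, large tight cores are no longer rare, so the tight-core step is unavailable; there one simply invokes \cref{thm:main-dense-ish}.
\item Your lower bound should not assume $p\ge Lp_H(\log n)^{C_H}$. For stable $H$ it holds whenever $p\gg p_H(\log\log n)^K$ by \cref{thm:lower-bound}, which controls overlaps of planted copies by a multiplicity bound rather than a second moment.
\end{itemize}
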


The proofs of both extensions of~\cref{thm:main-near-appearance-threshold} depend on some delicate properties of stable graphs.
In the case of~\cref{thm:clean}, we construct a measure that combines planting a highly-structured graph $G^*$ with a Poisson-type process on subgraphs rooted in $G^*$.  This gives rise to a lower bound on the logarithmic upper-tail probability that surpasses the one given by $\psi_H^*$.  On the other hand, the main obstacle for~\cref{thm:non-clean} lies in the aforementioned fact that maximal tight cores may not be seeds when $p$ is near its lower threshold.
Using the entropy method, we are able to show that the probability of encountering such a pathology is negligible in comparison with the upper-tail probability.

\subsection{The na\"ive mean field approximation}
\label{sec:naive-mean-field}

An alternate perspective on the upper-tail problem comes through the framework of the {\em na\"ive mean field approximation}. For a pair $\Pr$ and $\Qr$ of measures on the same probability space $\Omega$, with $\Qr$ absolutely continuous with respect to $\Pr$, the {\em  Kullback--Leibler divergence} of $\Qr$ from $\Pr$ is defined by
\begin{equation}
  \DKL(\Qr \, \| \, \Pr) \coloneqq \Ex_{\Qr}\left[\log\left(\frac{d\Qr}{d\Pr}\right)\right] =
  \sum_{\omega \in \Omega} \Qr(\omega) \log
  \left(\frac{\Qr(\omega)}{\Pr(\omega)}\right),
\end{equation}
where $\Ex_{\Qr}$ is the expectation operator associated with the measure $\Qr$. It is straightforward to show that the logarithmic probability of {\em any} event $\cA$ can be obtained by optimising the Kullback--Leibler divergence over all measures that assign $\cA$ probability 1:
\begin{equation}\label{eq:DonskerVaradhan1}
  -\log \Pr(\cA) = \inf_{\substack{\Qr \ll \Pr, \\ \Qr (\cA)=1}} \DKL(\Qr \,\| \,\Pr).
\end{equation} 
The usefulness of such a formulation is limited by the fact that measures that assign the upper-tail events probability 1 may be quite difficult to analyse. The idea of the na\"ive mean-field approximation is to replace the complicated variational problem in~\eqref{eq:DonskerVaradhan1} by a simpler one, where the infimum ranges only over product measures (the assumption $\Qr(\cA) = 1$ must then be relaxed somewhat).  Roughly speaking, the na\"ive mean-field approximation holds if minimising over this smaller set still achieves~\eqref{eq:DonskerVaradhan1}, up to lower order corrections.
More formally, set 
\[
  \psi_H^{\text{NMF}}(\delta) \coloneqq
  \inf\left\{\sum_{e \in K_n} i_p(q_e) : \mathbf{q} \in [0,1]^{E(K_n)} \land \Ex_{G_{n,\mathbf{q}}}[X_H] \ge (1+\delta)\Ex[X_H]\right\},
\]
where $i_p(q)$ denotes the Kullback--Leibler divergence of a Bernoulli-$q$ measure from a Bernoulli-$p$ measure, and $G_{n,\mathbf{q}}$ is the inhomogeneous random graph where each edge $e \in K_n$ is included with probability $q_e$, independently. The work of Chatterjee--Dembo~\cite{chatterjee2016nonlinear} and its extensions were phrased as sufficient conditions for the applicability of the na\"ive mean field approximation. In this light, \cite{BasBas2023,HarMouSam22} show that, under appropriate assumptions, the upper-tail problem satisfies an `extremely' na\"ive mean field approximation: $\psi_H$ is the restriction of $\psi_H^{\text{NMF}}$ to vectors $\mathbf{q} \in \{p,1\}^{E(K_n)}$. \Cref{thm:main-dense-ish} proves that the same approximation holds for connected, irregular graphs with maximum degree $\Delta$, whenever $\psi_H^*(\delta) = \psi_H(\delta) \cdot (1 + o(1))$, which holds true when $p \ge n^{-1/\Delta - o(1)}$.

In the regime where $n^{-1/\Delta - \eps_H} \ll p \ll n^{-1/\Delta - \Omega(1)}$, however, the na\"ive mean field approximation may fail.
In most cases, the approximation by product measures gives the correct order of magnitude, but $\psi^*_H(\delta)$ offers a better leading constant. In some cases, however, the breakdown may be much more spectacular, with $\psi_H^*(\delta) \ll \psi_H(\delta)$. To see an example, consider the graph $H$ drawn in~\cref{fig:example}.
This graph is not stable, as the subgraph induced by $\{a_1, a_2, b_2, b_3, b_4\}$, which is not in $\cJ^*(H) = \{H\}$, achieves the maximum density $m(H)$.
A computation will show that $r_H = 2/5$, $p_H = n^{-2/3}$, and $f = n^{7/5} p^{8/5}$ in the range of interest, i.e., $p \le n^{-1/\Delta} = n^{-1/4}$.
  
We set $p = L p_H$ for some slowly growing $L = n^{o(1)}$, so that $g = f = L^{8/5}n^{1/3}$.
Assume that $x = x(n) \to 0$ (to be specified later) and let $G^*$ be the blow-up of $H$ where $b_1$ and $b_5$ are blown up by a factor of $x^{-3} f$ while $b_2$, $b_3$, and $b_4$ are blown up by a factor of $x^2 f$. One can verify that $N(H,G^*) \ge f^5 = n^{v_H} p^{e_H}$, which means that $G^*$ is a $\delta$-seed for some $\delta >0$.
Furthermore, the set $\{a_1,a_2\}$ is a vertex cover and thus $G^*$ is a hub-core.
Finally, since
\[
  e_{G^*} = 6 x^2 f + 2 x^{-3} f
  \qquad
  \text{and}
  \qquad
  v_{G^*} = 2 + 3 x^2 f + 2x^{-3} f \ge 2x^{-3} f,
\]
we find that
\begin{align*}
  \frac{e_{G^*} \log(1/p) - v_{G^*} \log(en/g)}{2f} & \leq (3x^2 + x^{-3}) \bigl(\log (n^{2/3}) - \log L \bigr) - x^{-3} \left(\log (n^{2/3}) - \frac{8 \log L}{5} \right) \\
  & = 2x^2 \log n + \bigl(3x^{-3}/5 + O(x^2)\bigr) \log L.
\end{align*}
 \begin{figure}[h]
   \centering
   \begin{tikzpicture}[
     vertex/.style={circle,fill,minimum size=4pt,inner sep=0pt}
     ]

     \node[vertex] (a1) at (0.5,2) {};
     \node[vertex] (a2) at (3.5,2) {};

     \node[vertex] (b1) at (-1,0) {};
     \node[vertex] (b2) at (0.5,0) {};
     \node[vertex] (b3) at (2,0) {};
     \node[vertex] (b4) at (3.5,0) {};
     \node[vertex] (b5) at (5,0) {};

     \node[above=3pt] at (a1) {$a_1$};
     \node[above=3pt] at (a2) {$a_2$};

     \node[below=3pt] at (b1) {$b_1$};
     \node[below=3pt] at (b2) {$b_2$};
     \node[below=3pt] at (b3) {$b_3$};
     \node[below=3pt] at (b4) {$b_4$};
     \node[below=3pt] at (b5) {$b_5$};

     \draw (a1) -- (b2);
     \draw (a1) -- (b3);
     \draw (a1) -- (b4);

     \draw (a2) -- (b2);
     \draw (a2) -- (b3);
     \draw (a2) -- (b4);

     \draw (a1) -- (b1);
     \draw (a2) -- (b5);

   \end{tikzpicture}   
   \caption{The example graph $H$}
   \label{fig:example}
 \end{figure}
 Setting $x^5 \coloneqq \log L/\log n$, which minimises the above expression (up to a constant multiplicative factor) while maintaining the $x=o(1)$ constraint, we find that, for some $\delta >0$,
 \[
   \psi_H^*(\delta) = O\bigl(f (\log n)^{3/5} (\log L)^{2/5}\bigr) \ll f \log(1/p) = \Theta(\psi_H(\delta)),
 \]
due to our assumption that $L = n^{o(1)}$.
If we additionally require that $L = (\log n)^{\omega(1)}$, \cref{thm:main-dense-ish} ensures that $- \psi^*_H(\delta)$ is asymptotic to the logarithmic upper-tail probability $\log \Pr(\UT_{H,\delta})$.
Further, since
\[
  \min\{\Ex[X_J] : \emptyset \neq J \subseteq H\} = \Theta(n^5p^6) = \Theta(nL^6)
\]
is polynomially larger than $f$, the above construction provides a counterexample to the conjecture of DeMarco and Kahn on possible rates of upper-tail probabilities for subgraph counts~\cite[Conjecture~10.1]{demarco2012tight}, which was disproved by {\v S}ileikis and Warnke~\cite{vsileikis2019counterexample}.

\section{Proof overview}
\label{sec:proof-overview}

Fix a connected, irregular graph $H$ with maximum degree $\Delta \ge 2$ and a constant $\delta>0$ and assume that $p \ll 1$.
Recall that $G \subseteq K_n$ is a \emph{$\delta$-seed} if
\[
  \Ex_G[X_H] \coloneqq \Ex[X_H \mid G\subseteq G_{n,p}] \geq (1+\delta) \Ex[X_H].
\]
Informally, $G$ is a $\delta$-seed if its presence in $G_{n,p}$ triggers the event $\UT_{H, \delta}$ in expectation.
A routine computation relates the surplus $\Ex_G[X_H] - \Ex[X_H]$ to the number of copies of all nonempty subgraphs of $H$ in $G$ (see \cref{cor:difference-in-expectations}):
\begin{equation}
  \label{eq:ExG-Ex-combinatorial}
  \Ex_G[X_H] - \Ex[X_H] = (1+o(1)) \cdot \sum_{\emptyset \neq J\subseteq H} \frac{|\Emb(J,H)|}{|\Aut(H)|} \cdot N(J,G)\cdot n^{v_H-v_J} p^{e_H-e_J},
\end{equation}
where $N(J,G)$ denotes the number of copies of $J$ in $G$ and $\Emb(J,H)$ is the set of embeddings of $J$ into $H$.
An immediate corollary of~\eqref{eq:ExG-Ex-combinatorial} is that, for every seed $G$, there must be some nonempty $J \subseteq H$ that appears in $G$ at least $\Omega(n^{v_J} p^{e_J})$ times.
Consequently, well-known estimates on the number of copies of a fixed graph in a graph with given order and size (which we reprove in \Cref{sec:preliminaries-embeddings} using a simplification of the elegant entropy-based argument of Friedgut and Kahn~\cite{FriKah98}) imply that every seed must have at least $\Omega(f)$ edges and, furthermore, if a seed $G$ has not many more than $O(f)$ edges, the sum in the right-hand side of~\eqref{eq:ExG-Ex-combinatorial} is dominated by the narrow family $\cJ$ of subgraphs of $H$; see \Cref{prop:difference-in-expectations-important-J} for a precise statement.
The range of densities $p$ for which we are able to analyse the upper tail of $X_H$ are precisely those $p$ above the appearance threshold $n^{-1/m(H)}$ for which the contribution of graphs in $\cJ$ dominates this sum.

To make this precise, we define $\eps_H$, the parameter involved in all the main theorems of this work.
First, given a graph $J \subseteq H$, we set $\alpha_J^*$ to be its fractional independence number (see~\cref{sec:preliminaries} for more details). Now, we define
\begin{equation}\label{eq:eps_H_def}
  \eps_H \coloneqq \min \left\{ \frac{v_J - \alpha^*_J - e_J/\Delta}{e_J - r \Delta \alpha^*_J} : \emptyset \neq J \subseteq H \land J \not \in \cJ^* \land r \Delta \alpha^*_J < e_J\right\},
\end{equation}
where the minimum of the empty set is defined to be $\infty$.
As will be shown in~\cref{prop:difference-in-expectations-important-J}, the assumption that $p \geq p_H (\log n)^K$ for a sufficiently large $K$ allows us to neglect the contribution of summands not associated with $\cJ$ in~\eqref{eq:ExG-Ex-combinatorial}.

  In the remainder of this section, we shall sketch our proofs of \cref{thm:main-dense-ish}, \cref{thm:main-near-appearance-threshold}~\ref{item:main-near-appearance-threshold-UB}, and \cref{thm:non-clean}.
  As the proofs of the lower and the upper bounds on the probability of $\UT_{H,\delta}$ are very different, we handle them separately, in \cref{sec:outline-lower-bounds,sec:outline-upper-bounds}, respectively.
  Finally, in \cref{sec:outline-optimality}, we briefly outline the argument proving the remaining results, that is, \cref{thm:main-near-appearance-threshold}~\ref{item:main-near-appearance-threshold-LB} and \cref{thm:clean}.

\subsection{Lower bounds}
\label{sec:outline-lower-bounds}

As was observed in \cite{HarMouSam22}, a simple use of Markov's inequality (\cref{lem:lower}) shows that, for any $(\delta+\varepsilon)$-seed $G$, we have
\begin{equation}
  \label{eq:UTHd-lower}
  \Pr(\UT_{H,\delta}) \geq \Pr(\UT_{H, \delta} \mid G \subseteq G_{n,p}) \cdot \Pr(G \subseteq G_{n,p}) \ge \varepsilon p^{e_H} \cdot p^{e_G}.
\end{equation}
In particular, to obtain the strongest lower bound for the upper-tail probability using this approach, one should consider a seed $G$ with as few edges as possible; this naturally leads to the definition of $\psi_H$.
Combined with straightforward continuity considerations, the above argument shows that $-(1+o(1)) \cdot \psi_H(\delta)$ is a lower bound for $\log \Pr (\UT_{H, \delta})$.  This lower bound turns out to be optimal when $p \ge n^{-1/\Delta-o(1)}$.
As mentioned before, when $p \leq n^{-1/\Delta - \Omega(1)}$, there is an obvious inefficiency in the above strategy.
Instead of conditioning on $G_{n,p}$ containing  a \emph{particular} seed $G \subseteq K_n$, we may condition on the appearance of \emph{some} copy of $G$ in $G_{n,p}$, hoping that
\begin{equation}
  \label{eq:UTHd-lower-with-UB}
  \Pr(\UT_{H, \delta}) \gtrapprox \Pr(G' \subseteq G_{n,p} \text{ for some } G'\cong G),
\end{equation}
which yields a stronger lower bound.

For every seed $G$ that is also a hub-core, we will be able to justify a slight relaxation of~\eqref{eq:UTHd-lower-with-UB}, where $G'$ ranges over a large subfamily of isomorphic copies of $G$, as well as prove that, for each such $G$ whose number of edges is not much larger than $f$,
\[
  \log \Pr(G' \subseteq G_{n,p} \text{ for some } G'\cong G) \ge (1+o(1)) \cdot \left(e_G \log p + \log \binom{n}{v_G}\right).
\]
Since it is not hard to check that every hub-core $G$ satisfies $v_G \approx \min\{n, e_G\}$, we further have
\[
  \log \binom{n}{v_G} \approx v_G \log (en/v_G) \approx v_G \log (en/g);
\]
this leads to the desired bound $\log \Pr(\UT_{H,\delta}) \ge -(1+o(1)) \cdot \psi_H^*(\delta)$.
The above argument requires some care as, for certain graphs $H$, the terms $e_G \log(1/p)$ and $v_G \log(en/g)$ can be asymptotically equal near the threshold $p_H$, see \cref{lem:weak-LB-psi-*}.
The details are presented in \cref{sec:lower-bounds}.  

\subsection{Upper bounds}
\label{sec:outline-upper-bounds}

Using a conditional version of the elegant high-moment argument due to Janson, Oleszkiewicz, and Ruciński~\cite{JanOleRuc04} (\Cref{lem:stability}), we show that the upper-tail event $\UT_{H,\delta}$ is dominated by the appearance of a seed with $O(f \log(1/p))$ edges.
To be more precise, we let $\cS_{\delta,m}$ be those $\delta$-seeds that have at most $m$ edges and denote by $\langle \cS_{\delta,m}\rangle$ the family of all subgraphs of $K_n$ which include some $G \in \cS_{\delta ,m}$.
The moment argument allows us to prove that, for any $\eps >0$, there exists a $K$ such that 
\[
  \Pr(G_{n,p}\in \langle \cS_{\delta - \eps, Kf \log(1/p)} \rangle\mid \UT_{H,\delta}) = 1-o(1).
\]

Suppose that $G$ is a seed with $O(f \log(1/p))$ edges.
As we mentioned above, the sum in the right-hand side of~\eqref{eq:ExG-Ex-combinatorial} is dominated by the contribution of graphs $J$ that belong to the family $\cJ$, see \Cref{prop:difference-in-expectations-important-J}.
By iteratively removing from $G$ edges that support `few' copies of every $J \in \cJ$, we produce a $G^* \subseteq G$ with $G^* \in \cS_{\delta - 2 \eps}$. Crucially, since every edge of $G^*$ now supports many copies of some $J\in \cJ$, a simple combinatorial lemma (\Cref{lemma:core-edge-bipartite,cor:core-edge-bipartite}) that exploits the bipartite structure of graphs in $\cJ$ implies a strong lower bound on $\deg_{G^*}u+\deg_{G^*}v$ for every $uv\in E(G^*)$;  this yields the existence of a small vertex cover --- forcing $G^*$ to be a hub-core (see~\cref{lem:cores}). In the regime $p \ll n^{-1/\Delta} (\log n)^{-\omega(1)}$, the family of `important' subgraphs $J$ can be further narrowed down from $\cJ$ to $\cJ^*$, which allows us to further conclude that $\delta(G^*) \ge d_H$.

Thus, we conclude that the upper-tail event is dominated by the appearance of a hub-core which is also a $\delta - 2\eps$ seed. Thanks to the existence of small vertex covers, the number of hub-cores with $m$ edges in $K_n$ is dominated by the number of choices for the embedding of their (non-isolated) vertices (see~\Cref{lem:core-count}). At densities $p \leq n^{-1/\Delta - \Omega(1)}$, the number of such embeddings may be as large as $p^{-\Theta(m)}$; indeed, as was seen in the example discussed at the end of \cref{sec:naive-mean-field}, the effect of the number of embeddings may even reduce the order of magnitude of $\psi_H^*$.  Nevertheless, the adjustments made by $\psi_H^*$ still capture the logarithmic upper-tail probability as long as the assumptions of~\cref{thm:main-dense-ish} hold. The details are presented in \Cref{sec:upper-bounds}.

The argument sketched above remains valid as long as $p$ is polylogarithmically above $p_H$.
When this is no longer true, two obstructions arise.
First, if $H$ is not stable, then \Cref{prop:difference-in-expectations-important-J} no longer describes the graphs $J$ that dominate the sum in the right-hand side of~\eqref{eq:ExG-Ex-combinatorial} for all possible seeds $G$, as subgraphs of $H$ not in $\cJ^*$ may contribute significantly to the sum.
Moreover, when $p \ll p_H = n^{-1/\Delta-\varepsilon_H}$, the smallest number of edges in a seed is no longer $\Theta(f)$.
Finally, if $H$ is stable, the `small' error term in the aforementioned upper bound on the number of hub-cores with $m$ edges (\Cref{lem:core-count}) is negligible only when $m = O(f)$, but not for larger values of $m$, see~\cref{lem:weak-LB-psi-*}~\ref{item:weak-LB-psi-*-non-stable}.

To remedy this issue and prove \cref{thm:main-near-appearance-threshold}~\ref{item:main-near-appearance-threshold-UB} in the regime $p \le p_H (\log n)^{O(1)}$, we introduce \emph{tight cores}.
Tight cores are graphs that can be built from the empty graph by sequentially adding copies of graphs from $\cJ^*$ in such a way that (at least half the time) the newly added copy intersects the already built graph in a subgraph that does \emph{not} belong to the family $\cJ_{\emptyset}^* \coloneqq \cJ^* \cup \{\emptyset\}$ (see \Cref{def:tight-cores}).
Defining tight cores in this particular fashion guarantees that the expected number of tight cores with $m$ edges is at most $p^{\Omega(m)}$, as shown in \Cref{lem:large-tight-core-is-expensive}.

Given a $(\delta-\varepsilon)$-seed $G$, let $G^* \subseteq G$ be an inclusion-maximal tight core.
The case where $e_{G^*} \gg f$ can be easily ruled out -- the aforementioned \Cref{lem:large-tight-core-is-expensive} asserts that the expected number of such tight cores is much smaller than the upper-tail probability.
We may therefore assume from now on that $G^*$ has $O(f)$ edges.
The maximality of $G^*$ implies that every copy of each $J \in \cJ^*$ in $G$ intersects $G^*$ in some $J_0 \in \cJ_\emptyset^*$.
This fact allows us to bound the number of copies of each such $J$ from above by a polynomial in $e_G$ and $e_{G^*}$ that involves the \emph{dimensions} $\dim_{J_0} J$ of $J$ over its subgraphs $J_0$, defined in~\cref{sec:struct-stable-graphs}, and the fractional independence numbers $\alpha_{J_0 - R_{J_0}(J)}^*$, where $R_{J_0}(J) \coloneqq \{v \in V(J_0) : \deg_J v > \deg_{J_0} v\}$ is the set of \emph{roots} of $J$ over $J_0$, see~\cref{lem:counting-extensions-via-dimension}.
The constant $C_H$ that appears in the statement of~\cref{thm:main-near-appearance-threshold} is defined precisely so that, when $p \gg p_H (\log n)^{C_H}$, the total contribution to the right-hand side of~\eqref{eq:ExG-Ex-combinatorial} of copies of $J$ in $G$ that are not fully contained in $G^*$ is $o(n^{v_H} p^{e_H})$.
This means that, when $p \gg p_H(\log n)^{C_H}$, our maximal tight core $G^*$, which has only $O(f)$ edges, is also a $(\delta-2\varepsilon)$-seed.
This in turn means that we can repeat the argument relying on hub-cores, exactly as we did in the denser regime, as the error term in our upper bound on the number of hub-cores with $O(f)$ edges supplied by \cref{lem:core-count} is negligible.

For certain stable graphs $H$, one can tighten the above argument and prove the validity of the entropic variational principle under the weaker assumption that $p \gg p_H (\log n / \log \log n)^{C_H}$, which is optimal up to a multiplicative constant (as in~\cref{thm:non-clean}).
The approach using tight cores described above runs into an obstacle once $p = O(p_H(\log n)^{C_H})$, as now the copies of some $J \in \cJ^*$ that are not entirely contained in the tight core $G^*$ can carry meaningful contribution to the sum in the right-hand side of~\eqref{eq:ExG-Ex-combinatorial}.
As a result, we cannot rule out the possibility that a $(\delta-\eps)$-seed $G$ contains a maximal tight core $G^*$ with $O(f)$ edges that is not a $(\delta-2\eps)$-seed. Under the assumptions of~\cref{thm:non-clean}, we are able to show that, while deterministically possible, such a seed appears in $G_{n,p}$ with probability that is significantly smaller than the upper-tail probability. This is because, for some pair $J_0, J \in \cJ_\emptyset^*$ with $J_0 \subsetneq J$, the graph $G \setminus G^*$ must contain a collection of pairwise-disjoint copies of $J \setminus J_0$ whose size is much larger than what one would expect to see in $G_{n,p}$. We may thus again assume that $G^*$ is a $(\delta-2\varepsilon)$-seed with $O(f)$ edges and argue as we did before.
The details are presented in \Cref{sec:upper-bounds-stable}.

\subsection{Optimality}
\label{sec:outline-optimality}

Finally, we briefly outline the proof of a lower bound on $\Pr(\UT_{H,\delta})$, valid for all stable graphs $H$, that is different from the one discussed in \cref{sec:outline-lower-bounds} and becomes stronger once $p$ drops somewhat below $p_H (\log n)^{C_H}$.
This alternate lower bound for the upper-tail probability yields \cref{thm:main-near-appearance-threshold}~\ref{item:main-near-appearance-threshold-LB} and \cref{thm:clean}.
We choose a pair $J_0, J \in \cJ_\emptyset^*$ with $J_0 \subsetneq J$ that achieves the maximum in the definition of $C_H$ and trigger the upper-tail event in two stages.
First, we plant in $G_{n,p}$ a blow-up $G^*$ of $J_0$ with $o(f)$ edges, which costs us only $p^{o(f)}$.
Guided by the aforementioned \cref{lem:counting-extensions-via-dimension}, we choose the sizes of the independent sets of $G^*$ that correspond to the vertices of $J_0$ according to a fractional independent set whose restriction to $V(J_0) \setminus R_{J_0}(J)$ achieves its maximum possible value $\alpha_{J_0 - R_{J_0}(J)}^*$.
Second, we condition $G_{n,p}$ on containing a large number of copies of $J \setminus J_0$ that collectively extend the copies of $J_0$ in $G^*$ to $\Omega(n^{v_J} p^{e_J})$ copies of $J$ in $G_{n,p}$;  this is very likely to trigger the upper-tail event $\UT_{H,\delta}$, see~\eqref{eq:ExG-Ex-combinatorial} and~\eqref{eq:UTHd-lower}.
The structural characterisation of the pair $(J_0, J)$ established in~\cref{prop:optimal-pair-decomposition} allows us to represent the latter event as an intersection of $\dim_{J_0} J$ simpler events whose probabilities can be bounded from below by upper-tail probabilities of Poisson random variables (using the naive estimate provided in \cref{lem:Poisson-lemma}), each of which is also $p^{o(f)}$.
The details are presented in \cref{sec:LB-near-the-appearance-threshold}.

\section{Preliminaries}
\label{sec:preliminaries}

\subsection{Notation and basic facts}
\label{sec:preliminaries-notation}

Let $G$ and $H$ be two graphs.  We write $V(G)$ and $E(G)$ for the sets of vertices and edges of $G$, respectively, and denote their cardinalities by $v_G=v(G) \coloneqq |V(G)|$ and $e_G = e(G) \coloneqq |E(G)|$.
The maximum degree of $G$ is denoted by $\Delta(G)$ and its independence number by $\alpha_G$.
The \emph{fractional independence number} of $G$, denoted by $\alpha^*_G$, is the maximum value of $\sum_{v\in V(G)} a_v$ over all $a \colon V(G)\to [0,1]$ such that $a_u+a_v\leq 1$ for all $uv\in E(G)$.
We will frequently use the basic inequality $\alpha_G^* \ge \max\{\alpha_G, v_G/2\}$, which follows by considering the indicator function of some largest independent set in $G$ and the constant function that assigns each vertex of $G$ weight $1/2$.
Finally, we shall often identify a graph with its set of edges and write $uv \in G$ instead of $uv \in E(G)$.

\begin{lemma}
  \label{lemma:rH-bounds}
  For every connected, irregular graph $H$, we have $1/\Delta(H) \le r_H < 1$.
\end{lemma}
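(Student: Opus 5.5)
The lower bound is witnessed by a single vertex; the upper bound is a double-counting of the edges between $A$ and $N(A)$, where connectivity and irregularity force strict inequality.

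\emph{Lower bound.} Since $\Delta = \Delta(H) \ge 1$, there is a vertex $v$ of degree $\Delta$. The set $A = \{v\}$ is an independent set of vertices of degree $\Delta$ with $|N(A)| = \Delta$, so $r_H \ge |A|/|N(A)| = 1/\Delta$.

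\emph{Upper bound.} Let $A$ be any nonempty independent set of vertices of degree $\Delta$, and count the edges of $H[A,N(A)]$ in two ways.
\begin{itemize}
\item Because $A$ is independent, every edge at a vertex of $A$ goes to $N(A)$. This gives exactly $\Delta|A|$ edges.
\item Each vertex of $N(A)$ has degree at most $\Delta$. This gives at most $\Delta|N(A)|$ edges.
\end{itemize}
Hence $|A| \le |N(A)|$.

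Now suppose equality holds. Then every $u \in N(A)$ has all $\Delta$ of its edges going into $A$. In particular $u$ has degree $\Delta$ and $N(u) \subseteq A$. Every vertex of $A$ likewise has degree $\Delta$ and $N(x) \subseteq N(A)$ for $x \in A$. So $A \cup N(A)$ receives no edges from outside, and since $H$ is connected, $V(H) = A \cup N(A)$. Then every vertex of $H$ has degree $\Delta$, so $H$ is regular, a contradiction. Therefore $|A| < |N(A)|$ for every admissible $A$.

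Since $H$ is finite, the maximum defining $r_H$ is attained, and so $r_H < 1$.

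The main step is the equality case, where connectivity and irregularity are used. One small check is needed for the lower bound: $\Delta \ge 1$ guarantees that an admissible $A$ exists, which holds here since $H$ is connected and irregular, hence nonempty.
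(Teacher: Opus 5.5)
Your proof is correct and follows the paper's argument: the singleton $\{v\}$ for the lower bound, and the double count $\Delta|A| = e(H[A,N(A)]) \le \Delta|N(A)|$ with the equality case ruled out by connectivity and irregularity. Your explicit step that $A \cup N(A)$ is closed under taking neighbours, and hence equals $V(H)$, fills in the detail the paper leaves implicit when it says $H[A,N(A)]$ would be $\Delta$-regular.
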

\begin{proof}
  Write $\Delta \coloneqq \Delta(H)$.
  To prove the inequality $r_H \ge 1/\Delta$, consider $A \coloneqq \{v\}$ for an arbitrary vertex $v \in V(H)$ with degree $\Delta$.  To see that the claimed upper bound on $r_H$ holds, consider an arbitrary independent set $A \subseteq V(H)$ comprising vertices of degree $\Delta$.  Observe that
  \[
    |A| \cdot \Delta = e\bigl(H[A, N(A)]\bigr) \le |N(A)| \cdot \Delta
  \]
  and that the above inequality holds with equality if and only if every vertex in $N(A)$ has $\Delta$ neighbours in $A$.  In particular, if $|A| = |N(A)|$, then the graph $H[A, N(A)]$ would be $\Delta$-regular, which would contradict the assumption that $H$ is connected, irregular, and has maximum degree~$\Delta$.
\end{proof}

Throughout, we will write $\Pois(\mu)$ to denote the Poisson distribution with mean $\mu$.
Recall the following simple estimate on the upper tail of $\Pois(\mu)$.

\begin{fact}
  \label{fact:lower-bound-for-Poisson}
  For every real $\mu > 0$ and all integers $t \ge \max\{2 \mu, 3\}$,
  \[
    \log \Pr(\Pois(\mu) = t) = \log \left(\frac{e^{-\mu} \mu^t}{t!}\right) \ge -t \log(t/\mu).
  \]
\end{fact}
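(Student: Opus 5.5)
We need to prove the Poisson fact: for $t \ge \max\{2\mu,3\}$,
\[
  -\mu + t\log\mu - \log t! \;\ge\; -t\log t + t\log\mu .
\]
After cancelling the term $t\log\mu$ from both sides, this is equivalent to
\[
  \log t! + \mu \le t\log t .
\]
The plan is to bound $\log t!$ from above by Stirling's inequality and then absorb $\mu$ using the hypothesis $\mu \le t/2$.

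First I will use the standard bound $t! \le e\sqrt{t}\,(t/e)^t$, valid for every integer $t\ge1$. It follows from comparing $\log t! = \sum_{k\le t}\log k$ with $\int_1^t \log x\,dx$ plus a trapezoidal correction. This gives
\[
  \log t! \le t\log t - t + \tfrac12\log t + 1 .
\]
Second, since $\mu \le t/2$, it suffices to check that
\[
  -t + \tfrac12\log t + 1 + \tfrac{t}{2} \le 0,
\]
that is, $\tfrac12\log t + 1 \le t/2$, or equivalently $\log t + 2 \le t$.

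This last inequality holds for all $t\ge 4$: at $t=4$ we have $\log 4 + 2 \approx 3.39 \le 4$, and the function $t - \log t$ is increasing for $t \ge 1$. That settles every integer $t\ge4$.

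The only delicate point is the boundary case $t=3$, where $\log 3 + 2 > 3$, so the Stirling estimate is too lossy. Here I will verify directly. The hypothesis gives $\mu\le 3/2$, so
\[
  \log 6 + \tfrac32 \approx 3.29 \le 3\log 3 \approx 3.30 ,
\]
which holds, if only barely. This case is also why the hypothesis requires $t\ge3$. At $t=2$ with $\mu=1$ the inequality would read $\log 2 + 1 \le 2\log 2$, which fails, so the constraint is necessary.
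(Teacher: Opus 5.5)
Your proof is correct and complete. The reduction to $\log t! + \mu \le t\log t$, the trapezoidal bound $t! \le e\sqrt{t}\,(t/e)^t$ covering every $t \ge 4$, and the direct check at $t=3$ together handle every case. The $t=3$ check can be stated exactly as $6e^{3/2} \le 27$, i.e.\ $e^{3/2} \approx 4.48 \le 4.5$.

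The paper gives no proof of this fact; it is simply recalled as a standard estimate, so there is no argument to compare yours with. If you want a Stirling-free alternative, set $a_t \coloneqq t^t/t!$. Then $a_{t+1}/a_t = (1+1/t)^t \ge 2 > e^{1/2}$. The single base case $a_3 = 9/2 \ge e^{3/2}$ then gives, by induction, $a_t \ge e^{t/2} \ge e^{\mu}$ for all $t \ge 3$.
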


In our proof of \cref{thm:main-near-appearance-threshold}~\ref{item:main-near-appearance-threshold-LB} and \cref{thm:clean}, we will use the following estimate that bounds the upper-tail probabilities of a broad family of combinatorially-defined random variables from below by the upper-tail probabilities of Poisson random variables with the same expectation, modulo a small error term.  For a collection $\mathcal{H}$ of subsets of a set $V$ (a hypergraph on $V$) and a subset $R \subseteq V$, we let $\mathcal{H}[R] \coloneqq \{A \in \mathcal{H} : A \subseteq R\}$ be the subhypergraph of $\mathcal{H}$ induced by $R$.

\begin{restatable}{lemma}{Poissonlemma}
  \label{lem:Poisson-lemma}
  There is an absolute constant $C>0$ such that the following holds:
  Let $\mathcal{H}$ be a~nonempty collection of $r$-element subsets of a finite set $V$.
  Suppose that $p \in (0, 1)$, let $Y \coloneqq |\mathcal{H}[V_p]|$, and assume that $\mu\coloneqq \Ex[Y] \ge 1$.
  Then, for all $t$ satisfying $2\mu \leq t \leq |\mathcal{H}|/2$,
  \[
    \Pr (Y\geq t) \geq C^{-t} \cdot \max\left\{\frac{\Pr(\Pois(\mu)\geq t)}{|\mathcal{H}|^3} , \frac{\Pr(\Pois(\mu) \ge t)^4}{(4\Ex[Y^2] / \mu)^3} \right\}.
  \]
\end{restatable}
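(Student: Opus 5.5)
The plan is a second-moment argument for the number of $t$-element subfamilies of $\mathcal{H}$ that appear in $V_p$. Let $Z$ be the number of sets $\{A_1,\dotsc,A_t\}$ of $t$ distinct members of $\mathcal{H}$ with $A_1\cup\dotsb\cup A_t\subseteq V_p$. Then $\{Y\ge t\}=\{Z>0\}$, so by the Cauchy--Schwarz (Paley--Zygmund) inequality
\[
  \Pr(Y\ge t)\ \ge\ \frac{\Ex[Z]^2}{\Ex[Z^2]}.
\]
The argument then needs a lower bound on $\Ex[Z]$ and an upper bound on $\Ex[Z^2]$. The exponential factor $C^{-t}$ should absorb every error of the form $c^t$.

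\emph{First moment.} Each event $\{A_i\subseteq V_p\}$ is increasing, so Harris's inequality gives $\Pr(A_1\cup\dotsb\cup A_t\subseteq V_p)\ge p^{rt}$. Hence
\[
  \Ex[Z]\ \ge\ \binom{|\mathcal{H}|}{t}p^{rt}\ \ge\ \frac{(|\mathcal{H}|-t)^t p^{rt}}{t!}\ \ge\ 2^{-t}\frac{\mu^t}{t!},
\]
using $t\le|\mathcal{H}|/2$. Since $t\ge2\mu$, the Poisson tail is dominated by its first term up to a factor $2$: $\Pr(\Pois(\mu)\ge t)\le 2e^{-\mu}\mu^t/t!$. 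Therefore $\Ex[Z]\ge 4^{-t}\Pr(\Pois(\mu)\ge t)$. It is also convenient to note the trivial bound $\Ex[Z]\le \Ex\binom{Y}{t}$.

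\emph{Second moment.} Write $\Ex[Z^2]=\sum_{S}\Pr(S\subseteq V_p)\,\Ex[Z\mid \bigcup S\subseteq V_p]$, where the sum runs over $t$-sets $S$ of members of $\mathcal{H}$. The plan is to show that, for every $S$,
\[
  \Ex\Bigl[Z\Bigm|\textstyle\bigcup S\subseteq V_p\Bigr]\ \le\ C^{t}\cdot \Lambda,
\]
where $\Lambda$ is either $|\mathcal{H}|^3/\Pr(\Pois(\mu)\ge t)$ or $(4\Ex[Y^2]/\mu)^3/\Pr(\Pois(\mu)\ge t)^3$. Together with $\Ex[Z^2]\le \Ex[Z]\cdot\max_S \Ex[Z\mid\cdot]$, this would give both terms in the maximum.

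To carry this out, I would split a $t$-set $T$ counted by $Z$ into the part $T\cap S$, the part $T_1$ of sets that meet $\bigcup S$ without lying in $S$, and the part $T_2$ of sets disjoint from $\bigcup S$. Conditioning on $\bigcup S\subseteq V_p$ does not affect $T_2$, and $\Ex[\#T_2]$ is at most $\Ex\binom{Y}{|T_2|}$. The members of $T_1$ are counted by the variable $W$, the number of $A\in\mathcal{H}$ meeting $\bigcup S$ with $A\subseteq V_p$. Its conditional mean is at most $\sum_{A\in S}\sum_{A'\cap A\neq\emptyset}p^{|A'\setminus A|}$, which is bounded by $t$ times a quantity of order $\Ex[Y^2]/\mu$, because $\Ex[Y^2]=\sum_{A}p^r\sum_{A'}p^{|A'\setminus A|}$. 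Alternatively, $W$ is trivially bounded by $|\mathcal{H}|$. Summing over the sizes of $T\cap S$ and $T_1$ with binomial coefficients should give a bound of the shape
\[
  \Ex\bigl[Z\mid\cdot\bigr]\ \le\ \sum_{a+b+c=t}\binom{t}{a}\frac{(\Ex W)^b}{b!}\,\Ex\binom{Y}{c},
\]
where $a$, $b$ and $c$ are the numbers of members of $T$ in $S$, in $T_1$ and in $T_2$ respectively. The factors $2^t$ and $(\cdot)^b/b!$ are harmless once compared with $\mu^t/t!$.

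\emph{Main obstacle.} The crux is controlling the factorial moments $\Ex\binom{Y}{c}$ of the full count relative to $\mu^c/c!$. This is exactly the issue the weaker polynomial factors in the statement are designed to pay for, since the high factorial moments of $Y$ can genuinely exceed the Poisson ones. My first attempt is to avoid computing them and to truncate instead. I would replace $Z$ by $Z'=Z\cdot\1\{Y\le K\}$, where $K$ is the smallest integer with $\Pr(Y\ge K)$ no larger than a small multiple of $\Pr(Y\ge t)$. If $K\le t$ there is nothing to prove. Otherwise, by Markov's inequality, a deficit in $\Ex[Z']$ can only come from a large value of $\Pr(Y\ge t)$, which is what we want. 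On the truncated event, $\binom{Y}{c}\le\binom{K}{c}$. The resulting self-referential inequality for $x=\Pr(Y\ge t)$ should have the form $x\ge \Ex[Z]^2/(\Ex[Z]\cdot \Lambda(x))$. Solving it, with $\Lambda$ involving $1/x$ either linearly (via $|\mathcal{H}|$) or through $(\Ex[Y^2]/\mu)/x$ (via the Chebyshev-type control on $W$), is what produces the exponents $1$ and $3$ versus $4$ and $3$ in the two terms. Getting these exponents exactly right is the step I expect to require the most care.
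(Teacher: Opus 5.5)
Your proposal has a genuine gap at exactly the step you flag, and it is not a bookkeeping issue: the untruncated second-moment method cannot prove the lemma. Since $Z=\binom{Y}{t}$ identically, your bound reads $\Pr(Y\ge t)\ge \Ex[\binom{Y}{t}]^2/\Ex[\binom{Y}{t}^2]$. The trouble is that $\Ex[\binom{Y}{t}^2]$ can be dominated by rare clusters that contribute almost nothing to $\Pr(Y\ge t)$.

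Here is an example. Let $\mathcal{H}$ consist of $\lceil p^{-r}\rceil$ pairwise disjoint $r$-sets together with a sunflower $\{Q\cup\{\ell\}:\ell\in L\}$, where $|Q|=r-1$, $|L|=\lceil \lambda/p\rceil$, $\lambda=t^5$ and $p^{r-1}=\lambda^{-t}$.
\begin{itemize}
\item Then $1\le\mu\le 2$ and $\Ex[Y^2]\le 3$, so the right-hand side of the lemma is at least $C^{-t}(t!)^{-4}/O(1)$.
\item Vandermonde's identity and independence give $\Ex[Z]\le 4/t!$.
\item On the other hand, $\Ex[Z^2]\ge \Pr(Q\subseteq V_p)\cdot\Ex\bigl[\binom{|L\cap V_p|}{t}^2\bigr]\ge \tfrac12\lambda^{-t}\binom{\lfloor\lambda/2\rfloor}{t}^2$.
\item Hence $\Ex[Z]^2/\Ex[Z^2]\le 32\,(9e^2/t^5)^t$, which is $o\bigl(C^{-t}(t!)^{-4}\bigr)$ for every fixed $C$.
\end{itemize}
So no estimate of $\Ex[Z^2]$ can close the untruncated argument. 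The individual steps you sketch also fail:
\begin{itemize}
\item The uniform bound $\Ex[Z\mid \bigcup S\subseteq V_p]\le C^t\Lambda$ is impossible. By Harris the left side is at least $\Ex[Z]$, which can dwarf $\Lambda$. For a star with $\lceil p^{-2}\rceil$ leaves, $\Ex[Z]\approx p^{1-t}/t!$ while $\Lambda=O_t(p^{-6})$.
\item Bounding $\Ex[\binom{W}{b}\binom{Y'}{c}]$ by $\frac{(\Ex W)^b}{b!}\Ex\binom{Y}{c}$ goes the wrong way. The two factors are positively correlated, and $W$ can have the same heavy factorial moments as $Y$.
\item $\Ex[Y^2]/\mu$ is only the average over $A\in\mathcal{H}$ of $\sum_{A'}p^{|A'\setminus A|}$, so it does not control the worst $S$.
\item Even if everything else held, $\Lambda=|\mathcal{H}|^3/\Pr(\Pois(\mu)\ge t)$ would give $\Pr(\Pois(\mu)\ge t)^2/|\mathcal{H}|^3$, not the first term.
\end{itemize}

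The truncation does not repair this, for four reasons.
\begin{itemize}
\item Your $K$ always exceeds $t$, so the case $K\le t$ never occurs.
\item When the deficit $\Ex[Z\1\{Y\ge K\}]$ is large, Markov only yields $\Pr(Y\ge t)\ge \Ex[Z]/(2\varepsilon\binom{|\mathcal{H}|}{t})$. With the only available bound $\Ex[Z]\ge\binom{|\mathcal{H}|}{t}p^{rt}$, this is the trivial $p^{rt}$.
\item You have no way to bound $\Ex[Z\1\{Y<K\}]$ from below; Harris only bounds it from above.
\item $K$ can be far larger than $t$, so $\binom{K}{t}$ is not of size $C^{t}$.
\end{itemize}
The self-referential inequality is therefore never actually derived.

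The missing idea is to abandon high moments and compare point probabilities directly. The paper double counts pairs $(R,A)$ with $Y(R)=s$, $A\in\mathcal{H}$ and $A\not\subseteq R$, weighted by $\Pr(V_p=R\cup A)\,((1-p)/p)^{|A\setminus R|}$. This shows that for every $s\le|\mathcal{H}|/2$ there is a $d\ge 1$ with $s+d\le 4\Ex[Y^2]/(\mu\Pr(Y=s))$ and $\Pr(Y=s+d)\ge \Pr(Y=s)\cdot \mu/(7d^2(s+d))$, a Poisson-like ratio. Chaining such steps from some $t^*<2\mu$ with $\Pr(Y=t^*)\ge 1/(6\mu)$ gives $\Pr(Y=t')\ge C^{-t}\Pr(\Pois(\mu)\ge t)$ for the last point $t'<t$ of the chain. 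The single step overshooting $t$ then costs a factor $1/(7t_\ell^3)$ with $t_\ell\le\min\{|\mathcal{H}|,\,4\Ex[Y^2]/(\mu\Pr(Y=t'))\}$, which is exactly where the two cubed denominators come from.
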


The technical proof of~\cref{lem:Poisson-lemma} is postponed to the appendix (\cref{appendix:Poisson-lemma}).

\subsection{Fractional graph theory}
\label{sec:preliminaries-fractional}

As many prior works on the upper tail problem for subgraph counts, we will use two fundamental inequalities that relate the quantities $e_J$, $v_J$, and $\alpha_J^*$ in a subgraph $J$ of a connected graph with maximum degree $\Delta$.  The proofs of these inequalities use the following half-integrality property of the fractional independence number.

\begin{lemma}[folklore]
 \label{lemma:fractional-duality}
 Every graph $J$ admits a fractional independent set $a \colon V(J) \to [0,1]$ satisfying
 $\sum_{v \in V(J)} a_v = \alpha_J^*$ such that $a_v \in \{0, 1/2, 1\}$ for
 every $v \in V(J)$.  Moreover, there is a partition $V(J) = V_1 \cup V_2$ with
 $|V_1|/2 + |V_2| = \alpha_J^*$ such that $V_1$ can be covered by a collection
 of vertex-disjoint edges and cycles of $J$.
\end{lemma}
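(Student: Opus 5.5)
The plan is to use LP duality for the fractional independence LP, together with the standard half-integrality argument for extreme points. The LP is: maximise $\sum_v a_v$ subject to $a_u + a_v \le 1$ for $uv \in E(J)$ and $0 \le a_v \le 1$. Its dual is: minimise $\sum_{uv} y_{uv} + \sum_v z_v$ subject to $\sum_{u \sim v} y_{uv} + z_v \ge 1$ for every $v$, with $y, z \ge 0$. This is the fractional vertex cover / fractional edge-cover-type dual, with the loop variables $z_v$ allowed.

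\textbf{Step 1: half-integrality.} Take a vertex (extreme point) $a$ of the polytope that attains the optimum. Suppose some $a_v \notin \{0,1/2,1\}$. Let $P = \{v : 0 < a_v < 1/2\}$ and $Q = \{v : 1/2 < a_v < 1\}$. For small $\eta$, define $a^{\pm}$ by adding $\pm\eta$ on $P$ and $\mp\eta$ on $Q$.

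These perturbations stay feasible. Every tight edge constraint $a_u + a_v = 1$ with $u \in P$ has its other endpoint in $Q$, so its sum is unchanged. Edges with both endpoints equal to $1/2$ are untouched. Non-tight constraints have slack, so a small $\eta$ does no harm.

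Since $a = (a^+ + a^-)/2$ and $P \cup Q \neq \emptyset$, this contradicts the fact that $a$ is extreme. Hence every optimum vertex is $\{0,1/2,1\}$-valued, which gives the first assertion.

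\textbf{Step 2: the partition.} Take such a half-integral optimum $a$ and also a half-integral optimum $(y,z)$ of the dual. The same perturbation argument applies to the dual polytope, giving $y_{uv}, z_v \in \{0,1/2,1\}$. One can also argue this directly: the fractional edge cover polytope with loops is half-integral, and its support decomposes into edges and odd cycles.

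Complementary slackness then gives three facts:
\begin{itemize}
\item If $y_{uv} > 0$, then $a_u + a_v = 1$.
\item If $z_v > 0$, then $a_v = 1$.
\item If $a_v > 0$, then $\sum_u y_{uv} + z_v = 1$.
\end{itemize}

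Set $V_1 \coloneqq \{v : a_v = 1/2\}$ and $V_2 \coloneqq \{v : a_v = 1\}$. The remaining vertices, with $a_v = 0$, go into $V_2$ as well? No. The required condition is $|V_1|/2 + |V_2| = \alpha^*_J$, and vertices with $a_v = 0$ contribute nothing to $\sum_v a_v$, so they cannot be in $V_2$. The intended statement must therefore be that $V_1 \cup V_2$ partitions a subset of the vertices, or it must be proved differently. I will instead choose the partition via the dual.

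\textbf{Step 2, revised.} Use a vertex of the dual polytope. Its support graph, consisting of the edges with $y > 0$, splits into a matching $M$ (edges with $y = 1$) and vertex-disjoint odd cycles (edges with $y = 1/2$). Loops ($z_v = 1$) cover the remaining vertices.

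Let $V_1$ be the vertices covered by $M$ or by the cycles, and let $V_2$ be the loop vertices. By strong duality,
\[
\alpha^*_J = |M| + |C|/2 + |V_2| = |V_1|/2 + |V_2|.
\]
Here $|C|$ is the total number of cycle edges, which equals the number of cycle vertices. Matching edges are counted once but cover two vertices, hence the factor $1/2$ on $|V_1|$.

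Covering everything requires that the dual is tight at every vertex. We can ensure this by reducing $y$ or $z$ wherever there is an excess, since the objective only decreases and optimality is preserved. The decomposition of the vertex solution into a matching, odd cycles and loops is the main obstacle. I would handle it with the standard argument: on each connected component of the fractional support, a non-odd-cycle structure admits an alternating $\pm$ perturbation, contradicting extremality.
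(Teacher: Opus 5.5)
Your Step~1 is correct. It is the standard perturbation argument showing that every vertex of the fractional independent set polytope is $\{0,1/2,1\}$-valued, and it proves the first assertion. That is also the only part the paper uses later, e.g.\ for the integrality of $2\alpha_J^*$. You are right that the ``moreover'' part is a statement about the dual LP, with $V_2$ the loop vertices.

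\textbf{The gap: vertices of the dual need not have the claimed support.} A vertex of the dual polyhedron $\{(y,z)\ge 0 : \sum_{u\sim v}y_{uv}+z_v\ge 1 \text{ for all } v\}$, even an optimal one, need not be supported on a matching, odd cycles and loops covering every vertex exactly once. Take $J=K_{1,3}$ with centre $c$ and leaves $l_1,l_2,l_3$, and the point $y_{cl_1}=y_{cl_2}=y_{cl_3}=1$, $z\equiv 0$.
\begin{itemize}
\item It is optimal: its value is $3=\alpha_J^*$.
\item It is a vertex: the three leaf constraints and the four conditions $z_v=0$ give seven independent tight constraints.
\item Its $1$-valued edges form a star whose centre is covered three times, so your recipe gives $|V_1|/2+|V_2|=2\neq\alpha_J^*$.
\end{itemize}

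Your remedy, ``reduce $y$ or $z$ wherever there is an excess, since the objective only decreases and optimality is preserved'', cannot work. All objective coefficients equal $1$, so at an optimum any reduction that stays feasible would strictly lower the objective, which is impossible. In the star, lowering any $y_{cl_i}$ uncovers $l_i$. The alternating $\pm\eta$ argument in your last sentence only controls the $1/2$-valued part of the support. It says nothing about the $1$-valued part, which is exactly where such stars live.

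\textbf{The fix: an objective-neutral exchange, not a reduction.}
\begin{itemize}
\item If $v$ has excess $s_v>0$ and $y_{uv}>0$, move $\delta\coloneqq\min\{y_{uv},s_v\}$ from $y_{uv}$ to $z_u$. This keeps the coverage of $u$ and the objective unchanged and lowers the excess at $v$.
\item At an optimum an over-covered vertex has $z_v=0$, so such a $y_{uv}>0$ always exists.
\item Neither the number of over-covered vertices nor $|\mathrm{supp}\,y|$ ever increases, and one of them drops at each step, so the process terminates.
\end{itemize}
Hence the dual optimum $\alpha_J^*$ is attained on the bounded polytope $Q\coloneqq\{(y,z)\ge0 : \sum_{u\sim v}y_{uv}+z_v=1 \text{ for all } v\}$.

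Now take an optimal vertex of $Q$.
\begin{itemize}
\item The equalities force the $1$-valued edges to form a matching and the $1$-valued loops to sit on vertices covered by nothing else.
\item Every vertex meeting a fractional element meets at least two.
\item Your perturbation argument now applies: alternate $\pm\eta$ on an even cycle, on a path between two fractional loops, on a path from a fractional loop to an odd cycle, or on two odd cycles joined by a path. This shows the fractional part is a union of vertex-disjoint odd cycles carrying value $1/2$.
\end{itemize}
Only then do $V_1$ (vertices on matching edges and cycles) and $V_2$ (loop vertices) partition $V(J)$, and strong duality gives $|V_1|/2+|V_2|=\alpha_J^*$.

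Equivalently, you could first prove that optimal vertices of the original dual decompose into stars, odd cycles and loops, and then replace each star $K_{1,s}$ by one edge and $s-1$ loops.
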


The following lemma is a reformulation of \cite[Lemma~5.3]{HarMouSam22}.  Even though it follows from the proof of \cite[Lemma~5.3]{HarMouSam22}, we present the argument here for the sake of completeness.

\begin{lemma}\label{lemma:eJ-alphaJ-clique}
 Suppose that $J$ is a nonempty subgraph of a connected graph $H$ with maximum degree at most $\Delta$. Then
 \[
   e_J \leq \Delta\cdot(v_J-\alpha_J^*)
   \leq
   \Delta\cdot\alpha_J^*.
 \]
 The first inequality is tight if and only if
 \begin{enumerate}[label=(Q\arabic*)]
 \item
   \label{item:QH-regular}
   $J = H$ and $H$ is $\Delta$-regular or
 \item
   \label{item:QH-bipartite}
   $J$ admits a bipartition $V(J) = A \cup B$ such that $\deg_Ja = \Delta$ for all $a \in A$.
 \end{enumerate}
 If both inequalities are tight, then $J = H$ and $H$ is $\Delta$-regular.
\end{lemma}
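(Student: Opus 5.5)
We will use the half-integral optimal fractional independent set from \cref{lemma:fractional-duality}. Fix such an $a$ with values in $\{0,1/2,1\}$, and write $V_0, V_{1/2}, V_1$ for the preimages. Feasibility gives that $V_1$ is independent and $N(V_1)\subseteq V_0$. For the first inequality we count edges by their endpoint in $V_0\cup V_{1/2}$, with a weight. Every edge either meets $V_0$ or lies inside $V_{1/2}$, since edges inside $V_{1/2}\cup V_1$ that touch $V_1$ are forbidden. Hence
\[
  e_J \le \sum_{v\in V_0}\deg_J v + e(J[V_{1/2}]) \le \Delta|V_0| + \tfrac{\Delta}{2}|V_{1/2}| = \Delta\bigl(v_J - \alpha_J^*\bigr),
\]
because $v_J-\alpha_J^* = |V_0| + |V_{1/2}|/2$. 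For the second inequality, note that the constant function $1/2$ is feasible, so $\alpha_J^*\ge v_J/2$ and therefore $v_J-\alpha_J^*\le \alpha_J^*$.

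For the equality characterisation, suppose $e_J = \Delta(v_J-\alpha_J^*)$. Then every inequality in the display is tight, which forces three things:
\begin{itemize}
\item every vertex of $V_0$ has degree $\Delta$ in $J$;
\item $V_0$ is independent, since otherwise edges inside $V_0$ are counted twice;
\item there are no edges between $V_0$ and $V_{1/2}$, and $J[V_{1/2}]$ is $\Delta$-regular.
\end{itemize}
Consider first the case $V_{1/2}=\emptyset$. Then $A:=V_0$ and $B:=V_1$ give a bipartition with $\deg_J a=\Delta$ for all $a\in A$, which is \ref{item:QH-bipartite}. Now consider the case $V_{1/2}\neq\emptyset$. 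The vertices of $V_{1/2}$ have full degree $\Delta$ inside $J[V_{1/2}]$. Since $\Delta(H)\le\Delta$, they have no further neighbours in $H$, so $J[V_{1/2}]$ is a union of components of $H$. Connectivity of $H$ then gives $J = J[V_{1/2}] = H$ with $H$ being $\Delta$-regular, which is \ref{item:QH-regular}.

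The main subtlety is this step: we must argue with an optimal $a$, and when $V_{1/2}\ne\emptyset$ we must check that $V_0,V_1$ are empty. This holds because the $V_{1/2}$ part is already all of $H$. If instead $J[V_{1/2}]$ is bipartite, one may re-choose $a$ by rounding the $1/2$'s on each bipartite component to $0/1$. That reduces to the first case and yields \ref{item:QH-bipartite} as well, which is consistent.

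For the converse, assume \ref{item:QH-regular}. Then $e_H=\Delta v_H/2$, and $\alpha_H^*=v_H/2$ since summing $a_u+a_v\le1$ over the edges gives $\Delta\sum a\le e_H$; so equality holds. Now assume \ref{item:QH-bipartite}. Then $e_J=\Delta|A|$, and the indicator of $B$ shows $\alpha_J^*\ge|B|$, so $e_J\ge\Delta(v_J-\alpha_J^*)$, and with the first inequality we get equality. Finally, suppose both inequalities are tight. Then $\alpha_J^*=v_J/2$ and $e_J=\Delta v_J/2$, so $J$ is $\Delta$-regular because its maximum degree is at most $\Delta$. As above, connectivity of $H$ gives $J=H$ with $H$ being $\Delta$-regular.
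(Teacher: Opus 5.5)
Your proof is correct and follows essentially the same route as the paper. Your intermediate bound $\sum_{v\in V_0}\deg_J v + e(J[V_{1/2}])$ is a slightly sharper form of the paper's weighting $\sum_v (1-a_v)\deg_J v$ for the same half-integral optimal $a$, and your equality analysis splits on whether the half-weight class is empty, exactly as the paper does. The extras are that you verify the converse implications (Q1) or (Q2) $\Rightarrow$ tightness, which the paper leaves implicit, and that for the doubly-tight case you argue directly that $e_J=\Delta v_J/2$ forces $\Delta$-regularity, rather than using the paper's $|A|=|B|$ argument.
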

\begin{proof}
  By \cref{lemma:fractional-duality}, $J$ has a fractional independent set $a$ such that $a_v \in \{0,\frac12,1\}$.
  Then
 \begin{equation}
   \label{eq:optimal-alpha}
   e_J \leq \sum_{uv\in J}(2-a_u-a_v) =\sum_{v\in V(J)} (1-a_v) \deg_Jv \le \Delta \cdot \sum_{v\in V(J)} (1-a_v)  = \Delta \cdot (v_J - \alpha_J^*),
 \end{equation}
 which is the first inequality. For the second inequality, simply recall that $\alpha^*_J \geq v_J/2$.

 Further, observe that $e_J = \Delta\cdot (v_J-\alpha^*_J)$ if and only if both inequalities in~\eqref{eq:optimal-alpha} are equalities;
 this happens if and only if $a_u + a_v = 1$ for every edge $uv \in J$ and $\deg_Jv = \Delta$ whenever $a_v \neq 1$.
 Let $A$, $B$, and $C$ denote the sets of vertices that $a$ maps to $0$, $1$, and $1/2$, respectively.
 Thus, $e_J = \Delta \cdot (v_J - \alpha_J^*)$ if and only if each vertex in $A \cup C$ has degree $\Delta$ and each edge of $J$ has  either both endpoints in $C$ or one endpoint in each of $A$ and $B$.
 In
 particular, if $C$ is not empty, then it induces a $\Delta$-regular graph;
 hence $C = V(J)$ and $J = H$, as $H$ is connected and has maximum degree at most $\Delta$.
 Otherwise,
 if $C$ is empty, then $A \cup B$ is a bipartition of $J$ and all vertices
 of $A$ have degree $\Delta$.
 
 Lastly, suppose that $e_J = \Delta\cdot \alpha_J^*$,
 which implies $e_J = \Delta\cdot (v_J-\alpha^*_J)$. We claim that $J$ is $\Delta$-regular, which implies that
 $J = H$ as $H$ is a connected graph with maximum degree at most $\Delta$.
 Let $A,B,C$ be the same partition as above.
 If $C$ is nonempty, then $J$ is $\Delta$-regular, and we are done.
 Otherwise,  if $C$ is empty, then $A \cup B$ is a bipartition of $J$
 and all vertices of $A$ have degree $\Delta$. Moreover,
 \[
   |A| = e_J/\Delta =  \alpha_J^* = v_J - e_J/\Delta = v_J-|A|= |B|,
 \]
 and thus also every vertex of $B$ has degree $\Delta$.
\end{proof}

\begin{lemma}
  \label{lem:unique-frac-ind-set}
  Suppose that $J$ is a bipartite graph with maximum degree $\Delta$ and no $\Delta$-regular connected components.
  Suppose further that $J$ has a bipartition $V(J) = A \cup B$ such that $\deg a = \Delta$ for every $a \in A$.
  Then, the characteristic function of $B$ is the unique largest fractional independent set of $J$.
\end{lemma}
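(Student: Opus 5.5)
First we check that $\mathbbm{1}_B$ is a largest fractional independent set. It is feasible: every edge has exactly one endpoint in $B$, so $a_u+a_v = 1$ on every edge. Its value is $|B| = v_J - |A|$. Since every vertex of $A$ has degree $\Delta$ and $J$ is bipartite with parts $A$ and $B$, we have $e_J = \Delta|A|$. For any fractional independent set $a$, the chain in~\eqref{eq:optimal-alpha} (valid for any feasible $a$, with no half-integrality needed) gives
\[
  \Delta |A| = e_J \le \sum_{v}(1-a_v)\deg_J v \le \Delta \sum_v (1-a_v) = \Delta\bigl(v_J - \textstyle\sum_v a_v\bigr).
\]
Hence $\sum_v a_v \le v_J - |A| = |B|$, so $\alpha_J^* = |B|$. (We do not apply \cref{lemma:eJ-alphaJ-clique} directly, because $J$ need not sit inside a connected $H$; we only reuse its computation.)

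For uniqueness, let $a$ be any optimal fractional independent set, so $\sum_v a_v = |B|$. Then both inequalities above are equalities. Equality in the first gives $a_u + a_v = 1$ for every edge $uv$. Equality in the second gives $\deg_J v = \Delta$ for every $v$ with $a_v < 1$.

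Now fix a connected component $K$ of $J$, which is nonempty and bipartite with parts $A_K = A\cap V(K)$ and $B_K = B\cap V(K)$. If $K$ is a single isolated vertex, it lies in $B$, since vertices of $A$ have degree $\Delta \ge 1$. Because $a$ is maximal and the vertex has no constraints, $a_v = 1$, matching $\mathbbm{1}_B$; this uses that optimality forces isolated vertices to weight $1$. Otherwise $K$ is connected with edges, and $a_u + a_v = 1$ along every edge propagates: there is a constant $t \in [0,1]$ such that $a \equiv t$ on $A_K$ and $a \equiv 1-t$ on $B_K$.

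Suppose $t > 0$. Then $a_v = 1 - t < 1$ on $B_K$, so every vertex of $B_K$ has degree $\Delta$. Every vertex of $A_K$ also has degree $\Delta$ by hypothesis, so $K$ would be a $\Delta$-regular component, which is excluded. Hence $t = 0$ on every component, and $a = \mathbbm{1}_B$.

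The only subtle points are checking that the degree-equality step needs $a_v<1$ (not $a_v\ne 1$ loosely) and handling isolated vertices; neither is a real obstacle.
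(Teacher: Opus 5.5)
Your proof is correct and follows essentially the same route as the paper. You use the same chain $\Delta|A| = e_J \le \sum_v (1-a_v)\deg_J v \le \Delta(v_J - \sum_v a_v)$, the same two equality conditions, and the same propagation of $a_u + a_v = 1$ within components. The only difference is phrasing: the paper says every vertex of $B$ lies at even distance from a vertex of degree less than $\Delta$, whereas you argue that $a$ is constant on each side of a component and that $t>0$ would force that component to be $\Delta$-regular.
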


\begin{proof}
  Let $\alpha \colon V(J) \to [0,1]$ be some largest fractional independent set.
  We have
  \begin{multline}
    \label{eq:unique-frac-ind-set}
    (v_J - |B|) \cdot \Delta = |A| \cdot \Delta = e_J \le \sum_{uv \in J} (2 - \alpha_u - \alpha_v) \\
    = \sum_{v \in V(J)} (1-\alpha_v) \cdot \deg v \le \sum_v (1-\alpha_v) \cdot \Delta = (v_J - \alpha_J^*) \cdot \Delta.
  \end{multline}
  Since $\alpha_J^* \ge |B|$, as $B$ is an independent set, we not only have $\alpha_J^* = |B|$, but also the two inequalities in~\eqref{eq:unique-frac-ind-set} hold with equality.
  This means that $\alpha_u + \alpha_v = 1$ for every $uv \in J$ and $\alpha_v = 1$ for every $v$ with $\deg v < \Delta$;  consequently, $\alpha_u = 1$ also for every $u$ that is at an even distance from some $v$ with $\deg v < \Delta$.
  Since $J$ has no $\Delta$-regular connected components and every vertex of $A$ has degree $\Delta$, every vertex of $B$ must lie at an even distance from some vertex with degree less than $\Delta$ and thus $\alpha_b = 1$ for all $b \in B$, and hence $\alpha_a = 0$ for all $a \in A$.
\end{proof}

We close this section with a simple fact that will be used repeatedly throughout the paper.

\begin{fact}
  \label{fact:f-p-pH}
  For every $J \in \cJ^*$ and all $p \ge p_H$, we have
  \[
    f^{\alpha^*_J} \ge n^{v_J}p^{e_J}\geq (p/p_H)^{e_J}.
  \]
  Moreover, if $p \le n^{-1/\Delta}$, then $f^{\alpha_J^*} = n^{v_J} p^{e_J}$.
\end{fact}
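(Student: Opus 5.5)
The plan is to reduce everything to the explicit structure of $J$. By definition of $\cJ^*$, we may write $J = H[A, N(A)]$, where $A$ is an independent set of vertices of degree $\Delta$ with $|A|/|N(A)| = r$. Set $b \coloneqq |N(A)|$. Every vertex of $A$ keeps all $\Delta$ of its edges in $J$, so
\[
  e_J = \Delta |A| = \Delta r b \qquad\text{and}\qquad v_J = |A| + b = (1+r) b.
\]

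The first step is to show $\alpha_J^* = b$ via \cref{lem:unique-frac-ind-set}, applied with the bipartition $A \cup N(A)$. To do so I must check that $J$ has no $\Delta$-regular connected component. Suppose some component $J'$ were $\Delta$-regular, and let $A' \coloneqq A \cap V(J')$. Then $A'$ is an independent set of degree-$\Delta$ vertices of $H$. Because every vertex of $J'$ has degree $\Delta$, its neighbourhood in $H$ is exactly $V(J') \setminus A'$, and counting edges gives $|A'| = |N(A')|$. This ratio $1$ exceeds $r$, which contradicts \cref{lemma:rH-bounds}. So the lemma applies and the characteristic function of $N(A)$ is the largest fractional independent set, giving $\alpha_J^* = b$.

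The two claims about $f$ are now direct computations. First,
\[
  n^{v_J} p^{e_J} = n^{(1+r)b} p^{\Delta r b} = \bigl(n (np^\Delta)^r\bigr)^b \le f^b = f^{\alpha_J^*}.
\]
For the equality case, assume $p \le n^{-1/\Delta}$, so that $np^\Delta \le 1$. Since $r < 1$, we get
\[
  \frac{n (np^\Delta)^r}{n^2 p^\Delta} = (np^\Delta)^{r-1} \ge 1.
\]
Hence the maximum defining $f$ is attained by $n (np^\Delta)^r$, and $f^{\alpha_J^*} = n^{v_J} p^{e_J}$.

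The final step is the inequality $n^{v_J} p^{e_J} \ge (p/p_H)^{e_J}$. It is equivalent to $n^{v_J} p_H^{e_J} \ge 1$, that is, to $p_H \ge n^{-v_J/e_J}$. This holds because $p_H \ge n^{-1/m(H)}$ by definition of $p_H$, and $e_J/v_J \le m(H)$ since $J \subseteq H$ is nonempty. The assumption $p \ge p_H$ only ensures that this lower bound exceeds $1$; the inequality itself holds as stated.

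I expect the only genuinely non-mechanical step to be the verification that \cref{lem:unique-frac-ind-set} applies, namely ruling out $\Delta$-regular components. The argument above reduces it to $r < 1$ from \cref{lemma:rH-bounds}; everything else is arithmetic with $e_J = \Delta r b$ and $v_J = (1+r)b$.
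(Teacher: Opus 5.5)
Your proposal is correct and follows the paper's proof. Both arguments use \cref{lem:unique-frac-ind-set} to get $\alpha_J^* = |N(A)|$, rewrite $e_J = \Delta r \alpha_J^*$ and $v_J = (1+r)\alpha_J^*$, and conclude via $p_H \ge n^{-1/m(H)} \ge n^{-v_J/e_J}$. Beyond that, you explicitly verify two points the paper leaves implicit: the no-$\Delta$-regular-component hypothesis (through $r<1$), and that $n(np^\Delta)^r$ attains the maximum in $f$ when $p \le n^{-1/\Delta}$.
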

\begin{proof}
  Let $J\in \cJ^*$ and note that, by~\cref{lem:unique-frac-ind-set}, we have $r = (v_J-\alpha^*_J)/\alpha_J^*$.
  Therefore,  
  \[
    \Delta r=\frac{\Delta(v_J-\alpha^*_J)}{\alpha_J^*}=\frac{e_J}{\alpha^*_J}
    \qquad
    \text{and}
    \qquad
    1+r=\frac{v_J}{\alpha^*_J}.
  \]
  Consequently,
  \begin{equation}
    \label{eq:f-p-pH}
    f^{\alpha^*_J} \geq \left(n(np^{\Delta})^r\right)^{\alpha_J^*} = n^{v_J}p^{e_J} = n^{v_J} p_H^{e_J} \cdot (p/p_H)^{e_J} \ge (p/p_H)^{e_J},
  \end{equation}
  where the final inequality follows as $p_H \ge n^{-1/m(H)} \ge n^{-v_J/e_J}$.
  Moreover, if $p \le n^{-1/\Delta}$, then $f = n (np^{\Delta})^r$ and the first inequality in~\eqref{eq:f-p-pH} holds with equality.
\end{proof}

\subsection{Counting copies and embeddings}
\label{sec:preliminaries-embeddings}

We say that an injective map $\varphi\colon V(H)\to V(G)$ is an \emph{embedding} of $H$ into $G$ if for every $uv \in H$ we also have $\varphi(uv)\coloneqq \varphi(u)\varphi(v)\in G$; we write $\Emb(H,G)$ to denote the set of all such embeddings.
We refer to embeddings of $G$ into itself as automorphisms and write $\Aut(G) \coloneqq \Emb(G,G)$.
Given an edge $uv\in G$, we will write $\Emb(H,G;uv)$ to denote the set of all $\varphi \in \Emb(H,G)$ such that $uv\in \varphi(H)$.
Moreover, we write $N(H,G)$ to denote the number of \emph{unlabelled} copies of $H$ in $G$, that is $N(H,G)=|\Emb(H,G)|/|\Aut(H)|$.
Finally, for positive integers $m$ and $n$, we let $N(H,n,m)$ be the maximum of $N(H,G)$ over all graphs $G$ with $v_G\leq n$ and $e_G\leq m$.

The following theorem supplies an optimal (up to a constant multiplicative factor that depends only on $H$) bound on the number of embeddings of a fixed graph into a graph with a given number of edges and vertices.  It was proved by Janson, Oleszkiewicz, and Ruciński~\cite{JanOleRuc04}, but the version stated here is \cite[Theorem~5.7]{HarMouSam22}.
It refines an earlier result of Alon~\cite{Alo81}, who obtained optimal bounds on the number of embeddings of a fixed graph in a graph with a given number of edges only (i.e., without restricting the number of vertices).
The proof in~\cite{JanOleRuc04} is an adaptation of an elegant, entropy-based argument of Friedgut and Kahn~\cite{FriKah98}, who extended Alon's result to hypergraphs.
(We refer the interested reader to the excellent survey of Galvin~\cite{Gal} for an exposition of this argument that explicitly uses the language of entropy.)
Here, we present a simplified version of the arguments of~\cite{FriKah98,JanOleRuc04} that does not use linear programming duality.
A variation of this argument will also play a crucial role in the proof of \Cref{thm:main-near-appearance-threshold}, see \cref{claim:bound-on-Wlm}.

\begin{theorem}[{\cite{JanOleRuc04}}]
  \label{thm:max-copies}
  For every nonempty graph $J$ without isolated vertices and every graph $G$,
  \[
    |\Emb(J, G)| \le (2e_G)^{v_J - \alpha_J^*} \cdot \min\{2e_G, v_G\}^{2\alpha_J^* - v_J}.
  \]
\end{theorem}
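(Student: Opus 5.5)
We prove the bound by the entropy method of Friedgut and Kahn, avoiding LP duality by working directly with a half-integral optimal fractional independent set. First we fix such an $a\colon V(J)\to\{0,\tfrac12,1\}$ with $\sum_v a_v=\alpha_J^*$, together with the partition $V(J)=V_1\cup V_2\cup V_0$ from \cref{lemma:fractional-duality}. Here $V_2=\{a_v=1\}$, $V_1=\{a_v=\tfrac12\}$, $V_0=\{a_v=0\}$, and $V_1$ is covered by vertex-disjoint edges and cycles of $J$. A short check shows that we may take $|V_1|/2+|V_2|=\alpha_J^*$, and hence $v_J-\alpha_J^*=|V_0|+|V_1|/2$. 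Since $a$ is a fractional independent set, $V_2$ is independent and every neighbour of a vertex in $V_2$ lies in $V_0$. Because $J$ has no isolated vertices, every vertex of $V_2$ has a neighbour in $V_0$.

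Next let $\varphi$ be a uniformly random element of $\Emb(J,G)$, so that $\log|\Emb(J,G)|=\ent(\varphi)=\ent(\varphi|_{V(J)})$. We split the entropy along the chain rule as
\[
\ent(\varphi)=\ent\bigl(\varphi|_{V_0\cup V_1}\bigr)+\ent\bigl(\varphi|_{V_2}\mid\varphi|_{V_0\cup V_1}\bigr).
\]
The second term is at most $|V_2|\log v_G$, since each vertex of $V_2$ is mapped into $V(G)$. Alternatively, each $w\in V_2$ has a neighbour $u\in V_0$, so the term is also at most $\sum_{w\in V_2}\ent(\varphi(w)\mid\varphi(u))$. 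Each such conditional entropy is at most $\ent(\varphi(u),\varphi(w))-\ent(\varphi(u))$, and the pair $(\varphi(u),\varphi(w))$ is an oriented edge of $G$, so it takes at most $2e_G$ values.

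For the first term, we partition $V_1$ into the covering edges and cycles. For an edge $xy$ of the cover, $(\varphi(x),\varphi(y))$ is supported on $2e_G$ values, which gives entropy at most $\log(2e_G)=\tfrac{|\{x,y\}|}{2}\log(2e_G)$. For a cycle $x_1\dots x_k$, Shearer's lemma applied to the $k$ cyclic consecutive pairs (each vertex is covered twice) gives $\ent(\varphi(x_1),\dots,\varphi(x_k))\le\tfrac12\sum_i\ent(\varphi(x_i),\varphi(x_{i+1}))\le\tfrac k2\log(2e_G)$. For $V_0$ we use subadditivity and charge each $u\in V_0$ one unit of $\log(2e_G)$, as follows. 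If a vertex $u\in V_0$ has private $V_2$-neighbours, we combine as above: $\ent(\varphi(u))+\sum_{w}\ent(\varphi(w)\mid\varphi(u))$ is bounded by using $\ent(\varphi(u),\varphi(w))\le\log(2e_G)$ for one neighbour $w$ and $\log v_G$ for the others. Otherwise we simply use $\ent(\varphi(u))\le\log v_G\le\log(2e_G)$, which is legitimate because the two cases $\min\{2e_G,v_G\}$ must be handled separately anyway.

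Collecting terms gives $\ent(\varphi)\le(|V_0|+|V_1|/2)\log(2e_G)+|V_2|\log\min\{2e_G,v_G\}$ when $v_G\le 2e_G$, up to reassigning vertices. Since $|V_0|+|V_1|/2=v_J-\alpha_J^*$ and $|V_2|=2\alpha_J^*-v_J+(\text{slack})$, this is the claimed exponent. More precisely, $|V_2|-(2\alpha_J^*-v_J)=|V_0|$, so we need exactly one $V_2$-vertex per $V_0$-vertex to absorb a $\log(2e_G)$ rather than $\log v_G$. If $v_G>2e_G$, then $\min$ equals $2e_G$ and the bound $(2e_G)^{\alpha_J^*}$ is the Alon/Friedgut--Kahn bound; we obtain it by charging every $V_2$-vertex to an oriented edge through a neighbour.

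The main obstacle is this bookkeeping: assigning to each $u\in V_0$ a distinct $V_2$-vertex so that the pair costs $\log(2e_G)$ in total, plus $\log v_G$ for the remaining $V_2$-vertices. In general a perfect assignment need not exist. We therefore first modify $a$: if some set $S\subseteq V_0$ has $|N(S)\cap V_2|<|S|$, we shift weight by $\tfrac12$ on $S\cup(N(S)\cap V_2)$. This keeps $a$ optimal and half-integral, and by Hall's theorem, applied to an extremal optimal $a$ (say, one maximising $|V_1|$), it yields a matching of $V_0$ into $V_2$ along edges of $J$. With this matching in hand, the chain-rule estimates above combine into the stated bound.
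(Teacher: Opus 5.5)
Your route differs from the paper's and can be repaired, but as written the structural step it rests on is not justified. \cref{lemma:fractional-duality} does not say that the level set $W\coloneqq\{a_v=\tfrac12\}$ of a half-integral optimal $a$ is covered by vertex-disjoint edges and cycles. Its partition $V(J)=V_1\cup V_2$ has two parts and is not the level-set partition of $a$: for a star $K_{1,k}$ with $k\ge2$ one has $W=\emptyset$, while the lemma's $V_1$ is the centre plus one leaf.

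The property you need is true, but it needs its own proof. One way is Tutte's criterion for perfect $2$-matchings in $J[W]$. Suppose an independent $I\subseteq W$ had $|N(I)\cap W|<|I|$. Since $I$ has no neighbours in $\{a=1\}$, setting $a\equiv1$ on $I$ and $a\equiv0$ on $N(I)\cap W$ would give a heavier fractional independent set, contradicting optimality. Without this, your bound $\ent(\varphi|_W)\le\tfrac{|W|}{2}\log(2e_G)$ is unsupported.

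Your Hall step is also misargued. Shifting $\tfrac12$ on $S\cup(N(S)\cap V_2)$ preserves feasibility but raises $\sum_v a_v$ by $(|S|-|N(S)\cap V_2|)/2>0$, so it does not ``keep $a$ optimal''. Rather, optimality alone excludes Hall violators, so every optimal half-integral $a$ admits a matching of $V_0$ into $V_2$, and no extremal choice is needed. Your interim charging of an unmatched $u\in V_0$ by $\log v_G\le\log(2e_G)$ overcounts by $\log v_G$ per such vertex and should be dropped. With these two facts in place, your entropy bookkeeping is correct in both regimes of $\min\{2e_G,v_G\}$.

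Note also that the lemma, read as stated, already finishes the job. From $|V_1|+|V_2|=v_J$ and $|V_1|/2+|V_2|=\alpha_J^*$ one gets $|V_1|/2=v_J-\alpha_J^*$ and $|V_2|=2\alpha_J^*-v_J$. Bound $\varphi|_{V_1}$ by $(2e_G)^{|V_1|/2}$ via its covering edges and cycles (Shearer on the cycles). Bound each $\varphi(v)$ with $v\in V_2$ by the at most $\min\{2e_G,v_G\}$ non-isolated vertices of $G$. This gives the theorem at once; your matching plus $2$-matching merely reconstructs such a partition from $a$.

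The paper avoids any decomposition. It takes the conditional entropies $x_v$ along an arbitrary order and observes that they are feasible for $\mathcal{P}(J;\log v_G,\log(2e_G))$. It then bounds that LP in \cref{lem:LP} by showing that an optimal $x$ has $x_v\ge B-A$ and rescaling it into a fractional independent set. That argument needs no Hall, Tutte or Shearer. The decomposition route makes the dual combinatorial structure explicit, at the cost of having to establish that structure.
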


The following proposition complements \Cref{thm:max-copies} by showing that the asserted upper bound on $|\Emb(J,G)|$ is best-possible up to a constant multiplicative factor.

\begin{proposition}
  \label{prop:max-copies}
  For every nonempty graph $J$ without isolated vertices, there exists a positive constant $c_J$ such that, for all integers $m$ and $n$ with $v_J^2 \le m \le n^2$, there is a graph $G$ with at most $n$ vertices and $m$ edges that is a blow-up of $J$ and satisfies
  \[
    N(J, G) \ge c_J \cdot m^{v_J-\alpha_J^*} \cdot \min\{m, n\}^{2\alpha_J^*-v_J}.
  \]
  Further, if $J$ is bipartite, then some such $G$ has a vertex cover of cardinality at most $\max\{v_J, m/n\}$.
\end{proposition}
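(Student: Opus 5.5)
We take a blow-up of $J$ whose part sizes are given by an optimal half-integral fractional independent set. By \cref{lemma:fractional-duality}, fix $a\colon V(J)\to\{0,\tfrac12,1\}$ with $\sum_v a_v=\alpha_J^*$, and write $V_0,V_{1/2},V_1$ for the vertices receiving weights $0,\tfrac12,1$. Set $k\coloneqq\min\{m,n\}$ and choose a scale $s$ with $s\cdot k\approx m$, say $s\coloneqq\lceil m/k\rceil$; note $1\le s\le n$ up to constants, since $v_J^2\le m\le n^2$.

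The blow-up $G$ replaces each vertex $v$ by an independent set $U_v$ of size
\[
|U_v| \approx \lambda\cdot s^{1-a_v}\,k^{a_v}\cdot k^{?}
\]
normalised so that, for every edge $uv$ with $a_u+a_v\le 1$, $|U_u||U_v|\lesssim m/e_J$. Concretely, I would take $|U_v|=\lfloor c\, s\rfloor$ for $v\in V_0$ (with $s=m/k$), $|U_v|=\lfloor c\sqrt{m}\rfloor$ for $v\in V_{1/2}$, and $|U_v|=\lfloor c\,k\rfloor$ for $v\in V_1$, with a small constant $c=c_J$. Each edge $uv$ of $J$ becomes a complete bipartite graph between $U_u$ and $U_v$.

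Checking the edge count uses only $a_u+a_v\le 1$, which forbids edges $V_1$--$V_1$ and $V_1$--$V_{1/2}$. The remaining edge types are:
\begin{itemize}
\item $V_0$--$V_1$: contributes $c^2 (m/k)k=c^2m$ edges;
\item $V_{1/2}$--$V_{1/2}$: contributes $c^2m$ edges;
\item $V_0$--$V_{1/2}$: contributes $c^2(m/k)\sqrt m\le c^2 m$ edges, since $m/k\le\sqrt m$ (as $k\ge\sqrt m$ because $m\le n^2$);
\item $V_0$--$V_0$: contributes $c^2(m/k)^2\le c^2m$ edges.
\end{itemize}
So $e_G\le e_Jc^2m\le m$ for small $c$. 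The vertex count is at most $v_J c\max\{k,\sqrt m, m/k\}\le v_Jc\,n\le n$, since $k\le n$, $\sqrt m\le n$ and $m/k\le n$.

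For the copy count, note that $N(J,G)\ge \prod_v|U_v|/|\Aut(J)|$, because choosing one vertex in each part yields an embedding. The product is
\[
c^{v_J}(m/k)^{|V_0|}\, m^{|V_{1/2}|/2}\, k^{|V_1|} = c^{v_J}\, m^{|V_0|+|V_{1/2}|/2}\,k^{|V_1|-|V_0|}.
\]
Since $v_J-\alpha_J^*=|V_0|+|V_{1/2}|/2$ and $2\alpha_J^*-v_J=|V_1|-|V_0|$, this is exactly the claimed bound. The floors are harmless because every size is at least $c\cdot 1$ and $m\ge v_J^2$; we may need to replace floors by $\max\{1,\cdot\}$ and adjust $c$.

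For bipartite $J$ we can take $a$ to be integral, i.e.\ $V_{1/2}=\emptyset$. Then $V_0$ is a vertex cover of $J$, since $V_1$ is independent. Hence $\bigcup_{v\in V_0}U_v$ is a vertex cover of $G$ of size $|V_0|\max\{1,c\,m/k\}\le\max\{v_J, m/n\}$ when $k=n$, and $\le v_J$ when $k=m$. If $V_{1/2}$ is nonempty for the optimum, the subtle point is whether an integral optimum exists. For bipartite $J$ it does, because the fractional independent set polytope of a bipartite graph is integral. This is the one step needing care, together with the rounding bookkeeping in the size choices.
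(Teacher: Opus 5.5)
Your construction is exactly the paper's. Your part sizes are, up to the factor $c$, equal to $e^{x_v}$, where $x=(1-a)\max\{0,B-A\}+a\min\{A,B\}$ is the explicit optimal solution of $\mathcal{P}(J;A,B)$ with $A=\log n$, $B=\log m$ built in the proof of \cref{lem:LP}. Your check over edge types is that lemma's feasibility check, and the bipartite vertex-cover step via an integral optimum is the same as in the paper.

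One correction: the rounding adjustment is necessary, not optional. If $m\le n$ then $s=1$ and $\lfloor c\,s\rfloor=0$, so the parts $U_v$ with $v\in V_0$ would be empty and $N(J,G)=0$. Replacing each $\lfloor c\,y\rfloor$ by $\max\{1,\lfloor c\,y\rfloor\}$ does work, because $e_J\le v_J^2\le m$ and $v_J\le\sqrt m\le n$. The paper arranges the same thing more cleanly by taking $A=\log(n/v_J)$ and $B=\log(m/v_J^2)$, so that every part has size $\lfloor e^{x_v}\rfloor\ge 1$.
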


As was observed already in~\cite{Alo81}, the problem of counting embeddings of a graph $J$ in another graph is intimately related to the following linear program.

\begin{LP}
  For a graph $J$ and $A, B \ge 0$, the linear program $\mathcal{P}(J;A,B)$ is:
  \begin{align*}
    \textbf{maximise} & \qquad \sum_{v \in V(J)} x_v \\
    \textbf{subject to} & \qquad     0\le x_v \le A \quad \text{ for all $v \in V(J)$}, \\
                      & \qquad 0 \le x_v+x_w \le B \quad \text{ for all $vw\in E(J)$}.
  \end{align*}  
\end{LP}

The following proposition was (implicitly) proved by Janson, Oleszkiewicz, and Ruciński~\cite{JanOleRuc04}.

\begin{lemma}
  \label{lem:LP}
  For every nonempty graph $J$ without isolated vertices and all $0 \le B \le 2A$, the solution to $\mathcal{P}(J;A,B)$ is
  \begin{equation}
    \label{eq:LP-solution}
    (v_J - \alpha_J^*) \cdot B + (2\alpha_J^* - v_J) \cdot \min\{A, B\}.
  \end{equation}
  Moreover, if $J$ is bipartite, then there is an optimal solution $x$ that takes the value $\max\{0, B-A\}$ on all vertices of some vertex cover of $J$.
\end{lemma}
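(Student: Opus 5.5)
The proof has two parts. First I show that \eqref{eq:LP-solution} is an upper bound for every feasible $x$. Then I construct a feasible $x$ attaining it, which in the bipartite case is constant on a vertex cover. Throughout, I use the half-integral structure from \cref{lemma:fractional-duality}.

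For the upper bound, take the partition $V(J)=V_1\cup V_2$ given by \cref{lemma:fractional-duality}. Here $|V_1|/2+|V_2|=\alpha_J^*$ and $V_1$ is covered by vertex-disjoint edges and cycles of $J$. Since $v_J=|V_1|+|V_2|$, we get $|V_1| = 2(v_J-\alpha_J^*)$ and $|V_2| = 2\alpha_J^*-v_J$.
\begin{itemize}
\item For a covering edge $vw$, we have $x_v+x_w\le B$.
\item For a covering cycle $C$, summing the edge constraints around $C$ gives $\sum_{v\in C}x_v\le |C|B/2$.
\item Hence $\sum_{v\in V_1}x_v\le |V_1|B/2=(v_J-\alpha_J^*)B$.
\item For $v\in V_2$, since $J$ has no isolated vertices, $v$ has a neighbour $w$. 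Then $x_v\le B-x_w\le B$, and also $x_v\le A$, so $x_v\le\min\{A,B\}$.
\end{itemize}
Adding the $V_1$ and $V_2$ bounds yields \eqref{eq:LP-solution}.

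For the lower bound, I split into two cases.
\begin{itemize}
\item If $B\le A$, set $x\equiv B/2$. This is feasible because $B/2\le A$, and its value is $v_JB/2$, which equals \eqref{eq:LP-solution} since $\min\{A,B\}=B$.
\item If $A<B\le 2A$, take a half-integral optimal fractional independent set $a$ with values in $\{0,1/2,1\}$. Put $x_v=A$ when $a_v=1$, $x_v=B/2$ when $a_v=1/2$, and $x_v=B-A$ when $a_v=0$.
\end{itemize}

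I check feasibility of the second case.
\begin{itemize}
\item Each value lies in $[0,A]$, because $B/2\le A$ and $0\le B-A\le A$.
\item For an edge $vw$ we have $a_v+a_w\le1$. The possible value pairs are $(1,0)$, $(1/2,1/2)$, $(1/2,0)$ and $(0,0)$. Their sums are at most $A+(B-A)=B$, $B$, $B/2+B-A\le B$, and $2(B-A)\le B$ respectively, all within the constraint.
\end{itemize}
Let $n_1,n_{1/2},n_0$ be the numbers of vertices with $a_v=1,1/2,0$. Then $n_1+n_{1/2}/2=\alpha_J^*$ and $n_1+n_{1/2}+n_0=v_J$. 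The value of $x$ is
\[
n_1A+n_{1/2}B/2+n_0(B-A)=(n_1-n_0)A+(n_{1/2}/2+n_0)B .
\]
From the two identities, $n_{1/2}/2+n_0=v_J-\alpha_J^*$ and $n_1-n_0=2\alpha_J^*-v_J$. So the value equals \eqref{eq:LP-solution} with $\min\{A,B\}=A$.

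For the bipartite case, I expect the only care needed to be choosing the optimal fractional independent set integral, which is where bipartiteness is used.
\begin{itemize}
\item When $J$ is bipartite, the fractional independence polytope is integral, so $\alpha_J^*=\alpha_J$. I then take $a$ to be the indicator of a maximum independent set $I$. With this choice $n_{1/2}=0$ and $x=B-A$ on the vertex cover $V(J)\setminus I$.
\item When $B\le A$, I need the solution to equal $0$ on a vertex cover. Bipartiteness gives $\alpha_J^*=\alpha_J\ge v_J/2$, so $2\alpha_J^*-v_J\ge0$. Hence \eqref{eq:LP-solution} becomes $(v_J-\alpha_J)B+(2\alpha_J-v_J)B=\alpha_JB$.
\item This value is attained by $x=B$ on $I$ and $x=0$ on the complement, which is feasible since $B\le A$ and each edge has at most one endpoint in $I$.
\end{itemize}
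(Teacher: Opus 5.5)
\textbf{Gap in the lower bound for $B \le A$.} Your constant solution $x \equiv B/2$ has value $v_J B/2$. But for $\min\{A,B\}=B$, the target \eqref{eq:LP-solution} is $(v_J-\alpha_J^*)B+(2\alpha_J^*-v_J)B=\alpha_J^* B$. This is strictly larger whenever $B>0$ and $\alpha_J^*>v_J/2$. Examples are any star $K_{1,s}$ with $s\ge 2$, or, non-bipartitely, a triangle with two pendant edges at one vertex ($v_J=5$, $\alpha_J^*=3$). So the claimed equality is an algebra slip, and attainment is not shown in this case. Your bipartite paragraph repairs this for bipartite $J$, and there you compute the target correctly as $\alpha_J B$, which contradicts the earlier claim. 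For non-bipartite $J$ with $B\le A$ and $\alpha_J^*>v_J/2$, your proof never shows the bound is attained.

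\textbf{Fix.} Take $x \coloneqq B\cdot a$ for a maximum fractional independent set $a$. Then $0\le x_v\le B\le A$ and $x_v+x_w=B(a_v+a_w)\le B$, and the value is $\alpha_J^* B$. Equivalently, use your half-integral recipe in this case too, with $1\mapsto B$, $1/2\mapsto B/2$, $0\mapsto 0$. Together with your $A<B\le 2A$ recipe, this is exactly the paper's single formula $x=(1-\xi)\max\{0,B-A\}+\xi\min\{A,B\}$.

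\textbf{What is correct.} Your upper bound is valid and takes a different route from the paper. You use the edge/cycle decomposition from \cref{lemma:fractional-duality} as a dual certificate, which handles all $0\le B\le 2A$ at once. The paper instead splits into cases and, for $A<B<2A$, notes that an optimal $x$ satisfies $x_v\ge B-A$; it then rescales via $\xi=(x+A-B)/(2A-B)$ to a fractional independent set. Your construction for $A<B\le 2A$ and your bipartite argument are also correct and coincide with the paper's.
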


\begin{proof}
  We first prove an upper bound on the solution to $\mathcal{P}(J;A,B)$.
  If $B \le A$, then the vertex constraints are implied by the edge constraints (because $J$ has no isolated vertices) and thus the solution to $\mathcal{P}(J;A,B)$ is $\alpha_J^* \cdot B$.
  If $B = 2A$, then the edge constraints are implied by the vertex constraints and thus the solution to $\mathcal{P}(J;A,B)$ is $v_J \cdot A$.
  We may thus assume that $A < B < 2A$.
  Let $x \colon V(J) \to [0,A]$ be an optimal solution to $\mathcal{P}(J;A,B)$.  Observe that $x_v \geq B-A$ for each $v \in V(J)$; indeed, otherwise, replacing $x_v$ with $B-A$ would yield a feasible solution with larger weight.  Define
  \[
    \xi \coloneqq \frac{x + A - B}{2A-B}.
  \]
  Since the feasibility of $x$ implies that $0 \le \xi_v \le 1$ for every $v \in V(J)$ and $\xi_v + \xi_w \le 1$ for every $vw \in E(J)$,
  we may conclude that $\xi$ is a fractional independent set of $J$ and thus
  \[
    \sum_{v \in V(J)} x_v
    = v_J \cdot (B-A) + \sum_{v \in V(J)} \xi_v \cdot (2A-B)
    \le v_J \cdot (B-A) + \alpha_J^* \cdot (2A-B).
  \]

  We now describe a solution to $\mathcal{P}(J;A,B)$ with weight~\eqref{eq:LP-solution}.
  Suppose that $0 \le B \le 2A$ and let $\xi \colon V(J) \to [0,1]$ be a fractional independent set with weight $\alpha_J^*$.
  We claim that
  \[
    x \coloneqq (1-\xi) \cdot \max\{0, B-A\} + \xi \cdot \min\{A, B\}
  \]
  is a feasible solution to $\mathcal{P}(J;A,B)$ of weight~\eqref{eq:LP-solution}
  Indeed, since $A \ge \max\{0, B-A\}$, we have $0 \le x_v \le A$ for all $v \in V(J)$ and
  \[
    \begin{split}
      x_v + x_w
      & = 2 \cdot \max\{0, B-A\} + (\xi_v + \xi_w) \cdot (\min\{A, B\} - \max\{0, B-A\}) \\
      & \le \max\{0, B-A\} + \min\{A, B\} = B
    \end{split}
  \]
  for all $vw \in E(J)$, so $x$ is a feasible solution.
  Further,
  \[
    \begin{split}
      \sum_{v \in V(J)} x_v
      & = v_J \cdot \max\{0, B-A\} + \sum_{v \in V(J)} \xi_v \cdot (\min\{A, B\} - \max\{0,B-A\}) \\
      & = v_J \cdot \max\{0, B-A\} + \alpha_J^* \cdot (\min\{A, B\} - \max\{0, B-A\}),
    \end{split}
  \]
  which is equal to~\eqref{eq:LP-solution}.
  Finally, assume that $J$ is bipartite.
  In this case, $\alpha_J^*$ is achieved by some $\xi \colon V(J) \to \{0,1\}$ and $\xi^{-1}(0) \subseteq x^{-1}(\max\{0, B-A\})$ is a vertex cover of $J$.
\end{proof}

\begin{proof}[Proof of \Cref{thm:max-copies}]
  Let $\preceq$ be an arbitrary linear order on $V(J)$ and let $\varphi \colon V(J)\to V(G)$ be a uniformly chosen random embedding of $J$ into $G$. 
  Define $x \colon V(J)\to [0,\infty)$ by $x_v \coloneqq \ent\bigl(\varphi(v) \mid (\varphi(w) : w \prec v)\bigr)$ and note that, by the chain rule for conditional entropies,
  \begin{equation}
    \label{eq:log-EmbJG}
    \log |\Emb(J, G)| = \ent(\varphi) = \sum_{v \in V(J)} \ent\bigl(\varphi(v) \mid (\varphi(w) : w \prec v)\bigr)  =\sum_{v \in V(J)} x_v.
  \end{equation}
  Further, observe that standard properties of entropy imply that $x_v \le \ent(\varphi (v)) \le \log v_G$ for every $v \in V(J)$ and
  \[
    x_v+x_w \le \ent\bigl(\varphi(v)\bigr)+\ent\bigl(\varphi(w) \mid \varphi(v)\bigr) = \ent\bigl(\varphi(v),\varphi(w)\bigr)\le \log(2e_G)
  \]
  for every $vw \in E(J)$ with $v \prec w$.  In particular, $x$ is a solution to $\mathcal{P}(J; \log v_G, \log(2e_G))$ and thus,
  by \Cref{lem:LP},
  \[
    \sum_{v \in V(J)} x_v \le (v_J-\alpha_J^*) \cdot \log(2e_G) + (2\alpha_J^*-v_J) \cdot \min\{\log v_G, \log(2e_G)\}.
  \]
  Substituting this inequality into~\eqref{eq:log-EmbJG} gives the assertion of the theorem.
\end{proof}

\begin{proof}[Proof of \Cref{prop:max-copies}]
  Set $A \coloneqq \log(n/v_J)$ and $B \coloneqq \log(m/v_J^2)$ and note that $0 \le B \le 2A$.
  Let $x \colon V(J) \to [0, \infty)$ be an optimal solution to $\mathcal{P}(J; A, B)$ and let $G$ be the graph obtained from $J$ by blowing up each $v \in V(J)$ by a factor of $\lfloor e^{x_v} \rfloor$ and replacing the edges of $J$ by complete bipartite graphs.
  By definition,
  \[
    v_G \le \sum_{v \in V(J)} e^{x_v} \le v_J \cdot e^A = n
    \qquad
    \text{and}
    \qquad
    e_G \le \sum_{vw \in E(J)} e^{x_v+x_w} \le e_J \cdot e^B \le m,
  \]
  whereas, by \Cref{lem:LP} and because $x_v \ge 0$ for all $v$,
  \[
    N(J, G) \ge \prod_{v \in V(J)} \lfloor e^{x_v} \rfloor \ge 2^{-v_J} \cdot \exp\left(\sum_{v \in V(J)} x_v\right) = 2^{-v_J} \cdot \left(\frac{m}{v_J^2}\right)^{v_J - \alpha_J^*} \cdot \min\left\{\frac{m}{v_J^2}, \frac{n}{v_J}\right\}^{2\alpha_J^*-v_J},
  \]
  which implies the claimed estimate for a sufficiently small constant $c_J$.
  Finally, assume that $J$ is bipartite.
  By \Cref{lem:LP}, we may assume that $x$ takes the value $B-A$ on all vertices of some vertex cover $U \subseteq J$.
  In particular, the cardinality of the blow-up of $U$ is at most
  \[
    \sum_{v \in U} e^{x_v} \le v_J \cdot e^{\max\{0, B-A\}} = \max\{v_J, m/n\},
  \]
  as claimed.
\end{proof}

Our next lemma bounds the number of embeddings of certain bipartite graphs into a larger graph that contain a specified edge in the image.

\begin{lemma}[{\cite[Lemma~5.14]{HarMouSam22}}]
  \label{lemma:core-edge-bipartite}
  Suppose that $J$ is a nonempty, connected bipartite graph with maximum degree $\Delta$ that admits a bipartition $V(J) = A \cup B$ such that $|A| < |B|$ and $\deg_J a = \Delta$ for every $a \in A$. Then, for every graph $G$ and every $uv \in G$, 
  \[
    |\Emb(J,G; uv)| \le e_J \cdot (\deg_G u + \deg_G v) \cdot (2e_G)^{|A|-1} \cdot \bigl(\min\{e_G, v_G\}\bigr)^{|B|-|A|-1}.
  \]
\end{lemma}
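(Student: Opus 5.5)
We need to bound $|\Emb(J,G;uv)|$. The plan is to sum over the possible edges of $J$ that are mapped onto $uv$. Fix an edge $ab \in J$ with $a \in A$, $b \in B$; there are $e_J$ choices. We then count embeddings $\varphi$ with $\{\varphi(a),\varphi(b)\} = \{u,v\}$. The factor $\deg_G u + \deg_G v$ will come from the choice of the next vertex, and the remaining vertices will be counted via the entropy/LP argument used in the proof of \Cref{thm:max-copies}. Concretely, we may assume $\varphi(a)=u$ and $\varphi(b)=v$ (the other orientation is symmetric). Since $J$ is connected and $|A|<|B|$, we have $\Delta \ge 2$ unless $J$ is a single star. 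In the star case, $|A|=1$ and $J=K_{1,\Delta}$; once $a\mapsto u$ and $b \mapsto v$ are fixed, the remaining leaves go to neighbours of $u$. This gives at most $\deg_G u^{\Delta-1} \le (\deg_G u+\deg_G v)\cdot \min\{e_G,v_G\}^{|B|-2}$ choices, since $\deg_G u \le \min\{e_G, v_G\}$ and $|B|-|A|-1=\Delta-2$. This matches the bound.

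In general, I would order $V(J)$ by a BFS from the edge $ab$: first $a$, $b$, then the remaining vertices in an order in which each new vertex has a neighbour among the earlier ones. Let $\varphi$ be uniform over the embeddings with $\varphi(a)=u$ and $\varphi(b)=v$, and set $x_w := \ent(\varphi(w)\mid \text{earlier})$, so that $x_a=x_b=0$. The key inequalities are:
\begin{itemize}
\item $x_w \le \log \min\{e_G,v_G\}$ for every $w$, because $\varphi(w)$ is a non-isolated vertex of $G$;
\item $x_w+x_{w'} \le \log(2e_G)$ along edges $ww'$, as in the proof of \Cref{thm:max-copies}.
\end{itemize}
We want the total $\sum_w x_w$ to be at most $\log(\deg u+\deg v) + (|A|-1)\log(2e_G) + (|B|-|A|-1)\log\min\{e_G,v_G\}$.

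The cleanest route is a direct matching-type argument rather than the LP. Since every $a' \in A$ has degree $\Delta$ and $|A|\Delta = e_J \le |B|\Delta$, Hall's condition (for $S\subseteq A$, $|N(S)|\Delta \ge e(S,N(S)) = |S|\Delta$) gives a matching $M$ saturating $A$, and we may take it to contain $ab$. Pair each $a' \in A\setminus\{a\}$ with its partner $M(a')$; the entropy of each pair is at most $\log(2e_G)$. That accounts for $|A|-1$ pairs. The unmatched vertices of $B$ number $|B|-|A| \ge 1$, and each contributes at most $\log\min\{e_G,v_G\}$. This slightly overcounts by one factor of $\min\{e_G,v_G\}$.

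The main obstacle is extracting the $(\deg u+\deg v)$ factor in place of one of these factors. I would handle it using connectivity. Choose the matching, or rechoose $ab$'s role, so that some unmatched vertex $b' \in B$ is adjacent to $a$, or some matched pair is adjacent to $\{a,b\}$. Connectivity of $J$ and alternating-path exchanges in $M$ should let us assume there is an unmatched $b'$ adjacent to $a$, or a matched pair $(a',M(a'))$ with $a'$ adjacent to $b$. In the first case, $\varphi(b')$ is a neighbour of $u$, contributing at most $\log\deg u$. In the second case, $\varphi(a')$ is a neighbour of $v$, contributing at most $\log \deg v$; the pair then costs $\log\deg v + \log \min\{e_G,v_G\}$, which trades one $\log(2e_G)$ for one $\log\min$. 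Bookkeeping the exponents in the second case requires care: we traverse an alternating path until we reach an unmatched $B$-vertex, and along it each pair is charged as "neighbour of previous vertex" plus $\log(2e_G)$. Summing over the $e_J$ choices of $ab$ and the two orientations then yields the claim; the factor 2 from orientations is absorbed, or the orientation is determined by which endpoint plays $A$.
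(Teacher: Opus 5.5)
The paper does not prove this lemma itself (it is quoted from \cite{HarMouSam22}), so your argument has to stand on its own. Your overall scheme is the right one:
\begin{itemize}
\item sum over the $e_J$ edges $ab$ of $J$ and the two orientations of $ab\mapsto uv$;
\item charge one further vertex a degree factor;
\item charge $2e_G$ for each remaining edge of a matching saturating $A$;
\item charge a single-vertex factor for each leftover vertex of $B$.
\end{itemize}
But the step that carries all the content is only asserted (``should let us assume''). That step is that the degree factor can always be charged to an \emph{unmatched} vertex $w\in N_J(a)\setminus\{b\}$, i.e.\ that some such $w$ leaves $A\setminus\{a\}$ matchable into $B\setminus\{b,w\}$. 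Your fallback Case~2 does not give the bound. Charging $\deg_G v$ to an $A$-neighbour $a'$ of $b$ only replaces the factor $2e_G$ of the pair $(a',M(a'))$ by $\deg_G v\cdot\min\{e_G,v_G\}$, while all $|B|-|A|$ unmatched vertices of $B$ are still charged. The result exceeds the target by a factor $\min\{e_G,v_G\}^2/(2e_G)$, which is unbounded (e.g.\ when $e_G\le v_G$). Re-pairing along an alternating path does not change this, since each factor $2e_G$ needs a fresh vertex of $A$, and only $|A|-2$ remain once $a$ and $a'$ are placed. So the degree factor must replace a single-vertex factor, and you need to prove that this is always possible.

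It is always possible, by Hall's theorem, and this is exactly where connectivity and $|A|<|B|$ enter.
\begin{itemize}
\item Every nonempty $S\subseteq A$ satisfies $|N(S)|\ge|S|+1$. Equality $|N(S)|=|S|$ would force every vertex of $N(S)$ to have all $\Delta$ neighbours in $S$, so $S\cup N(S)$ would be a $\Delta$-regular component, i.e.\ all of $J$, contradicting $|A|<|B|$.
\item Hence a Hall violator $S_w\subseteq A\setminus\{a\}$ for matching into $B\setminus\{b,w\}$ must have $b,w\in N(S_w)$ and $|N(S_w)|=|S_w|+1$.
\item Sets $S$ with $b\in N(S)$ and $|N(S)|=|S|+1$ are closed under unions, because $|N(S\cup T)|-|S\cup T|=2-\bigl(|N(S)\cap N(T)|-|S\cap T|\bigr)\le 1$. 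Here use $b\in N(S)\cap N(T)$ when $S\cap T=\emptyset$, and $|N(S)\cap N(T)|\ge|N(S\cap T)|\ge|S\cap T|+1$ otherwise.
\item If every $w\in N_J(a)\setminus\{b\}$ failed, the union $U$ of the $S_w$ would satisfy $N(U)\supseteq N_J(a)$. Then $|N(U\cup\{a\})|=|U\cup\{a\}|$, contradicting the first point.
\end{itemize}
Given such a $w$ and matching, place $a,b,w$, then the matched pairs, then the remaining $|B|-|A|-1$ vertices of $B$, each after one of its (already placed) neighbours in $A$. A plain product count gives at most $\deg_G(\varphi(a))\cdot(2e_G)^{|A|-1}\cdot\Delta(G)^{|B|-|A|-1}$ embeddings per edge and orientation. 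Summing over both gives the lemma, since $\Delta(G)\le\min\{e_G,v_G\}$.

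This ordering also repairs your first bullet. Being non-isolated only bounds the number of choices by $\min\{2e_G,v_G\}$, which would cost a factor $2^{|B|-|A|-1}$ in the stated inequality.
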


In the sequel, we will only use the following simple corollary of~\cref{lemma:core-edge-bipartite}.
We shall not reproduce the proof of the lemma here; instead, we refer the interested reader to~\cite{HarMouSam22}.

\begin{corollary}
  \label{cor:core-edge-bipartite}  
  For every $J \in \cJ$, all graphs $G$ with $v_G \ge v_J$ and $e_G \ge e_J$, and every $uv \in G$,
  \[
    N(J, G; uv) \le C_J \cdot (\deg_G u + \deg_G v) \cdot \frac{N(J, v_G, e_G)}{e_G \cdot \min\{v_G, e_G\}}
  \]
  for some constant $C_J$ that depends only on $J$.
\end{corollary}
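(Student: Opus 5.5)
The plan is to reduce \cref{cor:core-edge-bipartite} to \cref{lemma:core-edge-bipartite}, applied componentwise, and then to compare the resulting bound with the lower bound on $N(J, v_G, e_G)$ from \cref{prop:max-copies}. Since $N(J,G;uv)$ and $|\Emb(J,G;uv)|$ differ only by the factor $|\Aut(J)|$, it suffices to work with embeddings throughout.

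\emph{Structure of $J$.} Fix $J = H[A, N(A)] \in \cJ$ and write $B \coloneqq N(A)$. Every $a \in A$ has $\deg_J a = \Delta$, and $J$ is bipartite with maximum degree $\Delta$. I first check that no connected component $J_i$ of $J$ is $\Delta$-regular. A $\Delta$-regular component would be a connected $\Delta$-regular subgraph of the connected graph $H$, and so would equal $H$, contradicting irregularity. Writing $A_i, B_i$ for the sides of $J_i$, double counting gives $|A_i|\Delta = e_{J_i} \le |B_i|\Delta$. Equality here would make $J_i$ $\Delta$-regular, so in fact $|A_i| < |B_i|$. Hence every component satisfies the hypotheses of \cref{lemma:core-edge-bipartite}. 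Moreover, \cref{lem:unique-frac-ind-set} gives $\alpha^*_J = |B|$, so that $v_J - \alpha_J^* = |A|$ and $2\alpha^*_J - v_J = |B| - |A|$.

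\emph{Upper bound on $|\Emb(J,G;uv)|$.} If $\varphi \in \Emb(J,G;uv)$, then $uv$ lies in the image of some component $J_i$. I would take a union over $i$ and bound the number of such $\varphi$ by
\[
  |\Emb(J_i, G; uv)| \cdot \prod_{j \ne i} |\Emb(J_j, G)|.
\]
The first factor is controlled by \cref{lemma:core-edge-bipartite}, giving
\[
  e_{J_i}(\deg_G u+\deg_G v)(2e_G)^{|A_i|-1}\min\{e_G,v_G\}^{|B_i|-|A_i|-1}.
\]
Each other factor is controlled by \cref{thm:max-copies}, giving $(2e_G)^{|A_j|}\min\{2e_G,v_G\}^{|B_j|-|A_j|}$; here I use $\alpha^*_{J_j} = |B_j|$, again by \cref{lem:unique-frac-ind-set}. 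The exponents add up over components. Absorbing the powers of $2$ into constants, this yields
\[
  |\Emb(J,G;uv)| \le C\,(\deg_G u + \deg_G v)\,\frac{e_G^{|A|}\min\{e_G,v_G\}^{|B|-|A|}}{e_G\min\{e_G,v_G\}}.
\]

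\emph{Comparison with $N(J, v_G, e_G)$.} I apply \cref{prop:max-copies} with $n = v_G$ and $m = e_G$, noting that $m \le n^2$ holds automatically. This gives
\[
  N(J,v_G,e_G) \ge c_J\, e_G^{|A|}\min\{e_G,v_G\}^{|B|-|A|},
\]
which matches the numerator above, and the corollary follows. The only gap is the hypothesis $e_G \ge v_J^2$ in \cref{prop:max-copies}, and this is the one small nuisance I expect. When $e_J \le e_G < v_J^2$, all quantities are bounded by constants depending on $J$. In that case $N(J, v_G, e_G) \ge 1$, witnessed by $J$ together with isolated vertices, which is allowed since $v_G \ge v_J$ and $e_G \ge e_J$. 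Also $\deg_G u + \deg_G v \ge 1$ and $e_G\min\{v_G,e_G\} < v_J^4$, so the right-hand side is at least a positive constant times $C_J$. Meanwhile $N(J,G;uv) \le (2e_G)^{v_J}$ is bounded, so a large enough $C_J$ handles this regime.
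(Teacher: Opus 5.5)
Your proof is correct and follows the paper's route: apply \cref{lemma:core-edge-bipartite}, use $\alpha_J^*=|B|$ from \cref{lem:unique-frac-ind-set} to rewrite the exponents, and compare with the lower bound on $N(J,v_G,e_G)$ from \cref{prop:max-copies}. Your componentwise argument and your separate treatment of $e_J \le e_G < v_J^2$ are in fact more careful than the paper's proof, which applies \cref{lemma:core-edge-bipartite} to $J$ itself even though that lemma assumes connectivity and $J\in\cJ$ need not be connected, and which does not check the hypothesis $m \ge v_J^2$ of \cref{prop:max-copies}.
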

\begin{proof}
  Consider an arbitrary $J \in \cJ$ and let $V(J) = A \cup B$ be its bipartition satisfying $\deg_J a = \Delta$ for every $a \in A$.  Since $|A| \le r \cdot |N(A)| = r|B|$, see the definition of $r$, and $r < 1$ by \Cref{lemma:rH-bounds}, we may bound $N(J, G; uv)$ from above using \Cref{lemma:core-edge-bipartite}.  Further, since $\alpha_J^* = |B|$, by \cref{lem:unique-frac-ind-set}, we have $|A| = v_J - \alpha_J^*$ and $|B| - |A| = 2\alpha^*_J - v_J$.  The claimed bound now follows by combining \Cref{lemma:core-edge-bipartite} and \Cref{prop:max-copies}.
\end{proof}



\section{Planted subgraphs and rate functions}
\label{sec:planted-graphs-rate}

The goal of this section is threefold. 
First, we shall establish the following asymptotic formula for the difference $\Ex_G[X] - \Ex[X]$ that is valid for all $G \subseteq K_n$ and all $p \ll 1$.

\begin{proposition}
  \label{cor:difference-in-expectations}
  For all $p \le 1/2$ and $G \subseteq K_n$,
  \begin{equation}
    \label{eq:difference-between-expectations}
    \Ex_G[X]-\Ex[X] =\bigl(1+\Theta(n^{-1}+p)\bigr) \cdot \sum_{\emptyset \neq J\subseteq H} \frac{|\Emb(J,H)|}{|\Aut(H)|} \cdot N(J,G)\cdot n^{v_H-v_J} p^{e_H-e_J},
  \end{equation}    
  where the sum is over all unlabelled subgraphs of $H$ without isolated vertices.
\end{proposition}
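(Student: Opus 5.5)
We write $X = \sum_{H'} \1[H' \subseteq G_{n,p}]$, where $H'$ ranges over all copies of $H$ in $K_n$. Conditioned on $G \subseteq G_{n,p}$, a fixed copy $H'$ appears with probability $p^{e_{H'} - e(H' \cap G)}$. Hence
\[
  \Ex_G[X] - \Ex[X] = \sum_{H'} p^{e_H}\bigl(p^{-e(H'\cap G)} - 1\bigr).
\]
Only copies $H'$ with $H' \cap G$ nonempty contribute. The plan is to group these copies according to the edge set $F = H' \cap G$, regarded as a subgraph of $G$ without isolated vertices and isomorphic to some nonempty $J \subseteq H$.

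For each such $J$, I would count the pairs consisting of a labelled copy of $J$ in $G$ and an extension of it to a copy of $H$ in $K_n$. The number of copies $H'$ of $H$ in $K_n$ containing a given copy $F$ of $J$, with the embedding of $J$ into $H$ recorded, is
\[
  \frac{|\Emb(J,H)|}{|\Aut(H)|}\cdot (n-v_J)_{v_H-v_J} \Big/ \ldots
\]
More cleanly, I would count triples $(\varphi, \psi)$, where $\varphi \in \Emb(H,K_n)$ and $\psi$ identifies $J$ with a subgraph of $H$. This gives
\[
  \sum_{H'} \#\{F \subseteq H' \cap G : F \cong J\} = \frac{|\Emb(J,H)|}{|\Aut(H)|}\cdot N(J,G)\cdot (n-v_J)_{v_H-v_J}.
\]
Here $(n)_k = (1+O(1/n))\,n^k$, which accounts for the $n^{-1}$ error term.

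The key issue is that $H'\cap G$ is generally larger than a given $F$. I would therefore use an inclusion–exclusion, or Möbius-type, identity. For a graph $F'$ we have
\[
  p^{-e(F')} - 1 = \sum_{\emptyset \ne F \subseteq F'} (p^{-1}-1)^{e(F)},
\]
where the sum runs over edge subsets. Grouping these edge subsets by their isomorphism type $J$ (after deleting isolated vertices) gives an exact formula:
\[
  \Ex_G[X]-\Ex[X] = p^{e_H}\sum_{\emptyset\neq J} (p^{-1}-1)^{e_J}\,\frac{|\Emb(J,H)|}{|\Aut(H)|}\, N(J,G)\,(n-v_J)_{v_H-v_J}.
\]
Every term is nonnegative. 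Since $(p^{-1}-1)^{e_J} = p^{-e_J}(1-p)^{e_J} = p^{-e_J}(1+O(p))$, and since $e_J \le e_H$ is bounded, each summand equals $(1+\Theta(n^{-1}+p))$ times the claimed summand. Nonnegativity then lets this multiplicative error factor out of the whole sum; the $\Theta$ reflects that the factor lies between two constants' worth of $1-O(n^{-1}+p)$, which is fine for $p\le 1/2$.

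The main obstacle is the bookkeeping in the double-counting identity. One has to check that summing over subgraphs $F\subseteq H'\cap G$ with $F\cong J$ and over copies $H'$ produces exactly the factor $|\Emb(J,H)|/|\Aut(H)|$ times the falling factorial. My approach would be to count triples $(\phi,\iota)$ with $\phi\in\Emb(H,K_n)$ and $\iota\in\Emb(J,H)$ such that $\phi\circ\iota$ maps $J$ into $G$. Counting these from the side of $\phi$ gives $|\Aut(H)|$ times the left-hand side times $|\Aut(J)|$. Counting from the side of $\phi\circ\iota\in\Emb(J,G)$ gives $|\Emb(J,G)|\cdot|\Emb(J,H)|\cdot(n-v_J)_{v_H-v_J}$. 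Dividing through by $|\Aut(J)|$ converts $|\Emb(J,G)|$ into $N(J,G)$.
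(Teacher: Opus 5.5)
Your proof is correct and follows essentially the paper's route: expand $p^{-e(H'\cap G)}$ as a sum over nonempty subgraphs of $H'\cap G$, swap the order of summation, and count the extensions of each copy of $J$ to a copy of $H$ via the falling factorial $(n-v_J)_{v_H-v_J}$. You differ from the paper in two modest ways. First, you use the exact identity $p^{-k}-1=\sum_{j\ge 1}\binom{k}{j}(p^{-1}-1)^j$, whereas the paper makes two approximations: it discards the $-p^{e_H}$ term, and it uses $\sum_{F\subseteq F'}p^{-|F|}=p^{-|F'|}(1+p)^{|F'|}$. This gives you an exact formula whose per-term correction factor $(1-p)^{e_J}(n-v_J)_{v_H-v_J}/n^{v_H-v_J}$ is visibly at most $1$. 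Second, you do the bookkeeping directly with unlabelled copies via the pairs $(\phi,\iota)$, instead of first treating $|\Emb(H,G_{n,p})|$ and spanning subgraphs $J\sqsubseteq H$.
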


Second, we identify the important summands in the right-hand side of~\eqref{eq:difference-between-expectations} whenever the planted graph $G$ has $O(f \log(1/p))$ edges.

\begin{proposition}
  \label{prop:difference-in-expectations-important-J}
  Suppose that $p \ll 1$ and let $G \subseteq K_n$ be a graph with $O(f \log(1/p))$ edges.
  Suppose further that either $\eps_H = \infty$ or $\eps_H < \infty$ and $p \gg n^{-1/\Delta - \eps_H} (\log(1/p))^K$ for some large constant $K$ that depends only on $H$.
  Then
  \begin{equation*}
    \Ex_G[X]-\Ex[X] \leq \sum_{J\in \cJ} \frac{|\Emb(J,H)|}{|\Aut(H)|} \cdot N(J,G)\cdot n^{v_H-v_J}p^{e_H-e_J}+o(n^{v_H}p^{e_H}).
  \end{equation*}
  Furthermore, if $p \le n^{-1/\Delta-\Omega(1)}$, then
  \[
    \Ex_G[X]-\Ex[X] \leq \sum_{J\in \cJ^*} \frac{|\Emb(J,H)|}{|\Aut(H)|} \cdot N(J,G)\cdot n^{v_H-v_J}p^{e_H-e_J}+o(n^{v_H}p^{e_H}).
  \]  
\end{proposition}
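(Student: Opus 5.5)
The plan is to start from \Cref{cor:difference-in-expectations} and show that every summand indexed by a $J \notin \cJ$ (respectively $J \notin \cJ^*$ in the second statement) is $o(n^{v_H}p^{e_H})$. Since $H$ has only boundedly many subgraphs, this suffices. The summand of $J$ equals a constant times $n^{v_H}p^{e_H}\cdot N(J,G)\, n^{-v_J}p^{-e_J}$. It therefore suffices to show
\[
  R_J \coloneqq \frac{N(J,G)}{n^{v_J}p^{e_J}} = o(1)
\]
for each such $J$, where $m \coloneqq e_G = O(f\log(1/p))$ and $v_G \le n$.

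By \Cref{thm:max-copies},
\[
  R_J \lesssim \frac{m^{v_J-\alpha^*_J}\,\min\{m,n\}^{2\alpha^*_J-v_J}}{n^{v_J}p^{e_J}}.
\]
I use two facts about the relevant $J$. First, $H$ is irregular, so \ref{item:QH-regular} of \Cref{lemma:eJ-alphaJ-clique} cannot hold. Second, if \ref{item:QH-bipartite} holds, so $J$ has a bipartition $A\cup B$ with every $a\in A$ of degree $\Delta$ in $J\subseteq H$, then $A$ is an independent set of degree-$\Delta$ vertices of $H$ whose entire neighbourhood lies in $J$. As $J$ has no isolated vertices, this forces $J=H[A,N(A)]\in\cJ$. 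Hence for $J\notin\cJ$ we get the strict inequality $e_J<\Delta(v_J-\alpha^*_J)$, i.e.\ $v_J-\alpha^*_J-e_J/\Delta>0$ (a positive rational with bounded denominator).

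\emph{Dense case: $p\ge n^{-1/\Delta}$.} Here $f=n^2p^\Delta\ge n$, and I treat both $m\ge n$ and $m<n$ via $\min\{m,n\}\le n$. This gives
\[
  R_J\lesssim (m/n^2)^{v_J-\alpha^*_J}p^{-e_J}\le (\log(1/p))^{O(1)}\,p^{\Delta(v_J-\alpha_J^*)-e_J}.
\]
The exponent of $p$ is at least $1/2$ because $v_J-\alpha^*_J$ is half-integral by \Cref{lemma:fractional-duality}. Since $p\ll1$, $R_J=o(1)$.

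\emph{Sparse case: $p<n^{-1/\Delta}$.} Now $f=n(np^\Delta)^r$. The worst case is $m\le n$, and the case $m>n$ reduces to the previous computation. Writing $p=n^{-1/\Delta-\eps}$ with $\eps>0$, I bound
\[
  R_J\lesssim m^{\alpha^*_J}n^{-v_J}p^{-e_J}\le(\log(1/p))^{\alpha^*_J}\, n^{\alpha^*_J(1+r)-v_J}p^{r\Delta\alpha^*_J-e_J}.
\]
The exponent of $n$ in the last product is
\[
  -\bigl(v_J-\alpha^*_J-e_J/\Delta\bigr)+\eps\,(e_J-r\Delta\alpha^*_J).
\]
If $e_J\le r\Delta\alpha^*_J$, this is at most $-(v_J-\alpha^*_J-e_J/\Delta)<0$, giving a polynomial saving. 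Otherwise it is negative exactly when $\eps<(v_J-\alpha^*_J-e_J/\Delta)/(e_J-r\Delta\alpha^*_J)$, which is guaranteed up to the boundary by the definition~\eqref{eq:eps_H_def} of $\eps_H$. The main obstacle is this boundary regime $p=n^{-1/\Delta-\eps_H}(\log(1/p))^{K}$, where the polynomial saving disappears. Here I will redo the computation keeping the factor $(\log(1/p))^{K}$ in $p$: it contributes $(\log(1/p))^{-K(e_J-r\Delta\alpha_J^*)}$. Since $e_J-r\Delta\alpha^*_J$ is bounded below by a positive constant depending on $H$, choosing $K$ large beats the $(\log(1/p))^{\alpha^*_J}$ loss coming from $m$. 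I also need to check that $J\in\cJ^*$ contributes only in the harmless way. Note that \Cref{fact:f-p-pH} shows the computation is exactly tight for such $J$.

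\emph{Second statement.} For $J\in\cJ\setminus\cJ^*$ with $J=H[A,N(A)]$, we have $e_J=\Delta(v_J-\alpha^*_J)$ and $\alpha^*_J=|N(A)|$ by \Cref{lem:unique-frac-ind-set}. The $n$-exponent above becomes
\[
  \eps\,(e_J-r\Delta\alpha^*_J)=\eps\Delta\bigl(|A|-r|N(A)|\bigr)<0,
\]
since $|A|/|N(A)|<r$. This is a polynomial saving whenever $\eps=\Omega(1)$, which kills the logarithmic factors, so these summands are also $o(n^{v_H}p^{e_H})$.
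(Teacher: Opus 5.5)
Your argument is the paper's, with the proof of \Cref{lem:bound-on-NJG} inlined. The max-copies bound, the split at $p=n^{-1/\Delta}$, the half-integral gain $p^{1/2}$ in the dense case, the sparse-case exponent $-(v_J-\alpha_J^*-e_J/\Delta)+\eps(e_J-r\Delta\alpha_J^*)$ controlled through the definition of $\eps_H$, and the saving $n^{\eps\Delta(|A|-r|N(A)|)}$ for $J\in\cJ\setminus\cJ^*$ are exactly items (ii)--(iv) of that lemma.

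However, one step is missing, and your sentence that this ``suffices'' is false as written. \Cref{cor:difference-in-expectations} gives $\Ex_G[X]-\Ex[X]=\bigl(1+\Theta(n^{-1}+p)\bigr)\sum_{J}(\cdots)$. The factor $1+\Theta(n^{-1}+p)$ also multiplies the terms you keep, whereas the claimed inequality has the exact coefficients $|\Emb(J,H)|/|\Aut(H)|$ on them. So you must also show that $(n^{-1}+p)\cdot N(J,G)\ll n^{v_J}p^{e_J}$ for every $J\in\cJ$ in the first statement, and for every $J\in\cJ^*$ in the second. This is why the paper invokes item (i) of \Cref{lem:bound-on-NJG} in addition to items (ii)--(iv).

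The repair uses only your own computation. For $J=H[A,N(A)]\in\cJ$ you have $e_J=\Delta|A|=\Delta(v_J-\alpha_J^*)$ and $\alpha_J^*=|N(A)|$.
\begin{itemize}
\item In the dense case this gives $R_J\lesssim(C\log(1/p))^{v_J-\alpha_J^*}$.
\item In the sparse case the $n$-exponent equals $\eps\Delta(|A|-r|N(A)|)\le 0$, so $R_J\lesssim(C\log(1/p))^{\alpha_J^*}$.
\end{itemize}
Since $p\ll 1$, and $\log(1/p)=O(\log n)$ as soon as $G$ has an edge (because $e_G=O(f\log(1/p))$), it follows that $(n^{-1}+p)R_J\le(n^{-1}+p)(\log(1/p))^{O(1)}=o(1)$. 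Replace your remark that $J\in\cJ^*$ ``contributes only in the harmless way'' with this explicit bound. Note that in the first statement it is needed for all of $\cJ$, not only $\cJ^*$.
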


Third, we will determine the order of magnitude of the rate functions $\psi_H$ and $\psi^*_H$ in the entire range $p_H \ll p \ll 1$, except for the case where $H$ is not stable and $p = p_H (\log n)^{O(1)}$.

\begin{proposition}
  \label{lem:psi--psi*-lower}
  For every $\delta > 0$, there exist $c_\delta,C_\delta > 0$ such that, for all $\omega \cdot p_H\ll p\ll 1$,
  \[
    c_\delta f \log(1/p)\le \psi_H(\delta) \le C_\delta f \log (1/p).
  \]
    Further, $\psi_{H}^*(\delta)\leq C_{\delta}f\log(1/p)$ and there is a constant $K$ for which 
    \[
      \psi^*_H(\delta) \geq c_{\delta} f\cdot
      \begin{cases}
        \log (1/p) &\text{if $H$ is stable},\\
        \min\{\log (p/p_H), \log(1/p)\} & p \geq p_H (\log n)^K.
  \end{cases}
  \]
\end{proposition}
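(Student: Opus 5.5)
The plan has three parts: upper bounds on $\psi_H$ and $\psi_H^*$ by explicit constructions, the lower bound on $\psi_H$ from \Cref{thm:max-copies}, and the lower bound on $\psi_H^*$, where the entropy term must be controlled.

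\emph{Upper bounds.} Fix $J\in\cJ^*$ with bipartition $A\cup B$, $\deg_J a=\Delta$ for $a\in A$, and $\alpha_J^*=|B|$. Apply \Cref{prop:max-copies} with $m=Kf$ for a large constant $K$, which gives a blow-up $G$ of $J$ with at most $n$ vertices and $Kf$ edges such that
\[
N(J,G)\ge c_J (Kf)^{v_J-\alpha_J^*}\min\{Kf,n\}^{2\alpha_J^*-v_J}.
\]
Case $p\le n^{-1/\Delta}$. Here \cref{fact:f-p-pH} gives $f^{\alpha_J^*}=n^{v_J}p^{e_J}$ and $f\le n$, so $N(J,G)\ge c_JK^{v_J-\alpha^*_J}n^{v_J}p^{e_J}$. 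Plugging this into \Cref{cor:difference-in-expectations} (every summand is nonnegative) makes $G$ a $\delta$-seed once $K$ is large.

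Case $p>n^{-1/\Delta}$. Here $f=n^2p^\Delta$. Use $J=K_{1,\Delta}\in\cJ$ (a single vertex of degree $\Delta$) and plant a hub: a vertex joined to $Kf/n$ vertices... this gives the wrong scale. Instead, plant a complete bipartite graph $K_{s,n}$ with $s=Kf/n=Kn p^\Delta$. It has $N(K_{1,\Delta},\cdot)\approx s n^\Delta$, and its contribution is $\approx s n^{\Delta}n^{v_H-\Delta-1}p^{e_H-\Delta}=Kn^{v_H}p^{e_H}$. If $s<1$ this fails, and one would use the clique-type alternative, but $f\ge n(np^\Delta)^r$ covers that case through the first construction. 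Either way $\psi_H(\delta)\le e_G\log(1/p)=O(f\log(1/p))$.

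For $\psi_H^*$, the same $G$ has a vertex cover of size $\max\{v_J,Kf/n\}$ by \Cref{prop:max-copies}. Its number of edges is $\Theta(f)$, so the cover size is at most $e_G/\omega$ and is within the second bound of \cref{dfn:hub-core}. It also satisfies $\delta(G)\ge d_H$ when blow-up factors are at least $1$. Hence $G$ is a hub-core, and the negative entropy term only helps, giving $\psi^*_H(\delta)\le C_\delta f\log(1/p)$.

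\emph{Lower bound on $\psi_H$.} Let $G$ be a $\delta$-seed. By \Cref{cor:difference-in-expectations}, some nonempty $J\subseteq H$ has $N(J,G)\ge c\,n^{v_J}p^{e_J}$. \Cref{thm:max-copies} gives
\[
N(J,G)\le (2e_G)^{v_J-\alpha^*_J}\min\{2e_G,n\}^{2\alpha^*_J-v_J}.
\]
Solving for $e_G$ requires showing that, for every $J$, the resulting threshold is $\Omega(f)$ when $p\gg\omega p_H$. This is the optimisation behind the definition of $r$, $\eps_H$ and $p_H$. Use \Cref{lemma:eJ-alphaJ-clique}, which gives $e_J\le\Delta(v_J-\alpha^*_J)$, and compare the exponents; for $J\notin\cJ$ the extra slack is exactly what $p\ge p_H$ buys through the definition of $\eps_H$. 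I expect this to be bookkeeping over the cases $e_G\lessgtr n$.

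\emph{Lower bound on $\psi^*_H$.} This is the main obstacle: I must show $e_G\log(1/p)-v_G\log(en/g)\ge c f\log(1/p)$, or $\ge cf\log(p/p_H)$, for every seed hub-core. For a hub-core, $v_G\le e_G+\text{(cover)}$ since non-cover vertices are incident to edges, so the difference is at least $e_G\log(g/(pn))$ up to lower order. When $p$ is large this is $\Omega(e_G\log(1/p))$ directly. Near $n^{-1/\Delta}$ and below it is not, since $g\approx n^{1+r}p^{r\Delta}$ can make $\log(g/(pn))$ small, and I would do a sharper count. Write $G$ as a bipartite-like graph with cover $U$ of size $u$ and $W=V(G)\setminus U$, where each $w\in W$ has degree at least $d_H$ into $U$, so $e_G\ge d_H|W|$. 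Bound $N(J,G)$ for $J\in\cJ^*$ in terms of $u$ and $|W|$: choose the $A$-side in $U$ and the $B$-side in $W$, giving roughly $u^{|A|}|W|^{|B|}$ at most, refined by degree constraints. Comparing with $n^{v_J}p^{e_J}=f^{\alpha^*_J}$ then yields $|W|\log(1/p)$ dominating $|W|\log(en/g)$ with a margin. In the stable case, use that $\alpha^*_J/e_J$ relations force $d_H\log(1/p)-\log(en/g)\ge c\log(1/p)$ near $p_H$; in general the margin is $\log(p/p_H)$ via \cref{fact:f-p-pH}, where $n^{v_J}p^{e_J}\ge(p/p_H)^{e_J}$ supplies that slack. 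I would first try to make this per-vertex accounting precise, i.e.\ charging each $w\in W$ its $d_H$ edges against $\log(en/g)$.
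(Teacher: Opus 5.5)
Your upper bounds and your lower bound on $\psi_H$ follow the paper's route: a blow-up of some $J\in\cJ^*$ from \cref{prop:max-copies}, and \cref{thm:max-copies} plus the $\eps_H$ bookkeeping, which is exactly \cref{lem:bound-on-NJG} and can simply be cited. The $K_{s,n}$ detour for $p>n^{-1/\Delta}$ is unnecessary. There $f=n^2p^\Delta\ge n$ and $e_J=\Delta(v_J-\alpha_J^*)$ give $f^{v_J-\alpha_J^*}n^{2\alpha_J^*-v_J}=n^{v_J}p^{e_J}$, so the same blow-up works; moreover $s=Knp^\Delta>K$ there, so ``$s<1$'' never occurs. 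The cover bound $\le e_G/\omega$ also needs $f\ge p/p_H\gg\omega$, which you should state.

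The genuine gap is the lower bound on $\psi_H^*$. Your reduction is correct. With $\nu_G$ the size of a smallest vertex cover, $\delta(G)\ge d_H$ gives $v_G\le e_G/d_H+\nu_G$, and hence, for $p\le n^{-1/\Delta}$ (where $g=f$),
\[
  e_G\log(1/p)-v_G\log(en/g)\ \ge\ \frac{e_G}{d_H}\log\Bigl(\frac{f}{enp^{d_H}}\Bigr)-\nu_G\log(en/f).
\]
Everything therefore hinges on the $G$-independent inequality $f/(np^{d_H})\ge\beta n^{\beta}$ if $H$ is stable, and $\ge\beta (p/p_H)^{\beta}$ otherwise. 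You never prove this, and none of your suggested tools can.
\begin{itemize}
\item Bounding $N(J,G)$ by $u^{|A|}|W|^{|B|}$ cannot produce it. The seed condition only yields lower bounds on the size of $G$, never an upper bound on $v_G/e_G$, and the required inequality does not involve $G$ at all.
\item \Cref{fact:f-p-pH} does not give it either. At $p=p_H$ it only says $f\ge 1$, with no comparison between $f$ and $np^{d_H}$.
\item The identities $e_J=r\Delta\alpha_J^*$ and $v_J=(1+r)\alpha_J^*$ are also insufficient. In the stable case the inequality amounts to $d_H>m(H)$, which genuinely needs stability.
\end{itemize}

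The missing idea is \cref{lem:f-npd}. Delete a vertex of degree $d_H$ from some $J\in\cJ^*$ to obtain $J'$. Since $e_{J'}=e_J-d_H$ with $1\le d_H\le\Delta-1$, while every graph in $\cJ$ has a multiple of $\Delta$ edges, $J'$ (after discarding isolated vertices) is not in $\cJ$. Items (iii)/(iv) of \cref{lem:bound-on-NJG} then give $N(J',n,f)\le C\,n^{v_{J'}}p^{e_{J'}}\cdot n^{-\beta}$ if $H$ is stable, resp.\ $\le C\,n^{v_{J'}}p^{e_{J'}}\cdot L^{-\beta}$ otherwise, with $L=p/p_H$. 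On the other hand, \cref{prop:max-copies} gives $N(J',n,f)\ge c f^{\alpha_{J'}^*}\ge c f^{\alpha_J^*-1}=c\,n^{v_J}p^{e_J}/f$. Since $n^{v_{J'}}p^{e_{J'}}=n^{v_J}p^{e_J}/(np^{d_H})$, comparing yields $np^{d_H}/f\le (C/c)\, n^{-\beta}$, resp.\ $\le (C/c)\,L^{-\beta}$.

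Two smaller omissions remain.
\begin{itemize}
\item The cover term $\nu_G\log(en/f)$ must be shown negligible. For stable $H$ use $\nu_G\le e_G/\omega$. Otherwise use $\nu_G\le(\log n)^{O(1)}$, which follows from \cref{dfn:hub-core} and $e_G\le f(\log(1/p))^2$, against $e_G\log L\ge c f\log L$ with $f\ge p/p_H\ge(\log n)^K$. This is the only place the hypothesis $p\ge p_H(\log n)^K$ enters, and your sketch never uses it.
\item The condition $\delta(G)\ge d_H$ is only imposed when $p\le n^{-1/\Delta}(\log n)^{-\omega}$, so above that you need the crude bound $v_G\le 2e_G$ to suffice. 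It does for all $p\ge n^{-1/(\Delta-x)}$ with a small constant $x$, because then $g=f\ge np^{xr}$. Contrary to your remark, the crude bound does not fail near $n^{-1/\Delta}$.
\end{itemize}
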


\subsection{Planted subgraphs}
\label{sec:planted-subgraphs}

We first prove a variant of \Cref{cor:difference-in-expectations} that is easier to prove but more difficult to apply.  We write $J \sqsubseteq H$ to indicate that $J$ is a \emph{spanning} subgraph of~$H$.

\begin{lemma}
  \label{lem:difference-in-expectations}
  Suppose that $p \le 1/2$ and that $G \sqsubseteq K_n$. Letting $X'\coloneqq |\Emb (H,G_{n,p})|$, we have
  \[
    \Ex_G[X']-\Ex[X'] =(1+\Theta(p))\sum_{\emptyset \neq J\sqsubseteq H} |\Emb(J,G)| \cdot p^{e_H-e_J}.
  \]
\end{lemma}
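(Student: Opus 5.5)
We prove the lemma by expanding $X'$ over embeddings $\varphi \colon V(H) \to [n]$ and then reorganising the resulting sum by the intersection of $\varphi(H)$ with $G$. Here $G \sqsubseteq K_n$ means $G$ is viewed as a spanning subgraph of $K_n$, i.e., a set of edges on vertex set $[n]$. Write $X' = \sum_{\varphi} \1[\varphi(H) \subseteq G_{n,p}]$, where $\varphi$ ranges over all injections $V(H) \to [n]$. For each such $\varphi$ let $J_\varphi \sqsubseteq H$ be the spanning subgraph of $H$ whose edges are $\{uv \in E(H) : \varphi(u)\varphi(v) \in G\}$. Conditioning on $G \subseteq G_{n,p}$, the edges of $\varphi(H)$ lying in $G$ are present with probability one and the remaining ones independently with probability $p$. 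Hence
\[
  \Ex_G[X'] = \sum_{\varphi} p^{e_H - e(J_\varphi)}
  \qquad\text{and}\qquad
  \Ex[X'] = \sum_\varphi p^{e_H}.
\]

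Next we group the injections according to $J_\varphi$. For a spanning $J \sqsubseteq H$, let $M(J)$ be the number of injections with $J_\varphi = J$ exactly. By definition, $|\Emb(J,G)| = \sum_{J' \sqsupseteq J} M(J')$, where $J'$ ranges over spanning subgraphs of $H$ containing $J$. Thus
\[
  \Ex_G[X'] - \Ex[X'] = \sum_{\emptyset \neq J \sqsubseteq H} M(J)\bigl(p^{e_H - e_J} - p^{e_H}\bigr),
\]
where the term $J = \emptyset$ vanishes. Since $e_J \ge 1$ and $p \le 1/2$, we have $p^{e_H-e_J} - p^{e_H} = (1 - \Theta(p))\, p^{e_H - e_J}$. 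It therefore remains to compare
\[
  S_M \coloneqq \sum_{J \ne \emptyset} M(J)\, p^{e_H-e_J}
  \qquad\text{with}\qquad
  S_E \coloneqq \sum_{J \ne \emptyset} |\Emb(J,G)|\, p^{e_H - e_J}.
\]

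For the comparison, substitute $|\Emb(J,G)| = \sum_{J' \sqsupseteq J} M(J')$ into $S_E$ and swap the order of summation:
\[
  S_E = \sum_{J' \ne \emptyset} M(J') \sum_{\emptyset \ne J \sqsubseteq J'} p^{e_H - e_J}.
\]
For fixed $J'$, the inner sum contains the term $J = J'$, which equals $p^{e_H - e_{J'}}$. Every other term comes from a $J$ with $e_J = e_{J'} - k$ for some $k \ge 1$, and there are at most $\binom{e_{J'}}{k}$ such $J$. The inner sum is therefore
\[
  p^{e_H - e_{J'}}\Bigl(1 + \sum_{k \ge 1} \tbinom{e_{J'}}{k} p^k\Bigr) \le p^{e_H - e_{J'}} (1+p)^{e_H} = p^{e_H - e_{J'}}\bigl(1 + O(p)\bigr),
\]
with the implicit constant depending on $H$. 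Hence $S_M \le S_E \le (1+O(p))\, S_M$, that is, $S_E = (1+\Theta(p))\, S_M$ in the sense of the statement (the factor lies between $1 - O(p)$ and $1 + O(p)$). Combining the two factors $(1 \pm O(p))$ gives the claim.

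The only step needing care is this last double counting: $|\Emb(J,G)|$ counts embeddings whose true intersection with $G$ may be strictly larger than $J$. The point is that such overcounting always comes with extra factors of $p$, so it is absorbed into the $(1+\Theta(p))$ error. Constants depend only on $e_H$, which is fixed.
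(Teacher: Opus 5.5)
Your proof is correct and follows essentially the same argument as the paper. Both expand over embeddings, reduce $\Ex_G[X']-\Ex[X']$ to $(1+O(p))\sum_\varphi p^{e_H-e(J_\varphi)}$ over those $\varphi$ with $J_\varphi\neq\emptyset$, and compare this with $\sum_{J}|\Emb(J,G)|\,p^{e_H-e_J}$ via $\sum_{J\sqsubseteq J_\varphi}p^{e_H-e_J}=p^{e_H-e(J_\varphi)}(1+p)^{e(J_\varphi)}$. The only difference is bookkeeping: you group embeddings by their exact intersection pattern (your $M(J)$) before swapping sums, whereas the paper makes the comparison separately for each embedding $\varphi$.
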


\begin{proof}
  Write $X'=\sum_{\varphi \in \Emb(H,K_n)}Y_\varphi$, where $Y_\varphi$ is the indicator for the event $\varphi\in \Emb(H,G_{n,p})$.
  For every $\varphi\in \Emb(H,K_n)$, we write $\varphi(H)$ for the image of $H$ under $\varphi$, so that $\varphi \in \Emb(H, G_{n,p})$ if and only if $\varphi(H) \subseteq G_{n,p}$.
  Suppose that $G$ is a spanning, labelled subgraph of $K_n$.
  By linearity of expectation and conditional expectation, we have
  \[
    \Ex_G[X']-\Ex[X']
    = \sum_{\varphi\in \Emb(H,K_n)}p^{e_{H}-e_{\varphi(H)\cap G}}-p^{e_H}
    = (1+\Theta(p)) \sum_{\substack{\varphi\in \Emb(H,K_n)\\ \varphi(H) \cap G \neq \emptyset}}  p^{e_H - e_{\varphi(H) \cap G}}.
  \]
  We now claim that, for every $\varphi \in \Emb(H,K_n)$ such that $\varphi(H) \cap G \neq \emptyset$,
  \begin{equation}
    \label{eq:Mobius-asymptotic-equivalence}
    p^{e_H - e_{\varphi(H) \cap G}} = (1+\Theta(p)) \sum_{\substack{\emptyset \neq J\sqsubseteq H \\ \varphi(J) \subseteq G}} p^{e_H-e_J},
  \end{equation}
  which implies the assertion of the lemma, as for every $J \sqsubseteq H$ we have
  \[
    \{\varphi\in \Emb(H,K_n) : \varphi(J)\subseteq G\} = \Emb(J,G).
  \]
  Indeed, it is clear that the left-hand side of~\eqref{eq:Mobius-asymptotic-equivalence} is at most as large as the right-hand side (consider the largest $J \sqsubseteq H$ such that $\varphi(J) \subseteq G$).
  On the other hand, it is not hard to see that, for every $\varphi \in \Emb(H,K_n)$,
  \[
    \sum_{\substack{J \sqsubseteq H \\ \varphi(J) \subseteq G}} p^{e_H-e_J} = p^{e_H-e_{\varphi(H) \cap G}} \cdot (1+p)^{e_{\varphi(H)\cap G}} \le (1+O(p)) \cdot p^{e_{H}-e_{\varphi(H)\cap G}}.
    \qedhere
  \]
\end{proof}

\begin{proof}[Proof of \Cref{cor:difference-in-expectations}]
  Since the left-hand side and the right-hand side of \eqref{eq:difference-between-expectations} do not change if we add to $G$ isolated vertices, we may assume that $G\sqsubseteq K_n$.
  For any graph $\emptyset \neq J \sqsubseteq H$, let $J^*$ be the subgraph of $J$ formed by removing all isolated vertices.  Then, 
  \begin{equation}\label{eq:embedding-of-graph-with-isolated-vertices}
    |\Emb(J, G)|=|\Emb(J^*,G)|\cdot (n-v_{J^*})_{v_H-v_{J^*}}=(1+O(n^{-1}))\cdot |\Emb(J^*,G)|\cdot n^{v_H-v_{J^*}},
  \end{equation}
  where $(n)_k=\prod_{i=0}^{k-1} (n-i)$ is the $k$-th falling factorial.
  Write $X' \coloneqq |\Emb(H, G_{n,p})|$.
  \cref{lem:difference-in-expectations} and \eqref{eq:embedding-of-graph-with-isolated-vertices} imply that
  \[
    \begin{split}
      \Ex_G[X']-\Ex[X']
      &= (1+O(n^{-1}+p)) \cdot \sum_{\emptyset \neq J\sqsubseteq H} |\Emb(J^*, G)| \cdot n^{v_H-v_{J^*}} p^{e_H-e_{J^*}}\\
      &= (1+O(n^{-1}+p)) \cdot \sum_{\emptyset \neq J^* \subseteq H} N(J^*,H) \cdot |\Emb(J^*,G)| \cdot n^{v_H-v_{J^*}} p^{e_H-e_{J^*}},
    \end{split}
  \]
  where the second sum ranges over all unlabelled subgraphs of $H$ without isolated vertices.
  The assertion of the proposition follows, as $|\Emb(K,L)| = |\Aut(K)|\cdot N(K,L)$ for all graphs $K,L$.
\end{proof}

The following lemma plays the key role in the proof of \Cref{prop:difference-in-expectations-important-J}.

\begin{lemma}
  \label{lem:bound-on-NJG}
  The following holds for every nonempty $J \subseteq H$ without isolated vertices, all $x \in (0, \infty)$, and every $G \subseteq K_n$ with at most $xf$ edges.
  Writing
  \[
    \pi(x) \coloneqq \max\{(2x)^{v_J-\alpha_J^*},(2x)^{\alpha_J^*}\} \le \max\{2x, (2x)^{v_J}\},
  \]
  we have:
  \begin{enumerate}[label=(\roman*)]
  \item
    \label{item:NJG-cJ}
    If $J \in \cJ$, then, for all $p$,
    \[
      \frac{N(J,G)}{n^{v_J}p^{e_J}} \le \pi(x).
    \]
  \item
    \label{item:NJG-cJ-not-star-sparse}
    If $J \in \cJ \setminus \cJ^*$, then, for all $p < n^{-1/\Delta}$,
    \[
      \frac{N(J,G)}{n^{v_J}p^{e_J}} \le \pi(x) \cdot (np^\Delta)^{\gamma_H},
    \]
    where $\gamma_H$ is a positive constant that depends only on $H$.
  \item
    \label{item:NJG-not-cJ-eps-infty}
    If $J \notin \cJ$ and $\eps_H = \infty$, then, for all $p$,
    \[
      \frac{N(J,G)}{n^{v_J}p^{e_J}} \leq \pi(x) \cdot \max\{p^{1/2}, n^{-1/(2\Delta)}\}.
    \]
  \item
    \label{item:NJG-not-cJ-eps-finite}
    If $J \notin \cJ$ and $\eps_H<\infty$, then, for every $L \ge 1$ and all $p \ge Ln^{-1/\Delta-\eps_H}$,
    \[
      \frac{N(J,G)}{n^{v_J}p^{e_J}}\leq \pi(x) \cdot \max\{p^{1/2}, n^{-1/(2\Delta)}, L^{-\beta_H}\},
    \]
    where $\beta_H$ is a positive constant that depends only on $H$.
  \end{enumerate}
\end{lemma}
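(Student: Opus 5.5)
The plan is to start from \Cref{thm:max-copies}, which for $e_G \le xf$ gives
\[
N(J,G) \le |\Emb(J,G)| \le (2xf)^{v_J-\alpha_J^*}\cdot \min\{2xf,n\}^{2\alpha_J^*-v_J}.
\]
Since $2\alpha_J^* - v_J \ge 0$, the $x$-dependence can be pulled out as $\max\{(2x)^{v_J-\alpha_J^*},(2x)^{\alpha_J^*}\}$. In both cases $x\le 1/2$ and $x\ge 1/2$ (splitting $\min\{2xf,n\}\le \max\{1,2x\}\min\{f,n\}$) this is at most $\pi(x)$ times $f^{v_J-\alpha_J^*} g^{2\alpha_J^*-v_J}$, where $g=\min\{n,f\}$. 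So in every part it suffices to compare the $x$-free quantity
\[
R_J \coloneqq \frac{f^{v_J-\alpha_J^*}\, g^{2\alpha_J^*-v_J}}{n^{v_J}p^{e_J}}
\]
with $1$, or with the claimed small factor. We do this by taking logarithms and writing $p=n^{-\theta}$. Then $\log_n f$ is piecewise linear in $\theta$: it equals $2-\Delta\theta$ when $\theta\le 1/\Delta$, and $1+r(1-\Delta\theta)$ otherwise. The comparison then becomes a linear inequality in the parameters $(v_J,\alpha_J^*,e_J)$.

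\emph{Parts (i) and (ii).} For $J=H[A,N(A)]\in\cJ$, \Cref{lem:unique-frac-ind-set} gives $\alpha_J^*=|N(A)|$ and $e_J=\Delta|A|=\Delta(v_J-\alpha_J^*)$. We split into two regimes.
\begin{itemize}
\item If $p\ge n^{-1/\Delta}$, then $f=n^2p^\Delta\ge n$ and $g=n$, so $R_J=(n^2p^\Delta)^{|A|}n^{|B|-|A|}/(n^{v_J}p^{\Delta|A|})=1$.
\item If $p<n^{-1/\Delta}$, then $f=n(np^\Delta)^r$. When $f\le n$ we get $g=f$ and $R_J=f^{\alpha_J^*}/(n^{v_J}p^{e_J})=(np^\Delta)^{r|B|-|A|}$. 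This is at most $1$ because $|A|/|B|\le r$ and $np^\Delta<1$. It equals $1$ exactly for $J\in\cJ^*$, and otherwise has exponent $\ge\gamma_H>0$, defined as the minimum over the finitely many $J\in\cJ\setminus\cJ^*$.
\item The case $p<n^{-1/\Delta}$ with $f>n$ gives $g=n$ and $R_J=(np^\Delta)^{r|A|}\cdot(\ldots)$, which is handled similarly. One should check that this subcase cannot produce a ratio above $1$; using $f\ge n$ it reduces to the same sign condition.
\end{itemize}

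\emph{Parts (iii) and (iv).} Here $J\notin\cJ$, and we use \Cref{lemma:eJ-alphaJ-clique}. When $p\ge n^{-1/\Delta}$ we have $R_J=(n^2p^\Delta)^{v_J-\alpha_J^*}n^{2\alpha_J^*-v_J}/(n^{v_J}p^{e_J})=p^{\Delta(v_J-\alpha_J^*)-e_J}$. This exponent is a nonnegative integer-multiple combination, and it is $\ge 1/2$ unless equality holds in \Cref{lemma:eJ-alphaJ-clique}. Equality forces case (Q2), or $J=H$ regular, which is excluded since $H$ is irregular. A bipartite $J$ with one side of full degree $\Delta$ and no isolated vertices is contained in $H[A,N(A)]$ with $A$ independent of degree-$\Delta$ vertices. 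Such a $J$ would lie in $\cJ$ unless it is a proper subgraph of some member, i.e.\ unless $B\subsetneq N(A)$ or some edges are missing. So we must verify carefully that equality together with no isolated vertices forces $J\in\cJ$: every vertex of $A$ has all $\Delta$ of its $H$-edges in $J$, hence $B\supseteq N(A)$, and edges inside $B$ are absent by bipartiteness. This is the key structural point.

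When $p<n^{-1/\Delta}$, write $np^\Delta=n^{-\eta}$ with $\eta>0$. A linear computation in the case $g=f$ gives
\[
\log_n R_J=\alpha_J^*(1-r\eta)-v_J+\tfrac{e_J}{\Delta}(1+\eta)=-(v_J-\alpha_J^*-e_J/\Delta)+\tfrac{\eta}{\Delta}(e_J-r\Delta\alpha_J^*).
\]
The first term is negative with gap at least $1/(2\Delta)$, by the integrality argument above. If $e_J\le r\Delta\alpha_J^*$, the second term is nonpositive and we obtain $n^{-1/(2\Delta)}$. Otherwise, the hypothesis $p\ge Ln^{-1/\Delta-\eps_H}$ gives $\eta/\Delta\le\eps_H-\log_n L$ (up to the factor $\Delta$), and the definition of $\eps_H$ in~\eqref{eq:eps_H_def} makes $\log_n R_J\le-\beta_H\log_n L$ with $\beta_H=\min(e_J-r\Delta\alpha_J^*)$ over the relevant $J$. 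When $\eps_H=\infty$ the bad case never occurs. The case $g=n$ again needs a separate check.

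The main obstacle is the bookkeeping among the regimes $p\gtrless n^{-1/\Delta}$ and $f\gtrless n$, together with the rigidity claim that equality in \Cref{lemma:eJ-alphaJ-clique} forces $J\in\cJ$.
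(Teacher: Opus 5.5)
Your proposal is correct and is essentially the paper's own argument. You bound $N(J,G)$ by \cref{thm:max-copies}, split according to whether $p\ge n^{-1/\Delta}$, and use the half-integral gap $e_J\le\Delta(v_J-\alpha_J^*)-\tfrac12\1[J\notin\cJ]$. Your rigidity check supplies the ``equality iff $J\in\cJ$'' step that the paper states without detail. In the sparse regime you compare exponents via $|A|\le r|N(A)|$ when $J\in\cJ$, and via the definition of $\eps_H$ when $J\notin\cJ$.

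The subcases you leave as ``needs a separate check'' (namely $p<n^{-1/\Delta}$ with $f>n$, i.e.\ $g=n$) are in fact empty. When $p<n^{-1/\Delta}$ we have $np^\Delta<1$ and $r<1$, so $f=n(np^\Delta)^r<n$. Hence $g=f$ throughout that regime, and your computation there already covers all cases.
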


\begin{proof}
  Fix a nonempty $J \subseteq H$ without isolated vertices and a positive real $x$ and let $G \subseteq K_n$ be a graph with at most $x f$ edges.
  \Cref{thm:max-copies} asserts that 
  \begin{equation}
    \label{eq:embedding_lemma}
    N(J,G)\leq |\Emb(J,G)|\leq (2e_G)^{v_J-\alpha_J^*}\cdot \min\{2e_G,v_G\}^{2\alpha_J^*-v_J}\leq \begin{cases}
      (2e_G)^{v_J-\alpha_J^*} \cdot n^{2\alpha_J^*-v_J}\\
      (2e_G)^{\alpha_J^*}
    \end{cases}.
  \end{equation}
  As $H$ is irregular, \Cref{lemma:eJ-alphaJ-clique} implies that $e_J \le \Delta \cdot (v_J-\alpha_J^*)$ for every nonempty $J \subseteq H$ and that equality holds if and only if $J \in \cJ$.  Further, since $e_J,v_J$, and $2\alpha_J^*$ are integers, see \Cref{lemma:fractional-duality}, we actually have
  \begin{equation}
    \label{eq:eJ-Delta-alpha-v-ineq}
    e_J\leq \Delta \cdot (v_J-\alpha_J^*)-1/2 \cdot \1[J \notin \cJ].
  \end{equation}
  We will consider two cases, depending on whether or not $p \ge n^{-1/\Delta}$.

  \noindent
  \textit{Case 1:}  $p\ge n^{-1/\Delta}$. \\ 
  Our assumption on $G$ translates to $e_G\leq x\cdot n^2p^{\Delta}$.
  Then, by \eqref{eq:embedding_lemma} and \eqref{eq:eJ-Delta-alpha-v-ineq},
  \[
    \begin{split}
      \frac{N(J,G)}{n^{v_J}p^{e_J}}
      & \le \frac{(2x\cdot n^2p^\Delta)^{v_J-\alpha^*_J} \cdot n^{2\alpha_J^*-v_J}}{n^{v_J}p^{e_J}} = (2x)^{v_J - \alpha_J^*} \cdot p^{\Delta(v_J-\alpha^*_J)-e_J}\leq  \pi(x) \cdot p^{\1[J \notin \cJ]/2},
    \end{split}
  \]
  as claimed.
  
  \noindent
  \textit{Case 2:} $p < n^{-1/\Delta}$. \\
  Our assumption on $G$ translates to $e_G\leq x\cdot n(np^{\Delta})^r$.
  It follows from \eqref{eq:embedding_lemma} that 
  \begin{equation}
    \label{eq:base-bound}
    \frac{N(J,G)}{n^{v_J}p^{e_J}}
    \leq \frac{(2x\cdot n(np^{\Delta})^r)^{\alpha_J^*}}{n^{v_J} p^{e_J}} \eqqcolon (2x)^{\alpha_J^*} \cdot h_J(n,p).
  \end{equation}
  In order to estimate $h = h_J(n,p)$, we consider two cases, depending on whether or not $J \in \cJ$.

  \noindent
  \textit{Subcase 2a:} $J \in \cJ$. \\
  Let $A \subseteq V(H)$ be the set of vertices of degree $\Delta$ such that $J = H[A, N(A)]$ and note that $\alpha_J^* \ge \alpha_J \ge |N(A)|$.
  Further, by the definition of $r$, we have $r|N(A)| \ge |A|$ and equality holds if and only if $J \in \cJ^*$.
  It follows that, for some positive constant $\gamma_H$,
  \[
    (1+r)\alpha_J^* \ge (1+r) |N(A)| \ge v_J + \gamma_H \cdot \1[J \notin \cJ^*].
  \]
  Consequently, since $np^\Delta \le 1$, and using \eqref{eq:eJ-Delta-alpha-v-ineq},
  \[
    h = (np^\Delta)^{(1+r)\alpha_J^*-v_J} \cdot p^{\Delta(v_J-\alpha_J^*) - e_J} \le (np^\Delta)^{\gamma_H \cdot \1[J \notin \cJ^*]}.
  \]
  Substituting this inequality into \eqref{eq:base-bound} yields the desired estimate.

  \noindent
  \textit{Subcase 2b:} $J \notin \cJ$. \\
  Let $\xi \ge 0$ be the number satisfying $p = n^{-1/\Delta - \xi}$ and note that
  \[
    h = n^{(1+r)\alpha_J^* - v_J} \cdot p^{r\Delta\alpha_J^* - e_J} = n^{e_J/\Delta + \alpha_J^* - v_J + \xi(e_J - r\Delta\alpha_J^*)}.
  \]
  Observe now that~\eqref{eq:eJ-Delta-alpha-v-ineq} implies that $e_J/\Delta + \alpha_J^* - v_J \le -1/(2\Delta)$.
  Consequently, $h \le n^{-1/(2\Delta)}$ whenever $e_J \le r\Delta\alpha_J^*$.
  Substituting this inequality into~\eqref{eq:base-bound} yields the claimed estimate, unless $e_J > r\Delta\alpha_J^*$.
  Assume now that $e_J > r\Delta\alpha_J^*$, which implies that $\eps_H < \infty$.
  Since the definition of $\eps_H$ implies that
  \[
    e_J/\Delta + \alpha_J^* - v_J + \eps_H \cdot (e_J - r\Delta\alpha_J^*) \le 0,
  \]
  we have
  \[
    h \le n^{(\xi - \eps_H) \cdot (e_J - r\Delta\alpha_J^*)} = \left(\frac{n^{-1/\Delta-\eps_H}}{p}\right)^{e_J - r\Delta\alpha_J^*}.
  \]
  Substituting this inequality into~\eqref{eq:base-bound} gives the assertion of~\ref{item:NJG-not-cJ-eps-finite} with
  \[
    \beta_H \coloneqq \min\{e_J - r\Delta\alpha_J^* : J \subseteq H \wedge e_J > r\Delta\alpha_J^*\} > 0
  \]
  and completes the proof of the lemma.
\end{proof}

\begin{proof}[Proof of \Cref{prop:difference-in-expectations-important-J}]
  Suppose that $p \ll 1$ and let $G$ be a graph with $O(f\log(1/p))$ edges.
  In view of \Cref{cor:difference-in-expectations}, in order to prove the first assertion of the proposition, it suffices to show that $N(J,G) \ll n^{v_J}p^{e_J}$ for all $J \notin \cJ$ and $(n^{-1} + p) \cdot N(J,G) \ll n^{v_J}p^{e_J}$ for all $J \in \cJ$.
  These two asymptotic estimates are easy consequences of \Cref{lem:bound-on-NJG}.
  Indeed, the former statement follows from items \ref{item:NJG-not-cJ-eps-infty} and \ref{item:NJG-not-cJ-eps-finite}, provided that $K$ is sufficiently large, while the latter statement follows from item \ref{item:NJG-cJ}.
  Finally, assume that $p \le n^{-1/\Delta-\Omega(1)}$ and suppose that $J \in \cJ \setminus \cJ^*$.  Since item~\ref{item:NJG-cJ-not-star-sparse} in \Cref{lem:bound-on-NJG} yields $N(J,G) \ll n^{v_J}p^{e_J}$, the second assertion of the proposition follows.
\end{proof}

\subsection{Estimating the rate functions}
\label{sec:rate-functions}

We now turn to the proof of \Cref{lem:psi--psi*-lower}.
We start with the (easier) upper bounds on the rate functions $\psi_H$ and $\psi_H^*$.

\begin{lemma}
  \label{lem:weak_estimation_for_Phi}
  For every $\delta > 0$, there exists a constant $D$ such that, for all $\omega \cdot p_H \ll p \ll 1$,
  \[
    \max\{\psi_H(\delta), \psi_H^*(\delta)\} \le Df\log(1/p).
  \]
\end{lemma}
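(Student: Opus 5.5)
We will exhibit, for every $\delta>0$, a single explicit graph $G$ that lies in $\cS_\delta\cap\cL$, has at most $Df$ edges, and satisfies $e_G\le f(\log(1/p))^2$. Since $v_G\log(en/g)\ge 0$, such a $G$ gives
\[
\psi_H^*(\delta)\le e_G\log(1/p),
\qquad
\psi_H(\delta)\le e_G\log(1/p).
\]

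\textbf{The construction.} We fix some $J\in\cJ^*$ when $p<n^{-1/\Delta}$. When $p\ge n^{-1/\Delta}$ we instead take $J$ to be a star $K_{1,\Delta}$, which is $H[\{v\},N(v)]\in\cJ$ for a vertex $v$ of degree $\Delta$. We apply \cref{prop:max-copies} to $J$ with $m\coloneqq\lceil xf\rceil$ and vertex bound $n$, where $x$ is a large constant depending on $\delta$. This is legitimate because $f\ge n\cdot(np^\Delta)^r\ge n(np^\Delta)$ grows when $p\ge\omega p_H$, and because $m\le n^2$; the latter uses $f\le n^2p^\Delta\le n^2$ or $f\le n$ when $p\le n^{-1/\Delta}$.

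We obtain a blow-up $G$ of $J$ with at most $xf$ edges and $N(J,G)\ge c_J m^{v_J-\alpha_J^*}\min\{m,n\}^{2\alpha_J^*-v_J}$. Since $J$ is bipartite, $G$ also has a vertex cover of size at most $\max\{v_J,m/n\}$.

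\textbf{Seed property.} We plug this lower bound into \cref{cor:difference-in-expectations}, where all summands are nonnegative, keeping only the summand for $J$.

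When $p\le n^{-1/\Delta}$ we have $f\le n$, so $\min\{m,n\}\gtrsim xf$ and $N(J,G)\gtrsim (xf)^{\alpha_J^*}$. By \cref{fact:f-p-pH}, $f^{\alpha_J^*}=n^{v_J}p^{e_J}$, so the $J$-summand is $\gtrsim x^{\alpha_J^*}n^{v_H}p^{e_H}$.

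When $p\ge n^{-1/\Delta}$ and $J$ is a star, we get $N(J,G)\gtrsim m^{\Delta}n^{1-\Delta}\approx x^\Delta n^{1+\Delta}p^{\Delta^2}$. Up to constants this exceeds $x^{\Delta} n^{v_J}p^{e_J}$. (Alternatively one can bound $\min$ more carefully, or use the hub construction directly: one vertex joined to $\Theta(n)$ vertices together with a $K_{\Delta}$-free blow-up.) Choosing $x$ large in terms of $\delta$ and the constants $c_J$, $|\Emb(J,H)|/|\Aut(H)|$ gives $\Ex_G[X]-\Ex[X]\ge\delta\Ex[X]$, so $G\in\cS_\delta$.

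\textbf{Hub-core property.} The vertex cover has size at most $\max\{v_J,xf/n\}$. Since $e_G=\Theta(f)$, this is at most $\max\{1,e_G/n\}$ times a constant, and hence at most the bound in \cref{dfn:hub-core} because $(\log\log(1/p))^3\to\infty$. For the same reason it is at most $e_G/\omega$, since $e_G/\omega\to\infty$ and, when $e_G\ge n$, $e_G/n\ll e_G/\omega$.

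It remains to check $\delta(G)\ge d_H$ when $p\le n^{-1/\Delta}(\log n)^{-\omega}$. The vertex of $G$ blown up from $v\in V(J)$ has degree $\sum_{w\in N_J(v)}|\text{class of }w|\ge \deg_J v\ge d_H$, using $J\in\cJ^*$ and that every class is nonempty. Nonemptiness holds because the blow-up factors are $\lfloor e^{x_v}\rfloor\ge1$.

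Thus $G\in\cL\cap\cS_\delta$ with $e_G\le xf\le f(\log(1/p))^2$, which gives the lemma with $D\coloneqq x$.

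\textbf{Main obstacle.} The delicate step is verifying the seed inequality uniformly across the regimes, particularly the transition at $p\approx n^{-1/\Delta}$. There $f$ switches between its two expressions, and one must ensure that the blow-up from \cref{prop:max-copies} yields $\Omega(x^{c}n^{v_J}p^{e_J})$ copies of an appropriate $J\in\cJ$. I would handle this by choosing $J\in\cJ^*$ in the sparse regime, and in the dense regime checking directly via \cref{lemma:eJ-alphaJ-clique} that $e_J=\Delta(v_J-\alpha_J^*)$, so that $(n^2p^\Delta)^{v_J-\alpha_J^*}n^{2\alpha_J^*-v_J}=n^{v_J}p^{e_J}$ for every $J\in\cJ$.
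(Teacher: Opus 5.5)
Your plan matches the paper's proof. You blow up a $J\in\cJ$ via \cref{prop:max-copies} to a graph with about $xf$ edges, read off the vertex cover, get $\delta(G)\ge d_H$ from $J\in\cJ^*$, and keep only the $J$-summand of \cref{cor:difference-in-expectations}. However, three computations are wrong as written and need to be replaced.

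First, the dense-regime count for the star is wrong, and the inequality you assert there is false. For $J=K_{1,\Delta}$ you have $v_J-\alpha_J^*=1$ and $2\alpha_J^*-v_J=\Delta-1$. So \cref{prop:max-copies} gives $N(J,G)\ge c_J\, m\cdot\min\{m,n\}^{\Delta-1}\ge c_J\,xn^2p^\Delta\cdot n^{\Delta-1}=c_Jx\,n^{v_J}p^{e_J}$, using $m\ge xf\ge n$; it does not give $m^\Delta n^{1-\Delta}$. Your expression $x^\Delta n^{1+\Delta}p^{\Delta^2}$ is smaller than $x^\Delta n^{v_J}p^{e_J}=x^\Delta n^{\Delta+1}p^{\Delta}$ by the factor $p^{\Delta^2-\Delta}\to 0$. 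The fix in your last paragraph is exactly the paper's argument. Every $J\in\cJ$ satisfies $e_J=\Delta(v_J-\alpha_J^*)$ by \cref{lemma:eJ-alphaJ-clique}, so $(n^2p^\Delta)^{v_J-\alpha_J^*}n^{2\alpha_J^*-v_J}=n^{v_J}p^{e_J}$. You can therefore keep the same $J\in\cJ^*$ in both regimes, and the star and the ``hub construction'' aside become unnecessary.

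Second, in the sparse regime the estimate $\min\{m,n\}\gtrsim xf$ fails once $xf>n$, for instance for $p$ just below $n^{-1/\Delta}$, where $f\approx n$. Use $\min\{m,n\}\ge f$ instead, which holds since $f\le n$ there. This gives $N(J,G)\ge c_J x^{v_J-\alpha_J^*}f^{\alpha_J^*}=c_J x^{v_J-\alpha_J^*}n^{v_J}p^{e_J}$. That suffices because $v_J-\alpha_J^*\ge 1$, and it is the paper's estimate.

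Third, $f\ge n(np^\Delta)$ does not show that $f$ grows. For $H=K_{1,\Delta}$ one has $\eps_H=\infty$ and $p_H=n^{-1-1/\Delta}$, and at $p=\omega p_H$ the quantity $n^2p^\Delta=\omega^\Delta n^{1-\Delta}$ tends to zero. The correct input is \cref{fact:f-p-pH}, which yields $f\ge p/p_H$. This is where the hypothesis $p\gg\omega p_H$ is needed: it gives $e_G\ge f\gg\omega$, hence $\max\{v_J,xf/n\}\le e_G/\omega$ in the hub-core check of \cref{dfn:hub-core}.

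With these three repairs your argument coincides with the paper's.
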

\begin{proof}
  Pick a large constant $D$ and an arbitrary $J \in \cJ^*$.
  By \Cref{prop:max-copies}, we may find a graph $G$ with at most $n$ vertices and $Df$ edges that is a blow-up of $J$, satisfies
  \[
    N(J, G)
    \ge c_J \cdot (Df)^{v_J - \alpha_J^*} \cdot \min\{Df, n\}^{2\alpha_J^* - v_J} \ge D c_J \cdot f^{v_J-\alpha_J^*} \cdot \min\{f, n\}^{2\alpha_J^* - v_J},
  \]
  and has a vertex cover of cardinality at most
  \[
    \max\{v_J, Df/n\} \le v_J \cdot \max\{1, e_G/n\} \le \frac{e_G}{\omega},
  \]
  where the last inequality follows as $e_G \ge f \ge p/p_H$, see \cref{fact:f-p-pH}.
  Since $\delta(G) \ge \delta(J) \ge d_H$, the graph $G$ is a hub-core.
  We claim that
  \[
    N \coloneqq f^{v_J-\alpha_J^*} \cdot \min\{f, n\}^{2\alpha_J^* - v_J} = n^{v_J} p^{e_J}.
  \]
  Indeed, if $p \ge n^{-1/\Delta}$ and thus $f = n^2p^\Delta \ge n$, then
  \[
    N = (n^2p^\Delta)^{v_J - \alpha_J^*} \cdot n^{2\alpha_J^* - v_J} = n^{v_J} p^{\Delta (v_J - \alpha_J^*)} = n^{v_J} p^{e_J},
  \]
  where the last equality follows from \Cref{lemma:eJ-alphaJ-clique}.
  If $p < n^{-1/\Delta}$ and thus $f = n (np^\Delta)^r$, then
  \[
    N = f^{\alpha_J^*} = (n^{1+r} p^{\Delta r})^{\alpha_J^*} = n^{v_J} p^{e_J},
  \]
  as every graph $J \in \cJ^*$ satisfies $v_J = (1+r) \cdot \alpha_J^*$ and $e_J = \Delta(v_J-\alpha_J^*)$.
  By \Cref{cor:difference-in-expectations},
  \[
    \Ex_G[X] - \Ex[X] \ge \frac{N(J,G) \cdot  n^{v_H-v_J} p^{e_H-e_J}}{2 |\Aut(H)|} \ge \frac{Dc_J \cdot n^{v_H}p^{e_H}}{2|\Aut(H)|} \ge \delta \Ex[X],
  \]
  provided that $D$ is sufficiently large.
  We may conclude that $\psi_H(\delta) \le e_G\log(1/p) \le Df\log(1/p)$ and, as $G$ is a hub-core, also that $\psi_H^*(\delta) \le Df\log(1/p)$.
\end{proof}

Next, the lower bound on $\psi_H$ is a simple consequence of \Cref{cor:difference-in-expectations,lem:bound-on-NJG}.

\begin{lemma}
  \label{lem:psi-lower}
  For every $\delta > 0$, there exists a constant $c_\delta > 0$ such that, for all $p_H\ll p\ll 1$,
  \[
    \psi_H(\delta) \ge c_\delta f \log(1/p).
  \]
\end{lemma}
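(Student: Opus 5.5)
We argue by contradiction on the number of edges: it suffices to show that there is a constant $x = x_\delta > 0$ such that no graph $G \subseteq K_n$ with at most $xf$ edges is a $\delta$-seed. Then every $G \in \cS_\delta$ has $e_G > xf$, and therefore $\psi_H(\delta) \ge x f\log(1/p)$, so we may take $c_\delta \coloneqq x$.

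Fix $G$ with $e_G \le xf$, where $x \le 1$ is small. By \Cref{cor:difference-in-expectations}, up to a factor $1+O(n^{-1}+p) \le 2$,
\[
  \frac{\Ex_G[X]-\Ex[X]}{\Ex[X]} \le 2\sum_{\emptyset\ne J\subseteq H} \frac{|\Emb(J,H)|}{|\Aut(H)|}\cdot\frac{N(J,G)\, n^{v_H-v_J}p^{e_H-e_J}}{\Ex[X]} = O_H\Bigl(\sum_{\emptyset \neq J\subseteq H} \frac{N(J,G)}{n^{v_J}p^{e_J}}\Bigr),
\]
using $\Ex[X] = \Theta_H(n^{v_H}p^{e_H})$. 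The sum runs over the boundedly many subgraphs $J$ without isolated vertices, so it is enough to bound each ratio $N(J,G)/(n^{v_J}p^{e_J})$ by something that tends to $0$ as $x \to 0$, uniformly in $n$ and in $p$ in the stated range.

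For $J \in \cJ$, \Cref{lem:bound-on-NJG}\ref{item:NJG-cJ} gives the ratio bound $\pi(x) \le \max\{2x,(2x)^{v_J}\} = 2x$ for $x\le 1/2$.

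For $J\notin\cJ$, items \ref{item:NJG-not-cJ-eps-infty} and \ref{item:NJG-not-cJ-eps-finite} give the bound $\pi(x)\cdot\max\{p^{1/2},n^{-1/(2\Delta)},L^{-\beta_H}\}$, where we take $L \coloneqq p/(n^{-1/\Delta-\eps_H})$. This $L$ is at least $p/p_H \to \infty$ because $p \gg p_H \ge n^{-1/\Delta-\eps_H}$. Hence these terms are at most $2x\cdot o(1)$.

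Summing over $J$, we get $\Ex_G[X]-\Ex[X] \le C_H x\,\Ex[X] < \delta\Ex[X]$ once $x < \delta/C_H$. So $G$ is not a $\delta$-seed, as required.

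The only delicate point is the use of item \ref{item:NJG-not-cJ-eps-finite}, which needs $L \ge 1$ and $L\to\infty$. This is exactly where the hypothesis $p\gg p_H$ enters, rather than $p \gg p_H(\log n)^K$. We also need the $o(1)$ factor from \Cref{cor:difference-in-expectations} to be harmless, which it is since $p \ll 1$.
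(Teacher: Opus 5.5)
Your proof is correct and is essentially the paper's argument. You bound every ratio $N(J,G)/(n^{v_J}p^{e_J})$ by $\pi(x)\le 2x$ via \cref{lem:bound-on-NJG}, then sum over the boundedly many $J$ using \cref{cor:difference-in-expectations}. One small correction to your closing remark: the extra factor $\max\{p^{1/2},n^{-1/(2\Delta)},L^{-\beta_H}\}$ is already at most $1$ once $L\ge 1$, so the argument only uses $p\ge p_H$, not $L\to\infty$.
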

\begin{proof}
  We need to show that every $G \subseteq K_n$ with at most $c f$ edges satisfies $\Ex_G[X] < (1+\delta)\Ex[X]$, provided that $c > 0$ is small enough.  Our lower-bound assumption on $p$ and \Cref{lem:bound-on-NJG} imply that, for every nonempty $J\subseteq H$ without isolated vertices, we have $N(J,G) \leq 2c \cdot n^{v_J}p^{e_J}$.
  Consequently, \Cref{cor:difference-in-expectations} implies that there is a constant $K$ that depends only on $H$ such that
  \[
    \frac{\Ex_G[X]-\Ex[X]}{\Ex[X]} \leq K \cdot \sum_{\emptyset \neq J\subseteq H} \frac{N(J,G)}{n^{v_J}p^{e_J}}\leq 2^{e_H} K c < \delta,
  \]
  provided that $c$ is sufficiently small.  This completes the proof of the lemma.
\end{proof}

Finally, we turn to the (much more subtle) proof of the lower bound on $\psi_H^*$.
The following technical lemma captures the essence of the assumption that $\delta(G^*) \ge d_H$ in the definition of hub-cores and the relation of $d_H$ to the stability property of $H$.

\begin{lemma}
  \label{lem:f-npd}
  There exists a positive constant $\beta$ such that, for every $L \ge 1$ and all $Lp_H \le p \le n^{-1/\Delta}$, we have
  \[
    \frac{f}{np^{d_H}} \geq \beta \cdot
    \begin{cases}
      n^\beta & \text{if $H$ is stable}, \\
      L^\beta & \text{otherwise}.
    \end{cases}
  \]
\end{lemma}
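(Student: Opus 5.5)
Throughout, assume $p \le n^{-1/\Delta}$, so that $f = n(np^\Delta)^r$. The plan is to write $p = n^{-1/\Delta-\xi}$ with $\xi \ge 0$ and compute directly:
\[
  \frac{f}{np^{d_H}} = n^r p^{\Delta r - d_H} = n^{\,d_H/\Delta - \xi(\Delta r - d_H)}.
\]
Write $d \coloneqq d_H$. If $\Delta r \le d$, the exponent is at least $d/\Delta \ge 1/\Delta$, and both conclusions hold for any $\beta \le 1/\Delta$ (note $L \le p/p_H \le n$). This covers the case where $d$ is attained at a vertex of degree $\Delta$, since $r<1$ by \Cref{lemma:rH-bounds}. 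So the real work is the case $\Delta r > d$. There the exponent decreases in $\xi$, and we must show that $\xi$ cannot be large enough to kill the term $d/\Delta$. This is where $\eps_H$ enters.

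\textbf{Key step.} Bound $\eps_H \le \frac{d/\Delta}{\Delta r - d}$ via a specific test graph in \eqref{eq:eps_H_def}. Fix $J \in \cJ^*$ and a vertex $v$ with $\deg_J v = d$. Since $d < \Delta r < \Delta$, the vertex $v$ lies in the side $B = N(A)$ of the bipartition $J = H[A, N(A)]$. Put $J' \coloneqq J - v$.

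\begin{itemize}
  \item $J' \notin \cJ^*$: the neighbours of $v$ in $A$ lose degree, while $J'$ still has at least one edge because $d<\Delta$.
  \item By \Cref{lem:unique-frac-ind-set} we have $\alpha_J^* = |B|$, and by \cref{fact:f-p-pH} (its proof) $e_J = r\Delta\alpha^*_J$ and $v_J=(1+r)\alpha^*_J$.
  \item If $\alpha^*_{J'} = \alpha^*_J - 1$, then $e_{J'} - r\Delta\alpha^*_{J'} = \Delta r - d > 0$, so $J'$ is admissible in \eqref{eq:eps_H_def}.
  \item In that case the numerator equals $v_J - 1 - \alpha^*_J + 1 - (e_J-d)/\Delta = d/\Delta$, using $e_J = \Delta(v_J-\alpha_J^*)$.
\end{itemize}
Together these give $\eps_H \le d/(\Delta(\Delta r-d))$.

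\textbf{Expected obstacle.} The delicate point is justifying $\alpha^*_{J'} = \alpha^*_J - 1$. We always have $\alpha^*_J - 1 \le \alpha^*_{J'} \le \alpha^*_J$: the lower bound comes from $B\setminus\{v\}$, and the upper bound from extending a fractional independent set of $J'$ by $0$ at $v$. To exclude $\alpha^*_{J'} = \alpha^*_J$ (or intermediate half-integral values), I would rerun the equality analysis of \Cref{lem:unique-frac-ind-set} on $J'$. The degree-$\Delta$ vertices of $A$ not adjacent to $v$ force, by the chain of tight inequalities, that an optimal set is essentially the indicator of $B \setminus \{v\}$. Alternatively, I would check directly that a larger $\alpha^*_{J'}$ only decreases the ratio in \eqref{eq:eps_H_def} (or makes $J'$ inadmissible), so the inequality for $\eps_H$ survives either way. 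I expect this to be the only non-routine step.

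\textbf{Non-stable case.} From $p \ge Lp_H \ge L n^{-1/\Delta-\eps_H}$ we get $n^{-\xi} \ge L n^{-\eps_H}$, so
\[
  \frac{f}{np^{d}} \ge n^{d/\Delta - \eps_H(\Delta r - d)}\, L^{\Delta r - d} \ge L^{\Delta r-d}
\]
by the key step. This gives the claim with $\beta \le \Delta r - d$.

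\textbf{Stable case.} Stability means $1/\Delta + \eps_H > 1/m(H)$. Since $p \ge p_H \ge n^{-1/m(H)}$, we get $\xi \le 1/m(H) - 1/\Delta \le \eps_H - \eta$ for the constant $\eta \coloneqq 1/\Delta+\eps_H-1/m(H) > 0$. Then
\[
  d/\Delta - \xi(\Delta r - d) \ge (\eps_H - \xi)(\Delta r - d) \ge \eta(\Delta r - d) > 0,
\]
which yields $n^\beta$ for a suitable constant $\beta$. Finally, take $\beta$ to be the minimum of the finitely many constants produced over the choices of $J$ and $v$.
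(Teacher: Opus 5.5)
\textbf{Gap: the value of $\alpha^*_{J'}$ is never established.} Your reduction is sound. The identity $f/(np^{d_H}) = n^{d_H/\Delta - \xi(\Delta r - d_H)}$ is correct, the case $\Delta r \le d_H$ is handled correctly, and both conclusions follow correctly from $\eps_H(\Delta r - d_H) \le d_H/\Delta$. The gap is the step you flag yourself: you never show $\alpha^*_{J'} = \alpha^*_J - 1$. Without it, $J'$ need not be admissible in \eqref{eq:eps_H_def}, and then $J'$ gives no bound on $\eps_H$ at all.

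Neither fallback closes this.
\begin{itemize}
\item Fallback (b) fails. If a larger $\alpha^*_{J'}$ made $J'$ inadmissible, $J'$ would simply drop out of the minimum, so the inequality for $\eps_H$ does not ``survive''. Moreover, $J'$ is bipartite, so $\alpha^*_{J'}$ is an integer. The only alternative value is therefore $\alpha^*_J$, which gives $e_{J'} - r\Delta\alpha^*_{J'} = -d_H < 0$. That is exactly the useless branch.
\item Fallback (a) does not transfer as phrased. The chain of inequalities behind \cref{lem:unique-frac-ind-set} is not tight for $J'$: at the value you want, $e_{J'} = \Delta|A| - d_H$ falls short of $\Delta(v_{J'}-\alpha^*_{J'}) = \Delta|A|$ by $d_H$. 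So there is no equality analysis to rerun.
\end{itemize}

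\textbf{The fix.} By \cref{lemma:eJ-alphaJ-clique},
\[
\alpha^*_{J'} \le v_{J'} - e_{J'}/\Delta = |B| - 1 + d_H/\Delta < |B|.
\]
Also $\alpha^*_{J'}$ is an integer: in \cref{lemma:fractional-duality}, $V_1$ is covered by disjoint edges and cycles, which are even in the bipartite graph $J'$, so $|V_1|$ is even. Together with the independent set $B\setminus\{v\}$, this gives $\alpha^*_{J'} = |B|-1 = \alpha^*_J - 1$. Admissibility then also yields $J' \notin \cJ^*$, since every graph in $\cJ^*$ satisfies $e = r\Delta\alpha^*$. Your remark that ``the neighbours of $v$ lose degree'' does not by itself rule out a different bipartition.

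\textbf{Comparison with the paper.} The paper never computes $\alpha^*_{J'}$. It sandwiches $N(J',n,f)$ between two bounds:
\begin{itemize}
\item a lower bound $c f^{\alpha^*_{J'}} \ge c f^{\alpha^*_J - 1}$ from \cref{prop:max-copies};
\item an upper bound from \cref{lem:bound-on-NJG}, which applies because $e_{J'} \not\equiv 0 \pmod{\Delta}$ forces $J' \notin \cJ$.
\end{itemize}
The proof of that lemma already covers both the admissible and the inadmissible case, so only the trivial inequality $\alpha^*_{J'} \ge \alpha^*_J - 1$ is needed. Your argument is essentially Subcase~2b of \cref{lem:bound-on-NJG}, unpacked for this particular $J'$. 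Once patched, it buys an explicit exponent and the structural bound $\eps_H \le d_H/(\Delta(\Delta r - d_H))$ whenever $\Delta r > d_H$.
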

\begin{proof}
  Choose an arbitrary $J \in \cJ^*$ with minimum degree $d_H$ and let $J'$ be the graph obtained from $J$ by removing an arbitrary vertex of degree $d_H$.  Clearly, $e_{J'} = e_J - d_H$, $v_{J'} = v_J -1$, and $\alpha_{J'}^* \ge \alpha_{J}^* - 1$.
  Since $f \le n$ by our assumption on $p$, \Cref{prop:max-copies} gives
  \begin{equation}
    \label{eq:NJ'nf-lower}
    N(J', n, f) \ge c f^{\alpha_{J'}^*} \ge cf^{\alpha_J^*-1} \ge \frac{c' n^{v_J}p^{e_J}}{f},
  \end{equation}
  for some positive constants $c$ and $c'$ that depend only on $H$.
  Further, let $J''$ be the graph obtained from $J'$ by removing all isolated vertices.
  Since $e_J - e_{J''} = e_J - e_{J'} = d_H \in \{1, \dotsc, \Delta-1\}$ and the number of edges of each graph in $\cJ$ is divisible by $\Delta$, we have $J'' \notin \cJ$.
  Since $N(J', G) \le N(J'', G) \cdot v_G$ for every graph $G$, \Cref{lem:bound-on-NJG} implies that
  \begin{equation}
    \label{eq:NJ'nf-upper}
    np^{d_H} \cdot \frac{N(J',n,f)}{n^{v_J}p^{e_J}} = \frac{N(J',n,f)}{n^{v_{J'}}p^{e_{J'}}} \le \frac{N(J'',n,f)}{n^{v_{J''}}p^{e_{J''}}} \le
    \begin{cases}
      n^{-\beta}, & \text{$H$ is stable}, \\
      L^{-\beta}, & \text{otherwise},
    \end{cases}
  \end{equation}
  where $\beta > 0$ depends only on $H$.
  Combining~\eqref{eq:NJ'nf-lower} and~\eqref{eq:NJ'nf-upper} yields the desired inequality.
\end{proof}

The lower bound on $\psi_H^*$ asserted in~\Cref{lem:psi--psi*-lower} will be a straightforward consequence of the following technical lemma and the fact that every seed must have $\Omega(f)$ edges, which follows from~\Cref{lem:psi-lower}.
The stronger, but more technical, statement will be necessary for controlling the union bound over all hub-cores in the proofs of \cref{thm:main-dense-ish}, \cref{thm:main-near-appearance-threshold}~\ref{item:main-near-appearance-threshold-UB}, and \cref{thm:non-clean}.
(Crucially, the explicit lower bound on the difference $e_G \log(1/p) - v_G \log(en/g)$ will allow us to show that the error term in the upper bound on the number of hub-cores proved in~\Cref{lem:core-count} below is negligible.)

\begin{lemma}
  \label{lem:weak-LB-psi-*}
  For every $\delta > 0$, there exists a positive constant $c$ such that the following holds for every $G \in \mathcal{L}$ satisfying $\Ex_G[X] \ge (1+\delta)\Ex[X]$:
  \begin{enumerate}[label=(\roman*)]
  \item
    \label{item:weak-LB-psi-*-non-stable}
    If $Lp_H \le p \ll 1$ for some $L \ge (\log n)^{1/c}$ and $e_G \le f(\log n)^2$, then
    \[
      e_G\log(1/p)-v_G\log(en/g)\geq ce_G \min\{\log L, \log(1/p)\}.
    \]
  \item
    \label{item:weak-LB-psi-*-stable}
    If $H$ is stable and $p_H \ll p \ll 1$, then
    \[
      e_G\log(1/p)-v_G\log(en/g)\geq ce_G\log(1/p).
    \]
  \end{enumerate}
\end{lemma}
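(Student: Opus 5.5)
We plan to bound $v_G$ in terms of $e_G$ using the structure of hub-cores, and then compare $v_G\log(en/g)$ with $e_G\log(1/p)$ in two regimes, according to whether $p\ge n^{-1/\Delta}$ or not.

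\textbf{Step 1: an edge bound from the minimum degree.} Let $U$ be the small vertex cover of $G$ guaranteed by \cref{dfn:hub-core}, and set $I\coloneqq V(G)\setminus U$. Then $I$ is independent, so every edge meeting $I$ goes to $U$. Since $|U|\le e_G/\omega=o(e_G)$, we get $v_G\le |I|+o(e_G)$.

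In the sparse regime $p\le n^{-1/\Delta}(\log n)^{-\omega}$, hub-cores satisfy $\delta(G)\ge d_H$. Summing degrees over $I$ gives
\[
  e_G\ge d_H|I|, \qquad\text{hence}\qquad v_G\le e_G/d_H+o(e_G).
\]
Here $g=f\le n$ (as $f\le n$ there), so $\log(en/g)\approx\log(n/f)$. The task becomes showing
\[
  \frac{\log(en/f)}{d_H}\le(1-c)\log(1/p)-\text{(lower order)},
\]
which is equivalent to $f/(np^{d_H})$ being large. \Cref{lem:f-npd} gives exactly this: $\log(f/(np^{d_H}))\ge\beta\log n$ in the stable case and $\ge\beta\log L$ otherwise.

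From this we obtain
\[
  e_G\log(1/p)-\frac{e_G}{d_H}\log(en/f)\ge \frac{e_G}{d_H}\bigl(\log(f/(np^{d_H}))-1\bigr),
\]
which is $\ge c e_G\log n\ge c e_G\log(1/p)$ when $H$ is stable, and $\ge c e_G\log L$ otherwise (using $L\ge(\log n)^{1/c}$ to absorb constants). The $o(e_G)\log(en/g)$ error from $U$ needs care: $\log(en/g)$ can be of order $\log n$, so we need $|U|\log n\ll e_G\log L$. Here we use the second bound in \cref{dfn:hub-core}: since $e_G\le f(\log n)^2$, it gives $|U|\le(\log\log(1/p))^{3}\cdot(\log n)^{2v_H}\cdot\max\{1,e_G/n\}$. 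Meanwhile $e_G=\Omega(f)\ge(p/p_H)\ge L$ by \cref{lem:psi-lower} and \cref{fact:f-p-pH}. So it suffices that $f$ beat a polylogarithm times $\log n$, which holds since $f\ge L^{\Omega(1)}$ with $L\ge(\log n)^{1/c}$ and $c$ small. In the stable case $f\ge n^{\Omega(1)}$ by \cref{fact:f-p-pH}, since $p_H=n^{-1/m(H)}$ and $p\gg p_H$ (one checks that $n^{v_J}p^{e_J}$ is polynomially large away from the threshold, or otherwise only needs $\omega$).

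\textbf{Step 2: the intermediate regime.} For $n^{-1/\Delta}(\log n)^{-\omega}\le p\le n^{-1/\Delta}$ there is no minimum-degree condition. Instead we use $v_G\le |U|+|I|$ with $|I|\le v_G\le\min\{n,2e_G\}$ and $\log(en/g)=\log(en/f)$, where now $f\ge n\cdot(\log n)^{-O(\omega)}$. Thus $\log(en/g)=O(\omega\log\log n)=o(\log(1/p))$, and the bound follows trivially with room to spare.

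\textbf{Step 3: the dense regime $p\ge n^{-1/\Delta}$.} Here $g=n$, so $\log(en/g)=1$, and the bound is immediate from $v_G\le 2e_G$.

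The main obstacle is Step 1 in the non-stable case: controlling the vertex-cover error $|U|\log(en/g)$ against the small gain $e_G\log L$, which requires combining the second vertex-cover bound in \cref{dfn:hub-core} with $e_G\le f(\log n)^2$ and $L\ge(\log n)^{1/c}$.
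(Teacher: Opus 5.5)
Your proposal is correct and follows the paper's proof essentially step for step. Where $g$ is close to $n$ you use the trivial bound $v_G\le 2e_G$. In the sparse regime you use $v_G\le e_G/d_H+\nu_G$ together with \cref{lem:f-npd}, charging the vertex cover via the polylogarithmic hub-core bound against $e_G\log L$ (with $e_G\ge c_\delta L$) in case (i), and via $\nu_G\le e_G/\omega$ against the $\beta\log n$ gain in case (ii). The only difference is that the paper places the case split at $n^{-1/(\Delta-x)}$ rather than at $n^{-1/\Delta}(\log n)^{-\omega}$.

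One small correction: in the stable case $f=(p/p_H)^{\Delta r}$ need not be $n^{\Omega(1)}$. As your parenthetical suggests, this does not matter, since $\nu_G\log(en/g)\le 2e_G\log n/\omega$ is already negligible compared with $\beta e_G\log n/d_H$.
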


\begin{proof}
  Suppose that $G\in \mathcal{L}$ and write
  \[
    \tau(G) \coloneqq e_G \log(1/p) - v_G \log(en/g).
  \]
  Fix a small constant $x > 0$.  We consider two cases, depending on whether or not $p \ge n^{-1/(\Delta-x)}$.
  
  \noindent
  \textit{Case 1:}  $n^{-1/(\Delta-x)} \le p\ll 1$. \\
  Since clearly $v_G \le 2e_G$, we have
  \[
    \tau(G) \ge e_G \log(1/p) - 2e_G \log(en/g) = e_G \log\left(\frac{g^2}{e^2n^2p}\right).
  \]
  Suppose first that $p\geq n^{-1/\Delta}$, which implies that $f=n^2p^{\Delta}\geq n$ and $g=n$.
  As $p \ll 1$, we have
  \[
    \log\left(\frac{g^2}{e^2n^2p}\right) = \log\left(\frac{1}{e^2p}\right) \ge \frac{\log(1/p)}{2}.
  \]
  Suppose now that $n^{-1/(\Delta-x)} \le p\leq n^{-1/\Delta}$, which implies that $f = n(np^\Delta)^r \le n$ and $g = f$.
  By our lower-bound assumption on $p$, we have $g = f \ge n p^{xr}$ and thus
  \[
    \log\left(\frac{g^2}{e^2n^2p}\right) = \log\left(\frac{p^{2xr}}{e^2p}\right) \ge \frac{\log(1/p)}{2},
  \]
  provided that $x$ is sufficiently small.

  \noindent
  \textit{Case 2:} $p \leq n^{-1/(\Delta-x)}$. \\
  Let $D_{G}$ be a smallest vertex cover in $G$ and let $\nu_G \coloneqq |D_{G}|$.
  Since every vertex of $G$ outside of $D_G$ has at least $\delta(G) \ge d_H$ neighbours in $D_G$, we have $e_G \ge d_H \cdot (v_G - \nu_G)$ or, equivalently,
  \[
    v_G \le e_G/d_H + \nu_G.
  \]
  Since $g = f$, as $p \le n^{-1/\Delta}$, it follows that
  \begin{equation}
    \label{eq:tau-G-lower-i}
    \tau(G) \ge e_G \log(1/p) - (e_G/d_H + \nu_G) \log (en/g) = \frac{e_G}{d_H} \log\left(\frac{f}{enp^{d_H}}\right) - \nu_G \log(en/f)
  \end{equation}
  while the assumption that $G \in \cL$ guarantees that
  \begin{equation}
    \label{eq:dG-upper}
    \nu_G \le \min\bigl\{e_G / \omega, \bigl(\log \log (1/p)\bigr)^3 \cdot \max\{1, (e_G/f)^{2v_H}\}\bigr\}.
  \end{equation}
  Suppose now that $p \ge Lp_H$ for some $L \gg 1$.  \Cref{lem:f-npd} supplies a constant $\beta > 0$ such that
  \begin{equation}
    \label{eq:fenpdH-lower}
    \log \left(\frac{f}{enp^{d_H}}\right) \ge
    \beta \cdot
    \begin{cases}
      \log n & \text{if $H$ is stable,}\\
      \log L & \text{otherwise}.
    \end{cases}
  \end{equation}
  Further, \Cref{lem:psi-lower} implies that $e_G \ge c_\delta f$ for some $c_{\delta} > 0$ that depends only on $\delta$ and $H$.
  Substituting \eqref{eq:dG-upper} and \eqref{eq:fenpdH-lower} into~\eqref{eq:tau-G-lower-i} gives the following:
  \begin{enumerate}[label=(\roman*)]
  \item
    If $e_G \le f (\log n)^2$, then
    \[
      \tau(G) \ge \frac{e_G}{d_H} \cdot \beta \log L - (\log n)^{4v_H+2},
    \]
    which gives the desired inequality, as $e_G \ge c_\delta f \ge c_\delta L \gg (\log n)^{4v_H+2}$ provided that $L \ge (\log n)^{1/c}$ for a sufficiently small constant $c > 0$.
  \item
    If $H$ is stable, then
    \[
      \tau(G) \ge \frac{e_G}{d_H} \cdot \beta \log n - \frac{e_G}{\omega} \cdot \log n \ge \frac{e_G}{2d_H} \cdot \beta \log n,
    \]
    which again gives the desired inequality.\qedhere
  \end{enumerate}
\end{proof}

\begin{proof}[Proof of \Cref{lem:psi--psi*-lower}]
  The upper bounds $\psi_H(\delta), \psi_H^*(\delta)\le C_\delta f\log(1/p)$ and the lower bound $\psi_H(\delta)\ge c_\delta f\log(1/p)$ follow immediately from \Cref{lem:weak_estimation_for_Phi,lem:psi-lower}, respectively;
  we only need to prove the asserted lower bound on $\psi_H^*(\delta)$.
  Let $G \in \mathcal{L} \cap \mathcal{S}_\delta$ be a graph achieving the minimum in the definition of $\psi_H^*(\delta)$.
  \Cref{lem:psi-lower} implies that $e_G \ge c_\delta' f$ for some positive constant $c_\delta'$.
  If $H$ is stable, \Cref{lem:weak-LB-psi-*}~\ref{item:weak-LB-psi-*-stable} supplies a positive constant $c$ such that
  \[
    \psi_H^*(\delta) = e_G\log(1/p)-v_G\log(en/g) \ge c e_G\log(1/p) \ge c c_\delta' f\log(1/p),
  \]
  as required.
  Suppose now that $H$ is not stable, choose a sufficiently large constant $K$, assume that $p \ge p_H (\log n)^K$, and let $L \coloneqq p/p_H$.
  In this case, \Cref{lem:weak-LB-psi-*}~\ref{item:weak-LB-psi-*-non-stable} supplies a positive constant $c$ such that
  \[
    \psi_H^*(\delta) \ge c e_G\min\{\log L,\log(1/p)\} \ge c c_\delta' f\min\{\log(p/p_H), \log(1/p)\},
  \]
  provided that $K \ge 1/c$, as desired.
\end{proof}

\section{Upper bounds}
\label{sec:upper-bounds}

In this section, we prove the upper bound in \cref{thm:main-dense-ish}.
As we have mentioned, our proof follows the general approach of Harel, Mousset and Samotij~\cite{HarMouSam22}, with the main difference being the more combinatorial construction of minimal seeds, which we term hub-cores.
For the sake of completeness, we include proofs of all the probabilistic lemmas borrowed from~\cite{HarMouSam22}.
For the sake of brevity, we shall write $X$ in place of $X_H$.
We start with a key definition.

\begin{definition}
  We say that a graph $G\subseteq K_n$ is a \emph{$t$-seed} if $\Ex_G[X] \geq \Ex[X]+t$.  Further, we let $\cS_{t,m}$ be the family of all $t$-seeds with at most $m$ edges and let $\langle \cS_{t,m} \rangle$ denote the family of all subgraphs of $K_n$ that contain some graph in $\cS_{t,m}$ as a subgraph.
\end{definition}

The first key result of this section is that the upper tail event $\UT_{H,\delta}$ is dominated by the appearance of a seed with $O(\psi_H(\delta))$ edges.

\begin{proposition}
  \label{cor:exists-a-seed-condition-on-UT}
  Suppose that $0 < \eps \le \delta$ and that $\psi_H(\delta+\varepsilon) \ge -\log(\varepsilon p^{e_H})$ and $\psi_H(\delta+\varepsilon) \gg 1$.
  There exists a $D=D(\delta,\varepsilon)$ such that, letting $t \coloneqq (\delta-\varepsilon)\Ex[X]$ and $m \coloneqq \lceil D \psi_H(\delta+\varepsilon) \rceil$, we have
  \[
    \Pr\bigl(G_{n,p} \in \langle\cS_{t,m}\rangle \mid X\geq (1+\delta)\Ex[X]\bigr) =1-o(1).
  \]  
\end{proposition}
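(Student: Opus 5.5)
We follow the conditional high-moment argument of Janson, Oleszkiewicz, and Ruciński as adapted in~\cite{HarMouSam22}. The goal is to show that
\[
  \Pr\bigl(\UT_{H,\delta} \cap \{G_{n,p}\notin\langle\cS_{t,m}\rangle\}\bigr) = o\bigl(\Pr(\UT_{H,\delta})\bigr).
\]
For the denominator we use the lower bound~\eqref{eq:UTHd-lower}: taking an optimal $(\delta+\eps)$-seed $G$ gives $\Pr(\UT_{H,\delta}) \ge \eps p^{e_H} p^{e_G} = \eps p^{e_H}\exp(-\psi_H(\delta+\eps))$. By the assumption $\psi_H(\delta+\eps)\ge -\log(\eps p^{e_H})$, this is at least $\exp(-2\psi_H(\delta+\eps))$. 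It therefore suffices to bound the numerator by $\exp(-3\psi_H(\delta+\eps))$, say.

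For the numerator, write $X = \sum_{K} Y_K$ over copies $K$ of $H$ in $K_n$, and let $\cA$ be the decreasing event $\{G_{n,p}\notin\langle\cS_{t,m}\rangle\}$. We bound $\Ex[X^\ell \1_{\cA}]$ for $\ell \coloneqq \lceil m/e_H\rceil$, or more precisely the factorial-type moment $\Ex[\sum_{K_1,\dots,K_\ell} \prod_i Y_{K_i}\1_\cA]$. The key step is to reveal the copies one at a time. Given $K_1,\dots,K_{i-1}$ with union $U_{i-1}$ (at most $(i-1)e_H\le m$ edges), the event $\cA$ together with $U_{i-1}\subseteq G_{n,p}$ forces $U_{i-1}\notin \cS_{t,m}$. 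Hence $\Ex_{U_{i-1}}[X] < \Ex[X]+t = (1+\delta-\eps)\Ex[X]$. Summing over $K_i$ the conditional probability $p^{e_{K_i}\setminus U_{i-1}}$ gives exactly $\Ex_{U_{i-1}}[X]$. We handle monotonicity carefully: we drop $\1_\cA$ at the last step only after using it to certify that the planted unions are non-seeds. This uses that $\cA$ is decreasing, so conditioning on $U_{i-1}\subseteq G_{n,p}$ and intersecting with $\cA$ still implies $U_{i-1}$ is a non-seed.

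Inductively we obtain
\[
  \Ex\Bigl[\textstyle\sum_{K_1,\dots,K_\ell}\prod_i Y_{K_i}\1_\cA\Bigr] \le \bigl((1+\delta-\eps)\Ex[X]\bigr)^{\ell}.
\]
We then relate this to $\Ex[X^\ell\1_\cA]$ and apply Markov's inequality on $\UT_{H,\delta}$:
\[
  \Pr(\UT_{H,\delta}\cap\cA) \le \Bigl(\frac{1+\delta-\eps}{1+\delta}\Bigr)^{\ell}\cdot(\text{correction}) \le e^{-c\ell},
\]
with $c=c(\delta,\eps)>0$. Since $\ell \approx m/e_H = D\psi_H(\delta+\eps)/e_H$, choosing $D$ large makes $e^{-c\ell}\le \exp(-3\psi_H(\delta+\eps))$. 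Because $\psi_H(\delta+\eps)\gg1$, this is $o$ of the lower bound.

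The main obstacle is the passage from the ordered sum over $\ell$-tuples to the $\ell$-th power. The usual device is to sum over tuples that may repeat copies, bounding $X^\ell \le \sum_{K_1,\dots,K_\ell}\prod Y_{K_i}$, where repeated copies contribute factor $1$ in conditional probability. This is consistent with the conditional sum bound, since $\Ex_{U}[X]$ counts $K\subseteq U$ with weight $1$. So no correction is actually needed, and $\Ex[X^\ell\1_\cA]\le((1+\delta-\eps)\Ex X)^\ell$ directly. Markov then gives $\Pr(\UT_{H,\delta}\cap\cA)\le((1+\delta-\eps)/(1+\delta))^\ell$. The delicate point is justifying the conditional step with $\1_\cA$ present. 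We will use $\1_\cA\le \1_{\{U_{i-1}\notin\cS_{t,m}\}}$ on $\{U_{i-1}\subseteq G_{n,p}\}$, which is deterministic in $U_{i-1}$, and then drop $\1_\cA$ after factoring.
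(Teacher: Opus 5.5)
Your proof is correct and follows the paper's route. You compare the conditional Janson--Oleszkiewicz--Ruci\'nski moment bound $\Pr(\UT_{H,\delta}\cap\cA)\le\bigl((1+\delta-\eps)/(1+\delta)\bigr)^{\lceil m/e_H\rceil}\le e^{-3\psi_H(\delta+\eps)}$ against the planting lower bound $\eps p^{e_H}e^{-\psi_H(\delta+\eps)}\ge e^{-2\psi_H(\delta+\eps)}$, exactly as the paper does.

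The only difference is in the conditional step. The paper keeps $\1_\cA$ through the induction $\Ex[X^k\1_\cA]\le(\Ex[X]+t)\,\Ex[X^{k-1}\1_\cA]$ and invokes Harris's inequality to get $\Ex_U[X\mid\cA]\le\Ex_U[X]$. You instead dominate $\1_\cA\prod_i Y_{K_i}$ by the deterministic product $\prod_{i<\ell}\1\{U_i\notin\cS_{t,m}\}\prod_i Y_{K_i}$, which is valid because each $U_i\subseteq G_{n,p}$ has fewer than $m$ edges. You then sum out $K_\ell,K_{\ell-1},\dots$ in turn, each remaining indicator depending only on copies not yet summed. This is equally rigorous and avoids the correlation inequality.
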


The heart of the proof of~\cref{cor:exists-a-seed-condition-on-UT} is a conditional version of the elegant high-moment argument due to Janson, Oleszkiewicz, and Ruciński~\cite{JanOleRuc04}, which yields the following lemma.

\begin{lemma}[\cite{HarMouSam22}]
  \label{lem:stability}
  Let $\delta,t>0$ and let $m$ be an integer. Then,
  \[
    \Pr\big(X \geq (1+\delta)\Ex[X]\land G_{n,p} \notin \langle\cS_{t,m}\rangle\big) \leq \left(\frac{\Ex[X]+t}{(1+\delta)\Ex[X]}\right)^{m/e_H}.
  \]
\end{lemma}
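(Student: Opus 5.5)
We will run the high-moment argument of Janson, Oleszkiewicz, and Ruciński. Write $X = \sum_{K} Y_K$, where $K$ ranges over all copies of $H$ in $K_n$ and $Y_K$ is the indicator of $\{K \subseteq G_{n,p}\}$. Set $\mathcal{B} \coloneqq \{G_{n,p} \notin \langle \cS_{t,m}\rangle\}$. Fix an integer $\ell \coloneqq \lfloor m/e_H \rfloor$; we may assume $\ell \geq 1$, since otherwise the bound is trivially at least $1$. By Markov's inequality,
\[
  \Pr\bigl(X \ge (1+\delta)\Ex[X] \land \mathcal{B}\bigr) \le \frac{\Ex\bigl[X^{\ell}\1_{\mathcal{B}}\bigr]}{\bigl((1+\delta)\Ex[X]\bigr)^{\ell}}.
\]
It therefore suffices to show $\Ex[X^\ell \1_{\mathcal{B}}] \le (\Ex[X]+t)^\ell$. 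This gives the exponent $\ell$ rather than $m/e_H$. Since the base is at most $1$ in the relevant case, we would either choose $\ell=\lceil m/e_H\rceil$ carefully or accept the floor, which matches the stated bound up to the harmless rounding we intend to check.

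The main step is to prove the following by induction on $k \le \ell$:
\[
  \Ex\bigl[X^{k}\1_{\mathcal{B}}\bigr] \le (\Ex[X]+t)\,\Ex\bigl[X^{k-1}\1_{\mathcal{B}}\bigr].
\]
Expand $X^k = \sum_{K_1,\dots,K_{k}} Y_{K_1}\cdots Y_{K_k}$. Consider a tuple with $Y_{K_1}\cdots Y_{K_{k-1}}\1_{\mathcal{B}} = 1$ and let $F \coloneqq K_1\cup\dots\cup K_{k-1}$, which has at most $(k-1)e_H < m$ edges. Since $\mathcal{B}$ holds and $F \subseteq G_{n,p}$, the graph $F$ is not a $t$-seed, because a $t$-seed with at most $m$ edges would place $G_{n,p}$ in $\langle\cS_{t,m}\rangle$. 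Hence $\Ex_F[X] < \Ex[X]+t$. We write
\[
  \Ex\bigl[Y_{K_1}\cdots Y_{K_k}\1_{\mathcal{B}}\bigr] \le \Ex\bigl[Y_{K_1}\cdots Y_{K_{k-1}} \1_{\mathcal{B}_F}\, Y_{K_k}\bigr],
\]
where $\mathcal{B}_F$ is chosen so that dropping $\1_{\mathcal{B}}$ on the last factor is legitimate. The cleanest route is to use that $\mathcal{B}$ is a decreasing event: the family $\langle\cS_{t,m}\rangle$ is increasing, so its complement is decreasing. Conditioning on $F \subseteq G_{n,p}$, Harris's inequality (FKG), applied in the product space of the edges outside $F$, gives
\[
  \Ex\bigl[\1_{\mathcal{B}} Y_{K_k} \mid F\subseteq G_{n,p}\bigr] \le \Pr(\mathcal{B}\mid F\subseteq G_{n,p})\,\Ex\bigl[Y_{K_k}\mid F\subseteq G_{n,p}\bigr].
\]
The first factor is decreasing and the second increasing in those edges. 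Summing over $K_k$ gives the factor $\Ex_F[X]\cdot \Pr(\mathcal{B}\mid F \subseteq G_{n,p})$. Here $\Ex_F[X] < \Ex[X]+t$ whenever $\Pr(\mathcal{B}\mid F\subseteq G_{n,p}) > 0$, because if $F$ were a $t$-seed then $\mathcal{B}$ would fail whenever $F\subseteq G_{n,p}$. Multiplying back by $\Pr(F\subseteq G_{n,p})$ and summing over $(K_1,\dots,K_{k-1})$ recovers $(\Ex[X]+t)\Ex[X^{k-1}\1_{\mathcal{B}}]$.

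The delicate point, and the main obstacle, is justifying the FKG step. Conditioning on $F \subseteq G_{n,p}$ preserves product structure on the remaining edges, and both $\1_{\mathcal{B}}$ and $Y_{K_k}$ restrict to monotone functions there, with opposite monotonicity. So Harris's inequality applies and yields the negative-correlation bound above. We must also check that the size condition $e_F \le m$ holds for all $k \le \ell$, which is where the choice $\ell \approx m/e_H$ enters. Iterating the induction from $\Ex[X^0\1_{\mathcal{B}}]\le 1$ gives $\Ex[X^\ell\1_{\mathcal{B}}]\le(\Ex[X]+t)^\ell$, and the lemma follows.
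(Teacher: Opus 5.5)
Your argument is essentially the paper's proof: Markov's inequality applied to $X^{k}\1_{\mathcal{B}}$, then Harris's inequality in the product space conditioned on $F\subseteq G_{n,p}$ (the decreasing $\1_{\mathcal{B}}$ against the increasing $X$) to get $\Ex_F[X\1_{\mathcal{B}}]\le \Ex_F[X]\cdot\Pr(\mathcal{B}\mid F\subseteq G_{n,p})$, and finally induction using that $F$ has fewer than $m$ edges and therefore cannot be a $t$-seed. The one point to settle is the rounding, and it must go the paper's way: take $\ell=\lceil m/e_H\rceil$, which still gives $e_F\le(\ell-1)e_H<m$ and yields an exponent $\ell\ge m/e_H$; the floor gives a strictly weaker bound when $e_H\nmid m$, and your remark that the case $\ell=0$ is trivial holds only with the ceiling.
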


\begin{proof}
  Let $Z$ be the indicator for the event that $G_{n,p} \notin \langle\cS_{t,m}\rangle$.
  From the definition of $Z$ we get that $XZ\geq 0$ and $Z^k=Z$. By Markov's inequality, for every positive integer $k$, we have
  \begin{equation}\label{eq:stability2}
    \Pr\big(X \geq (1+\delta)\Ex[X]\land G_{n,p} \notin \langle\cS_{t,m}\rangle\big)
    = \Pr\big(X Z \geq (1+\delta)\Ex[X]\big) \leq 
    \frac{\Ex[X^k Z]}{\big((1+\delta)\Ex[X]\big)^k}.
  \end{equation}
  For every $U\subseteq E(K_n)$, let $Y_U$ be the indicator for the event that $U$ is contained in $G_{n,p}$. Write $X = \sum_S Y_S$,
  where the sum ranges over all copies of $H$ in $K_n$. Then, for every $k$,
  \[
    \Ex[X^k Z]
    = \sum_{S_1,\dotsc,S_k}\Ex[Y_{S_1}\dotsb Y_{S_k} \cdot Z]
    = \sum_{S_1,\dotsc,S_{k-1}}
    \Ex[Y_{S_1}\dotsb Y_{S_{k-1}}\cdot Z]
    \cdot \Ex[X
    \mid Y_{S_1}\dotsb Y_{S_{k-1}}\cdot Z =1],
  \]
  where the latter sum ranges only over sequences $S_1,\dotsc,S_{k-1}$
  for which $Y_{S_1}\dotsb Y_{S_{k-1}}\cdot Z=1$ holds with positive probability.
  Note that if $Y_{S_1}\dotsb Y_{S_{k-1}}\cdot Z=1$, then $S_1\cup \dotsb \cup S_{k-1}\subseteq G_{n,p}$ and $G_{n,p} \notin \langle\cS_{t,m}\rangle$; in particular, $S_1\cup \dotsb \cup S_{k-1} \notin \cS_{t,m}$.
  Fix some such sequence $S_1,\dotsc, S_{k-1}$ and note that
  \[
    \Ex[X \mid Y_{S_1} \dotsb Y_{S_{k-1}} \cdot Z = 1] = \Ex[X \mid Y_{S_1 \cup \dotsb \cup S_{k-1}} \cdot Z = 1] = \Ex_{S_1 \cup \dotsb \cup S_{k-1}}[X \mid Z = 1],
  \]
  where $\Ex_{S_1\cup\dotsb \cup S_{k-1}}$ is the expectation operator in the conditional probability space of $G_{n,p}$ conditioned on the event $\{S_1\cup \dotsb \cup S_{k-1}\subseteq G_{n,p}\}$, which is a product space.
  Since $X$ and $Z$ are increasing and decreasing (respectively) functions defined on this space, Harris's inequality gives
  \[
    \Ex_{S_1 \cup \dotsb \cup S_{k-1}}[X \mid Z=1] \le \Ex_{S_1 \cup \dotsb \cup S_{k-1}}[X].
  \]
  Finally, if $k \le \lceil m/e_H \rceil$, then the facts that $S_1\cup \dotsb \cup S_{k-1}\notin \cS_{t,m}$ and $e(S_1 \cup \dotsb \cup S_{k-1}) \le (k-1)e_H < m$ imply that $\Ex_{S_1 \cup \dotsb \cup S_{k-1}}[X] < \Ex[X] +t$.
  It follows that, for $k \leq \lceil m / e_H \rceil$,
  \[
    \sum_{S_1,\dotsc,S_k}
    \Ex[Y_{S_1}\dotsb Y_{S_k}\cdot Z]\leq (\Ex[X]+t)\cdot 
    \sum_{S_1,\dotsc,S_{k-1}}
    \Ex[Y_{S_1}\dotsb Y_{S_{k-1}}\cdot Z].
  \]
  By induction, $\Ex[X^k Z] 
  < (\Ex[X]+t)^k$ for every integer $k \le \lceil m / e_H \rceil$. Substituting this inequality into~\eqref{eq:stability2} implies the assertion of the lemma.
\end{proof}

In order to deduce~\cref{cor:exists-a-seed-condition-on-UT} from \Cref{lem:stability}, we require a lower bound on $\Pr(\UT_{H,\delta})$.

\begin{lemma}
  \label{lem:lower}
  If an event $\mathcal{E}$ satisfies $\Ex[X\mid \mathcal{E}] \ge (1+\delta+\varepsilon)\Ex[X]$ for some $\delta, \varepsilon > 0$, then
  \[
    \Pr\bigl(X \ge (1+\delta)\Ex[X] \mid \mathcal{E}\bigr) \ge \varepsilon p^{e_H}.
  \]
\end{lemma}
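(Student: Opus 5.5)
The argument is a reverse Markov (Paley--Zygmund-type) bound that uses only the deterministic upper bound on $X$. Write $\mu \coloneqq \Ex[X]$. Let $N \coloneqq N(H, K_n)$ be the total number of copies of $H$ in $K_n$; then $X \le N$ always and $\mu = N p^{e_H}$. Split the conditional expectation according to whether the upper-tail event occurs:
\[
  \Ex[X \mid \mathcal{E}] \le (1+\delta)\mu + \Ex\bigl[X \cdot \1[X \ge (1+\delta)\mu] \mid \mathcal{E}\bigr] \le (1+\delta)\mu + N \cdot \Pr\bigl(X \ge (1+\delta)\mu \mid \mathcal{E}\bigr).
\]
On the complement of the upper-tail event we have $X < (1+\delta)\mu$, which gives the first term. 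On the event itself we bound $X \le N$, which gives the second term.

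We then combine this with the hypothesis $\Ex[X \mid \mathcal{E}] \ge (1+\delta+\varepsilon)\mu$ and obtain
\[
  N \cdot \Pr\bigl(X \ge (1+\delta)\mu \mid \mathcal{E}\bigr) \ge \varepsilon \mu = \varepsilon N p^{e_H}.
\]
Dividing by $N$ gives the claim. We need $N>0$, which holds whenever $n \ge v_H$. If $N = 0$, then $\mu = 0 = X$ and the conditional probability is trivially $1$.

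There is no real obstacle here. The only point to watch is that the crude bound $X \le N$ is exactly what turns $\mu/N$ into the factor $p^{e_H}$. No structure of $\mathcal{E}$ is needed, so the lemma applies to any conditioning event, for instance $\{G \subseteq G_{n,p}\}$ for a seed $G$, as in~\eqref{eq:UTHd-lower}.
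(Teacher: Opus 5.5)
Your proof is correct and is the paper's argument: split $\Ex[X \mid \mathcal{E}]$ at the threshold $(1+\delta)\Ex[X]$ and use the deterministic bound $X \le N(H,K_n) = \Ex[X]\cdot p^{-e_H}$ on the upper-tail part. Your handling of the degenerate case $N(H,K_n)=0$ is a harmless extra.
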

\begin{proof}
  Let $u \coloneqq (1+\delta)\Ex[X]$.
  Since $X\leq \Ex[X]\cdot p^{-e_H}$ always, we have
  \[
    u+\varepsilon \Ex[X]\leq \Ex[X\mid \mathcal{E}] \leq u+\Pr(X\geq u \mid \mathcal{E}) \cdot \Ex[X]\cdot p^{-e_H},
  \]
  and thus $\Pr(X\geq u\mid \mathcal{E})\geq \varepsilon p^{e_H}$. 
\end{proof}

\begin{corollary}
  \label{cor:lower}
  For all $\eps,\delta>0$, we have $\Pr\big(X\geq (1+\delta)\Ex[X]\big) \geq \varepsilon p^{e_H}\cdot e^{-\psi_H(\delta+\eps)}$.
\end{corollary}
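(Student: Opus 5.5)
The plan is to apply \cref{lem:lower} with $\mathcal{E}$ taken to be the event that a fixed optimal seed is present in $G_{n,p}$. We then multiply by the probability of that event, exactly as in~\eqref{eq:UTHd-lower}.

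If $\psi_H(\delta+\eps) = \infty$, the family $\cS_{\delta+\eps}$ is empty. In that case $e^{-\psi_H(\delta+\eps)} = 0$ and the inequality is trivial, so we may assume the minimum in the definition of $\psi_H(\delta+\eps)$ is attained. Fix a graph $G \in \cS_{\delta+\eps}$ with $e(G)\log(1/p) = \psi_H(\delta+\eps)$, and let $\mathcal{E} \coloneqq \{G \subseteq G_{n,p}\}$. By the definition of a $(\delta+\eps)$-seed,
\[
\Ex[X \mid \mathcal{E}] = \Ex_G[X] \ge (1+\delta+\eps)\Ex[X].
\]
This is precisely the hypothesis of \cref{lem:lower}, which gives $\Pr(X \ge (1+\delta)\Ex[X] \mid \mathcal{E}) \ge \eps p^{e_H}$.

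For the second factor, independence of the edges gives $\Pr(\mathcal{E}) = p^{e(G)} = \exp\bigl(-e(G)\log(1/p)\bigr) = e^{-\psi_H(\delta+\eps)}$. Combining the two,
\[
\Pr\bigl(X \ge (1+\delta)\Ex[X]\bigr) \ge \Pr\bigl(X \ge (1+\delta)\Ex[X] \mid \mathcal{E}\bigr)\cdot \Pr(\mathcal{E}) \ge \eps p^{e_H} e^{-\psi_H(\delta+\eps)},
\]
which is the claim. The argument is essentially routine, and the only care needed is the degenerate case of an empty seed family, handled above.
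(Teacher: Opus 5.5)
Your proposal is correct and follows essentially the same route as the paper: treat $\psi_H(\delta+\eps)=\infty$ as trivial, fix a minimising $(\delta+\eps)$-seed $G$, apply \cref{lem:lower} to the event $\{G\subseteq G_{n,p}\}$, and multiply by $\Pr(G\subseteq G_{n,p})=p^{e_G}=e^{-\psi_H(\delta+\eps)}$.
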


\begin{proof}
  If $\psi_H(\delta+\eps) = \infty$, then
  the assertion of the lemma is vacuous. 
  Otherwise, there exists a graph $G$ with $e_G\log (1/p) =
  \psi_H(\delta+\eps)$ and with $\Ex_G[X] \geq (1+\delta+\eps)\Ex[X]$.
  Now, letting $u \coloneqq (1+\delta)\Ex[X]$, \Cref{lem:lower} implies the assertion of the corollary as follows:
  \[ 
    \Pr(X\geq u) \geq
    \Pr(X\geq u \mid G\subseteq G_{n,p}) \cdot \Pr(G\subseteq G_{n,p})
    \geq \varepsilon p^{e_H} \cdot p^{e_G} =\varepsilon p^{e_H}\cdot e^{-\psi_H(\delta+\varepsilon)}.\qedhere
  \]
\end{proof}

\begin{proof}[Proof of~\Cref{cor:exists-a-seed-condition-on-UT}]
  On the one hand, \Cref{lem:stability} yields
  \[
    \begin{split}
      \Pr\big(X \geq (1+\delta)\Ex[X]\land G_{n,p} \notin\langle \cS_{t,m}\rangle\big)
      & \leq \left(\frac{(1+\delta-\varepsilon)\Ex[X]}{(1+\delta)\Ex[X]}\right)^{m/e_H}
        \leq \left(\frac{1+\delta-\varepsilon}{1+\delta}\right)^{D \psi_H(\delta+\varepsilon) / e_H} \\
      & \le e^{-3\psi_H(\delta+\varepsilon)},
    \end{split}
  \]
  provided that $D$ is sufficiently large.
  On the other hand, by Corollary \ref{cor:lower} and our assumptions on $\psi_H(\delta+\varepsilon)$,
  \[
    \Pr\bigl(X \ge (1+\delta)\Ex[X]\bigr) \ge \varepsilon p^{e_H} \cdot e^{-\psi_H(\delta+\varepsilon)} \gg e^{-3\psi_H(\delta+\varepsilon)}.\qedhere
  \]
\end{proof}

The second key result of this section states that every $t$-seed of nearly minimal size contains a $t'$-seed, for some $t' \approx t$, that is also a hub-core.

\begin{proposition}
  \label{lem:cores}
  There exists a constant $K$ such that the following holds.
  Suppose that either:
  \begin{enumerate}[label=(\roman*)]
  \item
    $p_H \cdot (\log n)^K \ll p \ll 1$ and $m \le f \cdot \bigl(\log (1/p)\bigr)^2$; or
  \item
    $p_H \cdot (\log \log n)^K \ll p \ll 1$, and $m \le f \cdot \log \log (1/p)$.
  \end{enumerate}
  For all $t \ge \Omega(\Ex[X])$, letting $t' \coloneqq t - o(\Ex[X])$, we have
  \[
    \cS_{t,m} \subseteq \langle\cS_{t',m} \cap \cL\rangle.
  \]
\end{proposition}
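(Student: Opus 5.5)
Given $G\in\cS_{t,m}$, we run a deletion process that strips away edges supporting few copies of graphs in $\cJ$. The goal is a subgraph $G^*\subseteq G$ that is still a $t'$-seed and in which every edge is ``heavy''; heaviness will then force a small vertex cover. Concretely, we set a threshold $\theta\coloneqq n^{v_H}p^{e_H}/(\omega^{1/2} m)$. As long as some edge $uv$ of the current graph $G'$ satisfies
\[
  \sum_{J\in\cJ} \frac{|\Emb(J,H)|}{|\Aut(H)|}\, N(J,G';uv)\, n^{v_H-v_J}p^{e_H-e_J} \le \theta,
\]
we delete it, and we also delete any isolated vertices. In the regime $p\le n^{-1/\Delta}(\log n)^{-\omega}$ we run the same process with $\cJ$ replaced by $\cJ^*$.

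\emph{Loss during deletion.} At most $m$ edges are deleted, so the $\cJ$-part of the sum in \cref{cor:difference-in-expectations} drops by at most $m\theta=o(n^{v_H}p^{e_H})=o(\Ex[X])$. The inputs of \cref{prop:difference-in-expectations-important-J} hold in both cases (i) and (ii): $e_G\le m=O(f\log(1/p))$, and $p$ is above $p_H$ by the required polylog factor. In case (ii) $m$ is even smaller, so we would redo \cref{lem:bound-on-NJG} with $x=\log\log(1/p)$; there a $(\log\log n)^K$ gain suffices. By that proposition, the contributions of $J\notin\cJ$ (respectively $J\notin\cJ^*$) are $o(\Ex[X])$ for $G$. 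Hence for the final graph $G^*$,
\[
  \Ex_{G^*}[X]-\Ex[X]\ \ge\ \Ex_G[X]-\Ex[X]-o(\Ex[X]) \ \ge\ t',
\]
and $G^*$ is nonempty because $t=\Omega(\Ex[X])$.

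\emph{Small vertex cover.} Every edge $uv\in G^*$ now has weight above $\theta$, so $N(J,G^*;uv)\gtrsim \theta\, n^{v_J-v_H}p^{e_J-e_H}$ for some $J$. Combining this with \cref{cor:core-edge-bipartite} and with the bound $N(J,v,e)\lesssim e^{v_J-\alpha^*_J}\min\{e,v\}^{2\alpha_J^*-v_J}$ from \cref{thm:max-copies} gives
\[
  \deg u+\deg v\ \gtrsim\ \frac{\theta\, e_{G^*}\min\{v_{G^*},e_{G^*}\}}{N(J,v_{G^*},e_{G^*})}\, n^{v_J-v_H}p^{e_J-e_H}.
\]
By \cref{fact:f-p-pH} and the computation in \cref{lem:weak_estimation_for_Phi}, $n^{v_J}p^{e_J}$ matches $f^{v_J-\alpha^*}\min\{f,n\}^{2\alpha^*-v_J}$ up to constants. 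The right-hand side is therefore at least $(e_{G^*}/\omega^{1/2}m)\cdot \min\{v,e\}\cdot (f/e_{G^*})^{O(v_H)}$. Call a vertex heavy if its degree is at least half of this quantity. Every edge then has a heavy endpoint, so the heavy vertices form a vertex cover. The number of heavy vertices is at most $2e_{G^*}$ divided by the threshold, which is
\[
  O\bigl(\omega^{1/2} (m/e_{G^*}) \max\{1,(e_{G^*}/f)^{v_H}\}\cdot e_{G^*}/\min\{v,e\}\bigr).
\]
We have $m/e_{G^*}\le m/\Omega(f)$ by \cref{lem:psi-lower}, and $e/\min\{v,e\}\le \max\{1,e/n\}$. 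This is where the definition of hub-cores demands care: the factor $m/f$ can be as large as $(\log(1/p))^2$ in case (i), while the hub-core bound only allows $(\log\log(1/p))^3$. We will therefore rebalance, either by choosing $\theta$ relative to $e_{G^*}$ rather than $m$ (iterating the threshold), or by exploiting that $N(J,v,e)$ for the actual $G^*$ is comparable to the seed requirement. Either way the cover size should become $\omega^{O(1)}\max\{1,(e/f)^{v_H}\}\max\{1,e/n\}$. For the $e_{G^*}/\omega$ part of the bound we use $e_{G^*}\ge f\ge p/p_H\gg \omega^{O(1)}$.

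\emph{Minimum degree when $p$ is small.} Here every edge lies in a copy of some $J\in\cJ^*$ inside $G^*$, and since $G^*$ has no isolated vertices, every vertex lies in such a copy. This does not immediately give $\delta(G^*)\ge d_H$, because a copy through $v$ gives $v$ only $\deg_J$ of that vertex many neighbours, and that is already at least $d_H$. So each vertex has degree at least $d_H$.

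\emph{Main obstacle.} The step I expect to be hardest is getting the exact polylogarithmic exponent $(\log\log(1/p))^3$ in the vertex-cover bound, that is, choosing $\theta$ so that the deletion loss is $o(\Ex[X])$ while the heavy-vertex count stays within the allowance.
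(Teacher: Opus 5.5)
Your strategy matches the paper's: trim edges that support few copies of graphs in $\cJ$, then combine \cref{cor:core-edge-bipartite} with \cref{lem:bound-on-NJG} to show that every surviving edge has a high-degree endpoint. However, the step you defer as the ``main obstacle'' is where the proof actually lives, and with your fixed threshold $\theta=n^{v_H}p^{e_H}/(\omega^{1/2}m)$ it fails in case (i).

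As you note, your count gives a vertex cover of size of order $\omega^{1/2}\cdot(m/f)\cdot\max\{1,(e_{G^*}/f)^{v_H}\}\cdot\max\{1,e_{G^*}/n\}$ when $e_{G^*}=\Theta(f)$, which is exactly the relevant situation for near-minimal seeds. Here $m/f$ may be as large as $(\log(1/p))^2$, whereas \cref{dfn:hub-core} allows only $(\log\log(1/p))^3$. (In case (ii), where $m/f\le\log\log(1/p)$, the fixed threshold does suffice.) Neither remedy you mention is carried out. The obvious form of ``iterating'' also fails with your constant. Restarting whenever the edge count halves costs up to $m_k\theta_k=\Ex[X]/\omega^{1/2}$ per round, over as many as $\log_2(m/f)=\Theta(\omega)$ rounds. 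The resulting bound on the total loss is then of order $\omega^{1/2}\,\Ex[X]$ rather than $o(\Ex[X])$.

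The missing idea is a threshold that scales with the current graph. The paper deletes an edge $e$ of the current graph $G'$ whenever $N(J,G';e)<n^{v_J}p^{e_J}/(\ell^2\max\{e_{G'},f\})$ for all $J\in\cJ$, where $\ell=2\log\log(1/p)$. The total loss for each $J$ is then a harmonic sum, at most $\ell^{-2}n^{v_J}p^{e_J}\bigl(1+\sum_{i=f+1}^{m}1/i\bigr)=O(n^{v_J}p^{e_J}/\ell)$. This is precisely where the hypothesis $m\le f(\log(1/p))^2$ enters, through $\log(m/f)\le\ell$.

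When the process stops, every edge of $G^*$ satisfies the reverse inequality with $\max\{e_{G^*},f\}$. This cancels the factor $e_{G^*}$ in the denominator of \cref{cor:core-edge-bipartite}. Together with \cref{lem:bound-on-NJG}, it yields $\deg u+\deg v\gtrsim\min\{e_{G^*},n\}/\bigl(\ell^2\max\{1,(e_{G^*}/f)^{v_H}\}\bigr)$ for every $uv\in G^*$. Hence there is a cover of size $O(\ell^2)\cdot\max\{1,(e_{G^*}/f)^{v_H}\}\cdot\max\{1,e_{G^*}/n\}$, with no dependence on $m$.

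Two smaller points:
\begin{itemize}
\item Your inequality $e/\min\{v_{G^*},e\}\le\max\{1,e/n\}$ goes the wrong way. Instead, apply \cref{lemma:core-edge-bipartite} with $\min\{e_{G^*},v_{G^*}\}\le\min\{e_{G^*},n\}$, which is legitimate because the exponent $|B|-|A|-1$ is nonnegative. Then $\min\{e_{G^*},n\}$ appears directly, as in the paper.
\item Your minimum-degree paragraph contradicts itself before reaching the right conclusion. Still, running the trimming with $\cJ^*$ when $p\le n^{-1/\Delta}(\log n)^{-\omega}$ is a valid alternative to the paper's argument that edges supported only by copies of $J\in\cJ\setminus\cJ^*$ cannot survive.
\end{itemize}
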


\begin{proof}
  Denote $\ell \coloneqq 2\log\log(1/p)$ and fix an arbitrary $G \in \cS_{t,m}$.
  Let $G^*$ be the graph obtained from $G$ by repeatedly deleting from a graph $G' \subseteq G$ all edges $e$ that satisfy
  \begin{equation}
    \label{eq:core-trimming}
    N(J, G'; e) < \frac{n^{v_J}p^{e_J}}{\ell^2 \cdot \max\{e_{G'}, f\}}
  \end{equation}
  for all $J \in \cJ$ until no such edges are left and then removing all isolated vertices.
  The definition of $G^*$ guarantees that, for every $J \in \cJ$,
  \[
    N(J, G) - N(J, G^*) < \sum_{i=e_{G^*}+1}^{e_G}  \frac{n^{v_J}p^{e_J}}{\ell^2 \cdot \max\{i,f\}} \le \frac{n^{v_J} p^{e_J}}{\ell^2} \cdot \left(1 + \sum_{i=f+1}^{m} \frac{1}{i} \right) \le \frac{3n^{v_J} p^{e_J}}{\ell},
  \]
  as $m/f \le \bigl(\log(1/p)\bigr)^2 = e^{2\ell}$.
  Consequently, \Cref{prop:difference-in-expectations-important-J} yields
  \[
    \Ex_G[X] - \Ex_{G^*}[X] \le \sum_{J \in \cJ} \frac{|\Emb(J,H)|}{|\Aut(H)|} \cdot \frac{3n^{v_H}p^{e_H}}{\ell} + o(n^{v_H}p^{e_H}) \ll \Ex[X],
  \]
  which means that $G^* \in \mathcal{S}_{t',m}$.
  Since $t' \ge \Omega(\Ex[X])$, \Cref{lem:psi-lower} implies that $e_{G^*} \ge \Omega(f)$.

  All that remains is to show that $G^* \in \cL$.
  By construction, for every edge $e = uv \in G^*$, there exists some $J \in \cJ$ for which the converse of~\eqref{eq:core-trimming} holds and thus, by \Cref{cor:core-edge-bipartite},
  \begin{equation}
    \label{eq:NJG*-uv-two-sided}
    \frac{n^{v_J} p^{e_J}}{\ell^2 \cdot \max\{e_{G^*},f\}} \le N(J, G^*; uv) \le C_J \cdot (\deg_{G^*} u + \deg_{G^*}v) \cdot \frac{N(J, n, e_{G^*})}{e_{G^*} \cdot \min\{e_{G^*}, n\}}.
  \end{equation}
  Writing $x \coloneqq e_{G^*}/f$, \Cref{lem:bound-on-NJG} further implies that
  \begin{equation}
    \label{eq:NJG*-upper}
    N(J, n, e_{G^*}) \le \max\{2x, (2x)^{v_J} \} \cdot n^{v_J} p^{e_J} \cdot Z,
  \end{equation}
  where
  \[
    Z =
    \begin{cases}
      (np^{\Delta})^{\gamma_H} & \text{if $J \in \cJ \setminus \cJ^*$ and $p < n^{-1/\Delta}$}, \\
      1 & \text{otherwise}.
    \end{cases}
  \]
  Substituting \eqref{eq:NJG*-upper} into~\eqref{eq:NJG*-uv-two-sided} and rearranging the terms yields
  \[
    \begin{split}
      \deg_{G^*} u + \deg_{G^*}v
      & \ge \frac{1}{C_J \cdot \ell^2 \cdot \max\{1, 1/x\} \cdot \max\{2x, (2x)^{v_J}\} \cdot Z} \cdot \min\{e_{G^*},n\} \\
      & \ge \frac{1}{C_J' \cdot \ell^2 \cdot \max\{1, x^{v_J}\} \cdot Z} \cdot \min\{e_{G^*},n\},
    \end{split}
  \]
  where $C_J'$ depends only on $C_J$ and $H$.
  Further, if $p \ll n^{-1/\Delta} (\log n)^{-\omega}$, then the trivial inequality $\deg_{G^*} u + \deg_{G^*} v \le 2v_{G^*} \le 4\min\{e_{G^*}, n\}$ and our assumption that $x \le m/f \le (\log n)^2$ imply that $Z$ must be equal to one and thus $uv$ lies in a copy of some $J \in \cJ^*$ in $G^*$.  This means, in particular, that both $\deg_{G^*}u$ and $\deg_{G^*} v$ are at least $\delta(J) \ge d_H$.
  Finally, since $x \le m/f$ and $Z \le 1$, this implies that every edge of $G^*$ must have an endpoint of degree at least
  \[
    d \coloneqq \frac{\min\{e_{G^*},n\}}{2C_J' \cdot \ell^2 \cdot \max\{1, (e_{G^*}/f)^{v_H}\}},
  \]
  which in turn implies that $G^*$ has a vertex cover of size at most
  \[
    \frac{2e_{G^*}}{d} \le \ell^3 \cdot \max\{1, (e_{G^*}/f)^{v_H}\} \cdot \max\left\{1, \frac{e_{G^*}}{n}\right\}.
  \]
  In order to conclude that $G^* \in \mathcal{L}$, we still need to show that $d \ge 2\omega$.
  To this end, recall that $e_{G^*} = \Omega(f)$ and thus
  \[
    d \ge \frac{\min\{f, n\}}{\ell^3 \cdot \max\{1, (m/f)^{v_H}\}}.
  \]
  Since $f \ge p / p_H$, see \cref{fact:f-p-pH}, this follows under both sets of assumptions of the lemma, provided that $K$ is sufficiently large.
\end{proof}

The final key result of this section gives an upper bound on the number of hub-cores with a given number of edges that matches the trivial lower bound up to a `small' error term.

\begin{lemma}
  \label{lem:core-count}
  Suppose that $p_H \ll p \ll 1$.
  For every $m \le f \cdot (\log(1/p))^2$ and every family $\cL' \subseteq \cL$ of hub-cores with $m$ edges,
  \[
    \log |\cL'| \le \max_{G \in \cL'} v_G \cdot \log(en/g) + f + O\bigl(m \log\log(1/p)\bigr).
  \]
\end{lemma}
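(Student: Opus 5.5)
We bound $|\cL'|$ by encoding each $G \in \cL'$ through a small vertex cover. Fix $G \in \cL'$ and let $D \subseteq V(G)$ be a vertex cover with
\[
  |D| \le \nu \coloneqq \min\Bigl\{m/\omega,\ \bigl(\log\log(1/p)\bigr)^3 \cdot \max\bigl\{1, (m/f)^{v_H}\bigr\} \cdot \max\{1, m/n\}\Bigr\},
\]
as guaranteed by \cref{dfn:hub-core}. We record $G$ by four pieces of data:
\begin{enumerate}[label=(\alph*)]
\item the integer $v = v_G \le 2m$;
\item the set $V(G) \subseteq V(K_n)$;
\item the set $D \subseteq V(G)$;
\item the edge set of $G$, viewed as a subset of $\{uw : u \in D,\ w \in V(G)\}$ of size $m$.
\end{enumerate}
Since $D$ is a vertex cover, every edge has an endpoint in $D$, so this data determines $G$. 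Consequently,
\[
  |\cL'| \le \sum_{v} \binom{n}{v} \cdot \binom{v}{\le \nu} \cdot \binom{\nu v}{m},
\]
where the sum runs over the values $v_G$ attained in $\cL'$. The factor $2m$ for the choice of $v$ costs only $O(\log m)$, which is negligible.

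\textbf{Main term.} The dominant factor is $\binom{n}{v} \le (en/v)^v$. We must compare this with $(en/g)^{v}$, i.e.\ show $v \log(en/v) \le v \log(en/g) + f + O(m\log\log(1/p))$.
\begin{itemize}
\item If $v \ge g$, this is immediate.
\item If $v < g$, the excess is $v\log(g/v)$. The function $v \mapsto v\log(g/v)$ is maximised at $v = g/e$ with value $g/e \le f$, so the excess is at most $f$.
\end{itemize}
This is exactly the source of the additive $f$ in the statement. Finally, replace $v$ by $\max_{G\in\cL'} v_G$ in the bound.

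\textbf{Cover and edges.} The cost of choosing $D$ is $\binom{v}{\le\nu} \le (ev)^{\nu}$, so its logarithm is at most $\nu\log(2em) \le (m/\omega)\,O(\log m)$. The edge factor satisfies
\[
  \log\binom{\nu v}{m} \le m \log\bigl(e\nu v/m\bigr) \le m\log(2e\nu),
\]
using $v \le 2m$. Both terms therefore require an estimate of $\log \nu$ and $\log m$.

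\textbf{Edge entropy — the main obstacle.} We need $\log \nu = O(\log\log(1/p))$. Since $m \le f(\log(1/p))^2$, the factor $(m/f)^{v_H}$ contributes only $O(\log\log(1/p))$ to $\log\nu$, as does $(\log\log(1/p))^3$. The problematic factor is $\max\{1, m/n\}$.
\begin{itemize}
\item If $g = f \le n$, then $m/n \le (\log(1/p))^2$, which is fine.
\item If $f > n$, i.e.\ $p > n^{-1/\Delta}$, then $m/n$ may be as large as $np^{\Delta}(\log(1/p))^2$, and $\log(m/n)$ can be of order $\log(1/p)$.
\end{itemize}
To handle the second case, we refine the edge count. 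We choose the edges as neighbourhoods of cover vertices, so that each cover vertex has at most $v \le n$ choices per neighbour. A better estimate in this regime is
\[
  \binom{\nu v}{m} \le \Bigl(\frac{e\nu n}{m}\Bigr)^{m},
\]
and $\nu n/m \le (\log\log(1/p))^3 \cdot (\log(1/p))^{2v_H}$ when $m \ge n$. This again gives $O(m\log\log(1/p))$.

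The cover-choice term requires the same care. Here we use $\nu \le m/\omega$ together with $\log m \le O(\log n)$. This alone only gives $\nu\log m \le m\log n/\omega$, which is not yet sufficient. We therefore instead bound $\binom{v}{\nu} \le (ev/\nu)^{\nu}$ and use $\nu \log(v/\nu) \le \nu\log(2m/\nu)$. Combining this with the second expression for $\nu$ in the minimum should yield $O(m\log\log(1/p))$.

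I expect this bookkeeping — keeping $\log\nu$ and $\nu\log(v/\nu)$ at $O(m\log\log(1/p))$ uniformly across the regimes $p \lessgtr n^{-1/\Delta}$ — to be the delicate part of the proof.
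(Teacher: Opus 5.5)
Your proposal is correct and follows the paper's proof. You encode $G$ by $V(G)$, a small vertex cover $D$, and its edges among the at most $\nu v$ pairs meeting $D$; you absorb the excess $v\log(g/v)\le f$ from $\binom{n}{v}$; and you control $\nu v/m$ using $v\le \min\{2m,n\}$, which is exactly the paper's observation $v^*\max\{1,m/n\}\le 2m$. The step you left open is not delicate: as the paper does, bound the number of choices of $D\subseteq V(G)$ by $2^{v}\le 4^m$ (equivalently, note that $\nu\log(ev/\nu)\le v\le 2m$ for every $\nu$), so this term contributes only $O(m)$.
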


\begin{proof}
  Denote $v^* \coloneqq \max_{G \in \cL'} v_G$.
  Every graph $G \in \cL'$ can be constructed as follows.
  First, choose its vertex set $V(G) \subseteq \br{n}$ of size at most $v^*$ and then a vertex cover $D_G \subseteq V(G)$ of size at most
  \[
    d_m \coloneqq \bigl(\log\log(1/p)\bigr)^{3} \cdot \max\bigl\{1, (m/f)^{v_H}\bigr\} \cdot \max\{1, m/n\}.
  \]
  Given $V(G)$ and $D_G$, the edges of $G$ are chosen from among the at most $d_m v^*$ pairs of vertices that intersect $D_G$.
  To summarise,
  \begin{equation}
    \label{eq:cL'-upper}
    |\cL'| \le \sum_{v = 0}^{v^*} \binom{n}{v} \cdot 2^{v^*} \cdot \binom{d_mv^*}{m} \le \left(\frac{en}{v^*}\right)^{v^*} \cdot 2^{v^*} \cdot \left(\frac{ed_mv^*}{m}\right)^m.
  \end{equation}
  Further, as
  \[
    v^* \cdot \max\left\{1, \frac{m}{n}\right\} \le \min\{2m, n\} \cdot \max\left\{1, \frac{m}{n}\right\} \le 2m,
  \]
  we have
  \[
    d_mv^* \le 2m \cdot \bigl(\log\log(1/p)\bigr)^{3} \cdot \max\left\{1, \left(\frac{m}{f}\right)^{v_H}\right\} \le m \cdot \bigl(\log(1/p)\bigr)^{2v_H+3}.
  \]
  Substituting this estimate into~\eqref{eq:cL'-upper}, we obtain
  \begin{equation}
    \label{eq:log-cL'-upper}
    \log |\cL'| \le v^* \log\left(\frac{2en}{v^*}\right) + C m \log \log(1/p)
  \end{equation}
  for some constant $C$ that depends only on $H$.
  Finally, since $x \log(ea/x) \le a$ for all $x, a > 0$,
  \[
    v^*\log\left(\frac{2en}{v^*}\right) - v^*\log\left(\frac{en}{g}\right) \le v^* \log\left(\frac{eg}{v^*}\right) \le g \le f.
  \]
  Combining this estimate with~\eqref{eq:log-cL'-upper} gives the assertion of the lemma.
\end{proof}

Finally, we conclude this section with the proof of the lower bound in \cref{thm:main-dense-ish}.

\begin{proof}[Proof of the lower bound in \cref{thm:main-dense-ish}]
  Let $K = K(H, \delta, \varepsilon)$ and $D = D(H, \delta, \varepsilon)$ be large constants, assume that $p \ge p_H \cdot (\log n)^K$, and set $t \coloneqq (\delta-\varepsilon) \cdot \Ex[X]$ and $t' \coloneqq (\delta-2\varepsilon) \cdot \Ex[X]$.
  Further, for every positive integer $m$, define
  \[
    \cL_m\coloneqq \{G \in \cS_{t',m} \cap \cL:e_G=m\}
    \qquad
    \text{and}
    \qquad
    v_m \coloneqq \max\{v_G : G \in \mathcal{L}_m\},
  \]
  let $m_0 \coloneqq \min\{m:\cL_m\neq \emptyset\}$ and $m_1 \coloneqq \lceil D \psi_H(\delta+\varepsilon) \rceil$,
  and observe that \cref{lem:psi--psi*-lower} implies that $f/D \le m_0 \le m_1 \le D^2 f \log(1/p)$, provided that $D$ is sufficiently large.
  It follows from \cref{cor:exists-a-seed-condition-on-UT,lem:cores} that
  \begin{equation}\label{eq:UT-dense-ish-regime-bound-by-cores}
    \begin{split}
      \Pr(\UT_{H,\delta})
      & \le 2 \cdot \Pr\bigl(G_{n,p} \in \langle\cS_{t,m_1}\rangle \bigr)\leq 2 \cdot \Pr\bigl(G_{n,p} \in  \langle\cS_{t',m_1} \cap \cL\rangle\bigr)\\
      & \le 2 \cdot \sum_{m=m_0}^{m_1} |\cL_m|\cdot  p^{m} \le  n^2 \cdot \max \bigl\{|\cL_m|\cdot p^{m}: m_0 \le m \le m_1\bigr\}.
    \end{split}
  \end{equation}
  Further, applying \cref{lem:core-count} with $\cL'=\cL_m$ for each $m \in [m_0, m_1]$, we get
  \[
    \begin{split}
      \log \bigl(|\cL_m| \cdot p^m\bigr)
      & \le v_m\log(en/g) - m \log(1/p) + O(m \log \log(1/p)) \\
      & \le -(1-\varepsilon) \cdot \bigl(m\log (1/p) - v_m \log(en/g)\bigr),
    \end{split}
  \]
  where the last inequality follows from~\cref{lem:weak-LB-psi-*} and our assumption that $p \ge p_H \cdot (\log n)^K$ for some sufficiently large $K$.
  Substituting this estimate into~\eqref{eq:UT-dense-ish-regime-bound-by-cores} yields
  \[
    \begin{split}
      \log \Pr(\UT_{H,\delta})
      & \le - (1-\varepsilon) \cdot \min\bigl\{m \log(1/p) - v_m \log(en/g) : m_0 \le m \le m_1 \bigr\} + 2 \log n \\
      & \le - (1-\varepsilon) \cdot \psi_H^*(\delta-2\varepsilon) + 2\log n \le -(1-2\varepsilon) \cdot \psi_H^*(\delta-2\varepsilon),
    \end{split}
  \]
  where the last inequality follows as $\psi_H^*(\delta-2\varepsilon) \ge f \ge p/p_H \gg \log n$, by \Cref{lem:psi--psi*-lower} and~\cref{fact:f-p-pH}.
\end{proof}

\section{Stable graphs}
\label{sec:stable-graphs}

In this section, we will explore the structure of stable graphs.
In particular, we will introduce the key concept of dimension associated with a pair of graphs in $\cJ_\emptyset^* = \cJ^* \cup \{\emptyset\}$ that is necessary to define the constant $C_H$ that appears in the statement of \cref{thm:main-near-appearance-threshold} as well as the notions of one-dimensional and clean graphs.

\subsection{An alternative characterisation of stable graphs}
\label{sec:alt-char-stable-graphs}

We start our discussion with an alternative characterisation of stable graphs, which was already mentioned in the introduction and which we restate here for convenience.

\stablechar*

\begin{proof}[Proof of \cref{lem:stable-characterisation}]
  Observe first that every $J \in \cJ^*$ satisfies $e_J / v_J = r\Delta / (1+r)$ and $e_J = r \Delta \alpha_J^*$.
  Further, let $\mathcal{E} \coloneqq \{J \subseteq H : e_J > r \Delta \alpha_J^*\}$ and recall that
  \begin{equation}
    \label{eq:epsH-plus-Delta}
    \frac{1}{\Delta} + \varepsilon_H = \frac{1}{\Delta} + \min_{J \in \mathcal{E}} \frac{v_J - \alpha_J^* - e_J/\Delta}{e_J - r \Delta \alpha_J^*} = \min_{J \in \mathcal{E}} \frac{v_J - (1+r) \alpha_J^*}{e_J - r \Delta \alpha_J^*}.
  \end{equation}
  Finally, let $\mathcal{M} \coloneqq \{J \subseteq H : e_J / v_J = m(H)\}$ be the family of densest subgraphs of $H$.

  Suppose first that $m(H) > r\Delta/(1+r)$.
  In this case, $\cJ^* \neq \mathcal{M}$, as the two sets are nonempty and disjoint.
  We claim that $H$ cannot be stable.
  Indeed, consider an arbitrary $J \in \mathcal{M}$ and note that, by \Cref{lemma:eJ-alphaJ-clique},
  \[
    r \cdot \Delta \alpha_J^* \le r \cdot (\Delta v_J - e_J) = e_J \cdot ( r\Delta / m(H) - r ) < e_J
  \]
  and thus $J \in \mathcal{E}$.
  In particular, it follows from~\eqref{eq:epsH-plus-Delta} that $1/\Delta + \varepsilon_H \le v_J / e_J = 1/m(H)$.

  We may thus assume that $m(H) = r\Delta/(1+r)$, which means that $\cJ^* \subseteq \mathcal{M}$.
  Suppose first that $\cJ^* \neq \mathcal{M}$ and let $J \in \mathcal{M} \setminus \cJ^*$ be arbitrary.
  Since $J \notin \cJ$, \Cref{lemma:eJ-alphaJ-clique} yields
  \[
    r \cdot \Delta \alpha_J^* < r \cdot (\Delta v_J - e_J) = e_J \cdot ( r\Delta / m(H) - r ) = e_J
  \]
  and thus $J \in \mathcal{E}$.
  As before, we may conclude that $1/\Delta + \varepsilon_H \le v_J / e_J = 1/m(H)$, which means that $H$ is not stable.

  Suppose now that $H$ is not stable and let $J \in \mathcal{E}$ be an arbitrary graph such that
  \[
    \frac{1}{\Delta} + \varepsilon_H = \frac{v_J - (1+r)\alpha_J^*}{e_J - r\Delta\alpha_J^*} \le \frac{1}{m(H)} = \frac{1+r}{r\Delta}.
  \]
  Since $e_J/v_J \le m(H)$, this must mean that $e_J/v_J = m(H)$ and thus $J \in \mathcal{M} \cap \mathcal{E} \subseteq \mathcal{M} \setminus \cJ^*$.
\end{proof}

\subsection{The structure of stable graphs}
\label{sec:struct-stable-graphs}

We start this subsection with the definitions of the notions of span, dimension, basis, and root.
Recall that $\cJ_\emptyset^* = \{\emptyset\} \cup \cJ^*$.
We first note the following important structural property of $\cJ_{\emptyset}^*$.

\begin{lemma}
  \label{lem:intersections-and-unions}
  If $H$ is stable, then the family $\cJ_\emptyset^*$ is closed under unions and intersections.
\end{lemma}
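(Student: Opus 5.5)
The plan is to reduce the statement to the familiar supermodularity argument for densest subgraphs, using the characterisation in \cref{lem:stable-characterisation}. Since $H$ is stable, that lemma gives $\cJ^* = \mathcal{M} \coloneqq \{J \subseteq H : e_J/v_J = m(H)\}$. So it suffices to show two things for $J_1, J_2 \in \cJ_\emptyset^*$: that $J_1 \cup J_2$ is either empty or in $\mathcal{M}$, and likewise for $J_1 \cap J_2$. Here union and intersection are taken as subgraphs of $H$, on vertex sets $V(J_1)\cup V(J_2)$ and $V(J_1)\cap V(J_2)$, with edge sets $E(J_1)\cup E(J_2)$ and $E(J_1)\cap E(J_2)$.

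If either $J_i$ is empty, the claim is trivial, because $\emptyset \cup J = J$ and $\emptyset \cap J = \emptyset$. So assume $J_1, J_2 \in \mathcal{M}$, and write $U \coloneqq J_1 \cup J_2$, $I \coloneqq J_1 \cap J_2$ and $m \coloneqq m(H) > 0$. By inclusion--exclusion on vertex sets and on edge sets,
\[
  v_U + v_I = v_{J_1} + v_{J_2}
  \qquad\text{and}\qquad
  e_U + e_I = e_{J_1} + e_{J_2} = m\,(v_{J_1}+v_{J_2}).
\]
The definition of $m(H)$ gives $e_U \le m v_U$, and also $e_I \le m v_I$; the latter holds trivially when $I$ has no vertices, since then $e_I = v_I = 0$. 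Adding the two inequalities yields
\[
  m(v_{J_1}+v_{J_2}) = e_U+e_I \le m(v_U+v_I) = m(v_{J_1}+v_{J_2}).
\]
Hence both inequalities hold with equality, that is, $e_U = m v_U$ and $e_I = m v_I$.

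It follows immediately that $U \in \mathcal{M} = \cJ^*$. For $I$, there are two cases. If $v_I = 0$, then $I = \emptyset \in \cJ_\emptyset^*$. Otherwise $e_I/v_I = m$, so $I \in \mathcal{M} = \cJ^*$. A small bookkeeping point: $I$ might a priori have isolated vertices, but this causes no problem, because $\mathcal{M}$ is defined over all subgraphs and \cref{lem:stable-characterisation} asserts literal equality with $\cJ^*$. (Indeed, equality $e_I = m v_I$ with $m>0$ rules out isolated vertices anyway, since deleting one would give a denser subgraph.)

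I expect no real obstacle. The only care needed is in being consistent about what union and intersection of subgraphs mean, which should be the graph-theoretic notions used for $\cJ^*$ later. Given that, the argument rests entirely on the stable characterisation together with the exact modularity of the vertex and edge counts.
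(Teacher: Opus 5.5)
Your proposal is correct and follows essentially the same route as the paper. The paper also uses \cref{lem:stable-characterisation} to get $e_{J_i}=m(H)\,v_{J_i}$, and then combines the inclusion--exclusion identities for vertex and edge counts with $e_K\le m(H)\,v_K$ for the union and the intersection to force equality in both. Your explicit treatment of the case where the intersection has no vertices is a small detail the paper leaves implicit.
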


\begin{proof}
  Suppose that $J_1,J_2\in \cJ^*_{\emptyset}$. Suppose further that $J_1,J_2\neq \emptyset$, as otherwise the assertion is trivial. By \cref{lem:stable-characterisation}, we have $e_{J_i} = m(H) \cdot v_{J_i}$ for both $i \in \br{2}$.
  Since $e_{J_1\cup J_2} \le m(H) \cdot v_{J_1 \cup J_2}$ and $e_{J_1 \cap J_2} \le m(H) \cdot v_{J_1 \cap J_2}$, we have
  \[
    m(H)\geq \frac{e_{J_1\cup J_2}}{v_{J_1\cup J_2}} = \frac{e_{J_1} + e_{J_2} - e_{J_1\cap J_2} }{v_{J_1} + v_{J_2} - v_{J_1\cap J_2}}  \ge \frac{m(H) \cdot (v_{J_1} + v_{J_2} - v_{J_1\cap J_2}) }{v_{J_1} + v_{J_2} - v_{J_1\cap J_2}} =m(H),
  \]
  which implies that $e_{J_1 \cup J_2} = m(H) \cdot v_{J_1 \cup J_2}$ and $e_{J_1 \cap J_2} = m(H) \cdot v_{J_1 \cap J_2}$.
\end{proof}

\begin{definition}
  Suppose that $J_0, J \in \cJ_\emptyset^*$ satisfy $J_0 \subsetneq J$.
  The \emph{span} of a set $E$ of edges of $J$ over $J_0$ is the graph
  \[
    \spn_{J_0}E \coloneqq \bigcap \bigl\{J' \in \cJ^* : J_0 \cup E \subseteq J'\bigr\}.
  \]
  The \emph{dimension} of $J$ over $J_0$ is
  \[
    \dim_{J_0}J \coloneqq \min\bigl\{|E| : E \subseteq J \wedge \spn_{J_0}E = J\bigr\}.
  \]
  A set $E$ of edges of $H$ is a \emph{basis} of $J$ over $J_0$ if $\spn_{J_0}E = J$ and $|E| = \dim_{J_0}J$.
  Finally, a vertex $v \in V(J_0)$ is a \emph{root} of $J$ over $J_0$ if $\deg_Jv > \deg_{J_0}v$;
  the set of all roots of $J$ over $J_0$ is denoted by $R_{J_0}(J)$.
\end{definition}

We can finally formally define the constant $C_H$ from the statement of~\Cref{thm:main-near-appearance-threshold}:
\[
  C_H \coloneqq \frac{1}{\Delta r} \cdot \max\left\{\frac{\dim_{J_0}J}{\alpha_{J}^* - \alpha_{J_0 - R_{J_0}(J)}^* - \dim_{J_0} J} : J_0, J \in \cJ_\emptyset^* \wedge J_0 \subsetneq J\right\}.
\]
Our next lemma shows that the above definition is valid (the inequality $\alpha_{J_0}^* \ge \alpha_{J_0-R_{J_0}(J)}^*$ is trivial).

\begin{lemma}\label{lem:C_H-is-well-defined}
  For all $J_0, J \in \cJ_\emptyset^*$ with $J_0 \subsetneq J $, we have
  \[
    \alpha_J^* > \alpha_{J_0}^* + \dim_{J_0}J.
  \]
\end{lemma}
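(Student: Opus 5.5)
The plan is to build a chain inside $\cJ_\emptyset^*$ from $J_0$ up to $J$, adding one edge at a time and closing up under span. Each step costs one edge, so the number of steps bounds $\dim_{J_0}J$ from above. We then show that each step increases the fractional independence number by strictly more than $1$.

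\emph{Step 1 (the chain).} Set $K_0 \coloneqq J_0$. While $K_i \neq J$, pick an edge $e_{i+1} \in J \setminus K_i$ and set $K_{i+1} \coloneqq \spn_{K_i}\{e_{i+1}\}$. Since $J \in \cJ^*$ contains $K_i \cup \{e_{i+1}\}$, \cref{lem:intersections-and-unions} gives $K_{i+1} \in \cJ^*$ and $K_i \subsetneq K_{i+1} \subseteq J$. The process therefore terminates with some $K_k = J$.

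We then check by induction on $i$ that $\spn_{J_0}\{e_1,\dots,e_i\} \supseteq K_i$. Take any $J' \in \cJ^*$ containing $J_0 \cup \{e_1,\dots,e_{i+1}\}$. By induction, $J'$ contains $K_i$. It also contains $e_{i+1}$, so it is one of the graphs in the intersection defining $K_{i+1}$, and hence $J' \supseteq K_{i+1}$. Consequently $\spn_{J_0}\{e_1,\dots,e_k\} = J$, and $\dim_{J_0}J \le k$.

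\emph{Step 2 (each step gains more than $1$).} Every nonempty $K \in \cJ^*$ has the form $H[A_K, N(A_K)]$ with $\deg_K a = \Delta$ for all $a \in A_K$. Thus $e_K = \Delta |A_K|$. Since $K \in \cJ^*$, we have $|A_K| = r|N(A_K)|$. By \cref{lem:unique-frac-ind-set} (no component of $K$ is $\Delta$-regular, as $H$ is connected and irregular), $\alpha_K^* = |N(A_K)| = |A_K|/r = e_K/(r\Delta)$. The formula $\alpha_K^* = e_K/(r\Delta)$ also holds trivially for $K = \emptyset$.

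Hence for each $i$,
\[
  \alpha^*_{K_{i+1}} - \alpha^*_{K_i} = \frac{e_{K_{i+1}} - e_{K_i}}{r\Delta} = \frac{a_i}{r},
  \qquad a_i \coloneqq |A_{K_{i+1}}| - |A_{K_i}| .
\]
Here $a_i$ is an integer because $e_{K_{i+1}} - e_{K_i} = \Delta a_i$. It is positive because $K_i \subsetneq K_{i+1}$ forces $e_{K_{i+1}} > e_{K_i}$. So $a_i \ge 1$, and since $r < 1$ by \cref{lemma:rH-bounds}, each increment satisfies $a_i/r > 1$.

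\emph{Step 3 (conclusion).} Summing the increments over the chain gives
\[
  \alpha_J^* - \alpha_{J_0}^* = \sum_{i<k} a_i/r > k \ge \dim_{J_0}J .
\]
Note that $k \ge 1$ because $J_0 \neq J$, so the inequality is indeed strict.

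The only delicate point is making sure the closed form $\alpha^*_K = e_K/(r\Delta)$ really holds for every member of $\cJ^*_\emptyset$, which rests on \cref{lem:unique-frac-ind-set}. The argument deliberately avoids needing $A_{K_i} \subseteq A_{K_{i+1}}$: only the integrality of the edge counts is used.
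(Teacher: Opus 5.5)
Your proof is correct and follows essentially the same route as the paper. You build a chain $J_0 = K_0 \subsetneq \dots \subsetneq K_k = J$ by repeatedly taking the span of a single new edge, and use \cref{lem:unique-frac-ind-set} to show that each step raises $\alpha^*$ by $(|A_{K_{i+1}}|-|A_{K_i}|)/r \ge 1/r > 1$; the only differences are that you spell out why the chosen edges span $J$ over $J_0$ (which the paper compresses into ``it suffices to show''), and you measure the increment via $e_K = \Delta|A_K|$ rather than via $|B_K|$.
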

\begin{proof}
  Note that it suffices to show that, for every $J' \in \cJ_\emptyset^*$ satisfying $J_0 \subseteq J' \subsetneq J$, there is an edge $e \in J \setminus J'$ such that, letting $J_e \coloneqq \spn_{J'} \{e\} \in \cJ_\emptyset^*$, we have $\alpha_{J_e}^* > \alpha_{J'}^* + 1$.
  Recall that each $J' \in \cJ_\emptyset^*$ admits a bipartition $A_{J'} \cup B_{J'} = V(J')$ such that all vertices in $A_{J'}$ have degree $\Delta$ and $\alpha_{J'}^* = |B_{J'}| = |A_{J'}| / r$, see \cref{lem:unique-frac-ind-set} (if $J' = \emptyset$, this is vacuously true).
  Consequently, for an arbitrary $J' \in \cJ_\emptyset^*$ with $J_0 \subseteq J' \subsetneq J$ and an edge $e \in J \setminus J'$, we have
  \[
    \alpha_{J_e}^* - \alpha_{J'}^* = |B_{J_e}| - |B_{J'}| = \frac{|A_{J_e}| - |A_{J'}|}{r} \ge \frac{1}{r} > 1,
  \]
  as desired.
\end{proof}

\begin{definition}[optimal pairs]
  \label{dfn:optimal-pair}
  A pair $(J_0, J)$ of graphs from $\cJ_\emptyset^*$ is called \emph{optimal} if it achieves the maximum in the definition of $C_H$.
\end{definition}

It follows from~\cref{lem:intersections-and-unions} that, for every pair $J_0, J \in \cJ_\emptyset^*$ with $J_0 \subsetneq J$ and all $E \subseteq J$,
\[
  \spn_{J_0} E = \bigcup_{e \in E} \spn_{J_0} e.
\]
In particular, for every such pair, letting $d \coloneqq \dim_{J_0} J$, there are $J_1, \dotsc, J_d \in \cJ^*$ such that $J = J_1 \cup \dotsb \cup J_d$ and $\dim_{J_0} J_i = 1$ for all $i$.
Our next proposition states that optimal pairs admit such a decomposition that has the extra property that $J_i \cap J_j = J_0$ for all distinct $i,j \in \br{d}$.

\begin{proposition}
  \label{prop:optimal-pair-decomposition}
  Suppose that  $(J_0, J)$ is an optimal pair and let $d \coloneqq \dim_{J_0}J$.
  There are $J_1, \dotsc, J_d \in \cJ^*$ such that $J_1 \cup \dotsb \cup J_d = J$, $\dim_{J_0} J_i = 1$ for each $i \in \br{d}$, and $J_i \cap J_j = J_0$ for all distinct $i, j \in \br{d}$.
\end{proposition}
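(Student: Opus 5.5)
The plan is to fix a basis $E=\{e_1,\dots,e_d\}$ of $J$ over $J_0$ and set $J_i\coloneqq\spn_{J_0}\{e_i\}\in\cJ^*$. By \cref{lem:intersections-and-unions} and the identity $\spn_{J_0}E=\bigcup_{e\in E}\spn_{J_0}e$, we have $J=J_1\cup\dots\cup J_d$. Moreover, $\dim_{J_0}J_i=1$ for every $i$, since $J_i\neq J_0$ and $J_i$ is spanned by the single edge $e_i$. So the only property to establish is $J_i\cap J_j=J_0$ for $i\neq j$. Note that $J_i\cap J_j\in\cJ^*_\emptyset$ and contains $J_0$. I will argue by contradiction: if $J'\coloneqq J_i\cap J_j\supsetneq J_0$ for some $i\neq j$ (and for every choice of basis), I will derive that $(J_0,J)$ is not optimal.

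The comparison goes through the intermediate graph $J'$, via the two pairs $(J_0,J')$ and $(J',J)$. Write
\[
d_1\coloneqq\dim_{J_0}J',\qquad d_2\coloneqq\dim_{J'}J,\qquad Q(J_0,J)\coloneqq\alpha_J^*-\alpha^*_{J_0-R_{J_0}(J)}-\dim_{J_0}J,
\]
and similarly $Q(J_0,J')$ and $Q(J',J)$; by \cref{lem:C_H-is-well-defined} all three are positive. The ratio $\dim/Q$ is maximised at $(J_0,J)$. The mediant inequality will therefore give a contradiction once I show two things. The first is a numerator comparison $d\le d_1+d_2-1$. The second is a denominator comparison $Q(J_0,J)\ge Q(J_0,J')+Q(J',J)-1$, or some version in which the numerator gain beats the denominator loss.

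For the numerator comparison, I will show that $E\setminus\{e_j\}$ together with $J'$ spans $J$. This holds because $J_j=\spn_{J_0}e_j$ should be recoverable from $J'\supsetneq J_0$ together with the remaining directions; if that fails, I will modify the basis by an exchange argument, since $\spn$ is a closure operator on the lattice $\cJ^*_\emptyset$. This yields $d_2\le d-1$, and $d_1\ge 1$ holds trivially.

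For the denominator comparison, I need the root inequality
\[
\alpha^*_{J_0-R_{J_0}(J)}+\alpha^*_{J'}\le \alpha^*_{J'-R_{J'}(J)}+\alpha^*_{J_0-R_{J_0}(J')}.
\]
I plan to prove it using the explicit bipartitions $A_{J'}\cup B_{J'}$ from \cref{lem:unique-frac-ind-set} and the fact that roots of $J$ over $J_0$ either are roots of $J'$ over $J_0$ or are vertices of $J_0$ that acquire new edges only in $J\setminus J'$. Beyond that, I would use supermodularity-type estimates for $\alpha^*$ of induced subgraphs of bipartite graphs with a prescribed independent side.

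I expect this root inequality, and making the numerator and denominator estimates fit together with a strict gap, to be the main obstacle. If a strict inequality is not available, my fallback is to take, among all optimal pairs, one with $e_J-e_{J_0}$ minimal. Equality in the mediant then forces $(J',J)$ to be optimal as well, and I would then argue by induction on $e_J-e_{J_0}$ that the decompositions can be chosen compatibly.
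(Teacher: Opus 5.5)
Your proposal has a genuine gap: the dimension step rests on a false claim, and the root inequality is unproved and false in the generality you plan to prove it.

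\textbf{The dimension does not drop.} Put $J_{(k)}\coloneqq\bigcup_{l\neq k}J_l$. It lies in $\cJ^*$ by \cref{lem:intersections-and-unions}, and it is a proper subgraph of $J$, since otherwise $E\setminus\{e_k\}$ would span $J$ over $J_0$. Then $J'=J_i\cap J_j\subseteq J_i\subseteq J_{(j)}$, so $\spn_{J'}(E\setminus\{e_j\})\subseteq J_{(j)}\subsetneq J$. Thus $E\setminus\{e_j\}$ together with $J'$ does \emph{not} span $J$.

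No exchange argument can repair this, because $J'\subseteq J_{(k)}$ for \emph{every} $k\in\br{d}$. By pigeonhole, any $F\subseteq J$ with $|F|\le d-1$ lies in some $J_{(k)}$, whence $\spn_{J'}F\subseteq J_{(k)}\neq J$. Hence $d_2=\dim_{J'}J=d$ exactly. Your inequality $d\le d_1+d_2-1$ survives only because $d_1\ge 1$.

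\textbf{The root inequality is false in general.} You leave it unproved, and the general bipartition/supermodularity route cannot establish it. For $J_0=\emptyset$ it reads $\alpha^*_{J'}\le\alpha^*_{J'-R_{J'}(J)}$. The vertices of $A_{J'}$ already have degree $\Delta$ in $J'$, so $R_{J'}(J)\subseteq B_{J'}$. By \cref{lem:unique-frac-ind-set}, the characteristic function of $B_{J'}$ is the unique maximum fractional independent set of $J'$. So the inequality fails whenever $R_{J'}(J)\neq\emptyset$.

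\textbf{The missing idea.} The fact $\dim_{J'}J=d$ is not an obstacle but the whole point, and no mediant argument is needed. Since $J'\in\cJ^*_\emptyset$ with $J_0\subsetneq J'\subsetneq J$, \cref{claim:J0-in-J'0-not-maximal} gives $\alpha^*_{J'-R_{J'}(J)}>\alpha^*_{J_0-R_{J_0}(J)}$ (for these bipartite graphs $\alpha^*=\alpha$). So the pair $(J',J)$ has the same numerator $d$ as $(J_0,J)$ and a strictly smaller denominator, which is still positive by \cref{lem:C_H-is-well-defined}. Its ratio is therefore strictly larger, contradicting the optimality of $(J_0,J)$.

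This is the paper's proof. It runs the same argument with $J_0'\coloneqq\bigcap_k J_{(k)}\supseteq J_i\cap J_j$ in place of $J'$: it shows that $E$ is still a basis of $J$ over $J_0'$, and concludes $J_0'=J_0$. Your fallback does pass to the pair $(J',J)$, but only hoping it is also optimal; the point is that it would be strictly better.
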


Given an optimal pair $(J_0, J)$ with $d = \dim_{J_0} J$, a sequence $(J_1, \dotsc, J_d)$ satisfying the assertion of \cref{prop:optimal-pair-decomposition} will be called a \emph{fan decomposition} of $J$ over $J_0$.
One can in fact prove that every optimal pair has a unique fan decomposition, but we will not need to use this in the sequel.

\begin{definition}[one-dimensional graphs]
  \label{def:1-dim-graphs}
  A stable graph $H$ is called \emph{one-dimensional} if $\dim_{J_0}J = 1$ for every optimal pair $(J_0, J)$.
\end{definition}

Finally, we define a subfamily of stable graphs called \emph{clean graphs}.
In \cref{sec:LB-near-the-appearance-threshold}, we will derive lower bounds on the upper-tail probability, and for the family of clean graphs, we will be able to derive a stronger statement.

\begin{definition}[clean graphs]
  \label{def:clean-graphs}
  We say that $H$ is a \emph{clean graph} if some optimal pair $(J_0, J)$ admits a fractional independent set $a \colon V(J)\to [0,1]$ and a fan decomposition $J_1, \dotsc, J_d$ such that, writing $R_i$ for the set of roots of $J_i$ over $J_0$, we have:
  \begin{enumerate}
  \item
    For every $i \in \br{d}$, the pair $(J_0, J_i)$ is optimal and $a(R_i) = |R_i|$.
  \item
    The sets $R_1, \dotsc, R_d$ are pairwise disjoint.
  \item
    The set $R = R_1 \cup \dotsb \cup R_d$ of roots of $J$ over $J_0$ satisfies $a(V(J_0)\setminus R)=\alpha^*_{J_0-R}$.
  \end{enumerate}
\end{definition}

The proof of \Cref{prop:optimal-pair-decomposition} will crucially use the following technical estimate.

\begin{lemma}\label{claim:J0-in-J'0-not-maximal}
  Suppose that $J_0,J_0',J\in \cJ_\emptyset^*$ satisfy $J_0 \subsetneq J_0'$ and let $R_{0},R'_{0}$ be the sets of roots of $J$ over $J_0,J'_{0}$, respectively. Then, $\alpha_{J_0' - R_0'} > \alpha_{J_0 - R_0}$.
\end{lemma}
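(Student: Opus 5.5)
The plan is to show that $J_0 - R_0$ is an induced subgraph of $J_0' - R_0'$ with no edges between old and new vertices beyond those at new vertices, and then compare the two fractional independence numbers through bipartite matchings. I read $\alpha$ in the statement as $\alpha^*$, the quantity used in $C_H$; for bipartite graphs the two coincide, so nothing changes. I also use that the roots $R_0'$ are only meaningful when $J_0' \subseteq J$, and assume this throughout.

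\emph{Structural setup.} By \cref{lem:unique-frac-ind-set}, every nonempty $K \in \cJ^*$ has the form $K = H[A_K, N(A_K)]$. Here $A_K$ is an independent set of vertices of degree $\Delta$, $B_K = N(A_K)$, and $\alpha_K^* = |B_K|$. Vertices of $A_K$ have full $H$-degree inside $K$, so they are never roots; hence $R_0 \subseteq B_{J_0}$ and $R_0' \subseteq B_{J_0'}$. I would next check that $A_{J_0} \subseteq A_{J_0'}$. A vertex $x \in A_{J_0}$ has all $\Delta$ of its $H$-edges in $J_0 \subseteq J_0'$. If $x$ were in $B_{J_0'}$, then all its neighbours would lie in $A_{J_0'}$. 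Then each neighbour $y$ of $x$ has degree $\Delta$ in $H$, so all its edges lie in $J_0'$; by uniqueness of the bipartition on components of $J_0'$ (\cref{lem:unique-frac-ind-set}), this would contradict $x \in A_{J_0}$ with $y \in B_{J_0}$. Since $e_{J_0'} = \Delta |A_{J_0'}| > \Delta|A_{J_0}|$, the inclusion $A_{J_0} \subsetneq A_{J_0'}$ is strict. Pick $a \in A_{J_0'} \setminus A_{J_0}$. The vertex $a$ is not in $V(J_0)$: otherwise $a \in B_{J_0}$ would be adjacent to some vertex of $A_{J_0} \subseteq A_{J_0'}$, contradicting the independence of $A_{J_0'}$. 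Moreover $a \notin R_0'$, since its degree is already $\Delta$.

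\emph{Monotonicity.} If $v \in V(J_0) \setminus R_0$, then $\deg_J v = \deg_{J_0} v \le \deg_{J_0'} v \le \deg_J v$, so $v \notin R_0'$. Hence $V(J_0) \setminus R_0 \subseteq V(J_0') \setminus R_0'$. The key observation is about any edge of $J_0' - R_0'$ joining two vertices of $V(J_0) \setminus R_0$. Such a vertex has all its $J$-edges already in $J_0$, so the edge belongs to $J_0 - R_0$. Thus $J_0 - R_0$ is an induced subgraph of $J_0' - R_0'$. Every edge of $J_0' - R_0'$ not in $J_0 - R_0$ meets the set $W \coloneqq (V(J_0') \setminus R_0') \setminus (V(J_0) \setminus R_0)$ of new vertices, and $a \in W$, so $W \neq \emptyset$.

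\emph{Strictness via K\H{o}nig.} Both graphs are bipartite, so $\alpha^*$ equals the number of vertices minus the matching number, by K\H{o}nig's theorem together with \cref{lemma:fractional-duality}. Split $W = W_A \cup W_B$ according to the sides $A_{J_0'}, B_{J_0'}$. A maximum matching of $J_0' - R_0'$, restricted to edges avoiding $W$, is a matching of $J_0 - R_0$. Every other edge of this matching uses a vertex of $W_A$ and also a vertex of $W_B$ or of the old graph. In any case, the number of these other edges is at most $\min\{|W_A|, |W_B|\}$ when both endpoints are new, and at most $|W|$ in general. More carefully, each such edge contains at least one vertex of $W$, and each edge between $W_A$ and $W_B$ contains two. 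From this I would extract $\nu' - \nu \le |W| - 1$, or at least $\nu' - \nu < |W|$, using that $a \in W_A$ has all its neighbours in $B_{J_0'}$. If necessary I would bound the count by $\min$ of the side counts. This gives $\alpha^*_{J_0'-R_0'} - \alpha^*_{J_0-R_0} = |W| - (\nu' - \nu) \ge 1$. The main obstacle is this last counting step: obtaining a gain of at least one and not zero. My first attempt will be to use $\nu' - \nu \le \min\{|W_A| + |W_B \text{ matched to old}|, \dots\}$, exploiting that each new matching edge consumes a distinct new vertex. Failing that, I would argue directly by putting weight $1$ on $a$ (or on all of $W_B$) and adjusting an optimal half-integral fractional independent set of $J_0 - R_0$.
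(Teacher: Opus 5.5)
\paragraph{The gap.} The problem is the structural claim $A_{J_0}\subseteq A_{J_0'}$. Everything after it depends on it: the existence of your vertex $a$, the strictness $A_{J_0}\subsetneq A_{J_0'}$, the claim $a\notin V(J_0)$, and your fallback of putting weight $1$ on $a$.

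Your justification does not yield a contradiction. \cref{lem:unique-frac-ind-set} determines the bipartition of $J_0$ and, separately, the bipartition of $J_0'$. Nothing forces a component of $J_0$ to sit inside $J_0'$ with the same orientation. So a vertex $x\in A_{J_0}\cap B_{J_0'}$ whose neighbours all lie in $A_{J_0'}\cap B_{J_0}$ is perfectly consistent.

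This configuration really occurs. Take $\Delta=3$, with $q_1\sim p_1,p_2,p_3$ and $q_2\sim p_2,p_3,p_4$, and attach two pendant vertices to each of $p_1,p_4$ and one to each of $p_2,p_3$. One checks that $r_H=1/2$. Hence $J_0\coloneqq H[\{q_1,q_2\},\{p_1,\dots,p_4\}]$ and $J_0'=J\coloneqq H=H[\{p_1,\dots,p_4\},N(\{p_1,\dots,p_4\})]$ all lie in $\cJ^*$, with $J_0\subsetneq J_0'$. Yet $A_{J_0}=\{q_1,q_2\}\subseteq B_{J_0'}$. Moreover, every vertex of degree $\Delta$ in $J_0'$ already lies in $V(J_0)$, so no vertex $a$ exists.

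The lemma does not assume stability, and neither does your argument. In any case, the appeal to uniqueness proves nothing here. The K\H{o}nig bookkeeping you sketch does not supply the missing strictness either: by itself it only gives $\nu'-\nu\le |W|$, which is the non-strict inequality.

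\paragraph{What is missing.} When a degree-$\Delta$ vertex of $J_0'$ outside $V(J_0)$ does exist, your fallback is exactly the first half of the paper's proof. Such a vertex lies in $V(J_0')\setminus R_0'$ and has no neighbour in $W_0\coloneqq V(J_0)\setminus R_0$. So it extends a maximum independent set $I_0$ of $J_0-R_0$.

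What you are missing is the complementary case, which is precisely the flipped configuration above. There, some edge of $J_0'\setminus J_0$ has an endpoint $v\in A_{J_0'}$ with $0<\deg_{J_0}v<\Delta$, so $v\in B_{J_0}\cap R_0$. On the component $J_v$ of $J_0$ containing $v$, the two bipartitions are swapped: $A_{J_0'}\cap V(J_v)=B_{J_0}\cap V(J_v)$.

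The paper replaces $I_0\cap V(J_v)$ by this set, which has three properties:
\begin{itemize}
\item It lies in $V(J_0')\setminus R_0'$.
\item It has no edges to $I_0\setminus V(J_v)$: an edge of $J_0'$ at one of its vertices either stays inside $J_v$, or is not in $J_0$ and therefore ends outside $W_0$.
\item It is strictly larger than $I_0\cap V(J_v)$. By \cref{lem:unique-frac-ind-set}, $B_{J_0}\cap V(J_v)$ is the unique maximum independent set of $J_v$, whereas $I_0$ misses the root $v$.
\end{itemize}
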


\begin{proof}
  The key observation here is that $W_0 \coloneqq V(J_0) \setminus R_0$ is the set of $w \in V(J_0)$ satisfying $\deg_{J_0} w = \deg_J w$.
  Consequently, every edge of $J$ with an endpoint in $W_0$ belongs to $J_0$.
  The analogous holds true for $W_0' \coloneqq V(J_0') \setminus R_0'$.
  Since $J_0 \subseteq J_0'$, we thus have $W_0 \subseteq W_0'$.

  Let $I_0 \subseteq W_0 \subseteq W_0'$ be a largest independent set in $J_0 - R_0$.
  Since $J[W_0] = J_0[W_0] = J_0'[W_0]$, by the above observation, $I_0$ is also independent in $J_0' - R_0' = J_0'[W_0']$.
  Note that any $v \in V(J_0') \setminus V(J_0)$ with $\deg_{J_0'} v = \Delta$ belongs to $W_0'$ and may be added to $I_0$ while maintaining independence, as $N(v) \cap W_0 = \emptyset$.
  We may thus assume that all vertices of degree $\Delta$ in $J_0'$ already belong to~$J_0$.

  In this case, we argue differently.
  Let $(A_0, B_0)$ and $(A_0', B_0')$ be the bipartitions of $J_0$ and $J_0'$, respectively.
  Since $J_0\subsetneq J_0'$, our assumption that all vertices of degree $\Delta$ are in $V(J_0)$ implies that there must be some vertex $v \in A_0'$ such that $0 < \deg_{J_0} v < \Delta$ and thus $v \in B_0 \cap R_0$.
  Let $J_v$ be the connected component of $v$ in $J_0$ and note that $A_0' \cap V(J_v) = B_0 \cap V(J_v)$ and $J_v \in \cJ^*$.
  Finally, let $I \coloneqq (I_0 \setminus V(J_v)) \cup (A_0' \cap V(J_v))$ and note that $I \subseteq W_0'$ as $A_0' \subseteq W_0'$.

  We claim that $I$ is an independent set in $J_0'$ and $|I| > |I_0|$.
  Indeed, $I$ is an independent set since both $I_0$ and $A_0'$ are independent and $N(A_0' \cap V(J_v)) \subseteq (B_0' \cap V(J_v)) \cup R_0 \subseteq (I_0 \setminus V(J_v))^c$.
  Further, since $B_0 \cap V(J_v) \ni v$ is the unique largest independent set of $J_v$, while $I_0 \cap V(J_v) \not\ni v$, as $v \in R_0$, we have $|A_0' \cap V(J_v)| = |B_0 \cap V(J_v)| > |I_0 \cap V(J_v)|$ and thus $|I| > |I_0|$.
\end{proof}

\begin{proof}[Proof of \cref{prop:optimal-pair-decomposition}]
  We may clearly assume that $d \ge 2$, since otherwise the assertion is trivial.
  Fix an arbitrary $E = \{e_1, \dotsc, e_d\} \subseteq J$ such that $\spn_{J_0} E = J$ and, for each $i \in \br{d}$, let $J_i \coloneqq \spn_{J_0} e_i$.
  Further, for each $i \in \br{d}$, let $J_{(i)} \coloneqq \bigcup_{j \neq i} J_j$, define $J_0' \coloneqq \bigcap_{i=1}^d J_{(i)}$, and observe that $J_0 \subseteq J_0' \subseteq J$; note that $J_{(i)},J_0' \in \cJ_\emptyset^*$, by \cref{lem:intersections-and-unions}.
  Fix a pair of distinct $i, j \in \br{d}$. Since $J_i \cap J_j \subseteq J_{(k)}$ for every $k \in \br{d}$, we have $J_0 \subseteq J_i \cap J_j \subseteq J_0'$.
  It thus suffices to show that $J_0 = J_0'$.

  We first show that $\dim_{J_0} J = \dim_{J_0'}J$, which will follow if we show that $E$ is a basis of $J$ over $J_0'$. Indeed, it is clear that $E$ is a spanning set of $J$ over $J_0'$, as $J = \spn_{J_0} E \subseteq \spn_{J_0'} E \subseteq J$.
  Let us show that $E$ is also a smallest spanning set.
  Suppose towards contradiction that there was a set $F$ of size at most $d-1$ that spans $J$ over $J_0'$.
  Since $J = \bigcup_{i=1}^d J_i$, each edge of $F$ must belong to $J_j$ for some $j \in \br{d}$.
  Consequently, there must be some $i \in \br{d}$ such that $F \subseteq J_{(i)}$, which means that $J_0' \cup F \subseteq J_{(i)} \subsetneq J$, contradicting the assumption that $F$ spans $J$ over $J_0'$.

  Denote by $R_0$ and $R_0'$ the sets of roots of $J$ over $J_0$ and $J_0'$, respectively.
  If $J_0 \subsetneq J_0'$, then \cref{claim:J0-in-J'0-not-maximal} would give $\alpha_{J_0' - R_0'}^* = \alpha_{J_0' - R_0'} > \alpha_{J_0 - R_0} = \alpha^*_{J_0 - R_0}$.
  However, since $\dim_{J_0'} J = \dim_{J_0} J$, this would yield the following contradiction to the optimality of $(J_0, J)$
  \[
    \frac{\dim_{J_0'} J}{\alpha_{J}^* - \alpha_{J_0' - R_0'}^* - \dim_{J_0'}J}
    > \frac{\dim_{J_0} J}{\alpha_{J}^* - \alpha_{J_0 - R_0}^* - \dim_{J_0} J} = r\Delta C_H.\qedhere
  \]
\end{proof}

\section{Upper bounds for stable graphs}
\label{sec:upper-bounds-stable}

Throughout this section we assume that $H$ is a stable graph, that is, $\eps_H>1/m(H)-1/\Delta$, and in particular $p_H=n^{-1/m(H)}$.
Our main goal here is to prove upper bounds on the upper-tail probability that are implicit in \cref{thm:main-near-appearance-threshold}~\ref{item:main-near-appearance-threshold-UB} and its sharper version, \cref{thm:non-clean}.

\subsection{Tight cores}
\label{sec:tight-cores}

We start by formally defining the notion of tight cores and establish several key properties of these graphs.
It will be convenient to denote, given a graph $G$,
\[
  \cJ^*(H,G) \coloneqq \{K \subseteq G : K \cong J \text{ for some }J\in \cJ^*\}.
\]

\begin{definition}[Tight cores]\label{def:tight-cores}
  We say that a graph $C\subseteq K_n$ is a \emph{tight core} if there exists a sequence of graphs $\emptyset = C_0 \subseteq C_1 \subseteq \dotsb \subseteq C_\ell=C$ such that, for every $i \in \br{\ell}$, either
  \begin{enumerate}[label=\textbf{(C\arabic*)}]
  \item
    \label{item:tight-core-one-copy}
    $C_i = C_{i-1} \cup K$ for some $K \in \cJ^*(H,C)$ such that $K\cap C_{i-1} \notin \cJ_\emptyset^*$ or
  \item
    \label{item:tight-core-two-copies}
    $C_i = C_{i-1} \cup K_1 \cup K_2$ for some $K_1,K_2\in \cJ^*(H,C)$ such that $K_1 \cap C_{i-1} \in \cJ_\emptyset^*$ but $K_2\cap (C_{i-1} \cup K_1) \notin \cJ_\emptyset^*$;
  \end{enumerate}
  we call $(C_0, C_1, \dotsc, C_\ell)$ a \emph{witness sequence} of $C$.
  Finally, we denote by $\cC$ the set of all tight cores, and by $\cC_m$ the set of all tight cores with at most $m$ edges. 
\end{definition}

The first key result of this section establishes a certain trichotomy for the family of $t$-seeds with $O(f \log(1/p))$ edges.
Each such seed either:
\begin{enumerate}[label=(\roman*)]
\item
  contains a tight core with $\omega(f)$ edges; or
\item
  contains a $t'$-seed, for some $t' \approx t$, that has only $O(f)$ edges;
\item
  contains a maximal tight core $G^*$ with $O(f)$ edges that is not a $t'$-seed for any $t' \approx t$.
\end{enumerate}
Given two pairs of graphs, $J_0 \subseteq J$ and $G^* \subseteq G$, we shall denote by $N(J,G; J_0,G^*)$ the number of copies of $J$ in $G$ that intersect $G^*$ on a subgraph that is isomorphic to $J_0$.

\begin{proposition}
  \label{lem:seeds-contains-a-small-tight-core}
  For every $\varepsilon > 0$, there exists a constant $c > 0$ such that the following holds for all $p_H \le p \le n^{-1/\Delta-\Omega(1)}$, all $t$, $m_c$, and $m = O\bigl(f \log(1/p)\bigr)$, and every $G \in \cS_{t,m}$:
  \begin{enumerate}[label=(\roman*)]
  \item
    \label{item:large-tight-core-or-small-seed}
    Either $G \in  \langle\cC \setminus \cC_{m_c}\rangle \cup \langle\cS_{t - \eps \Ex[X], m_c}\rangle$ or
  \item
    \label{item:small-tight-core-with-Poisson-event}
    $G$ contains a maximal tight core $G^*$ with at most $m_c$ edges such that $N(J,G;J_0,G^*)\geq c n^{v_J}p^{e_J}$ for some $J_0 ,J \in \cJ_{\emptyset}^*$ satisfying  $J_0\subsetneq J$.
  \end{enumerate}
\end{proposition}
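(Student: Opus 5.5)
We argue by cases on a maximal tight core of $G$. Fix $G \in \cS_{t,m}$ and let $G^* \subseteq G$ be an inclusion-maximal tight core (possibly empty; the empty graph is a tight core with the trivial witness sequence). If $e_{G^*} > m_c$, then $G \in \langle \cC \setminus \cC_{m_c}\rangle$ and we are in case~\ref{item:large-tight-core-or-small-seed}. So assume $e_{G^*} \le m_c$. We show that either $G^*$ is itself a $(t-\eps\Ex[X])$-seed, giving $G \in \langle \cS_{t-\eps\Ex[X],m_c}\rangle$, or alternative~\ref{item:small-tight-core-with-Poisson-event} holds.

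\textbf{Step 1 (maximality forces intersections in $\cJ_\emptyset^*$).} Every copy $K \in \cJ^*(H,G)$ satisfies $K \cap G^* \in \cJ_\emptyset^*$ up to isomorphism. Otherwise $G^* \cup K$ would be a larger tight core, obtained by appending one step of type~\ref{item:tight-core-one-copy} to a witness sequence of $G^*$. Consequently, every copy of $J \in \cJ^*$ in $G$ meets $G^*$ in a copy of some $J_0 \in \cJ_\emptyset^*$ with $J_0 \subseteq J$. This gives the decomposition
\[
  N(J,G) = N(J,G^*) + \sum_{J_0 \in \cJ_\emptyset^*,\, J_0 \subsetneq J} N(J,G;J_0,G^*),
\]
where the sum runs over unlabelled subgraphs of $J$, with a constant-factor bookkeeping for automorphisms.

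\textbf{Step 2 (reduce to $\cJ^*$).} Since $p \le n^{-1/\Delta-\Omega(1)}$ and $e_G = O(f\log(1/p))$, the second assertion of \cref{prop:difference-in-expectations-important-J} bounds $\Ex_G[X]-\Ex[X]$ by the sum over $J \in \cJ^*$ plus $o(n^{v_H}p^{e_H})$. For stable $H$ and $p \ge p_H$, this requires checking that the hypothesis on $\eps_H$ is met. Stability gives $1/\Delta+\eps_H > 1/m(H)$, so $p \ge p_H = n^{-1/m(H)}$ is a polynomial factor above $n^{-1/\Delta-\eps_H}$, which dominates any $(\log(1/p))^K$.

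\textbf{Step 3 (dichotomy).} Suppose that for every pair $J_0 \subsetneq J$ in $\cJ_\emptyset^*$ we have $N(J,G;J_0,G^*) < c\,n^{v_J}p^{e_J}$. Plugging this into Step~2 and the decomposition of Step~1 gives
\[
  \Ex_G[X]-\Ex[X] \le \sum_{J\in\cJ^*}\frac{|\Emb(J,H)|}{|\Aut(H)|}N(J,G^*)\,n^{v_H-v_J}p^{e_H-e_J} + O(c)\,n^{v_H}p^{e_H} + o(n^{v_H}p^{e_H}),
\]
with the implied constant depending only on $H$ (the number of pairs $(J_0,J)$ is bounded). By \cref{cor:difference-in-expectations}, which is a lower bound with nonnegative summands, the sum on the right is at most $(1+o(1))(\Ex_{G^*}[X]-\Ex[X])$. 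Choosing $c$ small in terms of $\eps$ and $H$, we get $\Ex_{G^*}[X] \ge \Ex[X]+t-\eps\Ex[X]$. Since $e_{G^*} \le m_c$, this means $G^* \in \cS_{t-\eps\Ex[X],m_c}$, which is case~\ref{item:large-tight-core-or-small-seed}. Otherwise some pair $J_0\subsetneq J$ has $N(J,G;J_0,G^*) \ge c\,n^{v_J}p^{e_J}$, which is exactly case~\ref{item:small-tight-core-with-Poisson-event}.

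\textbf{Main obstacle.} The delicate point is Step~1's bookkeeping. A copy of $J$ meeting $G^*$ in a $\cJ_\emptyset^*$-graph needs the intersection to be a subgraph isomorphic to some $J_0 \subseteq J$ in $\cJ^*_\emptyset$, counted consistently with the definition of $N(J,G;J_0,G^*)$. If the intersection with $G^*$ is empty, the term is $J_0=\emptyset$, which is allowed. One must also check that the maximality argument uses the \emph{single-copy} extension~\ref{item:tight-core-one-copy}, valid for any witness sequence. Everything else is a direct combination of \cref{prop:difference-in-expectations-important-J} and \cref{cor:difference-in-expectations}.
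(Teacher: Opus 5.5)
Your proof is correct and follows essentially the same route as the paper's. You take a maximal tight core $G^*$ and dispose of $e_{G^*}>m_c$. You then use maximality, via a single step of type \ref{item:tight-core-one-copy}, to bound $N(J,G)-N(J,G^*)$ by $\sum_{J_0} N(J,G;J_0,G^*)$, and combine the second part of \cref{prop:difference-in-expectations-important-J} (applicable by stability) for $G$ with \cref{cor:difference-in-expectations} for $G^*$. The only difference is presentation: you argue contrapositively, showing that if all $N(J,G;J_0,G^*)$ are small then $G^*$ is a $(t-\eps\Ex[X])$-seed, whereas the paper assumes $\Ex_G[X]-\Ex_{G^*}[X]\ge\eps\Ex[X]$ and pigeonholes.
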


\begin{proof}
  Consider an arbitrary $G \in \cS_{t,m}$ and let $G^* \subseteq G$ be a maximal tight core.
  We may assume that $e_{G^*} \le m_c$, as otherwise $G \in \langle\cC \setminus \cC_{m_c}\rangle$.
  Further, we may assume that $ \Ex_G[X] - \Ex_{G^*}[X]  \ge \varepsilon \Ex[X]$, as otherwise $G^*\in \cS_{t - \eps \Ex[X], m_c}$.
  By \Cref{prop:difference-in-expectations-important-J}, our assumption on $p$, and the fact that $1/\Delta + \eps_H > 1/m(H)$, which holds due to the stability of $H$, we have
  \[
    \sum_{J\in \cJ^*} \frac{|\Emb(J,H)|}{|\Aut(H)|} \cdot n^{v_H-v_J}p^{e_H-e_J}\cdot (N(J,G)-N(J,G^*))\geq \frac{\eps}{2}  \Ex[X],
  \]
  while the maximality of $G^*$ implies that, for every $J \in \cJ^*,$
  \[
    N(J, G) - N(J, G^*) = \sum_{\substack{J_0 \subsetneq J \\ J_0 \in \cJ_\emptyset^*}} N(J, G; J_0, G^*).
  \]
  Since $\Ex[X] = \Theta(n^{v_H} p^{e_H})$, there must be $J_0, J \in \cJ_{\emptyset}^*$ satisfying $J_0 \subsetneq J$ and $c > 0$ such that
  \[
    N(J,G;J_0,G^*)\geq c n^{v_J} p^{e_J}.\qedhere
  \]
\end{proof}

The second key result of this section states that, when $p \le p_H \cdot n^{o(1)}$, large tight cores are very unlikely to appear in $G_{n,p}$.

\begin{proposition}
  \label{lem:large-tight-core-is-expensive}
  There exists a positive constant $\eta$ such that, for all sufficiently large $n$, all $p_H \le p \le p_H \cdot n^{\eta}$, and all $m_c \le f \log(1/p)$,
  \[
    \Pr\bigl(G_{n,p}\in \langle \cC \setminus \cC_{m_c} \rangle\bigr) \leq p^{\eta m_c}.
  \]
\end{proposition}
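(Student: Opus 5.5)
Our plan is a first-moment (union) bound over tight cores of a carefully chosen size, combined with the counting structure forced by the witness sequence. First we reduce to tight cores whose number of edges lies in a window $[m_c, Cm_c]$ for a constant $C$ depending only on $H$. If $G_{n,p}$ contains a tight core with more than $m_c$ edges, take such a core $C$ with witness sequence $C_0\subseteq\dotsb\subseteq C_\ell$ and stop at the first index $i$ with $e_{C_i}>m_c$. We check directly from \ref{item:tight-core-one-copy} and \ref{item:tight-core-two-copies} that every prefix $C_i$ is itself a tight core (the copies $K$ used lie in $C_i$). Each step adds at most $2e_H$ edges, so $m_c< e_{C_i}\le m_c+2e_H$. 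It therefore suffices to show that the expected number of tight cores $C$ with $m_c<e_C\le m_c+2e_H$ contained in $G_{n,p}$ is at most $p^{\eta m_c}$.

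Next we bound this expectation by summing over witness sequences. Fix the length $\ell\le m_c+2e_H$ and the sequence of step types. For each step we record the isomorphism type of the added copy (or copies), $J\in\cJ^*$, together with its intersection pattern with the current graph. Write $J_0=K\cap C_{i-1}$ and estimate the ratio of (number of ways to choose the new vertices) to (probability of the new edges). Choosing the attachment inside $C_{i-1}$ costs at most $v_{C_{i-1}}^{v_H}\le (2m_c)^{v_H}$, a factor that is $p^{-o(1)}$ per step only in the aggregate sense. We therefore absorb it as $e^{O(\ell\log m_c)}$, and since $\log m_c\le \log n$ while $\log(1/p)=\Theta(\log n)$, this must be compared with a per-step gain of $p^{\Omega(1)}$; we will need a gain of the form $n^{-\Omega(1)}$ per step. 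The new vertices contribute $n^{v_J-v_{J_0}}p^{e_J-e_{J_0}}$. For a type-\ref{item:tight-core-one-copy} step, $J_0:=K\cap C_{i-1}\notin\cJ^*_\emptyset$ is a nonempty proper subgraph that is not densest. By \cref{lem:stable-characterisation} (stability), the densest subgraphs are exactly $\cJ^*$, so $e_{J_0}/v_{J_0}<m(H)$, and hence $e_J-e_{J_0}>m(H)(v_J-v_{J_0})$ with a gap bounded below by a constant $\gamma>0$ depending only on $H$. Thus $n^{v_J-v_{J_0}}p^{e_J-e_{J_0}}=(np^{m(H)})^{v_J-v_{J_0}}p^{\,e_J-e_{J_0}-m(H)(v_J-v_{J_0})}\le (p/p_H)^{O(1)}p^{\gamma}$. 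For a type-\ref{item:tight-core-two-copies} step, the first copy meets $C_{i-1}$ in some element of $\cJ^*_\emptyset$, which costs at most $(np^{m(H)})^{O(1)}=(p/p_H)^{O(1)}$ since then $e-m(H)v=0$ on the added part. The second copy gives the gain $p^\gamma$ as before. Hence every step costs at most $(p/p_H)^{O(1)}p^{\gamma}\cdot(2m_c)^{2v_H}$.

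Finally we multiply over the $\ell$ steps. Since $\ell\ge (m_c)/(2e_H)$ and $\ell\le m_c+2e_H$, we obtain an expectation bound of the form $\bigl(2^{O(1)}(2m_c)^{2v_H}(p/p_H)^{O(1)}p^{\gamma}\bigr)^{\ell}$. With $p\le p_Hn^{\eta}$, we get $(p/p_H)^{O(1)}\le n^{O(\eta)}$, which is $p^{-o(\gamma)}$ for small $\eta$ because $p\le n^{-1/\Delta}$ makes $\log(1/p)=\Theta(\log n)$.

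The main obstacle is the polynomial attachment factor $(2m_c)^{2v_H}$ per step. Since $m_c\le f\log(1/p)$ can be polynomial in $n$, this factor is $n^{O(1)}$ per step and naively swamps $p^\gamma$. To get around this we do not encode attachment points by arbitrary vertices of $C_{i-1}$. Instead, mirroring \cref{thm:max-copies}, we count the whole tight core at once: the entropy/LP argument gives vertex entropy at most $\log n$ only for genuinely new vertices, while the total count of vertices of $C$ is at most $v_C\le e_C/m(H)-\Omega(\ell)$ by the accumulated density deficit. Concretely, we would show by induction along the witness sequence that $e_{C_i}-m(H)v_{C_i}\ge \gamma\cdot\#\{\text{deficit steps}\}\ge \gamma\ell/2$. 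We then bound the number of graphs $C\subseteq K_n$ with $v$ vertices and $e$ edges that are tight cores by $n^{v}\cdot e^{O(e)}$, using the bounded-degree-like structure in which each new copy is determined by $O(1)$ choices inside $C_{i-1}$ relative to earlier copies (ordering-based encoding). This yields $n^{v}p^{e}e^{O(e)}\le (np^{m(H)})^{v}p^{\gamma\ell/2}e^{O(e)}\le p^{\eta m_c}$, where the $e^{O(e)}$ encoding factor is absorbed since $p\to0$. Making this $e^{O(e)}$ encoding rigorous, rather than paying $\log m_c$ per step, is the delicate step; the first thing we would try is a labelling by the $\le 2e_H$ previously added copies adjacent in a fixed spanning order.
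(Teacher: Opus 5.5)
\paragraph{There is a gap: you abandon a per-step bound that works and replace it with an unproven, and false, encoding claim.}
Your skeleton is the paper's. You truncate a witness sequence at the first prefix with more than $m_c$ edges, union-bound over witness sequences of length $\ell\ge m_c/(2e_H)$, and get a per-step gain from stability. By \cref{lem:stable-characterisation}, a nonempty $J'\subsetneq J\in\cJ^*$ with $J'\notin\cJ^*$ satisfies $m(H)v_{J'}-e_{J'}\ge 1/v_H$.

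The gap is that you then reject this per-step bound. You argue that the attachment factor $v_{C_{i-1}}^{v_H}$ is $n^{\Omega(1)}$ and swamps $p^{\gamma}$, and substitute a global encoding bound that you leave unproven. The premise is wrong: the estimate $\log m_c\le \log n$ discards the fact that $f$ is tiny in this range. Since $H$ is stable, $p_H=n^{-1/m(H)}$ with $m(H)=\Delta r/(1+r)<\Delta/2$. Hence $p\le p_H n^{\eta}\le n^{-1/\Delta}$ and
\[
  f=n^{1+r}p^{\Delta r}=(p/p_H)^{\Delta r}\le n^{\eta\Delta r}.
\]
So $v_{C_{i-1}}\le 2e(C_{i-1})=O(n^{\eta\Delta r}\log n)$. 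Using $np^{m(H)}=(p/p_H)^{m(H)}\le n^{\eta m(H)}$, a step of type \ref{item:tight-core-one-copy} with $\varphi(J)\cap C_{i-1}=\varphi(J')$ contributes, for $\eta$ small, at most
\[
  v_{C_{i-1}}^{v_{J'}}\,(np^{m(H)})^{v_J-v_{J'}}\,p^{m(H)v_{J'}-e_{J'}}
  \le n^{O(\eta)}(\log n)^{v_H}\cdot n^{-(1/m(H)-\eta)/v_H}
  \le n^{-1/(2m(H)v_H)}.
\]
The first copy in a step of type \ref{item:tight-core-two-copies} costs only $n^{O(\eta)}(\log n)^{v_H}$, since $m(H)v_{J'}-e_{J'}\ge 0$ always. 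Summing over $\ell\ge m_c/(2e_H)$ and using $p\ge p_H\ge n^{-2}$ gives $p^{\eta m_c}$ once $\eta$ is small relative to $1/(m(H)v_H e_H)$. So the per-step argument you sketched first is already a complete proof once you add this one observation.

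\paragraph{Drop the fallback; the fixed argument is the paper's.}
Besides being unproven, the bound of $n^{v}\exp(O(e))$ on the number of tight cores with $v$ vertices and $e$ edges is false in general. For $H=K_{1,2}$, every connected graph with at least three edges is a tight core: start with one step of type \ref{item:tight-core-two-copies}, then add one edge at a time by steps of type \ref{item:tight-core-one-copy}. Taking $e\approx v^{3/2}$, there are $n^{v}\exp(\Omega(e\log v))$ such cores. You therefore cannot avoid paying about $\log v_{C_{i-1}}$ per attachment vertex; the point is only that this is $O(\eta\log n+\log\log n)$ here.

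With the fix, your argument coincides with the paper's \cref{claim:bound-on-Wlm}; only the per-step estimate differs. The paper's \cref{lem:tight-core-extension} counts placements of $J'$ by $|\Emb(J',G)|$ and bounds this via \cref{thm:max-copies} and \cref{lem:bound-on-NJG}. There the gain comes from the fact that $J'\notin\cJ^*_\emptyset$ has few copies in a graph with $O(f(\log(1/p))^2)$ edges. You instead use the trivial count $v_G^{v_{J'}}$ together with the density gap from stability. Both routes rest on $n^{v_J}p^{e_J}=(p/p_H)^{e_J}\le n^{\eta e_H}$, and yours is slightly more elementary.
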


The third key result of this section states that assertion \ref{item:small-tight-core-with-Poisson-event} in \Cref{lem:seeds-contains-a-small-tight-core} cannot hold unless $p = O(p_H (\log n)^{C_H})$.

\begin{proposition}
  \label{lem:seeds-contains-a-small-tight-core-2}
  For all $c, D > 0$, there exists an $L$ such that the following holds for all $p \ge p_H \cdot (\log n)^{C_H-1/L}$.
  Suppose that $G$ is a graph with at most $Df\log(1/p)$ edges whose maximal tight core $G^*\subseteq G$ has at most $Df$ edges.
  Then, every pair $J_0, J \in \cJ_{\emptyset}^*$ with $J_0 \subsetneq J$ satisfies $N(J,G; J_0,G^*) < c n^{v_J}p^{e_J}$, unless $p \le L p_H  (\log n)^{C_H}$ and $(J_0, J)$ is an optimal pair.
\end{proposition}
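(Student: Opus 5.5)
We will bound $N(J,G;J_0,G^*)$ from above by an entropy/counting argument in the spirit of the proof of \Cref{thm:max-copies}, and then compare the resulting bound with $n^{v_J}p^{e_J}$. The stated regime gives $p \le n^{-1/\Delta-\Omega(1)}$, so \cref{fact:f-p-pH} yields $n^{v_J}p^{e_J} = f^{\alpha_J^*}$. Moreover $f = n(np^\Delta)^r = (p/p_H)^{\Delta r}$ up to constants, because $n^{1+r}p_H^{\Delta r} = 1$ for stable $H$. In particular $f = \Theta\bigl((\log n)^{\Delta r C_H}\bigr)$ when $p$ is near $p_H(\log n)^{C_H}$. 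This is where the factor $\frac{1}{\Delta r}$ in the definition of $C_H$ comes from.

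\textbf{Step 1 (counting extensions).} Fix $J_0 \subsetneq J$ in $\cJ_\emptyset^*$ and a copy $K$ of $J$ in $G$ with $K\cap G^* \cong J_0$. By maximality of $G^*$, adding any copy $K'$ of a graph in $\cJ^*$ must meet $G^*$ in a member of $\cJ_\emptyset^*$, or else $G^*\cup K'$ would extend the witness sequence via \ref{item:tight-core-one-copy}. Let $E=\{e_1,\dots,e_d\}$ be a basis of $J$ over $J_0$, with $d=\dim_{J_0}J$. We reveal $K$ in three stages.
\begin{enumerate}
\item First, embed the non-root part of the copy of $J_0$ into $G^*$.
\item Next, embed the roots $R_{J_0}(J)$ together with the $d$ basis edges, each edge costing a factor of at most $2e_G$.
\item Finally, show that the remaining vertices of $K$ are determined up to $O(1)$ choices, or at most polylogarithmic entropy.
\end{enumerate}
The third stage uses the following fact. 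Once $G^*\cup\{e_1,\dots,e_i\}$ is fixed, the next span $\spn(\cdot)$ is a member of $\cJ^*$ that must meet the current graph exactly in a member of $\cJ^*_\emptyset$. Otherwise two successive additions would create a tight-core step of type \ref{item:tight-core-two-copies} and contradict maximality. We expect this bound (the analogue of \cref{claim:bound-on-Wlm} referred to earlier) to read
\[
N(J,G;J_0,G^*) \le O(1)\cdot e_{G^*}^{\alpha^*_{J_0-R_{J_0}(J)}}\cdot e_G^{\dim_{J_0}J}\cdot f^{\alpha_J^*-\alpha^*_{J_0-R_{J_0}(J)}-\dim_{J_0}J}.
\]
Here the last factor accounts for the vertices of $J\setminus J_0$ that are forced to lie in the structure already counted. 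We would prove it via the LP $\mathcal{P}$ and \Cref{lem:LP} applied to $J_0-R_{J_0}(J)$ inside $G^*$ (with $e_{G^*}\le Df$ and $v_{G^*}\le n$), so that this part contributes $f^{\alpha^*_{J_0-R}}$.

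\textbf{Step 2 (comparison).} Substitute $e_{G^*}\le Df$ and $e_G\le Df\log(1/p)$. This gives
\[
\frac{N(J,G;J_0,G^*)}{f^{\alpha_J^*}} \lesssim \frac{(\log(1/p))^{d}}{f^{\alpha_J^*-\alpha^*_{J_0-R}-d}}.
\]
With $f=(p/p_H)^{\Delta r}$ and $\log(1/p)=\Theta(\log n)$, this ratio is at most $c$ as soon as
\[
p/p_H \ge L(\log n)^{\frac{1}{\Delta r}\cdot\frac{d}{\alpha_J^*-\alpha^*_{J_0-R}-d}}.
\]
The exponent here is at most $C_H$, and \Cref{lem:C_H-is-well-defined} makes the denominator positive. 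If $(J_0,J)$ is not optimal, the exponent is strictly below $C_H$. The hypothesis $p\ge p_H(\log n)^{C_H-1/L}$ then beats it for large $L$, since the finitely many exponent values leave a gap. If $(J_0,J)$ is optimal, we get the bound whenever $p> Lp_H(\log n)^{C_H}$, which is exactly the stated exception.

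\textbf{Main obstacle.} The hard part is Step 1. We must justify rigorously that maximality of $G^*$ forces the vertices of $J$ outside $J_0\cup\bigcup e_i$ to add no free entropy beyond the $f$-factors. We must also get the exact exponent $\alpha^*_{J_0-R_{J_0}(J)}$ for the part inside $G^*$, not the weaker $\alpha^*_{J_0}$. Our first attempt will be an entropy chain rule along the ordering $J_0-R$, then $R$, then $e_1,\dots,e_d$, then the rest. Conditional entropies of the vertices in each $\spn_{J'}\{e_i\}\setminus J'$ will be bounded using the structure of $\cJ^*_\emptyset$ (closure under unions and intersections, \cref{lem:intersections-and-unions}) together with \Cref{lem:unique-frac-ind-set}.
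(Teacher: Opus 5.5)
\textbf{There is a genuine gap in Step~1, which carries the whole proof.}

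First, the bound you display in Step~1 is not the one you use in Step~2. It contains the extra factor $f^{\alpha_J^*-\alpha^*_{J_0-R}-d}$. Substituting $e_{G^*}\le Df$ and $e_G\le Df\log(1/p)$ into it gives only $N(J,G;J_0,G^*)\lesssim f^{\alpha_J^*}(\log(1/p))^d$, which is useless. The ratio in Step~2 is what you get from the bound \emph{without} that factor, namely $N(J,G;J_0,G^*)\le O(1)\cdot(2e_{G^*})^{\alpha^*_{J_0-R}}(2e_G)^{d}$; this is the paper's \cref{lem:counting-extensions-via-dimension}. Since $\alpha_J^*-\alpha^*_{J_0-R}-d>0$ and $f$ is only polylogarithmic in the critical range, there is no room for any extra power of $f$. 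The remaining vertices must carry \emph{zero} entropy, not ``at most polylogarithmic entropy''.

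Second, you do not supply the mechanism that forces this. A chain-rule bound for a single random copy, adding spans one basis edge at a time, never engages maximality. Condition \ref{item:tight-core-two-copies} concerns \emph{two} copies of graphs in $\cJ^*$, not how one copy of $J$ decomposes. Relatedly, embedding the roots separately in your second stage would cost extra entropy (this is why you feared ending up with $\alpha^*_{J_0}$), unless you already know they are determined.

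The missing idea is a rigidity argument comparing two copies. Take $\varphi_1,\varphi_2\in\Emb(J,G)$ with $\varphi_i(J)\cap G^*=\varphi_i(J_0)$ and $\varphi_1(e)=\varphi_2(e)$ for every $e$ in a basis $E$ of $J$ over $J_0$. Since $\varphi_1(J)\cap G^*\cong J_0\in\cJ^*_\emptyset$, maximality of $G^*$ rules out a step \ref{item:tight-core-two-copies} with $K_1=\varphi_1(J)$ and $K_2=\varphi_2(J)$. Hence $\varphi_2(J)\cap(G^*\cup\varphi_1(J))=\varphi_2(J')$ for some $J'\in\cJ^*_\emptyset$. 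But $J'\supseteq J_0\cup E$, so $J'\supseteq\spn_{J_0}E=J$. This gives $\varphi_2(J\setminus J_0)\subseteq\varphi_1(J)\setminus G^*=\varphi_1(J\setminus J_0)$, and equality holds by symmetry.

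Consequently $\varphi(J\setminus J_0)$ is determined by $\varphi(E)$, so there are at most $(2e_G)^d$ possibilities for it. Every root has positive degree in $J\setminus J_0$, so the roots are then fixed up to $O(1)$ choices. The rest of the copy is pinned down by $\varphi(J_0-R)$, which has at most $(2e_{G^*})^{\alpha^*_{J_0-R}}$ choices by \cref{thm:max-copies}. This yields the exponent $\alpha^*_{J_0-R}$ directly, with no further LP or entropy argument. With this bound in hand, your Step~2 is correct and is essentially the paper's argument: substitute, use $f=(p/p_H)^{\Delta r}$, and use the finite gap between the non-optimal exponents and $C_H$.
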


The remainder of this section is devoted to proofs of \cref{lem:large-tight-core-is-expensive,lem:seeds-contains-a-small-tight-core-2}.
We start with the former.
The heart of the matter is the following lemma, which sheds some light on the somewhat mysterious definition of tight cores.

\begin{lemma}
  \label{lem:tight-core-extension}
  There exist positive constants $\eta$ and $\sigma$ such that the following holds for all $p_H \le p \le p_H \cdot n^\eta$.
  For all $J' \subseteq J \in \cJ^*$, all large enough $n$, and all $G \subseteq K_n$ with at most $f \cdot \bigl(\log(1/p)\bigr)^2$ edges,
  \[
    \left|\bigl\{\varphi \in \Emb(J,K_n) : \varphi(J) \cap G = \varphi(J')\bigr\} \right| \cdot p^{e_J - e_{J'}} \le
    \begin{cases}
      n^\sigma & \text{if $J' \in \cJ_\emptyset^*$,}\\
      n^{-3\sigma} & \text{otherwise}.
    \end{cases}
  \]
\end{lemma}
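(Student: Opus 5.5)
The approach is to bound the count crudely by first choosing the restriction of $\varphi$ to the non-isolated vertices of $J'$ as an embedding into $G$, and then choosing the remaining vertices freely. This reduces everything to the normalised quantity $N(J'',G)/(n^{v_{J''}}p^{e_{J''}})$ controlled by \cref{lem:bound-on-NJG}, times a factor coming from $J$ itself that stability makes only $n^{O(\eta)}$.

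First I record the regime. Since $H$ is stable, $p_H = n^{-1/m(H)}$. By \cref{lem:stable-characterisation}, every $J\in\cJ^*$ is a densest subgraph, so $e_J/v_J = m(H) = r\Delta/(1+r) < \Delta$. Hence $p \le p_H n^{\eta} \le n^{-1/\Delta-\Omega(1)}$ once $\eta$ is small. In particular $f = n(np^\Delta)^r \le n$, and by \cref{fact:f-p-pH} we have $f^{\alpha^*_J} = n^{v_J}p^{e_J}$. Moreover $n^{v_J}p_H^{e_J}=1$ because $J$ is densest, so
\[
  n^{v_J}p^{e_J} = (p/p_H)^{e_J} \le n^{\eta e_J}.
\]
Stability also gives $1/\Delta+\eps_H > 1/m(H)$. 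Therefore $p \ge p_H \ge n^{-1/\Delta-\eps_H}\cdot n^{\kappa}$ for some constant $\kappa>0$, i.e.\ \cref{lem:bound-on-NJG}\ref{item:NJG-not-cJ-eps-finite} applies with $L = n^{\kappa}$.

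Now fix $J'\subseteq J\in\cJ^*$ and let $J''$ be $J'$ with isolated vertices removed. Any $\varphi$ with $\varphi(J)\cap G = \varphi(J')$ restricts on $V(J'')$ to an embedding of $J''$ into $G$. The other $v_J - v_{J''}$ vertices have at most $n$ choices each. Hence the left-hand side is at most
\[
  |\Emb(J'',G)|\, n^{v_J-v_{J''}} p^{e_J-e_{J''}}
  = O\!\left(\frac{N(J'',G)}{n^{v_{J''}}p^{e_{J''}}}\right)\cdot n^{v_J}p^{e_J}
  \le O\!\left(\frac{N(J'',G)}{n^{v_{J''}}p^{e_{J''}}}\right)\cdot n^{\eta e_J}.
\]
(For $J''=\emptyset$ the ratio is read as $1$.) Apply \cref{lem:bound-on-NJG} with $x = e_G/f \le (\log(1/p))^2$, so that $\pi(x) \le (\log n)^{O(1)}$. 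There are three cases.
\begin{enumerate}[label=(\alph*)]
  \item If $J''\in\cJ^*_\emptyset$, the ratio is at most polylogarithmic, giving the bound $n^{\eta e_J + o(1)} \le n^{\sigma}$.
  \item If $J''\in\cJ\setminus\cJ^*$, item~\ref{item:NJG-cJ-not-star-sparse} gives an extra factor $(np^\Delta)^{\gamma_H} \le n^{-\Omega(1)}$.
  \item If $J''\notin\cJ$, item~\ref{item:NJG-not-cJ-eps-finite} (or item~\ref{item:NJG-not-cJ-eps-infty}) gives an extra factor $\max\{p^{1/2}, n^{-1/(2\Delta)}, n^{-\kappa\beta_H}\} = n^{-\Omega(1)}$.
\end{enumerate}

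One subtlety concerns $J'$ with isolated vertices: $J'\notin\cJ^*_\emptyset$ while $J''\in\cJ^*_\emptyset$. In that case $\varphi$ of the isolated vertices of $J'$ must avoid the edges of $G$ only, so they are free. I then need to argue separately that such $J'$ still gives a negative exponent, or, more likely, interpret $J'$ as the edge set $\varphi^{-1}(G)$, so that only $J''$ matters. I would check which reading the definition intends.

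Finally I choose the constants. Let $\gamma>0$ be the minimum of the gap exponents from cases (b) and (c). Take $\sigma \coloneqq \gamma/5$ and then $\eta$ so small that $\eta \max_J e_J < \sigma$. Then case (a) gives at most $n^{\sigma}$, and cases (b) and (c) give at most $n^{\eta e_J - \gamma + o(1)} \le n^{-3\sigma}$.

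The main obstacle I expect is making sure the exponent gaps in cases (b) and (c) are genuinely polynomial, uniformly in the range $p_H\le p\le p_H n^{\eta}$. This is exactly where stability enters, through the strict inequality $1/\Delta+\eps_H>1/m(H)$ and through $p\le n^{-1/\Delta-\Omega(1)}$.
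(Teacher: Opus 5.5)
Your proposal is correct and follows the paper's proof essentially step for step. You embed $J'$ into $G$, extend the remaining vertices freely, use $n^{v_J}p^{e_J}=(p/p_H)^{e_J}\le n^{\eta e_J}$, and split into cases via \cref{lem:bound-on-NJG}. Your edge-set reading of $J'$ is the intended one: the paper identifies graphs with their edge sets and applies \cref{lem:bound-on-NJG} to $J'$ directly. Your explicit choice $L=n^{\kappa}$ in item~\ref{item:NJG-not-cJ-eps-finite} makes precise a step that the paper's proof only writes loosely.
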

\begin{proof}
  Suppose that $p \le p_H \cdot n^{\eta}$ for some $\eta > 0$ and denote the number of embeddings from the assertion of the lemma by $E$.
  Since each such embedding $\varphi$ can be specified by first choosing an embedding of $J'$ in $G$ and then the images of the vertices in $V(J) \setminus V(J')$ in $\br{n}$, we have
  \begin{equation}
    \label{eq:tight-core-extension-upper}
    E \cdot p^{e_J-e_{J'}} \le |\Emb(J', G)| \cdot n^{v_J-v_{J'}} \cdot p^{e_J-e_{J'}} \le C \cdot \frac{N(J',G)}{n^{v_{J'}}p^{e_{J'}}} \cdot n^{v_J}p^{e_J}
  \end{equation}
  for some constant $C$ that depends only on $H$.
  Since $H$ is stable and $J \in \cJ^*$, by \cref{lem:stable-characterisation},
  \[
    n^{v_J}p^{e_J} = n^{v_J} p_H^{e_J} \cdot (p/p_H)^{e_J} = (p/p_H)^{e_J} \le n^{\eta e_H}.
  \]
  Further, \Cref{lem:bound-on-NJG} and the assumption that $e_G \le f \cdot \bigl(\log(1/p)\bigr)^2$ yield
  \[
    \frac{N(J',G)}{n^{v_{J'}}p^{e_{J'}}} \le \bigl(\log(1/p)\bigr)^{2v_H} \cdot
    \begin{cases}
      1 & \text{if $J' \in \cJ_\emptyset^*$}, \\
      \max\{n^{-1/(2\Delta)}, (np^\Delta)^{\gamma_H}\} & \text{otherwise},
    \end{cases}
  \]
  for some $\gamma_H > 0$ that depends only on $H$.
  Finally, since $p_H \le n^{-1/\Delta-2\zeta_H}$ for some $\zeta_H > 0$, we have $np^\Delta \le np_H^\Delta \cdot n^{\eta\Delta}\le n^{-\Delta\zeta_H}$, provided that $\eta \le \zeta_H/2$.
  Substituting these bounds into~\eqref{eq:tight-core-extension-upper} gives
  \[
    E \cdot p^{e_J-e_{J'}} \le n^{2\eta e_H} \cdot
    \begin{cases}
      1 & \text{if $J' \in \cJ_\emptyset^*$}, \\
      \max\{n^{-1/(2\Delta)}, n^{-\Delta \gamma_H \zeta_H}\} & \text{otherwise},
    \end{cases}
  \]
  which clearly implies the assertion of the lemma.
\end{proof}

\begin{proof}[Proof of \cref{lem:large-tight-core-is-expensive}]
  Fix an arbitrary $C \in \cC \setminus \cC_{m_c}$ and let $(C_0, \dotsc, C_w)$ be its witness sequence.
  Since $1 \le e(C_i \setminus C_{i-1}) \le 2e_H$ for all $i \in \br{w}$, there is an $m_c / (2e_H) \le \ell \le m_c$ such that $m_c \le e(C_\ell) \le m_c + 2e_H$.
  Consequently, denoting by $\mathcal{W}_{\ell,m}$ the set of all witness sequences $(C_0, \dotsc, C_\ell)$ with $e(C_{\ell}) = m$, we have  
  \begin{equation}
    \label{eq:large-tight-core-is-expensive}
    \Pr\bigl(G_{n,p}\in \langle \cC \setminus \cC_{m_c} \rangle\bigr) \leq \sum_{m =m_c}^{m_c+2e_H} \sum_{\ell = m_c/(2e_H)}^{m_c} |\mathcal{W}_{\ell,m}| \cdot p^{m}.
  \end{equation}
  It thus suffices to prove the following claim, where $\sigma$ is the constant appearing in \cref{lem:tight-core-extension}.
  \begin{claim}\label{claim:bound-on-Wlm}
    For every $m \le f \cdot \bigl(\log(1/p)\bigr)^2$ and every $\ell \ge 0$,
    \[
      |\mathcal{W}_{\ell, m}| \cdot p^m \le n^{-\sigma \ell}.
    \]
  \end{claim}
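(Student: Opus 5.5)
We prove the claim by induction on $\ell$, bounding the number of witness sequences step by step with \cref{lem:tight-core-extension}. The natural quantity is the weighted count
\[
  W_\ell \coloneqq \sum_{(C_0,\dotsc,C_\ell)} p^{e(C_\ell)},
\]
where the sum ranges over all witness sequences of length $\ell$ whose final graph has at most $f\cdot(\log(1/p))^2$ edges. We aim to show $W_\ell \le n^{-\sigma\ell}$. Since $|\mathcal{W}_{\ell,m}|\,p^m \le W_\ell$, this gives the claim. The base case $\ell=0$ is trivial, since $W_0=1$.

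For the inductive step, fix $(C_0,\dotsc,C_{\ell-1})$ and sum over all admissible $C_\ell$. The weight $p^{e(C_\ell)}$ factors as $p^{e(C_{\ell-1})}\cdot p^{e(C_\ell\setminus C_{\ell-1})}$. It therefore suffices to show that, for every fixed $G\coloneqq C_{\ell-1}$ with at most $f(\log(1/p))^2$ edges,
\[
  \sum_{C_\ell} p^{e(C_\ell \setminus G)} \le n^{-\sigma}.
\]
There are two types of step.

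In a step of type \ref{item:tight-core-one-copy}, $C_\ell=G\cup\varphi(J)$ for some $J\in\cJ^*$ and embedding $\varphi$ with $\varphi(J)\cap G=\varphi(J')$ and $J'\notin\cJ^*_\emptyset$. Summing over the $O(1)$ choices of $J\in\cJ^*$ and $J'\subseteq J$, \cref{lem:tight-core-extension} bounds the contribution by $O(n^{-3\sigma})$.

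In a step of type \ref{item:tight-core-two-copies}, we first add $K_1=\varphi_1(J_1)$ with $K_1\cap G\in\cJ^*_\emptyset$. By \cref{lem:tight-core-extension}, this contributes at most $n^{\sigma}$ per choice of $J_1$ and $J_1'$. We then add $K_2$ with $K_2\cap(G\cup K_1)\notin\cJ^*_\emptyset$, applying the lemma to the graph $G\cup K_1$, which contributes at most $n^{-3\sigma}$. The product is $O(n^{-2\sigma})$. Both types together give $O(n^{-2\sigma})\le n^{-\sigma}$ for large $n$.

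One subtlety is that the intermediate graph $G\cup K_1$, and indeed every $C_i$, must satisfy the edge bound required by \cref{lem:tight-core-extension}. This holds because $e(C_i)\le e(C_\ell)=m\le f(\log(1/p))^2$, and $G\cup K_1\subseteq C_\ell$. We handle it by restricting the sum to sequences whose final graph has at most $m$ edges, so that all intermediate graphs are monotonically smaller. This restriction is compatible with the induction.

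A second point is overcounting. Different choices of $(J,J',\varphi)$ may produce the same $C_\ell$, but we only need an upper bound, so overcounting is harmless. The main care point is making sure the type-\ref{item:tight-core-two-copies} case, which costs a factor $n^\sigma$, is always paired with a gain of $n^{-3\sigma}$. This is exactly what the definition of tight cores guarantees.
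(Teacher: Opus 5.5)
Your proposal is correct and follows the paper's route. You induct on $\ell$, split the last step into the \ref{item:tight-core-one-copy} and \ref{item:tight-core-two-copies} cases, and apply \cref{lem:tight-core-extension} first to $C_{\ell-1}$ and then to $C_{\ell-1}\cup K_1$, which gives a factor $K_H n^{-2\sigma}\le n^{-\sigma}$ per step. The only difference is cosmetic: you bound the total weight $\sum_{m}|\mathcal{W}_{\ell,m}|p^m$ rather than each $|\mathcal{W}_{\ell,m}|p^m$ separately, which yields a slightly stronger statement by the same computation.
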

  \begin{proof}
    We prove the assertion by induction on $\ell$. The induction basis ($\ell=0$) is vacuously true, so let us assume that $\ell \ge 1$.
    Observe that, for every $(C_0, \dotsc, C_\ell) \in \mathcal{W}_{\ell,m}$, either:
    \begin{enumerate}[label=(\arabic*)]
    \item
      The graph $C_\ell$ was obtained from $C_{\ell-1}$ as in \cref{def:tight-cores}~\ref{item:tight-core-one-copy}.
      More precisely, there are $J' \subseteq J \in \cJ^*$ with $J' \notin \cJ_\emptyset^*$ and $\varphi \in \Emb(J, K_n)$ such that $\varphi(J) \cap C_{\ell-1} = \varphi(J')$ and $C_\ell = C_{\ell-1} \cup \varphi(J)$.
      By \cref{lem:tight-core-extension}, for each such pair $J', J$, there are at most $n^{-3\sigma} \cdot p^{e_{J'} - e_J}$ embeddings $\varphi$ with $\varphi(J) \cap C_{\ell-1} = \varphi(J')$.
      Finally, since $e(C_\ell) = e(C_{\ell-1}) + e_J - e_{J'}$, we have $(C_0, \dotsc, C_{\ell-1}) \in \mathcal{W}_{\ell-1,m-e_J+e_{J'}}$.
    \item
      The graph $C_\ell$ was obtained from $C_{\ell-1}$ as in \cref{def:tight-cores}~\ref{item:tight-core-two-copies}.
      More precisely, there are $J_1' \subseteq J_1 \in \cJ^*$ and $J_2' \subseteq J_2 \in \cJ^*$ satisfying $J_1' \in \cJ_\emptyset^*$ and $J_2' \notin \cJ_\emptyset^*$ as well as $\varphi_1 \in \Emb(J_1, K_n)$ and $\varphi_2 \in \Emb(J_2, K_n)$ such that $\varphi_1(J_1) \cap C_{\ell-1} = \varphi_1(J_1')$, $\varphi_2(J_2) \cap (C_{\ell-1} \cup \varphi_1(J_1)) = \varphi_2(J_2')$, and $C_\ell = C_{\ell-1} \cup \varphi_1(J_1) \cup \varphi_2(J_2)$.
      By \cref{lem:tight-core-extension}, for each choice of $J_1', J_1, J_2', J_2$, there are at most $n^\sigma \cdot p^{e_{J_1'} - e_{J_1}}$ embeddings $\varphi_1$ with $\varphi_1(J_1) \cap C_{\ell-1} = \varphi_1(J_1')$ and then at most $n^{-3\sigma} \cdot p^{e_{J_2'} - e_{J_2}}$ further choices for $\varphi_2$ satisfying $\varphi_2(J_2) \cap (C_{\ell-1} \cup \varphi_1(J_1)) = \varphi_2(J_2')$.
      Finally, since $e(C_\ell) = e(C_{\ell-1}) + e_{J_1} + e_{J_2} - e_{J_1'} - e_{J_2'}$, we have $(C_0, \dotsc, C_{\ell-1}) \in \mathcal{W}_{\ell-1, m-e_{J_1}-e_{J_2}+e_{J_1'}+e_{J_2'}}$.
    \end{enumerate}
    By our inductive assumption, there is a constant $K_H$ such that
    \[
      \begin{split}
        |\mathcal{W}_{\ell, m}| \cdot p^m
        & \le \sum_{\substack{J' \subseteq J \in \cJ^* \\ J' \notin \cJ_\emptyset^*}} |\mathcal{W}_{\ell-1,m-e_J+e_{J'}}| \cdot p^{m-e_J+e_{J'}} \cdot n^{-3\sigma} \\
        & \;+ \sum_{\substack{J_1' \subseteq J_1 \in \cJ^* \\ J_2' \subseteq J_2 \in \cJ^* \\ J_2' \notin \cJ_\emptyset^*}} |\mathcal{W}_{\ell-1,m-e_{J_1}-e_{J_2}+e_{J_1'}+e_{J_2'}}| \cdot p^{m-e_{J_1}-e_{J_2}+e_{J_1'}+e_{J_2'}} \cdot n^{\sigma-3\sigma} \\
        & \le K_H \cdot n^{-2\sigma} \cdot n^{-(\ell-1)\sigma} \le n^{-\ell\sigma},
      \end{split}
    \]
    as claimed.
  \end{proof}
  Finally, substituting the bound from \cref{claim:bound-on-Wlm} into \eqref{eq:large-tight-core-is-expensive}, we obtain
  \[
    \Pr\bigl(G_{n,p}\in \langle \cC \setminus \cC_{m_c} \rangle\bigr) \le 2e_Hm_c \cdot n^{-\sigma m_c / (2e_H)} \le p^{\eta m_c},
  \]
  provided that $\eta$ is sufficiently small (as a function of $\sigma$ and $H$ only), since $p \ge p_H \ge n^{-2}$.
\end{proof}

We now turn to the proof of \cref{lem:seeds-contains-a-small-tight-core-2}.
The heart of the matter here is the following upper bound on $N(J,G;J_0,G^*)$ in terms of the quantities $\dim_{J_0} J$ and $\alpha_{J_0-R_{J_0}(J)}^*$, which appear in the definition of $C_H$.

\begin{lemma}
  \label{lem:counting-extensions-via-dimension}
  Suppose that $G^{*}\subseteq G$ is a maximal tight core.
  For all $J_0,J\in \cJ_\emptyset^*$ with $J_0\subsetneq J$,
  \[
    N(J,G;J_0,G^*) \leq  (2e_{G^*})^{\alpha_{J_0-R_{J_{0}}(J)}^*} \cdot (2e_G)^{\dim_{J_0}J}.
  \]
\end{lemma}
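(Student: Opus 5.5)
Let $R \coloneqq R_{J_0}(J)$, $W \coloneqq V(J_0)\setminus R$ and $d \coloneqq \dim_{J_0}J$. We bound the number of embeddings $\varphi\in\Emb(J,G)$ with $\varphi(J)\cap G^* = \varphi(J_0)$; this dominates $N(J,G;J_0,G^*)$ up to a factor depending only on $H$, which we absorb or avoid by counting copies directly. The plan is to specify such a copy in two stages. The first stage is the restriction $\varphi|_W$, which lives inside $G^*$. The second stage is the rest of $\varphi$, and we will show it is determined by the images of $d$ edges of $G$.

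\emph{Stage one.} Every vertex $w\in W$ satisfies $\deg_{J_0}w=\deg_J w$, so all edges of $J$ at $w$ lie in $J_0$ and hence map into $G^*$. The graph $J_0-R = J_0[W]$ may have isolated vertices, namely vertices of $W$ whose neighbours in $J_0$ are all roots. I would therefore not apply \Cref{thm:max-copies} to $J_0[W]$ directly. Instead I would rerun the entropy argument from its proof with the linear program $\mathcal{P}(J_0[W];\log(2e_{G^*}),\log(2e_{G^*}))$, where $B\le A$. Each $w\in W$ has positive degree in $J$, hence in $J_0$, so $\varphi(w)$ is a non-isolated vertex of $G^*$. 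This gives $x_w\le \log(2e_{G^*})$ for every $w$, and $x_u+x_w\le\log(2e_{G^*})$ for every edge $uw$ of $J_0[W]$. Rescaling by $\log(2e_{G^*})$ produces a fractional independent set of $J_0[W]$. Consequently the number of choices of $\varphi|_W$ is at most $(2e_{G^*})^{\alpha^*_{J_0-R}}$, where isolated vertices receive weight $1$, consistent with the definition of $\alpha^*$.

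\emph{Stage two.} Fix a basis $E=\{e_1,\dots,e_d\}$ of $J$ over $J_0$. The images $\varphi(e_1),\dots,\varphi(e_d)$, each an ordered edge of $G$, can be chosen in at most $(2e_G)^d$ ways. The key claim is that $\varphi|_W$ together with these images determines $\varphi$ up to $O_H(1)$ choices, and in fact determines $\varphi$ completely. For this I would use maximality of $G^*$. Let $C\coloneqq G^*\cup\varphi(J)$ and consider the intermediate graphs obtained by adding copies $\varphi(\spn_{J_0}F)$ for $F\subseteq E$. Any copy $K$ of a graph of $\cJ^*$ that lies in $G$ and has $K\cap G^*\notin\cJ^*_\emptyset$ could be appended to the witness sequence via \ref{item:tight-core-one-copy}, contradicting maximality. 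Likewise, two copies as in \ref{item:tight-core-two-copies} cannot both be added. Combining this with closure of $\cJ^*_\emptyset$ under unions and intersections (\cref{lem:intersections-and-unions}), the goal is to show that the copy of $J$ is the unique copy of $\spn_{J_0}E$ in $G$ that extends $\varphi|_W$ and contains the chosen edges.

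This uniqueness step is the main obstacle, and I am not certain the argument sketched goes through as stated. Suppose two copies $\varphi,\varphi'$ agree on $W$ and on the images of $E$. I would argue that $\varphi(J)\cap\varphi'(J)$ contains $\varphi(J_0)$ together with the images of $E$. Applying the tight-core definition to the pair of copies $(\varphi(J),\varphi'(J))$, their intersection must be isomorphic to a member of $\cJ^*_\emptyset$ containing $J_0\cup E$. It therefore contains $\spn_{J_0}E=J$, which forces $\varphi=\varphi'$. If the maximality argument turns out not to handle the root images cleanly, the fallback is to bound the number of choices for the images of $V(J)\setminus(W\cup V(E))$ by a constant. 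This relies on the fact that every vertex of $J\setminus J_0$ lies in the span of the chosen edges. Either way, multiplying the counts from the two stages gives the stated bound.
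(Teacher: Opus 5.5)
You have a genuine gap, and it sits in the uniqueness step, which is the whole content of the lemma. Stage one is fine. Your entropy rerun on $J_0[W]$ is in fact more careful than a direct appeal to \cref{thm:max-copies}, since $J_0-R$ may have isolated vertices. The $(2e_G)^{d}$ count over a basis $E$ is exactly what the paper does. The problem is the argument that the data $(\varphi|_W,\varphi|_E)$ determines the copy, which is wrong in two places.

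\emph{First error.} $\varphi$ and $\varphi'$ agree only on $W$ and on $V(E)$. Roots outside $V(E)$ may be sent to different vertices, so edges of $\varphi(J_0)$ at such roots need not lie in $\varphi'(J)$. Hence the containment $\varphi(J)\cap\varphi'(J)\supseteq\varphi(J_0)$ is unjustified.

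\emph{Second error.} Maximality of $G^*$ says nothing about the bare intersection of two copies. Via \ref{item:tight-core-one-copy} it gives $K\cap G^*\in\cJ^*_\emptyset$ for every $K\in\cJ^*(H,G)$. Via \ref{item:tight-core-two-copies} it gives $K_2\cap(G^*\cup K_1)\in\cJ^*_\emptyset$ whenever $K_1\cap G^*\in\cJ^*_\emptyset$. There is no condition on $K_1\cap K_2$ alone.

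\emph{The fallback.} The fallback ``every vertex lies in the span of the chosen edges'' is not an argument. The span is computed inside $H$, and without maximality nothing bounds the number of extensions of $(\varphi|_W,\varphi|_E)$ in $G$.

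\emph{The missing idea.} Intersect $\varphi'(J)$ with $G^*\cup\varphi(J)$, not with $\varphi(J)$. Your graph $C=G^*\cup\varphi(J)$ was pointing in this direction, but you then dropped it. Take $K_1=\varphi(J)$, which is legitimate because $K_1\cap G^*=\varphi(J_0)$ corresponds to $J_0\in\cJ^*_\emptyset$, and take $K_2=\varphi'(J)$. Maximality and \ref{item:tight-core-two-copies} then force the graph $J'\subseteq J$ defined by $\varphi'(J')=\varphi'(J)\cap(G^*\cup\varphi(J))$ to lie in $\cJ^*_\emptyset$.
\begin{itemize}
\item $J'$ contains $J_0$, because $\varphi'(J_0)\subseteq G^*$ holds automatically, with no need for $\varphi$ and $\varphi'$ to agree on $R$.
\item $J'$ contains $E$, because $\varphi'(E)=\varphi(E)\subseteq\varphi(J)$.
\end{itemize}
Hence $J'\supseteq\spn_{J_0}E=J$.

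Since $\varphi'(J)\cap G^*=\varphi'(J_0)$, the graph $\varphi'(J\setminus J_0)$ is edge-disjoint from $G^*$. It therefore lies in $\varphi(J\setminus J_0)$, and the two are equal by counting edges.

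This yields $\varphi'(J\setminus J_0)=\varphi(J\setminus J_0)$, not $\varphi'=\varphi$. Together with $\varphi|_W=\varphi'|_W$ and $V(J)=W\cup V(J\setminus J_0)$, this determines the copy up to at most $|\Aut(J\setminus J_0)|$ choices. That constant is immaterial where the lemma is used in \cref{lem:seeds-contains-a-small-tight-core-2}.
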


\begin{proof}
  Suppose that $G^*$ is a maximal tight core in $G$.
  Let $E \subseteq J$ be an arbitrary basis of $J$ over $J_0$ and denote $R \coloneqq R_{J_0}(J)$.
  We claim that it suffices to show that two embeddings $\varphi_1, \varphi_2$ of $J$ into $G$ that agree on $E$ and on $J_0 - R$ and satisfy $\varphi_i(J) \cap G^* = \varphi_i(J_0)$ for both $i$ must satisfy $\varphi_1(J) = \varphi_2(J)$.
  Indeed, this means that every embedding $\varphi$ of $J$ into $G$ that satisfies $\varphi(J) \cap G^* = \varphi(J_0)$ is determined by the images of $J_0 - R$ and of $E$, implying that
  \[
    N(J, G; J_0, G^*) \le |\Emb(J_0-R, G^*)| \cdot (2e_G)^{|E|} \le (2e_{G^*})^{\alpha_{J_0-R}^*} \cdot (2e_G)^{\dim_{J_0}J}.
  \]
  Let $\varphi_1, \varphi_2$ be two embeddings with the above properties.
  Since $G^*$ is a maximal tight core, the graph $J' \subseteq J$ defined by $\varphi_2(J') \coloneqq \varphi_2(J) \cap (G^* \cup \varphi_1(J))$ belongs to $\cJ_\emptyset^*$, see \cref{def:tight-cores}~\ref{item:tight-core-two-copies}.
  However, $J' \supseteq J_0 \cup E$, as $\varphi_1$ and $\varphi_2$ agree on $E$ and $\varphi_2(J) \cap G^* = \varphi_2(J_0)$, and thus $J' = J$.
  Since $\varphi_i(J) \cap G^* = \varphi_i(J_0)$ for both $i$, it follows that $\varphi_1(J \setminus J_0) = \varphi_2(J \setminus J_0)$.
  Finally, since every vertex of $J$ has nonzero degree in either $J \setminus J_0$ or $J_0 - R$, we conclude that $\varphi_1(J) = \varphi_2(J)$.
\end{proof}

\begin{proof}[Proof of~\cref{lem:seeds-contains-a-small-tight-core-2}]
  Suppose that $G^* \subseteq G$ satisfy the assumptions of the proposition, fix an arbitrary pair $J_0, J \in \cJ_\emptyset^*$ with $J_0 \subsetneq J$, and let $R\coloneqq R_{J_{0}}(J)$ and $d\coloneqq \dim_{J_0}(J)$.
  By \cref{lem:counting-extensions-via-dimension}, 
  \begin{equation}
    \label{eq:bound-on-N-J-G-J0-G*-part-1}
    N(J, G; J_0, G^*)
    \le  (2e_{G^*})^{\alpha_{J_0- R}^*} \cdot (2e_G)^{d} \le (2Df)^{\alpha_{J_0- R}^*} \cdot \bigl(2Df \log(1/p)\bigr)^{d}.
  \end{equation}
  Let $x$ be the number satisfying $p = x \cdot p_H (\log n)^{C_H}$ and note that
  \[
    f = n(np^\Delta)^r = x^{\Delta r} \cdot (\log n)^{C_H \Delta r} \ge x^{\Delta r} \cdot \bigl(\log(1/p)/2\bigr)^{C_H \Delta r},
  \]
  where the final inequality holds as $p \ge p_H \ge n^{-2}$.
  Substituting the resulting upper bound on $\log(1/p)$ into~\eqref{eq:bound-on-N-J-G-J0-G*-part-1} yields, for some $D' = D'(D,H)$,
  \begin{equation}
    \label{eq:bound-on-N-J-G-J0-G*-in-xf}
    N(J,G;J_0,G^*) \le D' \cdot x^{-d/C_H} \cdot f^{\alpha_{J_0-R}^* + d + d/(C_H\Delta r)}.
  \end{equation}
  The key observation is that, by the definition of $C_H$,
  \[
    \alpha_{J_0-R}^* + d + d/(C_H\Delta r) \le \alpha_J^*
  \]
  and equality holds if and only if $(J_0, J)$ is an optimal pair.
  Since
  \[
    f^{\alpha_J^*} = n^{v_J} p^{e_J} \ge (p/p_H)^{e_J} \ge (\log n)^{C_H/2},
  \]
  by \cref{fact:f-p-pH}, and $x^{-1} \le (\log n)^{1/L}$ for some large $L = L(H)$,
  we may already conclude from~\eqref{eq:bound-on-N-J-G-J0-G*-in-xf} that $N(J,G;J_0,G^*) < c n^{v_J} p^{e_J}$, unless $(J_0, J)$ is an optimal pair.
  In the latter case, \eqref{eq:bound-on-N-J-G-J0-G*-in-xf} becomes
  \[
    N(J, G; J_0, G^*) \le D' \cdot x^{-d/C_H} \cdot n^{v_J} p^{e_J}
  \]
  and the right-hand side is smaller than $c n^{v_J} p^{e_J}$ unless $x < L$ for some constant $L$ that depends only on $c$, $D$, and $H$.
\end{proof}

\subsection{The upper bound on the upper-tail probability}
\label{sec:proof-upper-near-threshold}

Let $L = K(H, \delta, \varepsilon)$ and $D = D(H, \delta, \varepsilon)$ be large constants and assume that either $p \ge p_H \cdot L(\log n)^{C_H}$, as in the setting of \cref{thm:main-near-appearance-threshold}~\ref{item:main-near-appearance-threshold-UB}, or that $p \ge p_H \cdot L(\log n / \log \log n)^{C_H}$ and $H$ is one-dimensional and not clean (see \cref{def:1-dim-graphs,def:clean-graphs}), as in the setting of \cref{thm:non-clean}.

Write
\[
  t \coloneqq (\delta-\varepsilon) \cdot \Ex[X],
  \qquad
  t' \coloneqq (\delta-2\varepsilon) \cdot \Ex[X],
  \qquad
  \text{and}
  \qquad
  t'' \coloneqq (\delta-3\varepsilon) \cdot \Ex[X]
\]
and define, for every positive integer $m$,
\[
  \cL_m\coloneqq \{G \in \cS_{t'',m} \cap \cL:e_G=m\}
  \qquad
  \text{and}
  \qquad
  v_m \coloneqq \max\{v_G : G \in \mathcal{L}_m\}.
\]
Further, let $m_0 \coloneqq \min\{m:\cL_m\neq \emptyset\}$ and $m_1 \coloneqq \lceil D \psi_H(\delta+\varepsilon) \rceil$ and observe that \cref{lem:psi--psi*-lower} implies that $f/D \le m_0 \le Df$ and $m_1 \le D^2 f \log(1/p)$, provided that $D$ is sufficiently large.
Finally, recall that $\mathcal{C}$ is the family of all tight cores and $\mathcal{C}_m$ is the family of all tight cores with at most $m$ edges.

It follows from \cref{cor:exists-a-seed-condition-on-UT,lem:seeds-contains-a-small-tight-core} with $m_c \coloneqq D^2f$ that
\begin{equation}
  \label{eq:UT-stable-bound-by-cores}
  \begin{split}
    \Pr(\UT_{H,\delta})
    & \le 2 \cdot \Pr\bigl(G_{n,p} \in \langle\cS_{t,m_1}\rangle \bigr) \\
    & \leq 2 \cdot \left(\Pr\bigl(G_{n,p} \in  \langle\cC \setminus \cC_{m_c}\rangle\bigr) + \Pr\bigl(G_{n,p} \in \langle \cS_{t', m_c}\rangle\bigr) + \Pr(\mathcal{E})\right),
  \end{split}
\end{equation}
where $\mathcal{E}$ is the event that $G_{n,p}$ contains a subgraph $G$ with at most $m_1$ edges and a maximal tight core $G^* \subseteq G$ with at most $m_c$ edges such that $N(J,G;J_0,G^*) \ge c n^{v_J} p^{e_J}$ for some pair $J_0, J \in \cJ_\emptyset^*$ with $J_0 \subsetneq J$.
We now bound each of the summands in the right-hand side of~\eqref{eq:UT-stable-bound-by-cores}.

First, by \cref{lem:large-tight-core-is-expensive,lem:psi--psi*-lower},
\begin{equation}\label{eq:too-large-of-a-tightcore}
  \Pr\bigl(G_{n,p} \in  \langle\cC \setminus \cC_{m_c}\rangle\bigr) \le p^{\eta m_c} \le p^{Df} \ll e^{-\psi_H^*(\delta)},
\end{equation}
provided that $D$ is sufficiently large.

Second, \Cref{lem:cores}, with $(t,t')$ replaced by $(t', t'')$, yields
\begin{equation}
  \label{eq:seed-hubcore-bound}
  \begin{split}
    \Pr\bigl(G_{n,p} \in \langle \cS_{t', m_c}\rangle\bigr)
    & \le \Pr\bigl(G_{n,p} \in \langle \cS_{t'', m_c} \cap \mathcal{L}\rangle\bigr) \\
    & \le \sum_{m=m_0}^{m_c} |\cL_m|\cdot  p^{m} \le  n^2 \cdot \max \bigl\{|\cL_m|\cdot p^{m}: m_0 \le m \le m_c\bigr\}.
  \end{split}
\end{equation}
Further, applying \cref{lem:core-count} with $\cL'=\cL_m$ for each $m \in [m_0, m_c]$, we get
\[
  \begin{split}
    \log \bigl(|\cL_m| \cdot p^m\bigr)
    & \le v_m\log(en/g) - m \log(1/p) + O(m \log \log(1/p)) \\
    & \le -(1-\varepsilon) \cdot \bigl(m\log (1/p) - v_m \log(en/g)\bigr),
  \end{split}
\]
where the last inequality follows from~\cref{lem:weak-LB-psi-*}, as $H$ is stable and $p \gg p_H$.
Substituting this into~\eqref{eq:seed-hubcore-bound} yields
\[
  \begin{split}
    \log \Pr\bigl(G_{n,p} \in \langle \cS_{t', m_c}\rangle\bigr)
    & \le - (1-\varepsilon) \cdot \min\bigl\{m \log(1/p) - v_m \log(en/g) : m_0 \le m \le m_1 \bigr\} + 2 \log n \\
    & \le - (1-\varepsilon) \cdot \psi_H^*(\delta-3\varepsilon) + 2\log n \le -(1-2\varepsilon) \cdot \psi_H^*(\delta-3\varepsilon) - \log 2,
  \end{split}
\]
where the last inequality follows as, by \Cref{lem:psi--psi*-lower},
\[
  \psi_H^*(\delta-3\varepsilon) = \Omega\bigl(f \log(1/p)\bigr) \ge \Omega\bigl( p/p_H \cdot \log(1/p) \bigr) \gg \log n.
\]

Finally, we bound the probability of $\mathcal{E}$.
Given all the preparations we have made so far, this is now very easy in the setting of \cref{thm:main-near-appearance-threshold}~\ref{item:main-near-appearance-threshold-UB}.
Indeed, if $p \ge L p_H (\log n)^{C_H}$ for a sufficiently large constant $L$, then \cref{lem:seeds-contains-a-small-tight-core-2} implies that $\Pr(\mathcal{E}) = 0$, which yields the desired estimate
\[
  \Pr(\UT_{H,\delta}) \le \exp\bigl(-(1-2\varepsilon) \cdot \psi_H^*(\delta-3 \varepsilon)\bigr) + o\bigl(e^{-\psi_H^*(\delta)}\bigr).
\]

We are now left with the case where $L p_H (\log n / \log \log n)^{C_H}\le p < L p_H (\log n)^{C_H}$ and $H$ is a one-dimensional graph that is not clean, as in \cref{thm:non-clean}.
By \cref{lem:seeds-contains-a-small-tight-core-2}, we may also assume that the pair $(J_0, J)$ appearing in the definition of $\mathcal{E}$ is optimal, as otherwise $\Pr(\mathcal{E}) = 0$.

Fix two graphs $G^*\subseteq G$ with $e_{G^*} \le m_c$ and $e_G \le m_1$ and an optimal pair $(J_0,J)$ such that
\[
  N(J,G;J_0,G^*) \geq c n^{v_J} p^{e_J} = cf^{\alpha_{J}^*},
\]
see~\cref{fact:f-p-pH}.
Our argument will exploit the special structure of the family
\[
  \mathcal{P} \coloneqq \bigl\{\varphi(J \setminus J_0) : \varphi \in \Emb(J,G; J_0,G^*)\bigr\}
\]
that is a consequence of our assumption that $H$ is one-dimensional and hence $\dim_{J_0}J = 1$; here and throughout, $\Emb(J, G; J_0, G^*)$ denotes the family of embeddings of $J$ in $G$ whose image intersects $G^*$ on a subgraph that is isomorphic to $J_0$.

\begin{claim}\label{claim:petals-disjoint}
  The graphs in $\mathcal{P}$ are pairwise edge-disjoint.
\end{claim}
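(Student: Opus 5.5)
The plan is to argue by contradiction, reusing the maximality argument from the proof of \cref{lem:counting-extensions-via-dimension} together with $\dim_{J_0}J = 1$. Suppose two distinct members of $\mathcal{P}$, say $P_1 = \varphi_1(J\setminus J_0)$ and $P_2 = \varphi_2(J\setminus J_0)$ with $\varphi_1,\varphi_2 \in \Emb(J,G;J_0,G^*)$, share an edge $e$. The goal is then to show $P_1 = P_2$.

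First, we form the graph $J'\subseteq J$ defined by $\varphi_2(J') \coloneqq \varphi_2(J)\cap (G^*\cup\varphi_1(J))$. Since $\varphi_1(J)$ is a copy of a graph in $\cJ^*$ and $\varphi_1(J)\cap G^* = \varphi_1(J_0)$ with $J_0\in\cJ_\emptyset^*$, adding $\varphi_1(J)$ and then $\varphi_2(J)$ to $G^*$ would be a legitimate step of type~\ref{item:tight-core-two-copies} unless $J'\in\cJ_\emptyset^*$. Maximality of $G^*$ as a tight core in $G$ therefore forces $J'\in\cJ_\emptyset^*$. (If $\varphi_1(J)\subseteq G^*$ this cannot happen, since $\varphi_1(J)\cap G^*=\varphi_1(J_0)\subsetneq\varphi_1(J)$.)

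Next, $J'$ contains $J_0$, because $\varphi_2(J)\cap G^*=\varphi_2(J_0)$. It also contains the edge $f\coloneqq\varphi_2^{-1}(e)\in J\setminus J_0$, because $e\in\varphi_1(J)$. Hence $J'\supseteq\spn_{J_0}\{f\}$. Since $\dim_{J_0}J=1$ and $J_0\subsetneq J$, the key point to verify is that every single edge of $J\setminus J_0$ spans $J$ over $J_0$. I would argue this as follows. $\spn_{J_0}\{f\}$ lies in $\cJ_\emptyset^*$ by \cref{lem:intersections-and-unions} and strictly contains $J_0$. If it were a proper subgraph of $J$, then the pair $(\spn_{J_0}\{f\}, J)$, or $(J_0,\spn_{J_0}\{f\})$, would contradict optimality or one-dimensionality. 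The cleanest route: $J_0\subsetneq J_1\coloneqq\spn_{J_0}\{f\}\subseteq J$. For a basis edge $g$ of $J$ over $J_0$, $J_1\cup \spn_{J_0}\{g\}=J$. The issue is whether $J_1$ itself equals $J$; this is the main obstacle.

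I would handle it by comparing $C_H$-ratios. If $J_1\subsetneq J$, consider the pair $(J_1,J)$, which has dimension $1$ (spanned by $g$). Its roots satisfy $\alpha^*_{J_1-R_{J_1}(J)}>\alpha^*_{J_0-R_{J_0}(J)}$ by \cref{claim:J0-in-J'0-not-maximal}. So its ratio $1/(\alpha_J^*-\alpha^*_{J_1-R_{J_1}(J)}-1)$ strictly exceeds the ratio for $(J_0,J)$, contradicting optimality. This is exactly the mechanism used in the proof of \cref{prop:optimal-pair-decomposition}. Hence $J_1=J$, so $J'=J$ and $\varphi_2(J)\subseteq G^*\cup\varphi_1(J)$. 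Removing $G^*$, which meets $\varphi_2(J)$ exactly in $\varphi_2(J_0)$, gives $P_2\subseteq \varphi_1(J)\setminus G^* = P_1$. By symmetry $P_1\subseteq P_2$, so $P_1=P_2$, and the graphs in $\mathcal{P}$ are pairwise edge-disjoint.
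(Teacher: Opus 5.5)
Your proof is correct and uses the same ingredients as the paper's: maximality of $G^*$ via step~\ref{item:tight-core-two-copies} to get $J'\in\cJ_\emptyset^*$, and then \cref{claim:J0-in-J'0-not-maximal}, together with optimality and $\dim_{J_0}J=1$, to rule out any member of $\cJ_\emptyset^*$ strictly between $J_0$ and $J$. The paper's version is slightly more direct. It applies that contradiction to $J'$ itself, which satisfies $J_0\subsetneq J'\subsetneq J$: it contains $f$, and $J'=J$ would force $P_2\subseteq P_1$, hence $P_1=P_2$, since both have $e_J-e_{J_0}$ edges. Your detour through $\spn_{J_0}\{f\}$ is therefore harmless but not needed.
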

\begin{proof}
  Suppose that two distinct $P_1, P_2 \in \mathcal{P}$ intersect and let $\varphi \in \Emb(J,G;J_0,G^*)$ be some embedding satisfying $\varphi(J \setminus J_0) = P_1$.
  Since $G^*$ is a maximal tight core, the graph $J_0' \coloneqq J_0 \cup \varphi^{-1}(P_1 \cap P_2)$ belongs to $\cJ_\emptyset^*$ and satisfies $J_0 \subsetneq J_0' \subsetneq J$, see~\cref{def:tight-cores}~\ref{item:tight-core-two-copies}.
  In particular, letting $R$ and $R'$ denote the roots of $J$ over $J_0$ and $J_0'$, respectively, \cref{claim:J0-in-J'0-not-maximal} would imply that $\alpha_{J_0'-R'}^* = \alpha_{J_0' - R'} > \alpha_{J_0 - R} = \alpha_{J_0-R}^*$ and thus, using that $\dim_{J_0'} J \ge 1 = \dim_{J_0} J$,
  \[
    \frac{\dim_{J_0'} J}{\alpha_{J}^* - \alpha_{J_0' - R'}^* - \dim_{J_0'}J}
    > \frac{\dim_{J_0} J}{\alpha_{J}^* - \alpha_{J_0 - R}^* - \dim_{J_0} J} = r\Delta C_H,
  \]
  which contradicts the assumption that $(J_0, J)$ is an optimal pair.
\end{proof}

Given a $P \in \mathcal{P}$, let $S_P \subseteq V(G^*)$ be the image of $R \coloneqq R_{J_0}(J) \subseteq V(J \setminus J_0) \cap V(J_0)$ in $P$.
Further, for every $S \subseteq V(G^*)$, let $\Emb(J_0, G^*; S)$ denote the family of all embeddings of $J_0$ into $G^*$ that map $R$ to $S$ and observe that
\begin{equation}
  \label{eq:Emb-SP}
  cf^{\alpha_J^*} \le N(J,G;J_0,G^*) \le \sum_{P \in \mathcal{P}} |\Emb(J_0, G^*; S_P)|
\end{equation}
and that, for every $S$, we have
\begin{equation}
  \label{eq:Emb-S-max}
  |\Emb(J_0, G^*; S)| \le |\Emb(J_0 - R, G^*)| \le (2e_{G^*})^{\alpha_{J_0-R}^*} \le (2m_c)^{\alpha_{J_0-R}^*} \le (2D^2f)^{\alpha_{J_0-R}^*}.
\end{equation}
Now, fix a small $\zeta>0$ and define
\[
  \mathcal{G} = \mathcal{G}(G^*) \coloneqq \bigl\{S \subseteq V(G^*) : |\Emb(J_0,G^*; S)| > f^{\alpha_{J_0- R}^*-\zeta}\bigr\}.
\]
It follows from \cref{claim:petals-disjoint}, \eqref{eq:Emb-SP}, and \eqref{eq:Emb-S-max} that
\begin{equation}
  \label{eq:good-bad-roots}
  |\{P \in \mathcal{P} : S_P \in \mathcal{G}\}| \ge \frac{cf^{\alpha_J^*} - e_G \cdot f^{\alpha_{J_0-R}^*-\zeta}}{(2D^2f)^{\alpha_{J_0-R}^*}}.
\end{equation}
Now, recalling that $p = x \cdot p_H(\log n)^{C_H}$, as in the proof of \cref{lem:seeds-contains-a-small-tight-core-2}, we have
\[
  f = n(np^\Delta)^r = x^{\Delta r} \cdot (\log n)^{C_H \Delta r} \ge x^{\Delta r} \cdot \bigl(\log(1/p)/2\bigr)^{C_H \Delta r}
\]
and thus, recalling the definition of $C_H$,
\begin{equation}
  \label{eq:f-log-p-f}
  e_G \le m_1 \le D^2 f\log(1/p) \le D^2f \cdot x^{-1/C_H} f^{1/(C_H \Delta r)} = D^2x^{-1/C_H} \cdot f^{\alpha_J^* - \alpha_{J_0-R}^*}.
\end{equation}
Since we have assumed that $x \ge (\log \log n)^{-C_H}$, we may substitute the above estimate into~\eqref{eq:good-bad-roots} and conclude that, for some positive $c'$ that depends only on $c$ and $D$,
\begin{equation}
  \label{eq:good-roots}
  |\{P \in \mathcal{P} : S_P \in \mathcal{G}\}| \ge c' f^{\alpha_J^* - \alpha_{J_0-R}^*}.
\end{equation}

To summarise, the event $\mathcal{E}$ implies the following event $\mathcal{E}'$:
For some optimal pair $(J_0, J)$, the random graph $G_{n,p}$ contains a tight core $G^*$ with at most $m_c$ edges and a collection $\mathcal{P}$ of at least $T \coloneqq c' f^{\alpha_J^* - \alpha_{J_0-R}^*}$ edge-disjoint copies of $J \setminus J_0$ rooted at the sets in $\mathcal{G} = \mathcal{G}(G^*)$, as in \eqref{eq:good-roots}.
In particular, recalling that $e_{G^*} \le m_c \le D^2f$,
\begin{equation}
  \label{eq:PrE-non-clean}
  \Pr(\mathcal{E}) \le \Pr(\mathcal{E}')
  \le \binom{n^2}{D^2f} \cdot \max_{G^* : e_{G^*} \le D^2f}  \binom{|\mathcal{G}(G^*)| \cdot n^{v_J-v_{J_0}}}{T} \cdot p^{(e_J-e_{J_0}) \cdot T}.
\end{equation}
In order to estimate the right-hand side of~\eqref{eq:PrE-non-clean}, we will need a nontrivial upper bound on $|\mathcal{G}(G^*)|$.
This is where we crucially use the assumption that $H$ is not clean.
  
\begin{claim}
  \label{claim:entropy-of-roots}
  If $e_{G^*} \le D^2f$, then $|\mathcal{G}(G^*)| \le f^{\alpha_{J_0}^*-\alpha_{J_0-R}^*-\zeta}$, given that $\zeta > 0$ is sufficiently small.
\end{claim}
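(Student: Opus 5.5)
We bound $|\mathcal{G}(G^*)|$ with an entropy argument in the spirit of the proof of \Cref{thm:max-copies}. Let $W \coloneqq V(J_0)\setminus R$. For each $S \in \mathcal{G}$, the definition of $\mathcal{G}$ gives more than $f^{\alpha^*_{J_0-R}-\zeta}$ embeddings of $J_0$ into $G^*$ mapping $R$ onto $S$. Choose $S\in\mathcal{G}$ uniformly at random, and then choose $\varphi$ uniformly from $\Emb(J_0,G^*;S)$. We obtain a random embedding $\varphi$ of $J_0$ into $G^*$ with
\[
  \ent(\varphi) \ge \ent(\varphi(R)) \ge \log|\mathcal{G}| - O(1),
\]
where the $O(1)$ accounts for orderings of $R$. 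Moreover, $\ent(\varphi \mid \varphi(R)) \ge (\alpha^*_{J_0-R}-\zeta)\log f$.

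We then bound $\ent(\varphi)$ from above by a vertex-by-vertex chain-rule decomposition. Set $x_v \coloneqq \ent(\varphi(v)\mid \text{earlier vertices})/\log f$ along a suitable ordering. Since $e_{G^*}\le D^2f$ and $f\le n$, the argument of \Cref{thm:max-copies} gives $x_u+x_v \le 1+o(1)$ for every edge $uv\in J_0$ and $x_v\le \log n/\log f$. With the edge constraints, this makes $x$ essentially a fractional independent set of $J_0$, so $\ent(\varphi)\le (\alpha^*_{J_0}+o(1))\log f$. This alone yields only $|\mathcal{G}|\le f^{\alpha^*_{J_0}-\alpha^*_{J_0-R}+\zeta+o(1)}$. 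To gain the extra $f^{-2\zeta}$, we must show that no fractional independent set $a$ of $J_0$ with weight close to $\alpha^*_{J_0}$ can simultaneously put weight close to $|R|$ on $R$, namely $a(R)\approx |R|$, and weight close to $\alpha^*_{J_0-R}$ on $W$.

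The key combinatorial input is the following. Since $(J_0,J)$ is optimal with $\dim_{J_0}J=1$, the fan decomposition is trivial: $d=1$ and $J_1=J$. Hence conditions (1) and (2) in \Cref{def:clean-graphs} reduce to $a(R)=|R|$, and condition (3) reads $a(W)=\alpha^*_{W}$. Therefore, because $H$ is not clean, every fractional independent set $a$ of $J$ (equivalently of $J_0$, as the roots' extra edges only go outside $J_0$) satisfies $a(R)<|R|$ or $a(W)<\alpha^*_{J_0-R}$. By compactness of the polytope of fractional independent sets and half-integrality (\Cref{lemma:fractional-duality}), this gives a uniform gap
\[
  \max_a \bigl(a(R)-|R|\bigr)+\bigl(a(W)-\alpha^*_{J_0-R}\bigr) \le -\kappa
\]
for some $\kappa>0$ depending only on $H$.

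The main obstacle is converting this LP gap into an entropy statement. The joint entropy splits as $\ent(\varphi(R))+\ent(\varphi(W)\mid\varphi(R))$. We would apply the chain rule with the vertices of $R$ first. Write $x_R \coloneqq \ent(\varphi(R))/\log f$, which is at most $\sum_{v\in R} x_v$, with each $x_v\le 1+o(1)$ since $g=f$ bounds the relevant vertex sets. We must check whether $x_v\le 1$ genuinely holds for vertices of $G^*$. Vertices of degree at least one in $G^*$ number at most $2D^2f$, so $x_v\le 1+o(1)$ does hold. This makes $(x_v)$, after normalisation, a feasible point of the polytope, and the gap gives
\[
  \log|\mathcal{G}| + (\alpha^*_{J_0-R}-\zeta)\log f \le \ent(\varphi) \le \bigl(|R|+\alpha^*_{J_0-R}-\kappa+o(1)\bigr)\log f.
\]
Since the paper's normalisation has $\alpha^*_{J_0}=|R|+\alpha^*_{J_0-R}$ in the clean extremal case, choosing $\zeta<\kappa/3$ yields $|\mathcal{G}|\le f^{\alpha^*_{J_0}-\alpha^*_{J_0-R}-\zeta}$. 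Care will be needed when $\alpha^*_{J_0}<|R|+\alpha^*_{J_0-R}$: in that case we would use the bound $\alpha^*_{J_0}$ directly instead, which already yields a gap.
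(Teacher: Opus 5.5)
\textbf{There is a genuine gap in your final step.} Since $a(R)+a(W)=a(V(J_0))$, and the maximum of $a(V(J_0))$ over fractional independent sets is $\alpha^*_{J_0}$, the optimal $\kappa$ in your uniform gap is exactly $|R|+\alpha^*_{J_0-R}-\alpha^*_{J_0}$. So your bound $\ent(\varphi)\le(|R|+\alpha^*_{J_0-R}-\kappa+o(1))\log f$ is just the trivial bound $\ent(\varphi)\le(\alpha^*_{J_0}+o(1))\log f$. As you note yourself, that only gives $|\mathcal{G}|\le f^{\alpha^*_{J_0}-\alpha^*_{J_0-R}+\zeta+o(1)}$.

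The identity $\alpha^*_{J_0}=|R|+\alpha^*_{J_0-R}$ that you then use is incompatible with $\kappa>0$. In fact, for a one-dimensional graph that is not clean, every optimal pair satisfies $\alpha^*_{J_0}<|R|+\alpha^*_{J_0-R}$: if equality held, $\1_{B_0}$ extended by zero would witness cleanness. So the case you set aside is the only case, and falling back on $\alpha^*_{J_0}$ gives no saving at all.

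Your reduction in the second paragraph is also misstated. You do not know a priori that $a(R)\approx|R|$; an upper bound on the weight on $R$ is what you are after. What must be excluded are fractional independent sets with $a(V(J_0))\approx\alpha^*_{J_0}$ and $a(W)\approx\alpha^*_{J_0-R}$, with no condition on $a(R)$.

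\textbf{The missing ingredient is \cref{lem:unique-frac-ind-set}.} The characteristic function of $B_0$ is the \emph{unique} maximum fractional independent set of $J_0$. Moreover $R\subseteq B_0$, because vertices of $A_0$ already have degree $\Delta$ in $J_0$. Hence the only fractional independent set of weight $\alpha^*_{J_0}$ puts weight $|B_0\setminus R|=\alpha^*_{J_0}-|R|$ on $W$. Non-cleanness forces this to be strictly less than $\alpha^*_{J_0-R}$; here $d=1$, and extending by zero outside $V(J_0)$ is legitimate because $J_0$ is induced in $J$. Compactness then gives: for small $\zeta$, any fractional independent set $\xi$ of $J_0$ with $\xi(W)\ge\alpha^*_{J_0-R}-2\zeta$ has $\xi(V(J_0))\le\alpha^*_{J_0}-4\zeta$.

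Now apply this to $\xi\coloneqq x/\log(2e_{G^*})$, with $x$ the chain-rule entropies and $R$ ordered first. This $\xi$ is exactly a fractional independent set, since $v_{G^*}\le 2e_{G^*}$. Work with the family of $S$ having more than $(2e_{G^*})^{\alpha^*_{J_0-R}-2\zeta}$ embeddings, which contains $\mathcal{G}$. The conditional-entropy lower bound gives $\xi(W)\ge\alpha^*_{J_0-R}-2\zeta$ up to $O(1/\log e_{G^*})$. You then obtain $\log|\mathcal{G}|\le \ent(\varphi(R))=(\xi(V(J_0))-\xi(W))\log(2e_{G^*})\le(\alpha^*_{J_0}-\alpha^*_{J_0-R}-2\zeta)\log(2e_{G^*})+O(1)$, which is the paper's argument.

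The essential point is that the lower bound on the entropy of $\varphi(W)$ must be fed back into the upper bound on the total entropy. Bounding the total by the unconstrained maximum $\alpha^*_{J_0}$, as you do, discards exactly the information that produces the $f^{-2\zeta}$ saving. Without uniqueness of the maximiser, a maximum fractional independent set that happened to be optimal on $W$ would not contradict non-cleanness.
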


\begin{proof}
  Suppose that $m \coloneqq e_{G^*} \le D^2f$ and note that it suffices to show that the set
  \[
    \mathcal{G}' \coloneqq \bigl\{S \subseteq V(G^*) : |\Emb(J_0,G^*;S)| > (2m)^{\alpha_{J_0- R}^*-2\zeta}\bigr\}
  \]
  has at most $(2m)^{\alpha_{J_0}^* - \alpha_{J_0-R}^* - 2\zeta}$ elements.
  Let $\preceq$ be an arbitrary linear order on $V(J_0)$ such that $R \prec V(J_0) \setminus R$ and consider a random $\varphi \in \Emb(J_0, G^*)$ chosen in the following way:
  First, choose an $S \in \mathcal{G}'$ uniformly at random.
  Second, choose a uniformly random embedding of $J_0$ in $G^*$ that maps $R$ to $S$.
  Define $x \colon V(J_0) \to [0,\infty)$ by setting $x_v \coloneqq \ent\bigl(\varphi(v) \mid (\varphi(w) : w \prec v)\bigr)$.
  As in the proof of~\Cref{thm:max-copies}, for every $vw \in E(J_0)$ with $v \prec w$,
  \[
    x_v + x_w \le \ent\bigl(\varphi(v)\bigr) + \ent\bigl(\varphi(w) \mid \varphi(v)\bigr) = \ent\bigl(\varphi(v), \varphi(w)\bigr) \le \log (2m).
  \]
  By the definition of $\mathcal{G}'$ and the choice of $\varphi$, we have
  \[
    x(R) = \ent\bigl((\varphi(v) : v \in R)\bigr) = \log |\mathcal{G}'|
  \]
  and
  \[
    x(V(J_0)\setminus R) = \ent\bigl((\varphi(v) : v \in V(J_0) \setminus R) \mid (\varphi(w) : w \in R)\bigr) > (\alpha_{J_0- R}^*-2\zeta) \cdot \log(2m).
  \]
  Consequently,
  \[
    \log |\mathcal{G}'| = x(V(J_0)) - x(V(J_0) \setminus R) < x(V(J_0)) - (\alpha_{J_0-R}^* - 2\zeta) \cdot \log(2m).
  \]
  It thus suffices to show that $x(V(J_0)) \le (\alpha_{J_0}^* - 4\zeta) \cdot \log(2m)$.
  
  Suppose that this were not true for any $\zeta > 0$.
  Since the function $x / \log(2m)$ is a fractional independent set of $J_0$ and the space of all fractional independent sets is compact, by continuity, we would be able to find a fractional independent set $\alpha$ that simultaneously satisfies $\alpha(V(J_0)) = \alpha_{J_0}^*$ and $\alpha(V(J_0) \setminus R) = \alpha_{J_0 - R}^*$.
  \Cref{lem:unique-frac-ind-set} would then imply that $\alpha$ is the characteristic function of $B_0$ and thus $\alpha(R) = |R \cap B_0| = |R|$, contradicting the assumption that $H$ is not clean, see \cref{def:clean-graphs}.
\end{proof}

By~\eqref{eq:PrE-non-clean} and~\cref{claim:entropy-of-roots}, and using the fact that $n^{v_J-v_{J_0}}p^{e_J-e_{J_0}} = f^{\alpha_J^*-\alpha_{J_0}^*}$, see~\cref{fact:f-p-pH},
\[
  \begin{split}
    \Pr(\mathcal{E})
    & \le \exp\left(2D^2f \log n + c'f^{\alpha_J^*-\alpha_{J_0-R}^*} \cdot \log\bigl(e/c' \cdot f^{-\zeta}\bigr)\right) \\
    & \le \exp\left(2\Delta D^2f \log(1/p) - c'\zeta/2 \cdot f^{\alpha_J^*-\alpha_{J_0-R}^*}\log f\right)
  \end{split}
\]
Finally, as $f \log(1/p) \le x^{-1/C_H} \cdot f^{\alpha_J^*-\alpha_{J_0-R}^*}$, cf.~\eqref{eq:f-log-p-f}, if $x \ge L (\log \log n)^{-C_H} \ge L (\log f)^{-C_H}$ for a sufficiently large constant $L = L(\Delta, c', D, \zeta)$, we may conclude that
\[
  \Pr(\mathcal{E}) \le \exp\left(-Df \log (1/p)\right) \ll e^{-\psi_H^*(\delta)},
\]
by \Cref{lem:psi--psi*-lower} and assuming that $D$ is sufficiently large, cf.~\eqref{eq:too-large-of-a-tightcore}.
Substituting this estimate into~\eqref{eq:UT-stable-bound-by-cores} yields
\[
  \Pr(\UT_{H,\delta}) \le \exp\bigl(-(1-2\varepsilon) \cdot \psi_H^*(\delta-3 \varepsilon)\bigr) + o\bigl(e^{-\psi_H^*(\delta)}\bigr),
\]
as in the previous case.

\section{Lower bounds} \label{sec:lower-bounds}

In this section, we prove the lower bound on the probability of $\UT_{H,\delta}$ that is implicit in the assertion of \cref{thm:main-dense-ish}.
More precisely, we shall prove the following statement.

\begin{theorem}\label{thm:lower-bound}
  For every irregular, connected graph $H$ with maximum degree $\Delta \ge 2$ and all $\delta, \eps >0$, there is a constant $K$ such that the following holds.
  Suppose that either $p_H \cdot (\log n)^K \ll p \ll 1$ or $p_H \cdot (\log \log n)^K \ll p \ll 1$ and $H$ is stable.
  Then,
  \[
    \log \Pr(X\geq(1+\delta)\Ex[X]) \ge - (1+\eps)\cdot \psi_H^*(\delta+\eps),
  \]
  provided that $n$ is sufficiently large.
\end{theorem}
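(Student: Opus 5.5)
We prove the bound by conditioning on the appearance of \emph{some} copy of a nearly optimal hub-core, as sketched in \cref{sec:outline-lower-bounds}. Fix a hub-core $G \in \cL \cap \cS_{\delta+\eps}$ with $e_G \le f(\log(1/p))^2$ that attains $\psi_H^*(\delta+\eps)$. Let $\cA$ be the event that $G_{n,p}$ contains a copy $G'$ of $G$ from a suitable family $\mathcal{F}$ of isomorphic copies. Our goal is the chain
\[
  \Pr(\UT_{H,\delta}) \ge \Pr(\cA)\cdot \min_{\cA'} \Pr(\UT_{H,\delta}\mid \cA') ,
\]
where $\cA'$ ranges over events of the form $\{G'\subseteq G_{n,p}\}$ intersected with ``$G'$ is the first copy in a fixed order''. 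The second factor should be at least $\tfrac{\eps}{2} p^{e_H}$ by \cref{lem:lower}, once we check that the conditional expectation of $X$ given $\cA'$ is still at least $(1+\delta+\eps/2)\Ex[X]$. To verify this we intend to use Harris' inequality, or simply the fact that conditioning on a decreasing event about edges outside $G'$ changes $\Ex[X]$ by a factor $1-o(1)$. Indeed, the copies of $H$ that are counted are determined mostly by edges of $G'$ and edges outside $\mathcal{F}$'s union, and \cref{cor:difference-in-expectations} is insensitive to removing $o(1)$-probability edges.

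For the first factor, let $\mathcal{F}$ be the family of copies $\varphi(G)$ where $\varphi$ ranges over injections of $V(G)$ into a fixed partition class structure. A convenient choice is to split $\br{n}$ into $v_G$ blocks, or to take all embeddings, and to apply the second moment (Paley--Zygmund / Janson-type) inequality to $Y = \sum_{G'\in\mathcal{F}} \1[G'\subseteq G_{n,p}]$:
\[
  \Pr(Y>0)\ge \frac{\Ex[Y]^2}{\Ex[Y^2]}.
\]
We have $\Ex[Y] = |\mathcal{F}| p^{e_G}$ with $\log|\mathcal{F}| \ge \log\binom{n}{v_G} - O(v_G)$. Since hub-cores satisfy $v_G\lesssim \min\{n,2e_G\}$ and $v_G\log(en/v_G)\ge v_G\log(en/g)-O(g)$, and $g\le f = O(e_G)$ is lower order only after dividing by $\log(1/p)$, we expect $\log\Ex[Y] \ge -(e_G\log(1/p)-v_G\log(en/g)) - O(e_G)$. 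The term $O(e_G)$ is $o(\psi^*_H)$ thanks to \cref{lem:weak-LB-psi-*}: $\psi^*_H$ is at least $c\,e_G\min\{\log L,\log(1/p)\}$ with $L=p/p_H\ge(\log n)^K$, or at least $c\,e_G\log(1/p)$ in the stable case.

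\textbf{The main obstacle} is the second moment. Here $\Ex[Y^2]/\Ex[Y]^2 = \sum_{G''\cap G'\ne\emptyset}p^{-e(G'\cap G'')}/|\mathcal{F}|\cdot(\ldots)$, and we need this ratio to be $e^{o(\psi^*)}$ rather than $O(1)$. We plan to bound it by summing over the overlap $F=G'\cap G''$. The number of $G''$ sharing a given $F$ is at most $|\mathcal{F}|$ times $n^{-v(F)}$ times the number of ways to place $F$ inside $G$. The latter is controlled by the small vertex cover: a subgraph of $G$ with $k$ edges is determined up to $\exp(O(k\log\log(1/p)) + \ldots)$ choices, exactly as in \cref{lem:core-count}. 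Each overlap thus contributes at most $\exp\bigl(e(F)\log(1/p)-v(F)\log(en/g)+O(e(F)\log\log(1/p))\bigr)$.

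If this is not small, it means $F$ is itself ``cheap''. To handle that case, we would replace $G$ by a minimiser that is robust, trimming as in \cref{lem:cores} so that every subgraph $F$ of $G$ satisfies a lower bound of the form in \cref{lem:weak-LB-psi-*}, applied to $F$ in the place of $G$ via its degree/vertex-cover structure. Alternatively, if the second moment fails, we would restrict $\mathcal{F}$ to copies whose vertex-cover image is fixed and whose remaining vertices lie in disjoint blocks. This keeps the entropy $v_G\log(en/g)$ up to $O(v_G)$, while making overlaps only possible through the $O(e_G/\omega)$ cover vertices, whose cost is negligible. Combining the pieces gives $\log\Pr(\UT_{H,\delta})\ge -(1+\eps)\psi_H^*(\delta+\eps)$.
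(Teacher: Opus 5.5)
There is a genuine gap. It sits at the step you call the main obstacle, and the way you set that step up cannot work. Since $\Ex[Y^2]\ge\Ex[Y]$ (the diagonal terms $G''=G'$), the ratio $\Ex[Y^2]/\Ex[Y]^2$ is at least $1/\Ex[Y]$. For a family of distinct copies of $G$, \cref{lem:core-count,lem:weak-LB-psi-*} give $\Ex[Y]\le e^{-\psi_H^*/2}$, say. So this ratio can never be $e^{o(\psi_H^*)}$; if it were, Paley--Zygmund would give $\Pr(Y>0)\ge e^{-o(\psi_H^*)}$. What you actually need is $\Ex[Y^2]\le e^{o(\psi_H^*)}\,\Ex[Y]$, i.e.\ given $G'\subseteq G_{n,p}$, the expected number $\sum_{G''}p^{e(G''\setminus G')}$ of copies present must be $e^{o(\psi_H^*)}$. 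In your overlap bookkeeping, the term $F=G$ is exactly this $e^{\psi_H^*}$ diagonal benchmark. The danger is therefore an overlap $F$ from which completing to a copy of $G$ is \emph{profitable}, not an $F$ that is cheap. Trimming $G$ so that its subgraphs obey lower bounds as in \cref{lem:weak-LB-psi-*} goes in the wrong direction, and it would also change the minimiser.

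Separately, your passage from $\{Y>0\}$ to $\UT_{H,\delta}$ is unjustified. Given $G'\subseteq G_{n,p}$, ``no earlier copy'' is a decreasing event and $X$ is increasing. Harris therefore gives $\Ex[X\mid\cA']\le\Ex_{G'}[X]$, which is the wrong inequality, and there is no general $1-o(1)$ principle to fall back on. This part is easy to repair by Cauchy--Schwarz: $\Pr(\UT_{H,\delta})\ge\Ex[Y\1_{\UT_{H,\delta}}]^2/\Ex[Y^2]$, with $\Ex[Y\1_{\UT_{H,\delta}}]\ge\eps p^{e_H}\Ex[Y]$ by \cref{lem:lower}.

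The idea that actually controls the multiplicity is missing from your proposal. It is the minimum-degree condition $\delta(G)\ge d_H$ built into \cref{dfn:hub-core} for $p\le n^{-1/\Delta}(\log n)^{-\omega}$, combined with \cref{lem:f-npd}, which gives $np^{d_H}\ll f=O(e_G)$. Your claim that, once the cover image is fixed, copies overlap ``only through the cover vertices'' is false. Two copies share all $\deg_G v\ge d_H$ edges at a non-cover vertex $v$ whenever they place $v$ at the same vertex, and given one copy, $v$ has about $|B_v|p^{\deg_G v}$ alternative positions. Map a smallest cover $D_G$ (of size $\nu_G$) to a fixed set and confine the non-cover vertices to disjoint blocks $B_v$ outside it. Then
\[
  \Ex[Y\mid G'\subseteq G_{n,p}]=\prod_{v\in V(G)\setminus D_G}\bigl(1+(|B_v|-1)\,p^{\deg_G v}\bigr)\le\exp\bigl(np^{d_H}\bigr)=e^{o(f)}.
\]
Together with the Cauchy--Schwarz step, this would complete your route, at an entropy loss relative to $\log\binom{n}{v_G}$ of $O(v_G)+\nu_G\log n$.

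The paper achieves the same more directly. It fixes the cover at $\br{\nu_G}$ and lets $\mathcal{T}$ be the event that all but $v_G+f$ vertices have fewer than $d_H$ neighbours in $\br{\nu_G}$. This event fails with conditional probability $\ll p^{e_H}$, again by \cref{lem:f-npd}. On $\mathcal{T}$, at most $M\le 2^{v_G+f}$ of the copies $\varphi_A(G)$ can be present, since every vertex of $A$ needs $d_H$ neighbours in $\br{\nu_G}$. The paper then averages, $\sum_A\Pr(\mathcal{T}\cap\{\varphi_A(G)\subseteq G_{n,p}\}\cap\UT_{H,\delta})\le M\Pr(\UT_{H,\delta})$, which handles $\UT_{H,\delta}$ at the same time. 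For $p>n^{-1/\Delta}(\log n)^{-\omega}$ there is no degree condition, but there $v_G\log(en/g)=o(\psi_H^*)$ and planting a single copy suffices.

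Finally, the loss $\nu_G\log n$ is not made negligible by the bound $\nu_G\le e_G/\omega$ you quote. That bound only gives $e_G\log n/\omega\gtrsim f\log n/\log\log n$. In the non-stable case this can exceed $\psi_H^*$ itself: in the example of \cref{sec:naive-mean-field}, $\psi_H^*=O(f(\log n)^{3/5}(\log L)^{2/5})$. You need the polylogarithmic bound on $\nu_G$ from \cref{dfn:hub-core}, together with $f\ge p/p_H\ge(\log n)^K$.
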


\begin{proof}
  Let $G \in \cL$ be a graph achieving the minimum in the definition of $\psi_H^*(\delta + \eps)$, that is,
  \[
    \psi_H^*(\delta+\varepsilon) = e_G \log(1/p) - v_G \log(en/g).
  \]
  Since $G$ is a $(\delta+\varepsilon)$-seed, \Cref{lem:psi-lower} implies that, for some positive constant $c$,
  \begin{equation}
    \label{eq:lower-bound-eG}
    e_G \ge \psi_H(\delta+\varepsilon) / \log(1/p) \ge cf .
  \end{equation}

  Let $D_G \subseteq V(G)$ be a smallest vertex cover, let $\nu_G \coloneqq |D_G|$, set $\mathcal{A} \coloneqq \binom{\br{n} \setminus \br{\nu_G}}{v_G - \nu_G}$, and note that
  \[
    |\mathcal{A}| = \binom{n-\nu_G}{v_G-\nu_G} \ge \left(\frac{n-\nu_G}{v_G-\nu_G}\right)^{v_G-\nu_G} \ge \left(\frac{n}{v_G}\right)^{v_G-\nu_G}.
  \]
  For every $A \in \mathcal{A}$, let $\varphi_A \colon V(G) \to \br{n}$ be an arbitrarily chosen embedding of $G$ into $K_n$ that maps $D_G$ onto $\br{\nu_G}$ and $V(G) \setminus D_G$ onto $A$.
  The crucial observation is that, for any event $\mathcal{T}$,
  \begin{equation}
    \label{eq:Pr-UT-lower-M}
    \sum_{A \in \mathcal{A}} \Pr\bigl(\mathcal{T} \cap \{\varphi_A(G) \subseteq G_{n,p}\} \cap \UT_{H,\delta}\bigr) \le M \cdot \Pr(\UT_{H, \delta}),
  \end{equation}
  where
  \[
    M = M(\mathcal{T}) \coloneqq \max\bigl\{|\{A \in \mathcal{A} : \varphi_A(G) \subseteq \Gamma\}| : \Gamma \in \mathcal{T}\bigr\}.
  \]
  Further, for every $A \in \mathcal{A}$, we have
  \begin{multline}
    \label{eq:PrT-EmbA-UT-lower}
    \Pr\bigl(\mathcal{T} \cap \{\varphi_A(G) \subseteq G_{n,p}\} \cap \UT_{H,\delta}\bigr) \\
    \ge \Pr(\varphi_A(G) \subseteq G_{n,p}) \cdot \bigl(\Pr(\UT_{H,\delta} \mid \varphi_A(G) \subseteq G_{n,p}) - \Pr(\mathcal{T}^c \mid \varphi_A(G) \subseteq G_{n,p})\bigr),
  \end{multline}
  where, by \Cref{lem:lower},
  \begin{equation}
    \label{eq:PrT-EmbA-UT-lower-details}
    \Pr(\varphi_A(G) \subseteq G_{n,p}) = p^{e_G}
    \qquad
    \text{and}
    \qquad
    \Pr(\UT_{H,\delta} \mid \varphi_A(G) \subseteq G_{n,p}) \ge \eps p^{e_H}.
  \end{equation}
  To utilise these estimates, we require an appropriate choice of the event $\mathcal{T}$ that leads to good upper bounds on both $M$ and on the conditional probability of $\mathcal{T}^c$.
  We consider two cases, depending on whether or not  $p \le n^{-1/\Delta} (\log n)^{-\omega}$.

  \medskip
  \noindent
  \textit{Case 1:}  $p > n^{-1/\Delta} (\log n)^{-\omega}$. \\
  We let $\mathcal{T}$ be the sure event, so that $\Pr(\mathcal{T}^c) = 0$, and observe the trivial bound
  \[
    M \le |\mathcal{A}| \le \binom{n}{v_G} \le \left(\frac{en}{v_G}\right)^{v_G} \le \left(\frac{en}{\min\{n,2e_G\}}\right)^{2e_G},
  \]
  where the final inequality follows from the trivial inequality $v_G \leq 2e_G$ and the fact that $(en/x)^x$ is increasing on $[0,n]$.
  By~\eqref{eq:lower-bound-eG} and the case assumption,
  \[
    e_G \ge cf \ge c n \cdot \min\bigl\{1, (np^\Delta)^r\bigr\} \ge cn \cdot \bigl(\Delta \log(1/p)\bigr)^{-\Delta r \omega}.
  \]
  Consequently, for some constant $C'$,
  \[
    \log M \le C' \omega e_G \cdot \log \log(1/p) \ll e_G \log(1/p) - v_G \log(en/g) = \psi_H^*(\delta + \varepsilon),
  \]  
  where the final inequality is due to \Cref{lem:weak-LB-psi-*} and the fact that $p_H \le n^{-1/\Delta - \Omega(1)}$.

  \medskip
  \noindent
  \textit{Case 2:} $p \le n^{-1/\Delta} (\log n)^{-\omega}$. \\
  We let $\mathcal{T}$ be the event that all but at most $v_G + f$ vertices of $G_{n,p}$ have fewer than $d_H$ neighbours in $\br{\nu_G}$.
  Since $G$ is a hub-core, each vertex of $G$ outside of $D_G$ has at least $\delta(G) \ge d_H$ neighbours in $D_G$.
  Therefore, for every $\Gamma \subseteq K_n$,
  \[
    \log_2 M \le \max_{\Gamma \in \mathcal{T}} |\{v \in \br{n} \setminus \br{\nu_G} : \deg_\Gamma(v, \br{\nu_G}) \ge d_H\}| \le v_G + f \le (2+1/c) \cdot e_G,
  \]
  where the last inequality is~\eqref{eq:lower-bound-eG}.  By \Cref{lem:weak-LB-psi-*}, we again have $\log M \ll \psi_H^*(\delta + \varepsilon)$.
  Further, recall from \cref{dfn:hub-core} that
  \[
    \nu_G \le \bigl(\log \log (1/p)\bigr)^3 \cdot \max\{1, (e_G/f)^{v_H}\} \cdot \max\{1, e_G/n\} \le (\log n)^{2v_H+3},
  \]
  since $e_G \le f \cdot (\log(1/p))^2 \ll n$ and $p \ge p_H \ge n^{-2}$.
  
  \begin{claim}
    For every $A \in \mathcal{A}$, we have $\Pr(\mathcal{T}^c \mid \varphi_A(G) \subseteq G_{n,p}) \ll p^{e_H}$.
  \end{claim}
  \begin{proof}
    For every vertex $v \in \br{n} \setminus \varphi_A(V(G))$, denote by $\mathcal{B}_v$ the event that $v$ has at least $d_H$ neighbours in $\br{\nu_G}$.
    Observe that, for some positive constant $C = C(H)$,
    \[
      \Pr(\mathcal{B}_v \mid \varphi_A(G) \subseteq G_{n,p}) = \Pr(\mathcal{B}_v) \le \binom{\nu_G}{d_H} p^{d_H} \le (e\nu_Gp)^{d_H} \le p^{d_H} \cdot (\log n)^C
    \]
    and that these events are independent, also after we condition on the event $\{\varphi_A(G) \subseteq G_{n,p}\}$.
    It follows that
    \[
      \begin{split}
        \Pr(\mathcal{T}^c \mid \varphi_A(G) \subseteq G_{n,p})
        & \le \Pr( \mathcal{B}_v \text{ for at least $f$ vertices $v \notin \varphi_A(V(G))$} \mid \varphi_A(G) \subseteq G_{n,p}) \\
        & \le \binom{n}{f} \cdot \Pr(\mathcal{B}_v)^f \le \left(\frac{enp^{d_H}}{f} \cdot (\log n)^C\right)^f \eqqcolon U^f.
      \end{split}
    \]
    If $p \gg p_H \cdot (\log n)^K$, then $U \ll 1$ and $f \geq p/p_H \gg \log n$, by \Cref{lem:f-npd} and~\cref{fact:f-p-pH}, respectively. If we instead assume that $p \gg p_H \cdot (\log \log n)^K$ and $H$ is stable, the same pair of observations imply that  $U \le n^{-\Omega(1)}$ and $f \ge p/p_H \gg 1$.
    In both cases, $U^f \ll p^{e_H}$, concluding the proof of the claim.
  \end{proof}

  To summarise, in both cases we have
  \[
    \log M \ll \psi_H^*(\delta+\varepsilon)
    \qquad
    \text{and}
    \qquad
    \Pr(\mathcal{T}^c \mid \varphi_A(G) \subseteq G_{n,p}) \ll p^{e_H}
  \]
  for every $A \in \mathcal{A}$.
  We further claim that $\log(1/p) \ll \psi_H^*(\delta+\varepsilon)$.
  Indeed, by \cref{lem:weak-LB-psi-*} and our assumption on $p$, either $\psi_H^*(\delta+\varepsilon) \gg f \gg \log(1/p)$ or $H$ is stable and $\psi_H^*(\delta+\varepsilon) = \Omega(f \log(1/p)) \gg \log(1/p)$.
  Substituting \eqref{eq:PrT-EmbA-UT-lower-details} into \eqref{eq:PrT-EmbA-UT-lower} and then into \eqref{eq:Pr-UT-lower-M}, we obtain
  \begin{equation}
    \label{eq:PrUT-lower-with-errors}
    \begin{split}
      \log \Pr(\UT_{H,\delta})
      & \ge - e_G \log(1/p) + \log |\mathcal{A}| - \log M - \log(2/\varepsilon) - e_H \log(1/p) \\
      & \ge - e_G \log(1/p) + (v_G - \nu_G) \cdot \log (n/v_G) - o\bigl(\psi_H^*(\delta+\varepsilon)\bigr) \\
      & = - (1+o(1)) \cdot \psi_H^*(\delta+\varepsilon) - v_G \log(ev_G/g) - \nu_G \log(n/v_G).
    \end{split}
  \end{equation}
  Finally, we bound the two error terms in the right-hand side of~\eqref{eq:PrUT-lower-with-errors}.

  \medskip
  \noindent
  \textit{Case 1:} $p \ge p_H \cdot (\log n)^K$.\\
  \Cref{lem:weak-LB-psi-*,lem:weak_estimation_for_Phi} imply that, for some constant $D$,
  \begin{equation}
    \label{eq:Pr-LB-eG-upper}
    (K/D) \cdot e_G \log\log(1/p) \le \psi_H^*(\delta+\varepsilon) \le D f \log(1/p),
   \end{equation}
   and thus, recalling that $g = \min\{f, n\}$,
  \[
    v_G \log(ev_G/g) \le 2e_G \cdot \max\{1, \log(6e_G / f)\} \le 3e_G\log\log(1/p) \le (\varepsilon/2) \cdot \psi_H^*(\delta+\varepsilon),
  \]
  since $K$ is sufficiently large.
  Further, since $G \in \mathcal{L}$,
  \[
    \nu_G \le \bigl(\log \log (1/p)\bigr)^3 \cdot \max\{1, (e_G/f)^{v_H}\} \cdot \max\{1, e_G/n\} \le (\log n)^{2v_H+3},
  \]
  where the last inequality follows as $e_G \le f (\log(1/p))^2 \ll n$ and $p \ge p_H \ge n^{-2}$.
  Since $f \ge p/p_H \ge (\log n)^K$, by~\cref{fact:f-p-pH}, we conclude that $\nu_G \cdot \log (n/v_G) \ll cf \le e_G \ll \psi_H^*(\delta+\varepsilon)$, see~\eqref{eq:lower-bound-eG}.

  \medskip
  \noindent
  \textit{Case 2:} $p \ge p_H \cdot (\log \log n)^K$ and that $H$ is stable.\\
  In this case, \Cref{lem:weak-LB-psi-*,lem:weak_estimation_for_Phi} imply that, for some constant $D$,
  \begin{equation}
    \label{eq:Pr-LB-eG-upper-stable}
    D^{-1} \cdot e_G \log(1/p) \le \psi_H^*(\delta+\varepsilon) \le D f \log(1/p) \ll n,
  \end{equation}
  and thus, recalling that $g = \min\{f, n\}$,
  \[
    v_G \log(ev_G/g) \le 2e_G \cdot \max\{1, \log(6e_G / f)\} \le 4e_G \log (6D) \ll \psi_H^*(\delta+\varepsilon).
  \]
  Further, since $G \in \mathcal{L}$,
  \[
    \nu_G \le \bigl(\log \log (1/p)\bigr)^3 \cdot \max\{1, (e_G/f)^{v_H}\} \cdot \max\{1, e_G/n\} \le (\log \log n)^4,
  \]
  where the last inequality follows from~\eqref{eq:Pr-LB-eG-upper-stable} and the inequality $p \ge p_H \ge n^{-2}$.
  Since $f \ge p/p_H \ge (\log \log n)^K$, by~\cref{fact:f-p-pH}, we conclude that $\nu_G \cdot \log (n/v_G) \ll f \log(1/p) \le D\psi_H^*(\delta+\varepsilon)$.
\end{proof}

\section{Lower bounds near the appearance threshold}
\label{sec:LB-near-the-appearance-threshold}

In this section, we prove lower bounds on the upper-tail probability that are implicit in \cref{thm:main-near-appearance-threshold}~\ref{item:main-near-appearance-threshold-LB} and \cref{thm:clean}.
More precisely, we shall prove the following theorem, which implies that the approximation $\log \Pr(\UT_{H,\delta}) \approx -\psi_H^*(\delta)$ is invalid, as $\psi_H^*(\delta) = \Theta_\delta(f \log (1/p))$ for all $p \gg p_H$ whenever $H$ is stable, see \cref{lem:weak-LB-psi-*}~\ref{item:weak-LB-psi-*-stable}.

\begin{theorem}
  \label{thm:Poisson-type-lower-bound}
  For every stable graph $H$ and all $\delta, \eps>0$, there is a $C>0$ such that the following holds.
  Suppose that $p = n^{-1/m(H)}(x\log n)^{C_H}$ for some $x \gg 1/\log n$.
  If either $x \leq (C\log\log n)^{-1}$ or $x \le C^{-1}$ and $H$ is clean, then
  \[
    \log \Pr(\UT_{H,\delta}) \ge - \eps f \log (1/p).
  \]
\end{theorem}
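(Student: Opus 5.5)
We follow the two-stage planting strategy outlined in \cref{sec:outline-optimality}. Fix an optimal pair $(J_0,J)$. When $H$ is clean, take the pair, the fractional independent set $a$ and the fan decomposition $J_1,\dotsc,J_d$ supplied by \cref{def:clean-graphs}; otherwise use any fan decomposition from \cref{prop:optimal-pair-decomposition}. Write $R=R_{J_0}(J)$ and $d=\dim_{J_0}J$. Since $p\le p_H n^{o(1)}\le n^{-1/\Delta}$, we have $f=n(np^\Delta)^r$. The choice of $p$ gives $f=(x\log n)^{C_H\Delta r}$. By \cref{fact:f-p-pH}, $n^{v_J}p^{e_J}=f^{\alpha^*_J}$.

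\emph{Stage one.} We plant a blow-up $G^*$ of $J_0$ and pay $p^{e_{G^*}}$. For each vertex $v\in V(J_0)$ we choose a blow-up size $f^{y_v}$, where $y$ is a fractional independent set of $J_0$ with $y(V(J_0)\setminus R)=\alpha^*_{J_0-R}$; in the clean case we take $y=a$ restricted to $V(J_0)$, so that $y_v=1$ on $R$. The number of edges in each edge-blow-up is at most $f^{y_u+y_w}\le f$. We shrink all sizes by a factor $\eta\to0$ so that $e_{G^*}=o(f)$, which costs only $p^{o(f)}$. Each root tuple $S$ (an image of $R$) then extends to roughly $\prod_{v\notin R}f^{y_v}\approx f^{\alpha^*_{J_0-R}}$ copies of $J_0$ in $G^*$.

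\emph{Stage two.} We condition on $G_{n,p}$ containing many copies of $J\setminus J_0$ attached at root tuples of $G^*$. Using the fan decomposition, $J\setminus J_0$ splits into the pieces $J_i\setminus J_0$, which are pairwise edge-disjoint and share only $V(J_0)$. For each $i$ we let $Y_i$ count the copies of $J_i\setminus J_0$ rooted at images of $R_i$ inside $G^*$, with the remaining vertices ranging over $\br{n}$ minus $V(G^*)$. We then demand $Y_i\ge t_i$, choosing the $t_i$ so that, combined with stage one, the total number of copies of $J$ is $\Omega(f^{\alpha^*_J})$. By \cref{cor:difference-in-expectations} and \cref{lem:lower}, this makes the upper-tail event hold with conditional probability at least $\eps p^{e_H}$. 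The events $\{Y_i\ge t_i\}$ are increasing and depend on disjoint edge sets, so their probabilities multiply. We bound each one from below with \cref{lem:Poisson-lemma} and \cref{fact:lower-bound-for-Poisson}, which gives $\log\Pr(Y_i\ge t_i)\ge -O\bigl(t_i\log(t_i/\mu_i)\bigr)-3\log|\mathcal{H}_i|$. Here $\mu_i$ is the mean number of rooted copies, namely the number of root tuples of $G^*$ times $n^{v_{J_i}-v_{J_0}}p^{e_{J_i}-e_{J_0}}=f^{\alpha^*_{J_i}-\alpha^*_{J_0}}$.

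\emph{Bookkeeping and the main obstacle.} The key exponent identity comes from the definition of $C_H$: optimality of the pair gives $\alpha^*_J-\alpha^*_{J_0-R}-d=d/(C_H\Delta r)$. Each of the $d$ Poisson events needs about $t_i\approx f^{1+o(1)}$ extensions. We need the overall cost $\sum_i t_i\log(t_i/\mu_i)$ to be at most $\eps f\log(1/p)$. Since $\log(1/p)\asymp\log n$ and $\log f\asymp\log(x\log n)$, this requires $\log(t_i/\mu_i)$ to be small compared with $\log n$ per unit of $f$. The first regime is $x\le(C\log\log n)^{-1}$. There $f^{1/(C_H\Delta r)}\approx x\log n$, so the required overshoot ratio $t_i/\mu_i$ is only polylogarithmic, and the extra $\log\log n$ room absorbs the loss coming from the fact that a generic $y$ wastes a $\log f$ factor on the roots. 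The second regime is $x\le C^{-1}$ with $H$ clean. There the disjoint root sets $R_i$ with $a(R_i)=|R_i|$ let each $J_i$ be extended independently from full-weight roots, so the overshoot ratio is a constant power and the cost is $O(f)\cdot\log(1/x)$-type, which is at most $\eps f\log(1/p)$ once $C$ is large. I expect the main obstacle to be this parameter optimisation and the verification of the hypotheses of \cref{lem:Poisson-lemma}: we must check $2\mu\le t\le|\mathcal{H}|/2$ and control $\Ex[Y^2]/\mu$ in the clean case, and we must also make sure that overlapping root tuples do not make the $J$-copies overcounted. After that, summing $e_{G^*}\log(1/p)$, the Poisson costs and the $e_H\log(1/p)$ term yields $-\eps f\log(1/p)$.
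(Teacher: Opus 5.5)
Your plan follows the paper's route. You plant a blow-up $G^*$ of $J_0$ along a fractional independent set realising $\alpha^*_{J_0-R}$, with full weight on the roots in the clean case. You then add one Poisson-type event per fan piece and bound each via \cref{lem:Poisson-lemma}. However, the bookkeeping you sketch is wrong, and that is exactly where the hypotheses on $x$ enter. Your own identity $\alpha^*_J-\alpha^*_{J_0-R}=d\,(1+1/(C_H\Delta r))$ forces $\prod_i t_i\gtrsim f^{d(1+1/(C_H\Delta r))}$. Taking the $t_i$ equal, each is of order $t\coloneqq\Gamma f^{1+1/(r\Delta C_H)}=\Gamma xf\log n$. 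This is not $f^{1+o(1)}$: the factor $x\log n=f^{1/(r\Delta C_H)}$ is a fixed power of $f$.

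Since each event costs $O(t\log(t/\mu_i))$, you need $x\log(t/\mu_i)\lesssim\eps/\Gamma$.
\begin{itemize}
\item In the first regime, $\mu_i\ge1$ and $t\le(\log n)^{O(1)}$ give $\log(t/\mu_i)=O(\log\log n)$. This is compensated by $x\le(C\log\log n)^{-1}$.
\item In the clean regime, $a(R_i)=|R_i|$ together with $\alpha^*_{J_0-R_i}=\alpha^*_{J_0}-|R_i|$ gives $\mu_i\gtrsim f^{\alpha^*_{J_i}-\alpha^*_{J_0-R_i}}$. Optimality of $(J_0,J_i)$ with dimension one makes this $f^{1+1/(r\Delta C_H)}$. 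So $t/\mu_i=O(1)$, and the cost is $O(\Gamma xf\log n)$, which is small precisely because $x\le C^{-1}$.
\end{itemize}
Your clean-case estimate ``$O(f)\cdot\log(1/x)$'' is therefore wrong. With it the hypothesis $x\le C^{-1}$ would be superfluous, which is impossible in view of \cref{thm:main-near-appearance-threshold}\ref{item:main-near-appearance-threshold-UB}. For the same reason the shrink factor must be a fixed small constant $\gamma$, with planting cost $\gamma e_{J_0}f\log(1/p)$, not $\eta\to0$. A vanishing factor must be paid for by a growing $t$, which destroys $t/\mu_i=O(1)$.

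Three construction details also need repair.
\begin{itemize}
\item If every $Y_i$ draws its non-root vertices from the same set $\br{n}\setminus V(G^*)$, a $d$-tuple of pieces need not form a copy of $J$, since non-root vertices of different pieces may coincide. The events then also do not live on disjoint edge sets. The paper embeds the new vertices of $J_i$ into the $i$-th part $U_i$ of an equipartition of $\br{n}\setminus V(G^*)$, and combines the increasing events by Harris's inequality.
\item A root lying in several $R_i$ must receive the same image in every piece; otherwise the tuple yields no copy of $J$ at all. The danger is inconsistency, not overcounting. This is why the paper takes $a\equiv0$ on $R$ in the non-clean case, so each $V_u$ with $u\in R$ is a single vertex, and uses the disjointness of $R_1,\dots,R_d$ in the clean case. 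An arbitrary $y$ on $R$ does not work.
\item \cref{cor:difference-in-expectations} describes conditioning on a fixed planted graph. Your event $\mathcal{E}=\{G^*\subseteq G_{n,p}\}\cap\bigcap_i\{Y_i\ge t_i\}$ is not of this form, and it constrains the very edges needed to extend copies of $J$ to copies of $H$. The paper places $\mathcal{E}$ in a first round $G_1\sim G_{n,p_1}$, with $p_1$ a small constant multiple of $p$, and extends using an independent second round. This gives $\Ex[X\mid\mathcal{E}]\ge(1+\delta+\eps)\Ex[X]$ directly, after which \cref{lem:lower} (valid for any event) applies. Alternatively, Harris applied to $\{K\subseteq G_{n,p}\}\cap\mathcal{E}$ and $\{L\setminus K\subseteq G_{n,p}\}$ works equally well.
\end{itemize}
On the positive side, your use of the $|\mathcal{H}|^3$ alternative in \cref{lem:Poisson-lemma} is legitimate, since $\log|\mathcal{H}_i|=O(\log n)\ll f\log(1/p)$. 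It lets you skip the second-moment estimate the paper carries out.
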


Our proof of \cref{thm:Poisson-type-lower-bound} considers another strategy of triggering the upper tail event that is `cheaper' than planting a hub-core achieving the minimum in the definition of $\psi_H^*(\delta)$.
This strategy is defined in terms of an optimal pair $J_0, J \in \cJ_\emptyset^*$ (in the sense of \cref{dfn:optimal-pair}) and its fan decomposition $J_1, \dotsc, J_d$.
More precisely, we shall show that $\UT_{H,\delta}$ contains a substantial proportion of the intersection of the sequence $\mathcal{E}^*, \mathcal{E}_1, \dotsc, \mathcal{E}_d$ of events defined as follows:
\begin{enumerate}[label=(\roman*)]
\item
  $\mathcal{E}^*$ is the event that $G_{n,p}$ contains a certain blowup $G^*$ of $J_0$ with only $o(f)$ edges;
\item
  for each $i \in \br{d}$, $\mathcal{E}_i$ is the event that $G_{n,p}$ contains $o(f \log n)$ copies of $J_i$, each rooted in the appropriate vertices of $G^*$.
\end{enumerate}
More precisely, we shall show that the event $\mathcal{E}^* \cap \mathcal{E}_1 \cap \dotsb \cap \mathcal{E}_d$ implies that $X_J \ge \Gamma \cdot \Ex[X_J]$ for some large constant $\Gamma$, and this is likely to trigger the upper tail event $\UT_{H,\delta}$.
While the probability of $\mathcal{E}^*$ will be trivial to estimate, the probability of each of $\mathcal{E}_1, \dotsc, \mathcal{E}_d$ will be bounded from below using \Cref{lem:Poisson-lemma}.

\begin{proof}[Proof of~\cref{thm:Poisson-type-lower-bound}]
  Observe first that, since $H$ is stable, we have $m(H) = \Delta r / (1+r)$ and
  
  \begin{equation}
    \label{eq:evaluation-of-f}
    f=n(np^{\Delta})^r = (x \log n)^{r\Delta C_H} \gg 1.
  \end{equation}
  
  We will expose $G_{n,p}$ in two rounds.
  Fix a small constant $\eta=\eta(H,\eps)>0$, to be specified later, let $p_1 \coloneqq \eta p$, and let $p_2$ be such that $1-p = (1-p_1)(1-p_2)$; note that $p_2= (p-p_1)/(1-p_1) \ge (1-2\eta)p$.
  Independently sample $G_1\sim G_{n,p_1}$ and $G_2\sim G_{n,p_2}$, so that $G_1\cup G_2 \sim G_{n,p}$.

  Let $(J_0, J)$ be an optimal pair, let $d \coloneqq \dim_{J_0}J$, and let $J_1, \dotsc, J_d$ be its fan decomposition.
  If $H$ is clean, we further assume that $J_1, \dotsc, J_d$ have the properties described in \cref{def:clean-graphs}.
  For every $i \in \br{d}$, let $R_i$ be the set of roots of $J_i$ over $J_0$ and observe that $R\coloneqq \bigcup_{i=1}^d R_i$ is the set of roots of $J$ over $J_0$.

  We first describe the planting event associated with $J_0$.
  Fix a fractional independent set $a \colon V(J_0)\to [0,1]$ that, depending on whether or not $H$ is clean, satisfies:
  \begin{enumerate}[label=(\roman*)]
  \item
    If $H$ is not clean, then $a(V(J_0) \setminus R) = \alpha_{J_0 - R}^*$ and $a(R) = 0$.
  \item
    If $H$ is clean, then $a$ satisfies the properties listed in \cref{def:clean-graphs}.
  \end{enumerate}
  Fix a small positive constant $\gamma$
  and let $G^*$ be the graph obtained from $J_0$ by blowing up each $u \in V(J_0)$ to a set $V_u \subseteq \br{n}$ of size $\lfloor (\gamma f)^{a_u} \rfloor$, so that $\{V_u : u \in V(J_0)\}$ are pairwise disjoint, and replacing the edges of $J_0$ by complete bipartite graphs;
  note that $e_{G^*} \le e_{J_0} \cdot \gamma f$.
  Finally, let $\mathcal{E}^*$ be the event that $G^* \subseteq G_1$ and note that $\Pr(\mathcal{E}^*) = p_1^{e_{G^*}} \ge (\eta p)^{e_{J_0}\gamma f}$.
  
  Next, we define the Poisson-type events associated with $J_1, \dotsc, J_d$.
  Let $\Gamma = \Gamma(\gamma) > 0$ be a large constant, let
  \[
    t \coloneqq \Gamma x f\log n = \Gamma \cdot f^{1 + 1/(r\Delta C_H)}, 
  \]
  where the equality follows from~\eqref{eq:evaluation-of-f}, and let
  \[
    T \coloneqq (\gamma f/2)^{\alpha_{J_0-R}^*} \cdot t^d.
  \]
  \begin{claim}
    \label{claim:Poisson-LB-t-T-large}
    There is a constant $D$ such that $f \ll t \le (\log n)^D$ and $T \ge \Gamma/D \cdot n^{v_J}p^{e_J}$.
  \end{claim}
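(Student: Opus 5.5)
The plan is to verify the three inequalities by direct computation from \eqref{eq:evaluation-of-f}, \cref{fact:f-p-pH} and the definition of $C_H$, with the optimality of $(J_0,J)$ doing the real work in the last one.

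\emph{Step 1: $f \ll t$.} We have $t = \Gamma x f \log n$ and $x \gg 1/\log n$, so $x\log n \to \infty$ and hence $t/f = \Gamma x \log n \to \infty$.

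\emph{Step 2: $t \le (\log n)^D$.} By \eqref{eq:evaluation-of-f}, $f = (x\log n)^{r\Delta C_H}$. The hypotheses in \cref{thm:Poisson-type-lower-bound} give $x \le C^{-1}$, so $f \le (\log n)^{r\Delta C_H}$. Therefore $t \le \Gamma(\log n)^{r\Delta C_H + 1}$, which is at most $(\log n)^{D}$ for any $D > r\Delta C_H+1$ once $n$ is large. (Alternatively, use $t = \Gamma f^{1+1/(r\Delta C_H)}$.)

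\emph{Step 3: $T \ge (\Gamma/D)\, n^{v_J}p^{e_J}$.} First compute $T$ in powers of $f$. Using $t = \Gamma f^{1+1/(r\Delta C_H)}$,
\[
  T = (\gamma/2)^{\alpha^*_{J_0-R}}\,\Gamma^d\, f^{\alpha^*_{J_0-R} + d + d/(r\Delta C_H)} .
\]
Optimality of $(J_0,J)$ means that $r\Delta C_H = d/(\alpha_J^* - \alpha^*_{J_0-R} - d)$. Hence $d/(r\Delta C_H) = \alpha_J^* - \alpha^*_{J_0-R} - d$, and the exponent of $f$ collapses to exactly $\alpha_J^*$. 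This is the same identity used in the proof of \cref{lem:seeds-contains-a-small-tight-core-2}.

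Next, $p = n^{-1/m(H)}(x\log n)^{C_H}$ lies below $n^{-1/\Delta}$ for large $n$, because $H$ is stable and so $1/m(H) > 1/\Delta$. \cref{fact:f-p-pH} then gives $f^{\alpha_J^*} = n^{v_J}p^{e_J}$. We conclude
\[
  T = (\gamma/2)^{\alpha^*_{J_0-R}}\,\Gamma^{d}\, n^{v_J}p^{e_J} \ge (\gamma/2)^{v_H}\,\Gamma\, n^{v_J}p^{e_J},
\]
using $d \ge 1$ and $\Gamma \ge 1$. Choosing $D \ge (2/\gamma)^{v_H}$ gives the claim.

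The main subtlety is the bookkeeping of which constants $D$ may depend on. Here $D$ depends on $\gamma$ and $H$ but not on $\Gamma$; this is consistent with the order of choices in the paper, where $\gamma$ is fixed first and $\Gamma=\Gamma(\gamma)$ is chosen large afterwards. No other step should be an obstacle.
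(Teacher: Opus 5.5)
Your proof is correct and follows essentially the same route as the paper. The bounds $f \ll t \le (\log n)^D$ come directly from \eqref{eq:evaluation-of-f} and the boundedness of $x$. Optimality of $(J_0,J)$ makes the exponent of $f$ in $T$ equal to $\alpha_J^*$, and then \cref{fact:f-p-pH} gives $f^{\alpha_J^*}=n^{v_J}p^{e_J}$. Your remark that $D$ may depend on $\gamma$ and $H$ but not on $\Gamma$ is the right bookkeeping for the later use of the claim in \cref{claim:PrUT-given-E}.
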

  \begin{proof}
    Let $D$ be a large constant.
    The definition of $t$ and \eqref{eq:evaluation-of-f} imply that
    
    \[
      f \ll t = \Gamma \cdot f^{1+1/(r\Delta C_H)} = \Gamma \cdot (x\log n)^{r\Delta C_H +1} \le (\log n)^D.
    \]
    
    Since $(J_0, J)$ is an optimal pair and $d = \dim_{J_0} J$, we have
    \[
      t^d = \Gamma^d \cdot f^{d + d/(r\Delta C_H)} = \Gamma^d \cdot f^{\alpha_J^* - \alpha_{J_0-R}^*}.
    \]
    Consequently, by \cref{fact:f-p-pH},
    \[
      T = \Gamma^d \cdot (\gamma/2)^{\alpha_{J_0-R}^*} \cdot f^{\alpha^*_J} \ge \Gamma/D \cdot n^{v_J}p^{e_J},
    \]
    provided that $D$ is sufficiently large.
  \end{proof}
  Fix some equipartition $U_1, \dotsc, U_d$ of $\br{n} \setminus V(G^*)$.
  Since $|V(G^*)| = O(f)$, we have $|U_i| \ge n/(2d)$ for each $i$.
  For every $i \in \br{d}$, let $\mathcal{B}_i$ be the set of copies of $J_i\setminus J_0$ such that every $u \in R_i$ is embedded into $V_u$  and every $v\in V(J_i)\setminus V(J_0)$ is embedded into $U_i$ and let $Y_i \coloneqq |\mathcal{B}_i[G_1]|$ denote the number of such copies that are contained in $G_1$.
  
  \begin{claim}
    \label{claim:Yi-moments}
    There is a constant $c$ that depends only on $H$, $\gamma$, and $\eta$ such that, for every $i \in \br{d}$,
    \[
      \Ex[Y_i] \ge c f^{\alpha_{J_i}^* - \alpha_{J_0}^* + a(R_i)}
      \qquad
      \text{and}
      \qquad
      \Ex[Y_i^2] \le \Ex[Y_i] \cdot f^{1/c}.
    \]
  \end{claim}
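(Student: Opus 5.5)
We bound $\Ex[Y_i]$ by direct counting and $\Ex[Y_i^2]$ by splitting over pairs of copies according to how they overlap.

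\textbf{First moment.} A copy in $\mathcal{B}_i$ is specified by the images of $R_i$, one in each $V_u$, and of $V(J_i)\setminus V(J_0)$, each in $U_i$. Its edge set is $J_i\setminus J_0$, whose edges either join two new vertices or join a new vertex to a root. Since $|V_u| = \lfloor(\gamma f)^{a_u}\rfloor \ge (\gamma f)^{a_u}/2$ (as $f \gg 1$) and $|U_i|\ge n/(2d)$, we get
\[
  \Ex[Y_i] \ge c_0\, (\gamma f)^{a(R_i)}\, n^{v_{J_i}-v_{J_0}}\, p_1^{e_{J_i}-e_{J_0}}.
\]
Because $p_1 = \eta p$, \cref{fact:f-p-pH} applied to $J_i$ and to $J_0$ (both in $\cJ_\emptyset^*$, with $p\le n^{-1/\Delta}$) gives
\[
  n^{v_{J_i}-v_{J_0}}p^{e_{J_i}-e_{J_0}} = f^{\alpha^*_{J_i}-\alpha^*_{J_0}}.
\]
This is the claimed first-moment bound, with $c$ depending on $\gamma$, $\eta$ and $H$. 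If $J_0=\emptyset$ we have $R_i=\emptyset$ and the same formula holds.

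\textbf{Second moment: decomposition.} We write
\[
  \Ex[Y_i^2] = \sum_{B,B'\in\mathcal{B}_i} p_1^{e(B\cup B')}
\]
and group the pairs by the isomorphism type of the overlap. Fix $B$ and a subgraph $F\subseteq J_i\setminus J_0$ that is the pullback of $B\cap B'$ (allowing shared vertices but no shared edges, i.e.\ $F$ may be edgeless). Let $W$ be the set of new vertices of $B'$ not in $B$, and $S\subseteq R_i$ the set of roots of $B'$ not shared with $B$. The number of choices of $B'$ is at most
\[
  O\Bigl(n^{|W|}\prod_{u\in S}|V_u|\Bigr),
\]
and the additional edge cost is $p_1^{e_{J_i}-e_{J_0}-e_F}$. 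So it suffices to show that, for every such overlap pattern,
\[
  n^{|W|}\, f^{a(S)}\, p^{e_{J_i}-e_{J_0}-e_F} \le f^{O(1)}.
\]
Summing over the $O(1)$ patterns then gives $\Ex[Y_i^2]\le\Ex[Y_i]\cdot f^{1/c}$.

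\textbf{Second moment: the key estimate.} The quantity above is, up to the root factors, the expected number of extensions of $F\cup J_0$ to $J_i$. Write $K\coloneqq J_0\cup F'$, where $F'$ is the subgraph of $J_i$ spanned by the shared part together with $J_0$. Using $f\ge 1$, $a(S)\le |S|$, and the fact that every subgraph $K\subseteq H$ satisfies $e_K\le m(H)v_K$ (by stability and $p\ge p_H=n^{-1/m(H)}$), we get
\[
  n^{v_{J_i}-v_K}p^{e_{J_i}-e_K} \le \frac{n^{v_{J_i}}p^{e_{J_i}}}{n^{v_K}p^{e_K}} \le \frac{(p/p_H)^{e_{J_i}}}{(p/p_H)^{e_K}} \le (p/p_H)^{e_H}.
\]
Here the middle step uses \cref{fact:f-p-pH} in the numerator and $n^{v_K}p_H^{e_K}\ge 1$ in the denominator. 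By \eqref{eq:evaluation-of-f} we have $p/p_H=(x\log n)^{C_H}=f^{1/(r\Delta)}$, so this bound is $f^{O(1)}$. The root factors contribute at most $\prod_u |V_u|\le f^{v_{J_0}}$, which is also $f^{O(1)}$.

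\textbf{Main obstacle.} The delicate point is that the sum must be organised so that this extension bound is applied to genuine subgraphs $K$ of $H$. Vertices of $B'$ that land in the same $V_u$ as vertices of $B$ without coinciding are counted separately, through the factor $f^{a(S)}$. I expect the only real work to be the bookkeeping: the correct denominator $n^{v_K}p^{e_K}$ must include the shared roots, and identifying shared vertices never increases the count. Each case then reduces to $(p/p_H)^{O(1)}=f^{O(1)}$.
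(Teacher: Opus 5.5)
Your argument is correct and follows essentially the paper's route. The first moment is direct counting combined with \cref{fact:f-p-pH}. For the second moment you fix one copy, group the other by its overlap with it, and use $n^{v_K}p_H^{e_K}\ge 1$ for subgraphs $K\subseteq H$ (i.e.\ $np^{m(H)}\ge 1$) to see that no overlap pattern contributes more than $f^{O(1)}$. The paper instead uses that $J_0$ is induced and densest, which gives the slightly sharper bound $O(f^{a(R_i)+\alpha_{J_i}^*-\alpha_{J_0}^*})$; this extra precision is not needed.

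One small citation slip. \cref{fact:f-p-pH} only gives $n^{v_{J_i}}p^{e_{J_i}}\ge (p/p_H)^{e_{J_i}}$, whereas you need the reverse inequality. It does hold, with equality, because $J_i$ is densest and $p_H=n^{-1/m(H)}$. Also, $e_K\le m(H)v_K$ is just the definition of $m(H)$, not a consequence of stability.
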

  \begin{proof}
    By construction,
    \[
      \Ex[Y_i]
      = |\mathcal{B}_i| \cdot p_1^{e_{J_i} - e_{J_0}} \ge (\gamma f / 2)^{a(R_i)} \cdot (n/(2d))^{v_{J_i} - v_{J_0}} \cdot (\eta p)^{e_{J_i}-e_{J_0}}
      \ge c f^{a(R_i) + \alpha_{J_i}^* - \alpha_{J_0}^*},
    \]
    where the last inequality follows from \cref{fact:f-p-pH}.
    Further,
    \[
      \Ex[Y_i^2] \le \Ex[Y_i] \cdot \max_{K_1 \in \mathcal{B}_i} \sum_{K_2 \in \mathcal{B}_i} p^{e_{K_2} - e_{K_1 \cap K_2}}.
    \]
    Fix a $K_1 \in \mathcal{B}_i$ and choose an embedding $\varphi$ of $J_i$ into $K_n$ with $\varphi(J_i \setminus J_0) = K_1$ that maps $J_0 \subseteq J_i$ into $G^*$.
    Note that, for every $U \subseteq V(J_i) \setminus V(J_0)$, there are at most $O(n^{v_{J_i} - v_{J_0} - |U|} \cdot f^{a(R_i)})$ graphs $K_2 \in \mathcal{B}_i$ such that $\varphi^{-1}(K_1 \cap K_2) \setminus V(J_0) = U$.
    Further, since $J_0$ is either empty or it achieves the maximum density $m(H)$ of $H$, and thus it is an induced subgraph of $J_i$, we have
    \[
      e_{K_1 \cap K_2} \le e_{J_i[V(J_0) \cup U]} - e_{J_i[V(J_0)]} \le m(H) \cdot (v_{J_0} + |U| - v_{J_0}) = m(H) \cdot |U|.
    \]
    It follows that, for some constant $C$ that depends only on $H$,
    \[
      \sum_{K_2 \in \mathcal{B}_i} p^{e_{K_2} - e_{K_1 \cap K_2}} \le C f^{a(R_i)} \cdot \max_{0 \le u \le v_{J_i} - v_{J_0}} n^{v_{J_i} - v_{J_0} - u} \cdot p^{e_{J_i} - e_{J_0} - m(H) \cdot u}.
    \]
    Since $np^{m(H)} \ge np_H^{m(H)} = 1$, the claimed upper bound on $\Ex[Y_i^2]$ now follows from \cref{fact:f-p-pH}.
  \end{proof}
  
  Finally, for each $i \in \br{d}$, let $\mathcal{E}_i \coloneqq \{ Y_i \ge t\}$ and let $\mathcal{E} \coloneqq \mathcal{E}^*\cap \mathcal{E}_{1} \cap \dotsb \cap \mathcal{E}_{d}$.
  
  \begin{claim}
    \label{claim:PrUT-given-E}
    We have $\Pr(\UT_{H,\delta} \mid \mathcal{E}) \geq \eps p^{e_H}$, provided that $\Gamma$ is sufficiently large.
  \end{claim}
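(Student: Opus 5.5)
The plan is to apply \cref{lem:lower} with the event $\mathcal{E}$. It suffices to show that $\Ex[X \mid \mathcal{E}] \ge (1+\delta+\eps)\Ex[X]$; then \cref{lem:lower} gives $\Pr(\UT_{H,\delta}\mid\mathcal{E}) \ge \eps p^{e_H}$. (If needed, I would rename $\eps$ to any fixed positive constant. Only the order $p^{O(1)}$ matters downstream.)

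First, I would lower-bound the number of copies of $J$ that $\mathcal{E}$ forces in $G_1$. On $\mathcal{E}^*\cap\mathcal{E}_1\cap\dots\cap\mathcal{E}_d$, fix for each $i$ a collection of $t$ copies of $J_i\setminus J_0$ from $\mathcal{B}_i$, all lying in $G_1$. Take one such copy for each $i$, together with an embedding of $J_0$ into $G^*$ that sends each vertex $u$ to some vertex of $V_u$ and agrees on $R$ with the roots of the chosen copies. Since the fan decomposition satisfies $J_i\cap J_j = J_0$, the non-root vertices of different $J_i\setminus J_0$ are disjoint; they are also placed in disjoint sets $U_i$. Hence this data assembles into a copy of $J = J_1\cup\dots\cup J_d$ in $G_1$. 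The roots are already fixed, so the number of completions of $J_0$ is $\prod_{u\notin R}|V_u| \ge (\gamma f/2)^{a(V(J_0)\setminus R)}$. In the non-clean case this exponent is $\alpha^*_{J_0-R}$. In the clean case the properties in \cref{def:clean-graphs} give the same value. Altogether this yields at least $T/O(1)$ labelled copies, and up to a constant-factor loss from automorphisms, $N(J, G_1) \ge T/C'$. By \cref{claim:Poisson-LB-t-T-large}, this is at least $(\Gamma/D') n^{v_J}p^{e_J}$.

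Next, I would convert this into a bound on $X$. Conditioning on $\mathcal{E}$ only involves $G_1$, and $G_2$ is independent of $G_1$. So for every outcome $G_1\in\mathcal{E}$, $\Ex[X\mid G_1] \ge \Ex_{G_1}[X]$ evaluated at density $p_2$ outside $G_1$. Averaging over $G_1\in\mathcal{E}$ then reduces the problem to bounding $\Ex_{G_1}[X]-\Ex[X]$, using \cref{cor:difference-in-expectations} with only the single term $J$ retained, which is legitimate since all summands are nonnegative. This gives a surplus of order at least $N(J,G_1)\, n^{v_H-v_J}p^{e_H-e_J} \ge (\Gamma/D'')\Ex[X]$.

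The one subtlety is that the outside edges appear with probability $p_2 \ge (1-2\eta)p$ rather than $p$. This lowers the baseline by a factor of at least $(1-2\eta)^{e_H}$. I would therefore redo the computation directly: each copy of $H$ extending one of the forced copies of $J$ is present with conditional probability at least $p_2^{e_H-e_J}$. The number of such extensions is $\Theta(n^{v_H-v_J})$, and each copy of $H$ is counted at most $O(1)$ times. This gives $\Ex[X\mid\mathcal{E}] \ge c\,\Gamma(1-2\eta)^{e_H}\Ex[X]$, which exceeds $(1+\delta+\eps)\Ex[X]$ once $\Gamma$ is large relative to $\delta$, $\eps$, $\eta$ and $\gamma$.

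I expect the main obstacle to be the first step. The count $T=(\gamma f/2)^{\alpha^*_{J_0-R}}t^d$ presupposes that each $V_u$ with $u\notin R$ has size about $(\gamma f)^{a_u}$ with $a(V(J_0)\setminus R)=\alpha^*_{J_0-R}$, and that distinct choices of copies produce distinct copies of $J$. Both hold by construction: the $V_u$ and $U_i$ are disjoint, and the root images come from the petals. It remains only to check the clean case, where $a(R_i)=|R_i|$ may make the petals share roots. That sharing is harmless, since it only affects which petals are compatible, not the disjointness of non-root vertices. I would handle it by summing over root assignments rather than insisting on compatibility, so that each petal tuple extends through $J_0-R$ in the stated number of ways.
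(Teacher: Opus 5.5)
Your overall route is the paper's. You apply \cref{lem:lower}, and show that on $\mathcal{E}$ each tuple in $\mathcal{B}_1[G_1]\times\dots\times\mathcal{B}_d[G_1]$ combines with each of the at least $(\gamma f/2)^{\alpha^*_{J_0-R}}$ embeddings of $J_0-R$ into $\prod_{u\notin R}V_u$, giving $T$ copies of $J$ in $G_1$. You then extend each copy to $H$ through the independent $G_2$ at cost $p_2^{e_H-e_J}$, and absorb the constant losses (automorphisms, overcounting, $(1-2\eta)^{e_H}$) by taking $\Gamma=\Gamma(\gamma)$ large.

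The one step you get wrong is why petals from different families $\mathcal{B}_i$ and $\mathcal{B}_j$ agree on their roots. This is what makes all $t^d$ tuples usable. You place the difficulty in the clean case and propose to ``sum over root assignments,'' but that remedy would not work. The event $\mathcal{E}_i$ only guarantees $t$ petals in $\mathcal{B}_i$ in total. If some $u\in R_i\cap R_j$ had many possible images, all petals of $\mathcal{B}_i$ could use one image of $u$ and all petals of $\mathcal{B}_j$ another, and then there would be no copy of $J$ at all.

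The correct reason is the choice of $a$, which makes compatibility automatic:
\begin{itemize}
\item In the clean case, the sets $R_1,\dots,R_d$ are pairwise disjoint by condition~(2) of \cref{def:clean-graphs}, so no two families share a root vertex.
\item In the non-clean case, roots may well be shared, but $a(R)=0$ forces $|V_u|=\lfloor(\gamma f)^0\rfloor=1$ for every $u\in R$. Hence all petals in all families use the same root images.
\end{itemize}
Sharing of root images among petals within a single $\mathcal{B}_i$ is irrelevant, since such petals are never combined. With this observation replacing your last paragraph, the argument is complete and matches the paper's.
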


  \begin{proof}
    By \cref{lem:lower}, it suffices to show that $\Ex[X \mid \mathcal{E}] \geq (1+\delta+\eps) \Ex[X]$.
    The key observation is that, on the event $\mathcal{E}$, the graph $G_1$ contains $T$ copies of $J$.
    To see this, note that the number of copies of $J_0 - R$ in $G^* \subseteq G_1$ such that the image of each $u \in V(J_0) \setminus R$ is embedded into $V_u$ is at least
    \[
      \prod_{u \in V(J_0) \setminus R} \lfloor (\gamma f)^{a_u} \rfloor \ge (\gamma f/2)^{a(V(J_0) \setminus R)} = (\gamma f/2)^{\alpha^*_{J_0 - R}}.
    \]
    Further, each tuple $(J_1 \setminus J_0, \dotsc, J_d \setminus J_0) \in \mathcal{B}_1[G_1] \times \dotsb \times \mathcal{B}_d[G_1]$ forms a copy of $J$ in $G_1$ together with each of the copies of $J_0 - R$ described above.
    Indeed, this is because $J_1 \cup \dotsb \cup J_d = J$ and $J_i \cap J_j = J_0$ for all distinct $i, j \in \br{d}$, see \cref{prop:optimal-pair-decomposition}.
    Finally, on the event $\mathcal{E}$, there are at least $t^d$ such tuples.

    Now, let $X'$ be the number of copies of $H$ in $G_1 \cup G_2$ that extend one of the copies of $J$ in $G_1$ and note that
    \[
      \Ex[X \mid \mathcal{E}] \ge \Ex[X' \mid \mathcal{E}] \ge T \cdot \binom{n-v_J}{v_H-v_J} \cdot p_2^{e_H-e_J} \ge \frac{(1-\eta)^{e_H}T}{v^{v_H}_H}\cdot \frac{n^{v_H}p^{e_H}}{n^{v_J}p^{e_J}} \ge (1+\delta+\eps) \cdot \Ex[X],
    \]
    where the last inequality follows from~\Cref{claim:Poisson-LB-t-T-large}, provided that $\Gamma$ is sufficiently large.
  \end{proof}

  Since \eqref{eq:evaluation-of-f} implies that $\varepsilon p^{e_H} \gg p^{\varepsilon f/2}$, in view of \Cref{claim:PrUT-given-E}, it suffices to establish the following.

  \begin{claim}
    \label{claim:PrE-LB}
    We have $\log \Pr(\mathcal{E}) \ge -(\eps/2) f \log(1/p)$.
  \end{claim}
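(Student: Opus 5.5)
We must show $\log \Pr(\mathcal{E}) \ge -(\eps/2) f\log(1/p)$. The events $\mathcal{E}^*, \mathcal{E}_1, \dotsc, \mathcal{E}_d$ are increasing events determined by $G_1$, so Harris's inequality gives
\[
  \Pr(\mathcal{E}) \ge \Pr(\mathcal{E}^*) \prod_{i=1}^d \Pr(\mathcal{E}_i).
\]
(Moreover, $\mathcal{B}_i$ involves only edges not inside $V(G^*)$, so $\mathcal{E}^*$ is in fact independent of each $\mathcal{E}_i$.) For the planting event we already have
\[
  \log \Pr(\mathcal{E}^*) \ge -e_{J_0}\gamma f \log(1/(\eta p)) \ge -2e_{J_0}\gamma f\log(1/p),
\]
which is at most $(\eps/4) f\log(1/p)$ in absolute value once $\gamma$ is chosen small relative to $\eps$. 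It therefore remains to show that $\log\Pr(Y_i \ge t) \ge -(\eps/(4d)) f\log(1/p)$ for each $i$.

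To bound $\Pr(Y_i\ge t)$, apply \cref{lem:Poisson-lemma} to the hypergraph $\mathcal{B}_i$, viewed as a family of $(e_{J_i}-e_{J_0})$-element edge sets sampled at rate $p_1$. Write $\mu_i \coloneqq \Ex[Y_i]$. I use the second alternative in the maximum:
\[
  \Pr(Y_i\ge t) \ge C^{-t}\Pr(\Pois(\mu_i)\ge t)^4 (4\Ex[Y_i^2]/\mu_i)^{-3}.
\]
By \cref{claim:Yi-moments}, $4\Ex[Y_i^2]/\mu_i \le 4f^{1/c}$, so the last factor costs only $O(\log f)$. We may need to pass to $t'=\lceil\max\{t,2\mu_i\}\rceil$ to meet the hypothesis $t\ge 2\mu_i$, and we must check $t \le |\mathcal{B}_i|/2$; the latter is clear, since $|\mathcal{B}_i|$ is polynomial in $n$ while $t\le(\log n)^D$. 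By \cref{fact:lower-bound-for-Poisson},
\[
  \log\Pr(Y_i\ge t) \ge -O(t) - 4t\log(t/\mu_i) - O(\log f) \ge -O\bigl(t\log t\bigr) = -O\bigl(\Gamma x f \log n\cdot \log\log n\bigr).
\]
If $\mu_i\ge t/2$ the Poisson factor is even cheaper, and the bound is at least as good.

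The main obstacle is comparing this with $f\log(1/p)\asymp f\log n$. In the general (non-clean) case, $\log(t/\mu_i)\le \log t = O(\log\log n)$, so the total cost is $O(\Gamma x\log\log n)\cdot f\log n$. This is below $(\eps/(4d)) f\log n$ precisely when $x\le (C\log\log n)^{-1}$ with $C$ large in terms of $\Gamma,\eps,d$. Note that $\Gamma$ depends on $\gamma$, which was fixed before $C$, so the order of constant choices is consistent.

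In the clean case the $\log\log n$ loss must be removed, and this requires $\log(t/\mu_i)=O(1)$. Here I would use the clean properties: $a(R_i)=|R_i|$ and the optimality of $(J_0,J_i)$. These should give
\[
  \alpha_{J_i}^*-\alpha_{J_0}^*+a(R_i) = \alpha_{J_i}^*-\alpha_{J_0-R_i}^* = 1+1/(r\Delta C_H).
\]
This uses $\dim_{J_0}J_i=1$ together with $\alpha_{J_0}^*-\alpha^*_{J_0-R_i}=|R_i|$, both following from the definition of clean graphs. Granting this, \cref{claim:Yi-moments} gives $\mu_i\ge c f^{1+1/(r\Delta C_H)} = (c/\Gamma)t$, so $\log(t/\mu_i)\le \log(\Gamma/c)$. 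The cost per $i$ is then $O_\Gamma(t)=O_\Gamma(x f\log n)$, which is at most $(\eps/(4d))f\log n$ when $x\le C^{-1}$. The delicate point I expect is verifying this exponent identity from \cref{def:clean-graphs}, in particular that the single fractional independent set $a$ simultaneously realises $\alpha^*_{J_0-R}$ and satisfies $a(R_i)=|R_i|$, so that the blow-up sizes of the root classes are large enough. Summing the contributions of $\mathcal{E}^*$ and the $d$ events $\mathcal{E}_i$ yields the claim.
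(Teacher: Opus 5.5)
Your route is the paper's: you split off $\Pr(\mathcal{E}^*)$ by Harris (independence also works), apply the second alternative of \cref{lem:Poisson-lemma} with \cref{claim:Yi-moments} controlling $\Ex[Y_i^2]/\mu_i$, use \cref{fact:lower-bound-for-Poisson}, and finish with the two cases $\log(t/\mu_i)\le\log t=O(\log\log n)$ and $t\le(\Gamma/c)\mu_i$. Two steps, however, are not justified as written.

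The first is the case $\mu_i\ge t/2$. Applying \cref{lem:Poisson-lemma} at $t'=2\mu_i$ gives only $\log\Pr(Y_i\ge t)\ge -O(\mu_i)-O(\log f)$. Your claim that this bound is ``at least as good'' therefore needs $\mu_i=O(t\log t)$, and you prove no upper bound on $\mu_i$. The paper disposes of this case trivially. Since $Y_i\le|\mathcal{B}_i|=\mu_i p_1^{e_{J_0}-e_{J_i}}$ always, we have $\mu_i\le\mu_i/2+|\mathcal{B}_i|\Pr(Y_i\ge\mu_i/2)$. So if $\mu_i\ge 2t$, then $\Pr(Y_i\ge t)\ge\tfrac12 p_1^{e_{J_i}-e_{J_0}}$, at a cost of only $O(\log(1/p))\ll f\log(1/p)$. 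Otherwise $t^*=\max\{2\mu_i,t\}\le 4t$ and your computation applies. (Paley--Zygmund with $\Ex[Y_i^2]\le\mu_i f^{1/c}$ also works. The case is in fact vacuous, since one can check $\mu_i\le f^{1+1/(r\Delta C_H)}\le t/\Gamma$ using $|V_u|\le(\gamma f)^{a_u}$ and the optimality of $(J_0,J)$, but that requires an argument you do not give.) You should also record that $\mu_i\ge 1$, which is needed both for the lemma and for $\log(t/\mu_i)\le\log t$. It follows from \cref{claim:Yi-moments} and \cref{lem:C_H-is-well-defined}.

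The second is the clean case, where you grant exactly the step that removes the $\log\log n$ loss. Nothing needs checking about $a$ ``simultaneously'' satisfying the conditions, since that is what \cref{def:clean-graphs} provides. What is needed is the sandwich
\[
\begin{split}
\alpha^*_{J_0}-|R_i| &\le \alpha^*_{J_0-R_i} \le \alpha^*_{J_0-R}+|R\setminus R_i| \\
&= a(V(J_0)\setminus R)+a(R\setminus R_i) = a(V(J_0))-|R_i| \le \alpha^*_{J_0}-|R_i|.
\end{split}
\]
The two inequalities on the first line hold because deleting a set of vertices lowers the fractional independence number by at most its size. The first equality uses property (3) together with $a\equiv 1$ on $R=R_1\cup\dotsb\cup R_d$, which follows from property (1). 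The second equality uses $a(R_i)=|R_i|$. The last inequality holds because $a$ restricted to $V(J_0)$ is a fractional independent set of $J_0$. Combined with the optimality of $(J_0,J_i)$ and $\dim_{J_0}J_i=1$, this gives $\alpha^*_{J_i}-\alpha^*_{J_0}+|R_i|=1+1/(r\Delta C_H)$. With that in place, your bound $t\le(\Gamma/c)\mu_i$ and the rest of your clean case go through.
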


  \begin{proof}
    Since $\mathcal{E}^*$ and $\mathcal{E}_1, \dotsc, \mathcal{E}_d$ are all increasing events, we have
    \[
      \log \Pr(\mathcal{E}) \ge  \log \Pr(\mathcal{E}^*) + \sum_{i=1}^{d}\log \Pr(\mathcal{E}_{i}) \ge - e_{G^*} \log(1/(\eta p)) + \sum_{i=1}^{d}\log \Pr(\mathcal{E}_{i}).
    \]	
    Since $e_{G^*} \le \gamma f$ and we may let $\gamma$ be arbitrarily small, it suffices to show that $\log \Pr(\cE_i)\ge -\eps/(4d) \cdot f\log (1/p)$ for all $i\in \br{d}$.
    
    Fix some $i \in \br{d}$, recall that $\Pr(\mathcal{E}_i) = \Pr(Y_i \ge t)$, and let $\mu_i \coloneqq \Ex[Y_i]$.
    Since $Y_i \le |\mathcal{B}_i| = \mu_i \cdot p_1^{e_{J_0}-e_{J_i}}$ with probability one, see~\cref{claim:Yi-moments}, we have 
    \[
      \mu_i \le \mu_i/2 + \mu_i \cdot p_1^{e_{J_0} - e_{J_i}}\cdot \Pr(Y_i \ge \mu_i/2).
    \]
    implying that
    \[
      \log \Pr(Y_i \ge \mu_i/2) \ge (e_{J_0} - e_{J_i}) \cdot \log(1/(\eta p)) - \log 2 \ge - \eps /(4d) \cdot f \log(1/p).
    \]
    Therefore, we may assume from now on that $t > \mu_i/2$.

    Set $t^*\coloneqq \max\{2\mu_i,t\}$ and note that $2\mu_i \le t^* \le 4t$.
    Further, $t \ll |\mathcal{B_i}|$ and $\mu_i \gg 1$, by \cref{claim:Poisson-LB-t-T-large,claim:Yi-moments}, respectively.
    We may thus apply \cref{lem:Poisson-lemma}, with $(\cH,\mu,t)$ replaced by $(\cB_i,\mu_i,t^*)$, to obtain

    \begin{equation}\label{eq:lower-bound-for-E_i}
      \begin{split}
        \log \Pr(\mathcal{E}_i)
        & \ge 4\log \Pr(\Pois(\mu_i) \geq t^*) - C_1t^* - 3\log(4\Ex[Y_i^2]/\mu_i) \\
        & \ge -4t^*\log (t^*/\mu_i) - 4C_1t - (1/c) \log (4f) \ge -16t\log(C_2t/\mu_i).
      \end{split}
    \end{equation}
    for some absolute constants $C_1, C_2$; the penultimate inequality follows from \cref{fact:lower-bound-for-Poisson} and \cref{claim:Yi-moments} and the final inequality holds as $t \ge f \gg 1$, by \cref{claim:Poisson-LB-t-T-large} and~\eqref{eq:evaluation-of-f}.
    We now consider two cases.

    \medskip
    \noindent
    \textit{Case 1. $x \le (C \log \log n)^{-1}$.}
    Since $\mu_i \gg 1$, by \cref{claim:Yi-moments} and \eqref{eq:evaluation-of-f}, we have, by \cref{claim:Poisson-LB-t-T-large},
    \[
      t \log (C_2 t / \mu_i) \leq t\log t = \Gamma xf \log n \cdot \log t \le D\Gamma x f \log n \log \log n \leq \frac{\Gamma D}{C}\cdot f \log (1/p),
    \]
    where $D$ is a constant that depends only on $H$.
    Thus, \eqref{eq:lower-bound-for-E_i} implies the desired inequality, provided that $C$ is sufficiently large.

    \medskip
    \noindent
    \textit{Case 2. $x \le C^{-1}$ and $H$ is clean.}
    Since $\dim_{J_0} J_i = 1$, the assumption that $H$ is clean yields
    \[
      1 + 1/(r\Delta C_H) = \alpha_{J_i}^* - \alpha_{J_0-R_i}^*
    \]
    and
    \[
      \alpha_{J_0}^* - |R_i| \le \alpha_{J_0 - R_i}^* \le \alpha_{J_0 - R}^* + |R \setminus R_i| = a(V(J_0) \setminus R) + |R \setminus R_i| = a(V(J_0)) - |R_i| \le \alpha_{J_0}^* - |R_i|.
    \]
    Consequently, by \cref{claim:Yi-moments},
    \[
      t = \Gamma f^{1+1/(r\Delta C_H)} = \Gamma f^{\alpha^*_{J_i}-\alpha^*_{J_0-R_i}} = \Gamma f^{\alpha^*_{J_i}-\alpha^*_{J_0}+|R_i|} \le \Gamma \mu_i / c.
    \]
    Therefore, for some constant $C_3$ that depends only on $\Gamma$, $\eta$, and $H$,
    \[
      t \log (C_2t /\mu_i) \leq \Gamma fx \log n \cdot \log(\Gamma C_2/c) \le C_3/C \cdot f \log n \le \eps/(4d) \cdot f\log(1/p),
    \]
    provided that $C$ is sufficiently large.
  \end{proof}
  The proof of the theorem is now complete.
\end{proof}

\bibliographystyle{abbrv}
\bibliography{UT-irregular-dense}

@article{vsileikis2019counterexample,
  title={A counterexample to the DeMarco-Kahn upper tail conjecture},
  author={{\v{S}}ileikis, Matas and Warnke, Lutz},
  journal={Random Structures \& Algorithms},
  volume={55},
  number={4},
  pages={775--794},
  year={2019},
  publisher={Wiley Online Library}
}

@article{barbour1989central,
  title={A central limit theorem for decomposable random variables with applications to random graphs},
  author={Barbour, Andrew D and Karo{\'n}ski, Michal and Ruci{\'n}ski, Andrzej},
  journal={Journal of Combinatorial Theory, Series B},
  volume={47},
  number={2},
  pages={125--145},
  year={1989},
  publisher={Elsevier}
}

@article {harris1960lower,
    AUTHOR = {Harris, T. E.},
     TITLE = {A lower bound for the critical probability in a certain
              percolation process},
   JOURNAL = {Proc. Cambridge Philos. Soc.},
  FJOURNAL = {Proceedings of the Cambridge Philosophical Society},
    VOLUME = {56},
      YEAR = {1960},
     PAGES = {13--20},
}

@article{janson1990poisson,
  title={Poisson approximation for large deviations},
  author={Janson, Svante},
  journal={Random Structures \& Algorithms},
  volume={1},
  number={2},
  pages={221--229},
  year={1990},
  publisher={Wiley Online Library}
}

@article{kozma2023lower,
  title={Lower tails via relative entropy},
  author={Kozma, Gady and Samotij, Wojciech},
  journal={The Annals of Probability},
  volume={51},
  number={2},
  pages={665--698},
  year={2023},
  publisher={Institute of Mathematical Statistics}
}

@article{janson2004deletion,
  title={The deletion method for upper tail estimates},
  author={Janson, Svante and Ruci{\'n}ski, Andrzej},
  journal={Combinatorica},
  volume={24},
  number={4},
  pages={615--640},
  year={2004},
  publisher={Springer-Verlag Berlin, Heidelberg}
}

@article{kim2004divide,
  title={Divide and conquer martingales and the number of triangles in a random graph},
  author={Kim, Jeong Han and Vu, Van H},
  journal={Random Structures \& Algorithms},
  volume={24},
  number={2},
  pages={166--174},
  year={2004},
  publisher={Wiley Online Library}
}

@article{chatterjee2012missing,
  title={The missing log in large deviations for triangle counts},
  author={Chatterjee, Sourav},
  journal={Random Structures \& Algorithms},
  volume={40},
  number={4},
  pages={437--451},
  year={2012},
  publisher={Wiley Online Library}
}

@article{demarco2012tight,
  title={Tight upper tail bounds for cliques},
  author={DeMarco, Robert and Kahn, Jeff},
  journal={Random Structures \& Algorithms},
  volume={41},
  number={4},
  pages={469--487},
  year={2012},
  publisher={Wiley Online Library}
}

@article{chatterjee2016nonlinear,
  title={Nonlinear large deviations},
  author={Chatterjee, Sourav and Dembo, Amir},
  journal={Advances in Mathematics},
  volume={299},
  pages={396--450},
  year={2016},
  publisher={Elsevier}
}

@article{eldan2018gaussian,
  title={Gaussian-width gradient complexity, reverse log-Sobolev inequalities and nonlinear large deviations},
  author={Eldan, Ronen},
  journal={Geometric and Functional Analysis},
  volume={28},
  number={6},
  pages={1548--1596},
  year={2018},
  publisher={Springer}
}

@article{augeri2021transportation,
  title={A transportation approach to the mean-field approximation},
  author={Augeri, Fanny},
  journal={Probability Theory and Related Fields},
  volume={180},
  number={1},
  pages={1--32},
  year={2021},
  publisher={Springer}
}

@article{augeri2020nonlinear,
  title={Nonlinear large deviation bounds with applications to Wigner matrices and sparse Erd{\H{o}}s--R{\'e}nyi graphs},
  author={Augeri, Fanny},
  journal={The Annals of probability},
  volume={48},
  number={5},
  pages={2404--2448},
  year={2020},
  publisher={JSTOR}
}

@article{cook2020large,
  title={Large deviations of subgraph counts for sparse Erd{\H{o}}s--R{\'e}nyi graphs},
  author={Cook, Nicholas and Dembo, Amir},
  journal={Advances in Mathematics},
  volume={373},
  pages={107289},
  year={2020},
  publisher={Elsevier}
}

@article{lubetzky2017variational,
  title={On the variational problem for upper tails in sparse random graphs},
  author={Lubetzky, Eyal and Zhao, Yufei},
  journal={Random Structures \& Algorithms},
  volume={50},
  number={3},
  pages={420--436},
  year={2017},
  publisher={Wiley Online Library}
}

@article{bhattacharya2017upper,
  title={Upper tails and independence polynomials in random graphs},
  author={Bhattacharya, Bhaswar B and Ganguly, Shirshendu and Lubetzky, Eyal and Zhao, Yufei},
  journal={Advances in Mathematics},
  volume={319},
  pages={313--347},
  year={2017},
  publisher={Elsevier}
}

@article {HarMouSam22,
    AUTHOR = {Harel, Matan and Mousset, Frank and Samotij, Wojciech},
     TITLE = {Upper tails via high moments and entropic stability},
   JOURNAL = {Duke Mathematical Journal},
    VOLUME = {171},
      YEAR = {2022},
    NUMBER = {10},
     PAGES = {2089--2192},
}

@article {FriKah98,
    AUTHOR = {Friedgut, Ehud and Kahn, Jeff},
     TITLE = {On the number of copies of one hypergraph in another},
   JOURNAL = {Israel Journal of Mathematics},
    VOLUME = {105},
      YEAR = {1998},
     PAGES = {251--256},
}

@article {Alo81,
    AUTHOR = {Alon, Noga},
     TITLE = {On the number of subgraphs of prescribed type of graphs with a
              given number of edges},
   JOURNAL = {Israel Journal of Mathematics},
    VOLUME = {38},
      YEAR = {1981},
    NUMBER = {1-2},
     PAGES = {116--130},
      ISSN = {0021-2172},
   MRCLASS = {05C35},
  MRNUMBER = {599482},
MRREVIEWER = {David E. Daykin},
       DOI = {10.1007/BF02761855},
}

@article {JanRuc2002,
    AUTHOR = {Janson, Svante and Ruci\'nski, Andrzej},
     TITLE = {The infamous upper tail},
      NOTE = {Probabilistic methods in combinatorial optimization},
   JOURNAL = {Random Structures \& Algorithms},
    VOLUME = {20},
      YEAR = {2002},
    NUMBER = {3},
     PAGES = {317--342},
      ISSN = {1042-9832,1098-2418},
   MRCLASS = {60C05 (90C27)},
  MRNUMBER = {1900611},
       DOI = {10.1002/rsa.10031},
       URL = {https://doi.org/10.1002/rsa.10031},
}

@article {BasBas2023,
    AUTHOR = {Basak, Anirban and Basu, Riddhipratim},
     TITLE = {Upper tail large deviations of regular subgraph counts in
              {E}rd{\H{o}}s-{R}\'enyi graphs in the full localized regime},
   JOURNAL = {Communications on Pure and Applied Mathematics},
    VOLUME = {76},
      YEAR = {2023},
    NUMBER = {1},
     PAGES = {3--72},
      ISSN = {0010-3640,1097-0312},
   MRCLASS = {60F10 (05C50 05C80)},
  MRNUMBER = {4544794},
       DOI = {10.1002/cpa.22036},
       URL = {https://doi.org/10.1002/cpa.22036},
}

@article {CooDemPha2024,
    AUTHOR = {Cook, Nicholas A. and Dembo, Amir and Pham, Huy Tuan},
     TITLE = {Regularity method and large deviation principles for the
              {E}rd{\H{o}}s-{R}\'enyi hypergraph},
   JOURNAL = {Duke Mathematical Journal},
    VOLUME = {173},
      YEAR = {2024},
    NUMBER = {5},
     PAGES = {873--946},
      ISSN = {0012-7094,1547-7398},
   MRCLASS = {05C65 (05C80 15A69 60F10)},
  MRNUMBER = {4740212},
       DOI = {10.1215/00127094-2023-0029},
       URL = {https://doi.org/10.1215/00127094-2023-0029},
}

@article{BasSha2025,
  title={Upper tail bounds for irregular graphs},
  author={Basak, Anirban and Karmakar, Shaibal},
  journal={arXiv preprint arXiv:2503.05311},
  year={2025}
}

@article {AkhSil2025,
    AUTHOR = {Akhmejanova, Margarita and {\v{S}}ileikis, Matas},
     TITLE = {On the upper tail of star counts in random graphs},
   JOURNAL = {Electronic Journal of Probability},
    VOLUME = {30},
      YEAR = {2025},
     PAGES = {Paper No. 82, 20},
      ISSN = {1083-6489},
   MRCLASS = {05C80 (60C05 60F10)},
  MRNUMBER = {4904105},
       DOI = {10.1214/25-ejp1345},
       URL = {https://doi.org/10.1214/25-ejp1345},
}

@article {Coh2024,
    AUTHOR = {Cohen Antonir, Asaf},
     TITLE = {The upper tail problem for induced 4-cycles in sparse random
              graphs},
   JOURNAL = {Random Structures \& Algorithms},
    VOLUME = {64},
      YEAR = {2024},
    NUMBER = {2},
     PAGES = {401--459},
      ISSN = {1042-9832,1098-2418},
   MRCLASS = {05C80 (60F10)},
  MRNUMBER = {4704274},
MRREVIEWER = {Yilun\ Shang},
       DOI = {10.1002/rsa.21187},
       URL = {https://doi.org/10.1002/rsa.21187},
}

@unpublished{MicNieSer24,
  author       = {Wojciech Michalczuk AND Mikołaj Nieradko AND
                  Grzegorz Serafin},
  title	       = {{Normal approximation for subgraph count in random
                  hypergraphs}},
  note	       = {arXiv:2408.06112}
}

@article {DeMKah12-K3,
    AUTHOR = {DeMarco, B. and Kahn, J.},
     TITLE = {Upper tails for triangles},
   JOURNAL = {Random Structures \& Algorithms},
    VOLUME = {40},
      YEAR = {2012},
    NUMBER = {4},
     PAGES = {452--459},
      ISSN = {1042-9832,1098-2418},
   MRCLASS = {60F10 (05C80)},
  MRNUMBER = {2925307},
       DOI = {10.1002/rsa.20382},
}

@article {Vu02,
    AUTHOR = {Vu, V. H.},
     TITLE = {Concentration of non-{L}ipschitz functions and applications},
   JOURNAL = {Random Structures \& Algorithms},
    VOLUME = {20},
      YEAR = {2002},
    NUMBER = {3},
     PAGES = {262--316},
      ISSN = {1042-9832,1098-2418},
   MRCLASS = {60F10 (90C27)},
  MRNUMBER = {1900610},
       DOI = {10.1002/rsa.10032},
}

@article {Ros11,
    AUTHOR = {Ross, Nathan},
     TITLE = {Fundamentals of {S}tein's method},
   JOURNAL = {Probability Surveys},
    VOLUME = {8},
      YEAR = {2011},
     PAGES = {210--293},
      ISSN = {1549-5787},
   MRCLASS = {60F05 (05C80 60C05)},
  MRNUMBER = {2861132},
MRREVIEWER = {Anant\ P.\ Godbole},
       DOI = {10.1214/11-PS182},
}

@article {KupSam24,
    AUTHOR = {Kuperwasser, Eden and Samotij, Wojciech},
     TITLE = {The list-{R}amsey threshold for families of graphs},
   JOURNAL = {Combinatorics, Probability and Computing},
    VOLUME = {33},
      YEAR = {2024},
    NUMBER = {6},
     PAGES = {829--851},
      ISSN = {0963-5483,1469-2163},
   MRCLASS = {05C80 (05D10)},
  MRNUMBER = {4822417},
MRREVIEWER = {Dingding\ Dong},
       DOI = {10.1017/s0963548324000245},
}

@article {JanOleRuc04,
    AUTHOR = {Janson, Svante and Oleszkiewicz, Krzysztof and Ruci\'{n}ski,
              Andrzej},
     TITLE = {Upper tails for subgraph counts in random graphs},
   JOURNAL = {Israel Journal of Mathematics},
    VOLUME = {142},
      YEAR = {2004},
     PAGES = {61--92},
      ISSN = {0021-2172,1565-8511},
   MRCLASS = {05C80},
  MRNUMBER = {2085711},
MRREVIEWER = {David\ B.\ Penman},
       DOI = {10.1007/BF02771528},
}

@Unpublished{Gal,
  author       = {David Galvin},
  title	       = {{Three tutorial lectures on entropy and counting}},
  note	       = {arXiv:1406.7872}
}

\appendix

\section{Proof of \cref{lem:Poisson-lemma}}
\label{appendix:Poisson-lemma}

In this section, we prove \cref{lem:Poisson-lemma}, which we restate here for the convenience of the reader.

\Poissonlemma*

\begin{proof}
  For the sake of brevity, write $M \coloneqq |\mathcal{H}|$.
  By the pigeonhole principle and Markov's inequality, there is some integer $t^* \in [0,2\mu)$ such that
  \begin{equation}
    \label{eq:definition-of-t*}
    \Pr(Y = t^*) \geq \frac{\Pr(0 \le Y < 2\mu)}{\lceil 2 \mu \rceil} \ge \frac{1 - \Ex[Y]/2\mu}{2\mu+1} \ge \frac{1}{6\mu}.
  \end{equation}

  Our goal is to estimate the `discrete derivative' of $s \mapsto \log \Pr(Y = s)$ and compare it to that of $s \mapsto \log \Pr(Z = s)$, where $Z$ denotes a $\Pois(\mu)$ random variable.
  \begin{claim}
    \label{claim:switching-equation-weighted}
    For every $s \leq M/2$ with $\Pr(Y=s) > 0$, there exists positive integer $d$ satisfying $s + d \le 4\Ex[Y^2] / (\mu \cdot \Pr(Y=s))$ such that
    \[
      \frac{\Pr(Y=s+d)}{\Pr(Y=s)} \geq \frac{\mu}{s+d} \cdot \frac{1}{7d^2}.
    \]
  \end{claim}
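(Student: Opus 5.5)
We prove the claim by a switching (double-counting) argument that compares configurations with $Y=s$ to configurations obtained from them by forcing one extra edge $A\in\mathcal{H}$ into $V_p$. Throughout, write $P \coloneqq \Pr(Y=s)$ and $D_0 \coloneqq 4\Ex[Y^2]/(\mu P)$. For each $A\in\mathcal{H}$, let $D_A \coloneqq Y(V_p\cup A)-Y(V_p)$, where $Y(U)\coloneqq|\mathcal{H}[U]|$. For every integer $d\ge1$, define the weight
\[
  w_d \coloneqq \sum_{A\in\mathcal{H}} p^r\cdot \Pr\bigl(Y=s \wedge D_A=d\bigr).
\]
The plan is to sandwich these weights: bound each $w_d$ from above by $(s+d)\Pr(Y=s+d)$, and bound their total mass from below. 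Pigeonholing over $d$ then finishes the proof.

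\emph{Upper bound on each weight.} Since $r=|A|$, the law of $V_p\cup A$ is exactly the law of $V_p$ conditioned on $A\subseteq V_p$. Indeed, for $\eta\supseteq A$,
\[
  \Pr(V_p\cup A=\eta)=\Pr(V_p\setminus A=\eta\setminus A)=p^{-r}\Pr(V_p=\eta).
\]
Hence
\[
  p^r\Pr(Y=s,\ D_A=d)\le p^r\Pr\bigl(Y(V_p\cup A)=s+d\bigr)=\Pr(Y=s+d,\ A\subseteq V_p).
\]
Summing over $A$ gives $w_d\le \Ex\bigl[Y\,\mathbb{1}\{Y=s+d\}\bigr]=(s+d)\Pr(Y=s+d)$.

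\emph{Lower bound on the total mass.} We have $D_A\ge1$ exactly when $A\not\subseteq V_p$, because then $A$ itself is a new member of $\mathcal{H}[V_p\cup A]$. Therefore
\[
  \sum_{d\ge1}w_d=\mu P-p^r\,\Ex\bigl[Y\,\mathbb{1}\{Y=s\}\bigr]=\mu P\,(1-s/M)\ge \mu P/2,
\]
where the last step uses $s\le M/2$.

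\emph{Truncating large $d$.} The same law identity controls the first moment of $Y(V_p\cup A)$:
\[
  \sum_d (s+d)\,w_d\le \sum_A p^r\,\Ex\bigl[Y(V_p\cup A)\bigr]=\sum_A p^r\,\Ex[Y\mid A\subseteq V_p]=\sum_A \Ex\bigl[Y\,\mathbb{1}\{A\subseteq V_p\}\bigr]=\Ex[Y^2].
\]
By Markov's inequality, the weights with $s+d>D_0$ satisfy
\[
  \sum_{d\,:\,s+d>D_0} w_d\le \frac{\Ex[Y^2]}{D_0}=\frac{\mu P}{4}.
\]
It follows that $\sum_{d\ge1,\ s+d\le D_0}w_d\ge \mu P/4$.

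\emph{Pigeonhole over $d$.} Suppose, for contradiction, that $w_d<\mu P/(7d^2)$ for every $d\ge1$ with $s+d\le D_0$. Summing over these $d$ gives
\[
  \sum_{d\ge1,\ s+d\le D_0}w_d<\frac{\mu P}{7}\cdot\frac{\pi^2}{6}<0.24\,\mu P<\frac{\mu P}{4},
\]
which contradicts the previous step. Hence there is some $d\ge1$ with $s+d\le D_0$ and $w_d\ge\mu P/(7d^2)$. Combining this with the upper bound $w_d\le(s+d)\Pr(Y=s+d)$ yields
\[
  \frac{\Pr(Y=s+d)}{\Pr(Y=s)}\ge \frac{\mu}{s+d}\cdot\frac{1}{7d^2},
\]
as claimed.

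The main obstacle is the truncation of large $d$. A direct bound on $\sum_d d\,w_d$ would only restrict $d$, whereas the claim requires $s+d\le D_0$. This is resolved by bounding the first moment of $s+d$, which equals that of $Y(V_p\cup A)$ on the event $\{Y=s\}$, rather than the first moment of $d$. The law identity for $V_p\cup A$ turns this moment exactly into $\Ex[Y^2]$.
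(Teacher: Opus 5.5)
Your proof is correct and is essentially the paper's switching argument: your total weight $\sum_{d\ge 1} w_d$ equals $p^r\Sigma$ for the paper's double sum $\Sigma$, and both lower bounds come from $|\{A\in\mathcal{H} : A\not\subseteq V_p\}| = M-s$ on $\{Y=s\}$. The only difference is bookkeeping: you compare each $w_d$ with $(s+d)\Pr(Y=s+d)$ separately and truncate on the weight side via $\sum_d (s+d)w_d\le \Ex[Y^2]$, while the paper bounds the aggregate by $\sum_{d\ge1}(s+d)\Pr(Y=s+d)$ and truncates that sum using $\Ex\bigl[Y\1\{Y\ge s+K\}\bigr]\le \Ex[Y^2]/(s+K)$.
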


  \begin{proof}
    Fix an $s \leq M/2$ and examine the sum
    \[
      \Sigma \coloneqq \sum_{\substack{R \subseteq V \\ Y(R) = s}} \sum_{\substack{A \in \mathcal{H} \\ A \nsubseteq R}} \Pr(V_p = R \cup A) \cdot \left(\frac{1-p}{p}\right)^{|A \setminus R|}.
    \]
    On the one hand, we have
    \[
      \Sigma = \sum_{\substack{R \subseteq V \\ Y(R) = s}} \Pr(V_p = R) \cdot |\{A \in \mathcal{H} : A \nsubseteq R\}|
      \ge \Pr(Y = s) \cdot (M-s).
    \]
    Since $M - s \ge M/2 = \mu / (2p^r)$, we may conclude that
    \begin{equation}
      \label{eq:Sigma-lower}
      \Sigma \ge \Pr(Y=s) \cdot \frac{\mu}{2p^r}.
    \end{equation}
    On the other hand, as $Y(R \cup A) \ge Y(R) + 1$ for all $R \subseteq V$ and $A \in \mathcal{H}$ with $A \nsubseteq R$, we have
    \[
      \begin{split}
        \Sigma
        & \le \sum_{\substack{R' \subseteq V \\ Y(R') \ge s+1}} \sum_{\substack{A \in \mathcal{H} \\ A \subseteq R'}} \Pr(V_p = R') \sum_{R' \setminus A \subseteq R \subsetneq R'} \left(\frac{1-p}{p}\right)^{|A \setminus R|}\\
        & = \sum_{\substack{R' \subseteq V \\ Y(R') \ge s+1}} \sum_{\substack{A \in \mathcal{H} \\ A \subseteq R'}} \Pr(V_p = R') \cdot (p^{-|A|} - 1) \\
        & \le \sum_{d \ge 1} (s + d) \cdot \Pr(Y = s+d) \cdot p^{-r}.
      \end{split}
    \]
    Further, for every $K \ge 0$, we have
    \[
      \sum_{d \ge K} (s+d) \cdot \Pr(Y=s+d) \le \frac{1}{s+K} \cdot \sum_{d \ge K} (s+d)^2 \cdot \Pr(Y = s+d) \le \frac{\Ex[Y^2]}{s+K}
    \]
    and thus
    \begin{equation}
      \label{eq:Sigma-upper}
      \Sigma \cdot p^r \le \sum_{d=1}^{\lfloor K \rfloor} (s+d) \cdot \Pr(Y = s+d) + \frac{\Ex[Y^2]}{s+K}.
    \end{equation}
    Combining \eqref{eq:Sigma-lower} and~\eqref{eq:Sigma-upper} with $s + K \coloneqq \lfloor 4\Ex[Y^2] / (\mu \cdot \Pr(Y=s)) \rfloor$, we get
    \[
      \Pr(Y=s) \cdot \frac{\mu}{7} \cdot \sum_{d \ge 1} \frac{1}{d^2} \le \Pr(Y=s) \cdot \frac{\mu}{4} \le \sum_{d=1}^K (s+d) \cdot \Pr(Y = s+d),
    \]
    which clearly implies the existence of the desired integer $d$.
  \end{proof}
  Since $t^* < t \le M/2$, we may use~\cref{claim:switching-equation-weighted} to find a sequence $t^* \eqqcolon t_0 < t_1 < \dotsb < t_\ell$ with $t_{\ell -1} < t \le t_\ell$ such that, for every $i \in \br{\ell}$,
  \begin{equation}
    \label{eq:Pr-ratio-sequence}
    \frac{\Pr(Y = t_i)}{\Pr(Y = t_{i-1})} \ge \frac{\mu}{t_i} \cdot \frac{1}{7(t_i-t_{i-1})^2}
  \end{equation}
  and $t_\ell \le 4\Ex[Y^2] / (\mu \cdot \Pr(Y = t_{\ell-1}))$.
  
  Let $t' \coloneqq t_{\ell-1}$.
  Multiplying \eqref{eq:Pr-ratio-sequence} for all $i \in \br{\ell-1}$ and using \eqref{eq:definition-of-t*} as well as the inequality $7d^2 \le 7^d$, valid for every positive integer $d$, yields
  \[
    \Pr(Y = t') \ge \Pr(Y = t^*) \cdot \prod_{i=1}^{\ell-1} \left( \frac{\mu}{t_i} \cdot\frac{1}{7(t_i-t_{i-1})^2} \right) \ge \frac{1}{6\mu \cdot 7^t} \cdot \prod_{i=1}^{\ell-1} \frac{\mu}{t_i}.    
  \]
  On the other hand,
  \[
    \Pr(Z = t) = \frac{\mu^t}{t!} \cdot e^{-\mu} \le \prod_{i=1}^{\ell-1} \frac{\mu}{t_i} \cdot \frac{\mu^{t-\ell+1}}{(t - \ell+1)!} \cdot e^{-\mu} \le \prod_{i=1}^{\ell-1} \frac{\mu}{t_i},
  \]
  while a routine calculation shows that $\Pr(Z \ge t) \le 2 \Pr(Z = t)$, as $t \ge 2\mu$.
  Combining these estimates yields
  \begin{equation}
    \label{eq:Y-t'-lower}
    \Pr(Y = t') \ge C^{-t} \cdot \Pr(Z \ge t)
  \end{equation}
  for some absolute constant $C$.
  
  Finally, it follows from~\eqref{eq:Pr-ratio-sequence}, and the assumption that $\mu \ge 1$, that
  \[
    \frac{\Pr(Y \ge t)}{\Pr(Y = t')} \ge \frac{\Pr(Y=t_\ell)}{\Pr(Y=t')} \ge \frac{1}{7t_\ell^3} \ge \frac{1}{7} \cdot \max\left\{\frac{1}{M}, \frac{\mu \cdot \Pr(Y=t')}{4\Ex[Y^2]}\right\}^3.
  \]
  Substituting \eqref{eq:Y-t'-lower} into the above estimate yields the desired bounds.
\end{proof}

\end{document}